\def\mf#1{\mathfrak{#1}}
\def\mc#1{\mathcal{#1}}
\def\mb#1{\mathbb{#1}}
\def\tx#1{\textrm{#1}}
\def\ms#1{\mathsf{#1}}
\def\tr{\tx{tr}\,}
\def\C{\mathbb{C}}
\def\Q{\mathbb{Q}}
\def\R{\mathbb{R}}
\def\Z{\mathbb{Z}}
\DeclareMathOperator\Gal{Gal}
\def\sm{\smallsetminus}
\def\<{\langle}
\def\>{\rangle}
\theoremstyle{plain}
\newtheorem{thm}[equation]{Theorem}
\newtheorem{lem}[equation]{Lemma}
\newtheorem{pro}[equation]{Proposition}
\newtheorem{cor}[equation]{Corollary}
\newtheorem{exa}[equation]{Example}
\theoremstyle{definition}
\newtheorem{dfn}[equation]{Definition}
\newtheorem{dfnlem}[equation]{Definition/Lemma}
\newtheorem{rem}[equation]{Remark}
\newtheorem{notn}[equation]{Notation}
\numberwithin{equation}{section}
\DeclareMathOperator\Ad{Ad}
\DeclareMathOperator\Aut{Aut}
\DeclareMathOperator\Cent{Cent}
\DeclareMathOperator\disc{disc}
\DeclareMathOperator\fNorm{N}
\DeclareMathOperator\PGL{PGL}
\DeclareMathOperator\Hom{Hom}
\DeclareMathOperator\Ind{Ind}
\DeclareMathOperator\ind{Ind}
\DeclareMathOperator\inv{inv}
\DeclareMathOperator\Irr{Irr}
\DeclareMathOperator\Lang{L}
\DeclareMathOperator\Lie{Lie}
\DeclareMathOperator\Or{O}
\DeclareMathOperator\ord{ord}
\DeclareMathOperator\Pin{Pin}
\DeclareMathOperator\Res{Res}
\DeclareMathOperator\rk{rk}
\DeclareMathOperator\sgn{sgn}
\DeclareMathOperator\SL{SL}
\DeclareMathOperator\SO{SO}
\DeclareMathOperator\Spin{Spin}
\DeclarePairedDelimiterX\pair[2]\<\>{#1, #2}
\newcommand\card[1]{\lvert#1\rvert}
\newcommand\ldef{\mathrel{:=}}
\newcommand\pleft[1]{\mathchoice{\Bigl#1}{\bigl#1}{#1}{#1}}
\newcommand\pright[1]{\mathchoice{\Bigr#1}{\bigr#1}{#1}{#1}}
\newcommand\res[1][\relax]{\mathclose{#1|}}
\newcommand\sdiff{\mathop\Delta}
\newcommand\stbar{\mathrel|}
\renewcommand\^[1]{\prescript{#1}{}}
\newcommand\abgp{_\tx{ab}}
\newcommand\scab{_\tx{sc,ab}}
\newcommand\adform{_\tx{ad}}
\newcommand\scform{_\tx{sc}}
\newcommand\dergp{_\tx{der}}
\newcommand\scder{_\tx{sc,der}}
\newcommand\Gad{G\adform}
\newcommand\Gsc{G\scform}
\newcommand\Mad{M\adform}
\newcommand\Msc{M\scform}
\newcommand\Mscab{M\scab}
\newcommand\Mscder{M\scder}
\newcommand\Tsc{T\scform}
\newcommand\gsc{\fg\scform}
\newcommand\msc{\fm\scform}
\newcommand\tsc{\ft\scform}
\newcommand\anondot{{-}}
\newcommand\dota{\cdot}
\newcommand\dotm{\cdot}
\newcommand\subb{_\tx b}
\newcommand\hO{\widehat O}
\newcommand\STheta{S\Theta}
\def\lmod{\backslash}
\def\mb#1{\mathbb{#1}}
\newcommand{\bC}{\mathbb{C}}
\newcommand{\bF}{\mathbb{F}}
\newcommand{\bR}{\mathbb{R}}
\newcommand{\cB}{\mathcal{B}}
\newcommand{\fg}{\mathfrak{g}}
\newcommand{\fh}{\mathfrak{h}}
\newcommand{\fm}{\mathfrak{m}}
\newcommand{\fp}{\mathfrak{p}}
\newcommand{\ft}{\mathfrak{t}}
\newcommand{\fS}{\mathfrak{S}}
\newcommand{\wt}{\widetilde}
\newcommand{\wh}{\widehat}
\newcommand{\eps}{\epsilon}
\DeclareMathOperator{\Sp}{Sp}
\newcommand{\sepfield}{{\bar F}}
\begin{document}
\title{A twisted Yu construction, Harish-Chandra characters, and endoscopy}
\author{\renewcommand\footnotemark{\relax}
Jessica Fintzen\thanks{J.F.\ is supported in part by NSF grant DMS-1802234/DMS-2055230 and a Royal Society University Research Fellowship.}\and
\renewcommand\footnotemark{\relax}
Tasho Kaletha\thanks{T.K.\ is supported in part by NSF grant DMS-1801687 and a Sloan Fellowship.}\and
\renewcommand\footnotemark{\relax}
Loren Spice\thanks{L.S.\ is supported in part by Simons grant 636151.}
\thanks{The authors gratefully acknowledge the hospitality of
the American Institute of Mathematics, which supported this research
as part of a SQuaRE program.}}
\maketitle

\begin{abstract}
We give a modification of Yu's construction of supercuspidal representations of a connected reductive group $G$ over a non\-archimedean local field $F$.
This modification restores the validity of certain key intertwining property claims made by Yu, which were recently proven to be false for the original construction. This modification is also an essential ingredient in the construction of supercuspidal $L$-packets in \cite{KalSLP}. As further applications, we prove the stability and many instances of endoscopic character identities of these supercuspidal $L$-packets, subject to some conditions on the base field $F$. In particular, for regular supercuspidal parameters we prove all instances of standard endoscopy. In addition, we prove that these supercuspidal $L$-packets 
satisfy \cite[Conjecture 4.3]{KalDC}, which, together with standard endoscopy, uniquely characterizes the local Langlands correspondence for supercuspidal $L$-packets (again subject to the conditions on $F$). These results are based on a statement of the Harish-Chandra character formula for the supercuspidal representations arising from the twisted Yu construction.
\end{abstract}

\tableofcontents

\section{Introduction}

The construction of supercuspidal representations of reductive $p$-adic groups provided by Yu (\cite{Yu01}) has been widely used. It was recently noticed by one of us (L.S.)\ that the proofs of two essential results in this construction, \cite[Proposition 14.1 and Theorem 14.2]{Yu01}, are incorrect, due to the usage of a misstated lemma from \cite{gerardin:weil}. In fact, \cite[\S4]{Fin19} provided counterexamples showing that the results themselves are false. These results are essential in Yu's proof that his construction produces irreducible supercuspidal representations. A different proof was given in \cite[Theorem 3.1]{Fin19} that Yu's construction still produces irreducible supercuspidal representations, despite the failure of \cite[Proposition 14.1 and Theorem 14.2]{Yu01}.

However, \cite[Proposition 14.1 and Theorem 14.2]{Yu01} turn out to have important consequences beyond their original intention. For example, they are used in \cite{Spice17} and the forthcoming \cite{Spice21} for the computation of the Harish-Chandra characters of the representations produced by Yu's construction, generalizing earlier work \cite{AS09}.  Moreover, even in special cases where the Harish\-Chandra characters have already been computed, such as \cite{AS09} or \cite{KalRSP}, the resulting formulas contain auxiliary characters that interfere with applications, and whose existence can be traced back to the failure of \cite[Proposition 14.1 and Theorem 14.2]{Yu01}. From this point of view, restoring the validity of \cite[Proposition 14.1 and Theorem 14.2]{Yu01} becomes desirable.  We accomplish this by modifying Yu's construction.

The modification is obtained by twisting the data in the original construction by a sign character $\epsilon_x$. The construction of this sign character and the computation of its values in Theorem \ref{thm:main} are the technical heart of this paper.  Before we discuss Theorem \ref{thm:main}, we list some applications.

Our first main result states that the twisted Yu construction satisfies the analogues of \cite[Proposition 14.1 and Theorem 14.2]{Yu01}, and, as a consequence, produces irreducible supercuspidal representations, cf.\ Corollaries \ref{cor:14.1} and \ref{cor:14.2}, and Theorem \ref{thm:twistedYu}. Our second main result is a formula for the Harish-Chandra character function of regular (or more generally, non\-singular and possibly reducible) supercuspidal representations, Theorem \ref{thm:char}. For shallow elements this formula is based on \cite[\S4.4]{KalRSP}. For general elements this formula uses \cite[Theorem 10.2.1]{Spice21}. The formula for shallow elements is used in an essential way in \cite{KalSLP} for the construction of supercuspidal $L$-packets. Without the twisted Yu construction obtained in this paper and the resulting character formula of Proposition \ref{pro:char}, the construction of \cite{KalSLP} would not be possible, due to the significant interference of the above mentioned auxiliary characters.

Before describing our third main result, let us discuss the last point more precisely. A formula for the Harish-Chandra character function of the supercuspidal representations arising from Yu's construction was given in \cite[Theorem 7.1]{AS09} under a strict compactness hypothesis. It was shown in \cite[\S4.4]{KalRSP} that the character formula of \cite{AS09}, adapted as in \cite[Theorem 4.28]{DS18}, is valid without the compactness assumption in the special case of regular supercuspidal representations and certain shallow elements. Moreover, after suitably interpreting the roots of unity occurring in this formula, it can be used as a guide in the construction of $L$-packets, which was done in \cite[\S5]{KalRSP}. The reinterpreted character formula, stated in \cite[Corollary 4.10.1]{KalRSP}, contained certain auxiliary characters that from a philosophical point of view should not have been there. More precisely, the character formula for a regular supercuspidal representation $\pi_{(S,\theta)}$ at a shallow regular element $\gamma \in S(F)$ stated in \cite[Corollary 4.10.1]{KalRSP} is
\[ e(G)\epsilon_L(X^*(T)_\C-X^*(S)_\C,\Lambda)\!\!\!\!\!\!\!\!\!\!\!\!\sum_{w \in N(S,G)(F)/S(F)}\!\!\!\!\!\!\!\!\!\!\!\!\Delta_{II}^\tx{abs}[a,\chi'](\gamma^w)\epsilon_{f,\tx{ram}}(\gamma^w)\epsilon^\tx{ram}(\gamma^w)\theta(\gamma^w). \]
The auxiliary characters are $\epsilon_{f,\tx{ram}}$ and $\epsilon^\tx{ram}$. There is also another part of the formula that is less natural than it should be, and it involves the construction of the $\chi$-data $\chi'$ in \cite[(4.7.2)]{KalRSP}. We will delay this discussion to the body of this paper (\S\ref{sub:root}), and limit ourselves in this introduction to the remark that the $\chi$-data $\chi'$ fail to reflect the functoriality on the Galois side that is suggested by the inductive nature of Yu's construction.

While these auxiliary terms did not present a mathematical problem for the construction of regular supercuspidal $L$-packets, they do become a problem if one wants to go beyond the regular case. The reason is discussed in detail in the introduction of \cite{KalSLP}. The modification of Yu's construction obtained in this paper removes the auxiliary characters $\epsilon_{f,\tx{ram}}$ and $\epsilon^\tx{ram}$ from the character formula and leads to a more natural construction of the $\chi$-data used in it. More precisely, Proposition \ref{pro:char} states that the character formula
for a regular supercuspidal representation $\pi_{(S,\theta)}$ at a shallow regular element $\gamma \in S(F)$ is given by
\[ e(G)\epsilon_L(X^*(T)_\C-X^*(S)_\C,\Lambda)\sum_{w \in N(S,G)(F)/S(F)}\Delta_{II}^\tx{abs}[a,\chi''](\gamma^w)\theta(\gamma^w). \]
We have denoted here by $\chi''$ the new $\chi$-data, which now reflect functoriality along $L$-embeddings between the various twisted Levi subgroups that are part of the Yu datum. This formula applies more generally to non-singular supercuspidal representations. With the auxiliary characters now removed, this formula allows the construction of supercuspidal $L$-packets to proceed beyond the regular case, which is done in \cite{KalSLP}.

Our third main result, Theorem \ref{thm:endo}, is the proof of stability and endoscopic transfer of the regular supercuspidal $L$-packets constructed in \cite{KalRSP}, and more generally the stability and certain cases of endoscopic transfer of the torally wild supercuspidal $L$-packets constructed in \cite{KalSLP}. This proof is valid when the characteristic of $F$ is zero and the residual characteristic $p$ is larger than a certain bound depending on $G$. We note that this condition on $p$ implies in particular that every supercuspidal Langlands parameter is torally wild. To state our result slightly more precisely, consider a supercuspidal parameter $\varphi : W_F \to \^LG$ and let $S_\varphi=\Cent(\varphi,\wh G)$. There is an associated torus $S$ defined over $F$, a character $\theta : S(F) \to \C^\times$, and a canonical exact sequence
\[ 1 \to \wh S^\Gamma \to S_\varphi \to \Omega(S,G)(F)_\theta \to 1, \]
where $\Omega(S,G)$ is a subgroup of the automorphism group of $S$ that is identified with the Weyl group coming from $G$ in a natural way. When $\varphi$ is regular, $\Omega(S,G)(F)_\theta$ is trivial, hence $S_\varphi \cong \wh S^\Gamma$. In this paper we prove, for any supercuspidal parameter $\varphi$, the stability of the $L$-packet $\Pi_\varphi$ and the endoscopic character identities for all $s \in \wh S^\Gamma \subset S_\varphi$, subject to the above mentioned conditions on the base field $F$.

On the way to proving the stability of supercuspidal $L$-packets, we also prove that they satisfy \cite[Conjecture 4.3]{KalDC}. We recall that this conjecture, together with the endoscopic character identities, uniquely characterizes the local Langlands correspondence for supercuspidal parameters, again subject to the conditions on $F$.

Let us also mention that the twisted Yu construction of this paper aligns with the geometric construction of unramified toral supercuspidal representations of Chan--Ivanov \cite{ChanIvanov21}. This was noticed by Chan--Oi, who computed some character values of these representations and noticed the absence of the auxiliary character $\epsilon^\tx{ram}$ (the auxiliary character $\epsilon_{f,\tx{ram}}$ is automatically trivial in the unramified setting), cf. \cite[Theorem 7.2]{ChanOi}.

Having described the applications of our main technical result, Theorem \ref{thm:main}, let us now briefly discuss its proof, and in particular the construction of the sign character $\epsilon_x$. It is performed in the following abstract situation. Let $F$ be a non-archimedean local field whose residual characteristic is not $2$, $G$ a connected adjoint $F$-group that splits over a tame extension of $F$, $M \subset G$ a tame twisted Levi subgroup, $x$ a point in the enlarged Bruhat--Tits building of $M$, and $X \in \Lie^*(M\scab)(F)$ a $G$-generic element, where $M\scab$ is the maximal abelian quotient of the preimage of $M$ in the simply connected cover of $G$. To these data we associate a character $\epsilon^{G/M}_x : M(F)_x \to \{\pm1 \}$, which is the product of three distinct characters --- a character $\_M\epsilon_\tx{sym,ram}$ obtained from the Moy--Prasad filtration, a character $\_M\epsilon^\tx{sym,ram}_s$ obtained via a hypercohomology construction, and a character $\_M\epsilon_{0}$ obtained via a spinor-norm construction.
The hypercohomology and spinor-norm constructions are quite general and work over arbitrary fields of odd characteristic; here they are applied to the residue field of $F$.

For every tame maximal torus $T \subset M$ whose enlarged building contains $x$, we give an explicit formula for the restriction of $\epsilon^{G/M}_x$ to $T(F)_x$. This formula has the property that it recovers the product of the auxiliary characters $\epsilon^\tx{ram}$ and $\epsilon_{f,\tx{ram}}$ mentioned above, together with a third character $\epsilon_\flat$ which measures the difference between the two versions $\chi'$ and $\chi''$ of the $\chi$-data in the Harish-Chandra character formula. In order to unify notation, we have denoted $\epsilon^\tx{ram}$ by $\epsilon_{\sharp, x}$ and $\epsilon_{f,\tx{ram}}$ by $\epsilon_f$, and we have decorated each of them by the superscript $G/M$. Part of the character $\epsilon_\flat$, denoted here by $\epsilon_{\flat,0}$, appeared in \cite[Proposition 5.27]{KalDC}.

In the application of $\epsilon^{G/M}_x$ to Yu's construction, we are given a Yu datum for an irreducible supercuspidal representation, which in particular contains a tower of twisted Levi subgroups $G^0 \subset G^1 \subset \dotsb \subset G^d=G$ as well as a point $x$ in the enlarged Bruhat--Tits building of $G^0$. The character $\epsilon_x$ by which we twist this construction is the product over all $i$ of characters derived in a straightforward way from $\epsilon_x^{G^{i+1}/G^i}$, cf. \S\ref{sub:Yu}, in particular Definition \ref{dfn:epsGM}.

The essential challenge in the proof of Theorem \ref{thm:main} was that the constructions of the three pieces $\_M\epsilon_\tx{sym,ram}$, $\_M\epsilon^\tx{sym,ram}_s$, and $\_M\epsilon_{0}$
 of $\epsilon^{G/M}_x$ were in no way suggested by the shape of the auxiliary characters $\epsilon_{\sharp, x}$ and $\epsilon_f$ that reflect the issues with Yu's original construction, or the character $\epsilon_{\flat,0}$ that was presented by considerations with the dual side. In fact, none of the three pieces $\_M\epsilon_\tx{sym,ram}$, $\_M\epsilon^\tx{sym,ram}_s$, and $\_M\epsilon_0$ is directly related to any of the three auxiliary characters $\epsilon_{\sharp, x}$, $\epsilon_f$, and $\epsilon_{\flat,0}$. The path from the latter to the former led us through many sharp turns and down many dark alleys. In fact, we were intially only concerned with extending the product $\epsilon_{\sharp, x} \dotm \epsilon_{\flat,0}$ from one fixed tame maximal torus $T(F)$ to $M(F)_x$. That there is a natural construction of a character of $M(F)_x$ which does this simultaneously for \emph{all} tame maximal tori $T \subset M$, and that it moreover produces, in addition to $\epsilon_{\sharp, x}$ and $\epsilon_{\flat,0}$, also $\epsilon_f$ and the remainder of $\epsilon_\flat$, was a great and pleasant surprise. The full force of this statement ended up being essential in our applications.

\numberwithin{equation}{section}
\section{Notation}
\label{sec:notation}

For any field $F$ we fix a separable closure of $\sepfield$. 
When we refer to a separable extension \(E/F\), we tacitly assume that
\(E \subset \sepfield\).
If \(E/F\) is finite, then we write \(\disc(E/F) \in F^\times/F^{\times,2}\) for the discriminant of the extension $E/F$, i.e., the determinant of the trace pairing, and $\fNorm_{E/F} : E^\times \to F^\times$ for the norm map.

For every quadratic extension \(E/E_\pm\)
of fields of characteristic not \(2\),
we write \(\sigma_{E/E_\pm}\) for the generator of
\(\Gal(E/E_\pm)\),
$E^1$ for the kernel of \(\fNorm_{E/E_\pm}\) (if \(E_\pm\) is understood),
and \(\Lang_{E/E_\pm}\) for the isomorphism
\(E^1 \to E^\times/E_\pm^\times\) that is inverse to
\(e \mapsto e/\sigma_{E/E_\pm}(e)\).

We denote by \(\Gamma_F\) the absolute Galois group
\(\Gal(\sepfield/F)\) and put \(\Sigma_F = \Gamma_F \times \{\pm1\}\). If \(F\) is understood, then we may abbreviate \(\Gamma_F\) and \(\Sigma_F\)
to \(\Gamma\) and \(\Sigma\), respectively.
Let \(\fS\) be a set on which \(\Sigma\) operates,
and \(i\) an element of \(\fS\).
Then \(i\) is called \emph{asymmetric} if $-i \notin \Gamma \dota i$, and \emph{symmetric} if $-i \in \Gamma \dota i$. These notions depend only on the $\Sigma$-orbit of $i$. We have the disjoint-union decomposition $\fS=\fS_\tx{asym}\sqcup \fS_\tx{sym}$ into the subsets of asymmetric, respectively symmetric, elements. We write $\Gamma_i$, respectively \(\Gamma_{\pm i}\), for the subgroup of $\Gamma$ that leaves invariant the set $\{i\}$, respectively \(\{i, -i\}\).  Thus $\Gamma_i$ is a subgroup of $\Gamma_{\pm i}$, of index $1$ if $i$ is asymmetric, and of index $2$ if $i$ is symmetric. We write $F_i$, respectively $F_{\pm i}$, for the fixed field in $\sepfield$ of $\Gamma_i$, respectively $\Gamma_{\pm i}$.  If \(i\) is symmetric, then we abbreviate \(\Lang_{F_i/F_{\pm i}}\) to \(\Lang_{\pm i}\), and \(\sigma_{F_i/F_{\pm i}}\) to \(\sigma_i\).

If $F$ is a non-archimedean local field, then we denote its ring of integers by $O_F$ and the maximal ideal of \(O_F\) by $\mf{p}_F$; write \(k_F = O_F/\mf p_F\); and let \(p_F\) and \(q_F\) be the characteristic and size, respectively, of \(k_F\).
We denote by $I_F \subset \Gamma_F$ the inertia subgroup. Again we drop the subscript if $F$ is understood. Given $i \in \fS$, we write $O_i$, $\mf{p}_i$, and $k_i$, for the ring of integers, maximal ideal, and residue field of $F_i$, and employ the analogous notation for $F_{\pm i}$. We let $F^\tx{tr}$, respectively \(F^\tx{ur}\), be the maximal tamely ramified, respectively unramified, extension of $F$ (inside $\sepfield$). A symmetric element $i$ is called \emph{unramified} if $-i \notin I \dota i$, and \emph{ramified} if $-i \in I \dota i$. The extension $[k_i:k_{\pm i}]$ is of degree $1$ if $i$ is ramified, and $2$ if $i$ is unramified. We have the disjoint-union decomposition $\fS_\tx{sym}=\fS_\tx{sym,unram} \sqcup \fS_\tx{sym,ram}$ of the symmetric elements of $\fS$ into the subsets of unramified symmetric, respectively ramified symmetric, elements.

If $G$ is a connected reductive group defined over $F$, then we write $\cB(G,F)$ for the enlarged Bruhat--Tits building of $G$. For \(x \in \cB(G, F)\),
we denote by $G(F)_x$ the stabilizer of $x$
and by $\ms{G}_x$ the
(usually disconnected) $k$-group scheme with reductive identity
component for which $\ms{G}_x(k_E)=G(E)_x/G(E)_{x,0+}$ for every
unramified extension $E/F$. Its identity component is denoted by
$\ms{G}_x^\circ$, and satisfies $\ms{G}_x^\circ(k_E)=G(E)_{x,0}/G(E)_{x,0+}$.
Here $G(E)_{x,0}$ and $G(E)_{x,0+}$ are the (connected) parahoric subgroup
of $G(E)_x$ and its pro-unipotent radical. For a non-negative real number $r$ and a separable extension $E/F$ of finite ramification degree, we denote by $G(E)_{x,r}$, respectively \(G(E)_{x, r+}\), the Moy--Prasad filtration subgroup of depth $r$, respectively the union of such subgroups of all depths greater than \(r\), normalized with respect to the valuation on $E$
that sends a uniformizer of \(F\) to \(1\).
Note that \(G(E)_x\), \(G(E)_{x, r}\), and \(G(E)_{x, r+}\), depend only on the image of \(x\) in the reduced building. We may abbreviate $G(E)_{x,r}/G(E)_{x,r+}$ to $G(E)_{x, r:r+}$. We use the analogous notation for the Lie algebra.

Given a torus $T$ defined over $F$, we write \(X^*(T)\) and \(X_*(T)\) for the character and cocharacter
lattices of \(T_{\sepfield}\), respectively. These are finitely generated free abelian groups with $\Sigma$-action.  We write $\ms{T}$ for the
special fiber of the ft-N\'eron model of \(T\). Thus for every unramified
extension $E/F$ we have $\ms{T}(k_E)=T(E)\subb/T(E)_{0+}$ and
$\ms{T}^\circ(k_E)=T(E)_0/T(E)_{0+}$, where $T(E)\subb$ is the maximal
bounded subgroup of $T(E)$, $T(E)_0$ is the Iwahori subgroup, and
$T(E)_{0+}$ the pro-unipotent radical of  $T(E)_0$. For $x \in \cB(T,F)$
we have $\ms{T}=\ms{T}_x$.

If $T$ is a torus in a connected reductive group $G$, then we write $R(T,G)$ for the set of non-zero weights of $T_{\sepfield}$ acting on $\Lie(G_{\sepfield})$. If $M \subset G$ is a connected reductive subgroup containing the torus $T$, we write $R(T,G/M) = R(T,G) \sm R(T,M)$. The sets $R(T,G)$, $R(T,M)$, and $R(T,G/M)$ are equipped with a $\Sigma$-action. We will be most interested in the cases when  $M$ is a twisted Levi subgroup of $G$ and $T$ is either a maximal torus of $G$ or the connected center of $M$.

We use $\Lie(\anondot)$ or Fraktur letters to denote Lie algebras. Thus we will write $\Lie(G)$, $\Lie(M)$, and $\Lie(T)$, or $\fg$, $\fm$, and $\ft$ for the Lie algebras of $G$, $M$, and $T$, respectively. A point $x \in \cB(G,F)$ specifies an $O_E$-lattice $\fg(E)_{x,0}$
in the $E$-vector space $\fg(E)$ for every separable extension $E/F$.
When $E/F$ has finite ramification degree and is tame we have
$\fg(F)_{x,0}=\fg(E)_{x,0}^\Gamma$.

We will write $\Ad(g) : \fg \to \fg$ for the differential of the homomorphism $x \mapsto gxg^{-1}$, and define $\Ad^*(g) : \fg^* \to \fg^*$ by $\<\Ad^*(g)X,Y\>=\<X,\Ad(g)^{-1}Y\>$.

We write $\Gad$ for the adjoint quotient of $G$, $G\dergp$ for the derived subgroup of $G$, and $\Gsc$ for the simply-connected cover of $G\dergp$. If $M$ is a twisted Levi subgroup of $G$ (for example, if \(M = T\) is a maximal torus), then we write $\Msc$ and $\Mad$ for the preimage of $M$ in $\Gsc$ and the image of $M$ in $\Gad$, respectively. Note that $\Msc$ is not the simply connected cover of the derived subgroup of $M$ unless $M=G$ (the derived subgroup of $\Msc$ is simply connected, but \(\Msc\) will have a non-trivial connected center) and $\Mad$ is not the adjoint quotient of $M$ unless $M=G$ (the center of $\Mad$ is connected, but not trivial).

Let \(T\) be a maximal torus in \(G\).
For $\alpha \in R(T,G) \subset X^*(T)$, we write $\alpha^\vee \in R^\vee(T,G) \subset X_*(T)$ for the coroot and $d\alpha^\vee$ for the corresponding element of $\Hom(\Lie(\mathbb G_{m, \sepfield}), \ft_{\sepfield})$. Write $\alpha\scform$ for the root corresponding to $\alpha$ under the identification of $R(T,G)$ and $R(\Tsc,\Gsc)$. We set  $H_\alpha=d\alpha\scform^\vee(1) \in \tsc(\sepfield)$.

If $K$ is a subgroup of $G(F)$, $\rho$ a representation of $K$ and $g \in G(F)$, then we write $\^gK = {gKg^{-1}}$, \(\^{g\cap}K = K \cap \^gK\), and $\^g\rho(k)=\rho(g^{-1} k g)$.

\numberwithin{equation}{section}
\section{Statement of Theorem \ref{thm:main}} \label{sec:main-stmt}

Let $F$ be a non-archimedean local field. We assume $p \neq 2$.
Let $G$ be a connected adjoint group defined over $F$ and split over $F^\tx{tr}$,
and let $M$ be a twisted Levi subgroup of \(G\) split over $F^\tx{tr}$. Note that the center $Z_M$ of $M$ is connected.

We fix an embedding of Bruhat--Tits buildings $\cB(M,F) \subset \cB(G,F)$ and a point $x \in \cB(G,F)$ that lies in the image of $\cB(M,F)$. We regard $x$ as an element of \(\cB(M, F)\) via the fixed embedding. The image of $x$ in the reduced building of $M$ is independent of the choice of embedding $\cB(M,F) \to \cB(G,F)$; therefore, none of the constructions depend on the chosen embeddings. Note further that, since all embeddings of $\cB(M,F)$ into $\cB(G,F)$ have the same image, the property of $x$ belonging to the image of $\cB(M,F)$ depends only on $M$ and $x$, and not on the chosen embedding. We can apply this discussion in particular to any tame maximal torus $T \subset G$.

We denote by $\Gsc$ the simply-connected cover of $G$, by $\Msc$ the preimage of $M$ in \(\Gsc\), and by \(\Mscab = \Msc/\Mscder\) and $Z_{\Msc}$ the abelianization and center of $\Msc$, respectively.
We assume there exist $X \in \Lie(\Mscab)^*(F) \subset \Lie^*(\Msc)(F)$ and $r \in \R$ such that for all tamely ramified maximal tori $T \subset M$ and all $\alpha \in R(T,G/M)$ we have $\ord(\pair X{H_\alpha})=-r$. Henceforth, we call such an element \textit{\(G\)-good}, or, if \(G\) is understood, just good. We fix one such $X$ and put $s=r/2$.

If $T \subset G$ is a tame maximal torus with $x \in \cB(T,F)$ and $\alpha \in R(T,G)$, then we write
\[ \ord_x(\alpha)
= \{t \in \R
	\stbar \fg_\alpha(F_\alpha)_{x,t+}
		\subsetneq \fg_\alpha(F_\alpha)_{x,t} \}. \]
This set is unchanged upon replacing \(F\) by an unramified
extension, but it changes upon tame ramified base change. The set $\ord_x(\alpha) \subset \R$ is stable under translation by elements of $\ord(F_\alpha^\times) \subset \R$ and satisfies
$\ord_x(\sigma\alpha)=\ord_x(\alpha)$ for every $\sigma \in \Gamma$.
It was proven in \cite[Corollary 3.11]{DS18} that \(\ord_x(-\alpha) = -\ord_x(\alpha)\) under the assumption
that \(p \ne 2\),
but in fact it holds even when \(p = 2\), at least when \(G\) is tame; see Lemma \ref{lem:q-depth}.

For $\alpha \in R(T,G/M)$ we write $\alpha_0$ for the restriction of
$\alpha$ to $Z_M$. Then the set of non-zero weights $R(Z_M,G)$ for the
action of $Z_M$ on $\fg$ is precisely the set
$\{\alpha_0\stbar\alpha \in R(T,G/M)\}$.

Given $\alpha \in R(T,G/M)$ we denote by $e_\alpha$ or $e(\alpha)$,
respectively $e_{\alpha_0}$ or $e(\alpha_0)$,
the ramification degree of the extensions $F_\alpha/F$,
respectively $F_{\alpha_0}/F$,
and by $e(\alpha/\alpha_0)$ their quotient, which
is the ramification degree of $F_\alpha/F_{\alpha_0}$. The set
$\ord_x(\alpha)$ is an $e_\alpha^{-1}\Z$-torsor in $\R$. Since $r \in e_\alpha^{-1}\Z$ the set $\ord_x(\alpha)$ is stable under translations by $r$. Therefore $s \in \ord_x(\alpha)$ if and only if $s \in \ord_x(-\alpha)$. When $\alpha$ is symmetric then $\ord_x(\alpha)=\ord_x(-\alpha)=-\ord_x(\alpha)$.

Since $p \neq 2$, the groups $k_\alpha^\times$ and $k_\alpha^1$ are finite
cyclic of even order and we write
$\sgn_{k_\alpha}$ and $\sgn_{k_\alpha^1}$ for the corresponding
unique non-trivial $\{\pm 1\}$-valued characters.

\begin{dfn} \label{dfn:lstk}
For \(\gamma \in \ms T(k)\), we define
\begin{align*}
\epsilon_{\sharp, x}^{G/M}(\gamma)
={} & \prod_{\substack{
	\alpha \in R(T,G/M)_\tx{asym}/\Sigma\\
	s \in \ord_x(\alpha)
}}
	\sgn_{k_\alpha}(\alpha(\gamma))
\dotm\prod_{\substack{
	\alpha \in R(T,G/M)_\tx{sym,unram}/\Gamma\\
	s \in \ord_x(\alpha)
}}
	\sgn_{k_\alpha^1}(\alpha(\gamma)), \\
\epsilon_{\flat,0}^{G/M}(\gamma)
={} & \prod_{\substack{
	\alpha \in R(T,G/M)_\tx{asym}/\Sigma\\
	\alpha_0 \in R(Z_M,G/M)_\tx{sym,ram}\\
	2 \nmid e(\alpha/\alpha_0)
}}
	\sgn_{k_\alpha}(\alpha(\gamma))
\dotm\prod_{\substack{
	\alpha \in R(T,G/M)_\tx{sym,unram}/\Gamma\\
	\alpha_0 \in R(Z_M,G/M)_\tx{sym,ram}\\
	2 \nmid e(\alpha/\alpha_0)
}}
	\sgn_{k_\alpha^1}(\alpha(\gamma)), \\
\epsilon_{f}^{G/M}(\gamma)
={} & \prod_{\substack{
	\alpha \in R(T,G/M)_\tx{sym,ram}/\Gamma\\
	\alpha(\gamma) \in -1 + \mf p_\alpha
}}
	f_{(G, T)}(\alpha),\\
\epsilon_{\flat,1}^{G/M}(\gamma)
={} & \prod_{\substack{
	\alpha \in R(T,G/M)_\tx{sym,ram}/\Gamma\\
	\alpha(\gamma) \in -1 + \mf{p}_\alpha
}}
	(-1)^{[k_\alpha : k]+1}
	\sgn_{k_\alpha}(e_\alpha\ell_{p'}(\alpha^\vee)),\\
\epsilon_{\flat,2}^{G/M}(\gamma)
={} & \prod_{\substack{
	\alpha \in R(T,G/M)_\tx{sym,ram}/\Gamma\\
	\alpha(\gamma) \in -1 + \mf{p}_\alpha
}}
	\sgn_{k_\alpha}(-1)^{(e(\alpha/\alpha_0) - 1)/2},\\
\intertext{and}
\epsilon_\flat^{G/M}={}& \epsilon_{\flat,0}^{G/M} \dotm \epsilon_{\flat,1}^{G/M} \dotm \epsilon_{\flat,2}^{G/M},
\end{align*}
where
\(f_{(G, T)}(\alpha)\) is the toral invariant \cite[\S4.1]{KalEpi}
and
\(\ell_{p'}(\alpha^\vee)\) is the prime-to-\(p\) part of the normalized square length of \(\alpha^\vee\) as in Definition \ref{dfn:regular-length}.
\end{dfn}

\begin{rem}
\label{rem:well-defined}
We will see in Lemma \ref{lem:r1} that, when \(\alpha\) is ramified symmetric, the integer $e(\alpha/\alpha_0)$ is odd, so that the exponent \((e(\alpha/\alpha_0) - 1)/2\) occurring in the definition of $\epsilon_{\flat,2}$ is an integer. We prove in Lemma \ref{lem:ers1} that the maps $\epsilon_f^{G/M}$, $\epsilon_{\flat,1}^{G/M}$, and $\epsilon_{\flat,2}^{G/M}$ are characters of \(\ms T(k)\).
By inflation, we regard them as characters of \(T(F)\subb\) when convenient.
\end{rem}

\begin{rem}
\label{rem:compare-notn}
The character $\epsilon_{\sharp, x}$ was denoted by
$\epsilon^\tx{ram}_{x,r/2}$ in \cite[\S4.3]{DS18} and by
$\epsilon^\tx{ram}$ in \cite[(4.3.3)]{KalRSP}, while the character
$\epsilon_{\flat,0}$ is that of \cite[Proposition 5.27]{KalDC} and was
denoted by $\delta$ in \cite{KalSLP}.
Both arise in connection with the local Langlands correspondence;
for example, \(\epsilon_{\sharp, x}\) is the obstruction to stability in
Reeder's conjectural local Langlands correspondence
(see \cite[\S6.6]{Ree08}
and \cite[Example 5.5 and Theorem 5.8]{DS18}). The character $\epsilon_f^{G/M}$ is the quotient of the characters $\epsilon_{f,\tx{ram}}$ of \cite[Definition 4.7.3]{KalRSP} for the groups $G$ and $M$.
\end{rem}

Up to now, we have fixed, in addition to the building point \(x \in \cB(G, F)\), a specific tame maximal torus \(T\).  It is important in Theorem \ref{thm:main} that a \emph{single} sign character works simultaneously for \emph{all} choices of \(T\).

\begin{thm} \label{thm:main}
There is a sign character
$\epsilon_x^{G/M} : \ms{M}_x(k) \to \{\pm 1\}$ with the
following property: for every tame maximal torus $T \subset M$ with
$x \in \cB(T,F)$ the restriction of $\epsilon_x^{G/M}$
to $\ms{T}(k)$ equals
$\epsilon_{\sharp, x}^{G/M}
\dotm \epsilon_{\flat}^{G/M}
\dotm\epsilon_{f}^{G/M}$.
\end{thm}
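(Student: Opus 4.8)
The plan is to build $\epsilon_x^{G/M}$ not as a single object pulled from thin air, but as the product of three characters of $\ms M_x(k)$—call them $\_M\epsilon_{\mathrm{sym,ram}}$, $\_M\epsilon^{\mathrm{sym,ram}}_s$, and $\_M\epsilon_0$—each built by a uniform, torus-independent procedure, and then to verify that the restriction of the product to $\ms T(k)$ matches $\epsilon_{\sharp,x}^{G/M}\dotm\epsilon_\flat^{G/M}\dotm\epsilon_f^{G/M}$ term by term. The first character $\_M\epsilon_{\mathrm{sym,ram}}$ should be extracted from the Moy--Prasad filtration: the reductive quotient $\ms M_x^\circ$ carries, for each symmetric ramified $Z_M$-weight, a piece of $\fg_{x,s:s+}$ on which $\ms M_x^\circ$ acts, and the determinant (or a suitable discriminant/spinor-type invariant) of this action gives a canonical sign character. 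The second, $\_M\epsilon^{\mathrm{sym,ram}}_s$, is to be produced by a hypercohomology construction over the residue field $k$—the kind of $H^1$-of-a-complex pairing that shows up in \cite{KalDC}—applied to the relevant quotient of the cocharacter lattice; the point is that this is defined on all of $\ms M_x(k)$, not just $\ms T(k)$. The third, $\_M\epsilon_0$, is a spinor-norm character of $\ms M_x(k)$ attached to the quadratic space structure on a sum of weight spaces. All three constructions are general nonsense over fields of odd characteristic, so they manifestly give characters of $\ms M_x(k)$; the content is entirely in the computation of their restrictions to tori.

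Next I would fix a tame maximal torus $T\subset M$ with $x\in\cB(T,F)$ and compute each restriction. On $\ms T(k)$, the relevant $\ms M_x^\circ$-modules decompose into root spaces $\fg_\alpha$ for $\alpha\in R(T,G/M)$, and the three invariants (determinant, hypercohomology class, spinor norm) become explicit products over $\Sigma$-orbits of roots. For asymmetric orbits one gets a norm from $k_\alpha$ down to $k$ and hence a factor $\sgn_{k_\alpha}(\alpha(\gamma))$ exactly when the relevant root space survives into depth $s$, i.e.\ when $s\in\ord_x(\alpha)$—this reproduces the first product in $\epsilon_{\sharp,x}^{G/M}$ and, via the $2\nmid e(\alpha/\alpha_0)$ bookkeeping, the asymmetric part of $\epsilon_{\flat,0}^{G/M}$. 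For unramified symmetric orbits one descends instead to $k_\alpha^1$ and picks up $\sgn_{k_\alpha^1}(\alpha(\gamma))$ under the analogous condition, giving the second product of $\epsilon_{\sharp,x}^{G/M}$ and the unramified-symmetric part of $\epsilon_{\flat,0}^{G/M}$. For ramified symmetric orbits, which are the source of the toral invariant $f_{(G,T)}(\alpha)$, the spinor-norm and hypercohomology pieces conspire to produce $\epsilon_f^{G/M}$ together with $\epsilon_{\flat,1}^{G/M}\dotm\epsilon_{\flat,2}^{G/M}$; here I would lean on Lemma \ref{lem:r1} (oddness of $e(\alpha/\alpha_0)$) so that $(e(\alpha/\alpha_0)-1)/2$ is an integer, on Lemma \ref{lem:ers1} (that $\epsilon_f,\epsilon_{\flat,1},\epsilon_{\flat,2}$ are genuine characters of $\ms T(k)$), and on Lemma \ref{lem:q-depth} for the symmetry $\ord_x(-\alpha)=-\ord_x(\alpha)$ when $p=2$ is not assumed. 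A subtlety to track carefully is that the condition "$\alpha(\gamma)\in -1+\mf p_\alpha$" is precisely the locus where the quadratic-space structure on $\fg_\alpha\oplus\fg_{-\alpha}$ degenerates, which is why the ramified-symmetric factors are supported there.

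The genuinely hard part is showing that the product $\_M\epsilon_{\mathrm{sym,ram}}\dotm\_M\epsilon^{\mathrm{sym,ram}}_s\dotm\_M\epsilon_0$ is \emph{well defined on $\ms M_x(k)$ independently of $T$} and that its restriction to \emph{every} tame maximal torus simultaneously gives the prescribed formula. Each individual construction is canonical, so well-definedness is not the issue; the issue is matching. The mechanism I expect to use is that any two tame maximal tori $T,T'$ with $x$ in their buildings become conjugate inside $\ms M_x(k)$ (or at least after an unramified base change, using Bruhat--Tits theory and the fact that $\ms M_x^\circ$ is a connected reductive $k$-group in which maximal tori are conjugate), so it suffices to (i) check the formula for one convenient $T$, e.g.\ one that is maximally split modulo center at $x$, where the root-space combinatorics is cleanest, and (ii) verify that the \emph{shape} of the target formula $\epsilon_{\sharp,x}^{G/M}\dotm\epsilon_\flat^{G/M}\dotm\epsilon_f^{G/M}$ is compatible with this conjugation—i.e.\ that it transforms correctly under $N(S,G)$-type twists, which is exactly the kind of compatibility already implicit in the way $\ord_x$, the toral invariant, and $\ell_{p'}$ are $\Gamma$-invariant and behave under Weyl conjugation. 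Concretely, I would phrase the proof as: define the three characters; prove each is a character of $\ms M_x(k)$; restrict to a single well-chosen $T$ and match with the explicit formula; then argue by conjugacy of tame maximal tori in $\ms M_x$ that the match propagates to all $T$, using the Galois- and Weyl-equivariance of all the invariants involved. The main obstacle, and where most of the real work lives, is step three for the ramified symmetric roots: disentangling how the spinor norm and the residual hypercohomology class combine to yield precisely $f_{(G,T)}(\alpha)\cdot(-1)^{[k_\alpha:k]+1}\sgn_{k_\alpha}(e_\alpha\ell_{p'}(\alpha^\vee))\cdot\sgn_{k_\alpha}(-1)^{(e(\alpha/\alpha_0)-1)/2}$ rather than some other sign, which requires a careful analysis of the quadratic form on the ramified weight spaces and its Hasse/spinor invariants over the residue field.
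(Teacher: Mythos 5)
Your high-level decomposition of $\epsilon_x^{G/M}$ into three canonical pieces (Moy--Prasad determinant, hypercohomology, spinor norm) and your sketch of how these pieces restrict to a torus are on target and match the paper's structure. The place where your plan breaks down is in step (i)--(ii): you propose to verify the matching formula for one ``convenient'' tame maximal torus and then propagate it to all others by conjugacy in $\ms M_x$. This does not work. Tame maximal tori $T \subset M$ with $x \in \cB(T,F)$ are \emph{not} all $M(F)_x$-conjugate, nor do their reductions $\ms T$ form a single $\ms M_x(k)$-conjugacy class: already in the depth-zero case, the maximal $k$-tori of the connected reductive $k$-group $\ms M_x^\circ$ fall into many $k$-conjugacy classes (split, elliptic, and everything in between), and the Galois structure of $T$ (which is exactly what the $\Gamma$-orbit and $\Sigma$-orbit combinatorics in $\epsilon_{\sharp,x}, \epsilon_\flat, \epsilon_f$ depend on) varies accordingly. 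Conjugacy over $\bar k$ is of no use because the characters are defined and compared on $k$-points. Consequently there is no ``base torus'' from which one can transport; one must check the identity of characters on $\ms T(k)$ for each choice of $T$ separately. The paper does exactly this: Proposition~\ref{pro:sym-ram-extends}, Proposition~\ref{pro:conc-hyper} (via Remark~\ref{rem:refine-hyper-char}), and Proposition~\ref{pro:all-at-0} compute the restriction of each of the three pieces to $\ms T(k)$ for an \emph{arbitrary} tame maximal torus $T$ with $x \in \cB(T,F)$, uniformly; the matching is then the purely combinatorial Lemma~\ref{lem:repack} together with Corollary~\ref{cor:esr_asym}.

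A secondary point worth flagging: you describe the three abstract pieces as lining up neatly with the three named characters $\epsilon_{\sharp,x}$, $\epsilon_\flat$, $\epsilon_f$ (asymmetric terms giving the $\epsilon_{\sharp,x}$ and $\epsilon_{\flat,0}$ contributions, ramified-symmetric giving $\epsilon_f$ and $\epsilon_{\flat,1}\dotm\epsilon_{\flat,2}$, etc.). The actual bookkeeping is more entangled than that: the Moy--Prasad determinant $\_M\epsilon_\tx{sym,ram}$ restricts to $\epsilon_{\flat,2}\dotm\epsilon_\tx{sym,ram}^{0,s}$, the spinor-norm piece $\_M\epsilon_0$ restricts to $\epsilon_{0,\tx{sym,ram}}\dotm\epsilon_f\dotm\epsilon_{\flat,1}$, and the hypercohomology piece restricts to $\epsilon_s^\tx{sym,ram}$; one then needs the symmetric-difference identity of Lemma~\ref{lem:repack}, $\epsilon_{\sharp,x}\dotm\epsilon_{\flat,0} = \epsilon_{0,\tx{sym,ram}}\dotm\epsilon_s^\tx{sym,ram}\dotm\epsilon_\tx{sym,ram}^{0,s}$, which in turn uses the parity information of Lemma~\ref{lem:r1} on $e_{\alpha_0}r$ to sort the roots. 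Without this recombination step the match does not close even for a single fixed $T$, so it is a genuine missing ingredient in your sketch, not merely a presentational choice.
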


By inflation, we regard \(\epsilon_x^{G/M}\) as a character of \(M(F)_x\) when convenient.
Although Theorem \ref{thm:main} only claims existence, in fact we have uniqueness.  We first need a technical result, Lemma \ref{lem:p-vs-2}.   Corollary \ref{cor:unique} doesn't use the full power of this result; for that, we wait until Lemma \ref{lem:diff}.  This lemma actually remains valid for any connected, reductive group \(M\) over \(F\), whether or not it arises as a twisted Levi subgroup of an adjoint group \(G\), so we state it in that generality.

\begin{lem}
\label{lem:p-vs-2}
Let \(H\) be a connected, reductive group over \(F\), and \(x, y \in \cB(H, F)\). Let \(\chi : H(F)_x \cap H(F)_y \to \{\pm1\}\) be a smooth character that is trivial on \(T(F)\subb\) for every tame maximal torus \(T \subset H\) with \(x, y \in \cB(T, F)\).  Then \(\chi\) is trivial.
\end{lem}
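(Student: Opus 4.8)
The strategy is to show that the domain $H(F)_x \cap H(F)_y$ is generated, as a group, by the union of the subgroups $T(F)\subb$ over tame maximal tori $T \subset H$ with $x, y \in \cB(T,F)$, together with the pro-unipotent part on which a smooth $\{\pm1\}$-valued character is automatically trivial. More precisely, since $p \neq 2$ and $\chi$ has order dividing $2$, $\chi$ kills every pro-$p$ subgroup; in particular it kills $H(F)_{z,0+}$ for any $z \in \cB(H,F)$, and hence kills the group generated by $H(F)_{x,0+}$ and $H(F)_{y,0+}$. So it suffices to work modulo pro-unipotent radicals, i.e. to show that the image of $H(F)_x \cap H(F)_y$ in an appropriate finite (or reductive-over-$k$) quotient is generated by images of such torus subgroups.

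The key geometric input is Bruhat--Tits theory. First I would reduce to the case where $x$ and $y$ lie in a common apartment: choosing any maximal $F$-split torus $A$ whose apartment $\cA$ contains $x$, the point $y$ is $H(F)_x$-conjugate into $\cA$ only after possibly enlarging, so instead one uses that any two points of $\cB(H,F)$ lie in a common apartment $\cA(A')$ for some maximal $F$-split torus $A'$ — but tameness of the relevant tori is the subtle point. The cleaner route: pick $g \in H(F)_x \cap H(F)_y$; by the theory of the Moy--Prasad filtration and the structure of parahoric subgroups, the image $\bar g$ of $g$ in the reductive quotient $\ms H_x^\circ(k)$ (or rather in $\ms H_z(k)$ for a facet containing both points' data) is a $k$-point of a connected reductive group over the (finite, hence perfect) residue field $k$, and over a finite field every element of a connected reductive group lies in a maximal torus, in fact in a maximal torus defined over $k$. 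Such a $k$-torus lifts, by Hensel/smoothness of scheme-theoretic torus centralizers (using $p \neq 2$ only mildly, or tameness of $H$), to a tame maximal torus $T \subset H$ with $x \in \cB(T,F)$ and $g \in T(F)\subb \cdot H(F)_{x,0+}$. One must arrange simultaneously that $y \in \cB(T,F)$: this is where one uses that $g$ fixes $y$, so $g$ lies in the pointwise stabilizer of the geodesic segment $[x,y]$ (convexity of fixed-point sets), and one runs the lifting argument at a point of the relative interior of the facet through $[x,y]$, or at a chamber containing $[x,y]$ in its closure, so that the resulting torus's apartment contains that whole facet and in particular both $x$ and $y$.

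Thus every $g \in H(F)_x \cap H(F)_y$ is a product of an element of some $T(F)\subb$ (with $x,y \in \cB(T,F)$) and an element of $H(F)_{x,0+}$ (or $H(F)_{y,0+}$); on both factors $\chi$ is trivial, so $\chi(g) = 1$. The main obstacle I anticipate is the simultaneous control of $x$ and $y$: ensuring that the tame maximal torus produced by lifting a residual maximal torus at one point has $x$ \emph{and} $y$ in its building, which forces one to work at the facet (or chamber) attached to the segment $[x,y]$ rather than at $x$ or $y$ individually, and to invoke the compatibility of Moy--Prasad filtrations across the faces of that facet. A secondary technical point is justifying that the lifted torus is \emph{tame} — this should follow from $H$ being handled via its splitting behavior, or from a direct argument that a torus whose special fiber is a maximal torus of $\ms H_z^\circ$ splits over an unramified (hence tame) extension; one can cite the standard Bruhat--Tits statement that such minisotropic-modulo-center lifts of residual maximal tori are automatically tame.
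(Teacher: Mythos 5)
Your plan has the right high-level shape (use that $\chi$ kills pro-$p$ groups and reduce to tori containing a given element), but it has two substantive gaps where the details would not go through.

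First, the claim that ``over a finite field every element of a connected reductive group lies in a maximal torus, in fact in a maximal torus defined over $k$'' is false: only \emph{semisimple} elements of $\ms H_z^\circ(k)$ lie in maximal tori. A general element $\bar g \in \ms H_z^\circ(k)$ needs to be Jordan-decomposed as $\bar g = \bar s\bar u$ first, with $\bar u$ unipotent (of $p$-power order, hence killed by $\chi$ after one verifies that the full preimage of $\langle\bar u\rangle$ in $H(F)_z$ is pro-$p$) and $\bar s$ semisimple lying in a $k$-rational maximal torus. Moreover $\bar g$ may lie in a non-identity component of the (typically disconnected) group $\ms H_z$, in which case it does not normalize-and-lie-in any maximal torus of $\ms H_z^\circ$ at all, and the lifting step breaks down. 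So the reduction mod $p$ picture is genuinely more delicate than you allow for.

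Second, and more seriously, the acknowledged ``simultaneous control of $x$ and $y$'' obstacle is not actually resolved by your suggested fix. The segment $[x,y]$ is in general not contained in the closure of a single chamber, so ``a chamber containing $[x,y]$ in its closure'' need not exist; and for a facet $\mathcal F$ whose closure does not contain $x$ and $y$, there is no reason for the apartment/building $\cB(T,F)$ of a lifted residual torus at a point $z$ of $\mathcal F$ to contain $x$ and $y$: $\cB(T,F)$ is an affine subspace through $z$ and need not contain $\mathcal F$, let alone the endpoints of the geodesic. This is precisely the step that needs a genuinely new input.

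The paper supplies that input by running the Jordan decomposition in the $p$-adic group rather than in the residual group. Citing results of \cite{Spice08}, one reduces to checking triviality of $\chi$ on topologically $p$-unipotent elements (automatic, since $p \neq 2$) and on absolutely $F$-semisimple elements $\gamma$. Such a $\gamma$ is tame, and \cite[Proposition~2.33]{Spice08} identifies the fixed-point set of $\gamma$ in $\cB(H,F)$ with $\cB(\Cent_H(\gamma),F)$; since $\gamma$ fixes both $x$ and $y$, both lie in the building of the centralizer, and one can then pick a tame maximal torus $T \subset \Cent_H(\gamma)$ with $x, y \in \cB(T,F)$. This fixed-point-set identification is exactly what gives the simultaneous control over $x$ and $y$ that your reduction-mod-$p$ approach lacks, so I would recommend reorganizing your argument around the topological Jordan decomposition as the paper does.
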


\begin{proof}
By \cite[Theorem~2.38, Corollary~2.31, and Proposition~1.7(1)]{Spice08}, it suffices to show that \(\chi\) is trivial on topologically \(p\)-unipotent elements, in the sense of \cite[Definition~1.3]{Spice08}, and on absolutely \(F\)-semisimple elements, in the sense of \cite[Definition~2.15]{Spice08}.  Since \(p \ne 2\), we have that \(\chi\) is trivial on topologically \(p\)-unipotent elements \(\gamma\), i.e. the elements that satisfy \(\lim_{n \to \infty} \gamma^{p^n} = 1\).

Now let \(\gamma\) be an absolutely \(F\)-semisimple, hence tame \cite[Corollary 2.37]{Spice08}, element of \(H(F)_x \cap H(F)_y\).  By \cite[Proposition~2.33]{Spice08}, the Bruhat--Tits building $\cB(\Cent_H(\gamma),F)$ of the centralizer of $\gamma$ in \(H\) can be identified with the fixed-point set of \(\gamma\) in $\cB(H, F)$, hence, in particular, contains \(x\) and \(y\); so there is a tame maximal torus \(T\) in \(\Cent_H(\gamma)\) whose building contains $x$ and $y$.  By assumption, \(\chi\) is trivial on \(T(F)\subb\), hence, in particular, on \(\gamma\).
\end{proof}

\begin{cor}
\label{cor:unique}
The character \(\epsilon_x^{G/M}\) of Theorem \ref{thm:main} is unique.
\end{cor}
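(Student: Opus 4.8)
The plan is to deduce uniqueness directly from Theorem~\ref{thm:main} together with Lemma~\ref{lem:p-vs-2}. Suppose $\epsilon$ and $\epsilon'$ are two sign characters of $\ms{M}_x(k)$ both satisfying the conclusion of Theorem~\ref{thm:main}. Then $\chi := \epsilon/\epsilon' = \epsilon \dotm (\epsilon')^{-1}$ is a $\{\pm1\}$-valued character of $\ms{M}_x(k)$ whose restriction to $\ms{T}(k)$ is trivial for every tame maximal torus $T \subset M$ with $x \in \cB(T,F)$, since both $\epsilon$ and $\epsilon'$ restrict to $\epsilon_{\sharp,x}^{G/M} \dotm \epsilon_\flat^{G/M} \dotm \epsilon_f^{G/M}$ there. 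Inflating $\chi$ to a smooth character of $M(F)_x$, I want to conclude $\chi$ is trivial; since $\ms{M}_x(k) = M(F)_x/M(F)_{x,0+}$ and $M(F)_{x,0+}$ is pro-unipotent (hence pro-$p$, and $p \ne 2$), triviality of the inflation is equivalent to triviality of $\chi$ on $\ms{M}_x(k)$.

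The one technical point to check before invoking Lemma~\ref{lem:p-vs-2} is a compatibility of filtrations: for a tame maximal torus $T \subset M$ with $x \in \cB(T,F)$, the image of $T(F)\subb$ in $\ms{M}_x(k) = M(F)_x/M(F)_{x,0+}$ agrees with the image of $\ms{T}(k)$ under the natural map $\ms{T}(k) = T(F)\subb/T(F)_{0+} \to \ms{M}_x(k)$, so that the hypothesis ``$\chi$ trivial on $\ms{T}(k)$ for all such $T$'' translates exactly into ``$\chi$ trivial on $T(F)\subb$ for all such $T$,'' as required by Lemma~\ref{lem:p-vs-2}. This is immediate from $T(F)\subb \subset M(F)_x$ (as $x \in \cB(T,F)$, $T(F)\subb$ fixes $x$) and $T(F)_{0+} = T(F)\subb \cap M(F)_{x,0+}$, the latter being a standard property of Moy--Prasad filtrations for tame tori. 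Applying Lemma~\ref{lem:p-vs-2} with $H = M$, $y = x$, and this $\chi$ then gives $\chi \equiv 1$ on $M(F)_x$, hence on $\ms{M}_x(k)$, so $\epsilon = \epsilon'$.

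I expect the main (and really the only) obstacle to be the bookkeeping in the previous paragraph: namely verifying that the abstract building-point filtration quotient $\ms{M}_x(k)$ receives the torus quotients $\ms{T}(k)$ compatibly, so that the word ``trivial on $\ms{T}(k)$'' in Theorem~\ref{thm:main} and the word ``trivial on $T(F)\subb$'' in Lemma~\ref{lem:p-vs-2} are genuinely the same condition. Everything else is formal: the quotient of two characters with the same restrictions is trivial on those restrictions, and Lemma~\ref{lem:p-vs-2} is stated in exactly the generality needed (any connected reductive $M/F$, the case $x = y$). So the proof is short:

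\begin{proof}
Let $\epsilon$ and $\epsilon'$ be two characters of $\ms{M}_x(k)$ satisfying the conclusion of Theorem \ref{thm:main}, and regard them by inflation as smooth characters of $M(F)_x$. Their quotient $\chi = \epsilon(\epsilon')^{-1}$ is a $\{\pm1\}$-valued smooth character of $M(F)_x$. For any tame maximal torus $T \subset M$ with $x \in \cB(T,F)$, both $\epsilon$ and $\epsilon'$ restrict to $\epsilon_{\sharp,x}^{G/M} \dotm \epsilon_\flat^{G/M} \dotm \epsilon_f^{G/M}$ on $\ms T(k)$, hence $\chi$ is trivial on the image of $T(F)\subb$ in $\ms M_x(k)$; since $T(F)\subb \subset M(F)_x$ and $\chi$ is inflated from $\ms M_x(k)$, this means $\chi$ is trivial on $T(F)\subb$. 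By Lemma \ref{lem:p-vs-2}, applied with $H = M$ and $y = x$, the character $\chi$ is trivial. As $\chi$ is trivial and inflated from $\ms M_x(k)$, we conclude $\epsilon = \epsilon'$ as characters of $\ms M_x(k)$.
\end{proof}
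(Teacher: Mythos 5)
Your proof is correct and takes essentially the same route as the paper's: apply Lemma \ref{lem:p-vs-2} with $H = M$ and $y = x$ to the quotient of two candidate characters. The extra paragraph you spend verifying that $T(F)\subb \subset M(F)_x$ and $T(F)_{0+} = T(F)\subb \cap M(F)_{x,0+}$ so that ``trivial on $\ms T(k)$'' matches ``trivial on $T(F)\subb$'' is a reasonable sanity check but is left implicit in the paper.
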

\begin{proof}
Lemma \ref{lem:p-vs-2}, with \(y = x\), shows that a character of \(M(F)_x\) is determined by its restrictions to \(T(F)\subb\) for the various tame maximal tori \(T \subset M\) with \(x \in \cB(M, F)\).
\end{proof}

\numberwithin{equation}{subsection}

\section{Applications of Theorem \ref{thm:main}}
\newcommand\xad{x\adform}

Throughout this section, $G$ denotes a connected reductive group over $F$ that splits over $F^\tx{tr}$. Before proving Theorem \ref{thm:main}, we will show how it resolves some problems in representation theory and harmonic analysis. The first problem is the failure of the intertwining results \cite[Proposition~14.1 and Theorem~14.2]{Yu01} in Yu's construction of supercuspidal representations. We produce in \S\ref{sub:Yu} a twisted version of Yu's construction for which Yu's proofs show that these intertwining results do hold.  In \S\ref{sub:char} we give a formula for the Harish-Chandra character of a regular supercuspidal representation (more generally, a non-singular supercuspidal Deligne--Lusztig packet) that has been constructed using the twisted Yu construction of \S\ref{sub:Yu}. This formula is based on the computations of roots of unity in \S\ref{sub:root}, where Theorem \ref{thm:main} is actually applied. In \S\ref{sub:endo} we apply the formula of \S\ref{sub:char} to prove the stability and endoscopic character identities for regular supercuspidal $L$-packets (as well as certain character identities for the more general supercuspidal $L$-packets).

\subsection{Twist of Yu's construction and validity of intertwining results} \label{sub:Yu}

To obtain the twisted Yu construction, we need a character $\eps$ that has a certain intertwining property (see Proposition \ref{pro:14.1}). The existence of such a character follows from our main Theorem \ref{thm:main}, which actually provides a distinguished one. This was in fact one of the motivations for this theorem.

Before we recall the specificities of Yu's construction and how we modify it, we extract the key information we need from Theorem \ref{thm:main}. Let $M \subset G$ be a tame twisted Levi subgroup. For $x \in \cB(M,F)$, we denoted by $[x]$ the images of $x$ in the reduced building of $G$ (not of $M$!) and by $M(F)_{[x]}$ the stabilizer of $[x]$ in $M(F)$.

\begin{dfn} \label{dfn:uxy}
Let $x,y \in \cB(M,F)$. Let $r \in \bR$ and set $s=r/2$. 
\begin{enumerate}
\item For $t=s$ or $t=s+$, we write $(M,G)(F)_{x,(r,t)}$ for the group
$$ G(F) \cap \left<T(E)_{r},\ U_\alpha(E)_{x,r},\ U_\beta(E)_{x,t} \, | \, \alpha \in R(T,M),\ \beta \in R(T,G/M) \, \right>,$$
where $T$ is a maximal torus of $M$ that splits over a tamely ramified extension $E/F$ with $x \in \cB(T,F) \subset \cB(G,F)$, and
$U_\alpha(E)_{x,r}$ denotes the Moy--Prasad filtration subgroup of depth $r$ at $x$ of the root group $U_\alpha(E) \subset G(E)$ corresponding to the root $\alpha$, and similarly for \(U_\beta(E)_{x, t}\).
\item Let \(U^x_y\) be the quotient of
\[
((M, G)(F)_{x, (r, s)} \cap (M, G)(F)_{y, (r, s+)})(M, G)(F)_{x, (r, s+)}
\]
by
\[
(M, G)(F)_{x, (r, s+)}.
\]
\item For $m \in M(F)_{[x]} \cap M(F)_{[y]}$ let \(\delta^x_y(m) = \sgn_{\bF_p}(\det\nolimits_{\bF_p}(c_m\res_{U^x_y}))\), where $c_m$ is the action of $m$ on $U^x_y$ by conjugation.
\end{enumerate}
\end{dfn}

We have used that $U^x_y$ has a natural structure of an $\bF_p$-vector space, being an abelian $p$-group, and that the conjugation action of $G(F)$ on itself induces an action of $M(F)_{[x]} \cap M(F)_{[y]}$ on $U^x_y$.

We now formulate the key result, Lemma \ref{lem:diff}, that connects Theorem \ref{thm:main} to Yu's construction. The usefulness of Lemma \ref{lem:diff} stems from Proposition \ref{pro:14.1} below. Recall that we write \(\Msc\) for the pre-image of \(M\) in \(\Gsc\), and \(\Mad\) for the image of \(M\) in \(\Gad\).

\begin{lem} \label{lem:diff} 
Assume the existence of an element $X$ of $\tx{Lie}^*(M\scab)(F)$ that satisfies  $\ord(\pair X{H_\alpha})=-r$ for all $\alpha \in R(T,G/M)$. For every $y \in \cB(M,F)$ denote by $\epsilon^{G/M}_y$ the pull back to $M(F)_{[y]}$ of the inflation of $\epsilon^{\Gad/\Mad}_{[y]}$ of Theorem \ref{thm:main}. Then
\begin{enumerate}
\item\label{lem:diff:conj} For every $y \in \cB(M,F)$ and \(m \in M(F)\), we have that \(\^{m^{-1}}{\smash{\epsilon_{m\dota y}^{G/M}}}\) equals \(\epsilon_{y}^{G/M}\). 
\item\label{lem:diff:move} For every \(x,y \in \cB(M, F),\) 
\[
(\eps_x^{G/M}\res_{M(F)_{[x]} \cap M(F)_{[y]}})\delta^x_y
= (\eps_y^{G/M}\res_{M(F)_{[x]} \cap M(F)_{[y]}})\delta^y_x.
\]
\end{enumerate}
\end{lem}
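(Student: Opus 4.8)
The plan is to treat the two parts separately, since part \eqref{lem:diff:conj} is a functoriality/equivariance statement about the single character $\epsilon^{G/M}_y$, while part \eqref{lem:diff:move} is the genuine "moving the base point" identity that links $\epsilon^{G/M}$ to the conjugation determinant $\delta$. For part \eqref{lem:diff:conj}, I would argue via Corollary \ref{cor:unique} (more precisely, via Lemma \ref{lem:p-vs-2} with two distinct points). Both $\^{m^{-1}}\epsilon^{G/M}_{m\cdot y}$ and $\epsilon^{G/M}_y$ are smooth $\{\pm1\}$-valued characters of $M(F)_{[y]}$; to show they agree it suffices to check they agree on $T(F)\subb$ for every tame maximal torus $T\subset M$ with $y\in\cB(T,F)$. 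Conjugation by $m$ carries $T$ to the tame maximal torus $\^mT$, whose building contains $m\cdot y$, and carries the root datum $(R(T,G/M),\ord_y(\cdot))$ isomorphically onto $(R(\^mT,G/M),\ord_{m\cdot y}(\cdot))$, with all the local fields $F_\alpha$, residue fields $k_\alpha$, sign characters, toral invariants $f_{(G,T)}$, and normalized coroot lengths $\ell_{p'}$ preserved (these are all defined purely from the $\Sigma$-action on roots and the valuation, hence are conjugation-equivariant). So the explicit formulas of Definition \ref{dfn:lstk} for $\epsilon^{G/M}_{m\cdot y}$ restricted to $\^mT(F)\subb$ pull back under $\Ad(m)$ to those for $\epsilon^{G/M}_y$ restricted to $T(F)\subb$; this is exactly the asserted equality after restriction, and uniqueness finishes it. (One must pass through $\Mad$ as in the statement, but $\Ad(m)$ descends to $\Mad$ and the argument is unchanged.)

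For part \eqref{lem:diff:move}, the key point is that both sides are characters of the group $K \ldef M(F)_{[x]}\cap M(F)_{[y]}$ — the $\epsilon$'s by Theorem \ref{thm:main} and Remark \ref{rem:well-defined}, and the $\delta^x_y$, $\delta^y_x$ because they are composites of the determinant-of-conjugation-action homomorphism with $\sgn_{\bF_p}$. So by Lemma \ref{lem:p-vs-2} it suffices to verify the identity after restricting to $T(F)\subb$ for every tame maximal torus $T\subset M$ with $x,y\in\cB(T,F)$. Fixing such a $T$, I would compute $\delta^x_y\res_{T(F)\subb}$ explicitly. The $\bF_p$-vector space $U^x_y$ is built from Moy--Prasad filtration quotients of root groups $U_\beta$, $\beta\in R(T,G/M)$, at depths near $s$, modulo a "$y$-condition"; decomposing $U^x_y$ under the $T$-action into root spaces (grouped into $\Gamma$- or $\Sigma$-orbits), the determinant of conjugation by $\gamma\in T(F)\subb$ is a product over those orbits of $\alpha(\gamma)$-type terms, and applying $\sgn_{\bF_p}\circ\fNorm$ converts these into the sign characters $\sgn_{k_\alpha}$, $\sgn_{k_\alpha^1}$ appearing in Definition \ref{dfn:lstk}. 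The combinatorial content is: for which $\beta$ does the root space survive in $U^x_y$ but not cancel, i.e. lie in the "$x$-part but not $y$-part" — this is governed by comparing $\ord_x(\beta)$ and $\ord_y(\beta)$ near $s$. After this computation, $\delta^x_y / \delta^y_x$ should come out as a product of $\sgn$'s over exactly the roots $\beta$ with $s\in\ord_x(\beta)$ but $s\notin\ord_y(\beta)$ (or vice versa), and one checks directly from Definition \ref{dfn:lstk} that $\epsilon^{G/M}_x/\epsilon^{G/M}_y$ restricted to $T(F)\subb$ equals the same product, since the only $x$-dependence in $\epsilon^{G/M}_{\bullet}\res_{T(F)\subb}$ is through the condition "$s\in\ord_x(\alpha)$" in $\epsilon_{\sharp,x}$ (all of $\epsilon_\flat$ and $\epsilon_f$ are $x$-independent). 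Rearranging gives $(\epsilon_x\delta^x_y)\res_K = (\epsilon_y\delta^y_x)\res_K$, and Lemma \ref{lem:p-vs-2} upgrades this to the full equality on $K$.

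The main obstacle I expect is the explicit determinant computation for $\delta^x_y\res_{T(F)\subb}$ and, in particular, pinning down the exact filtration-depth bookkeeping that determines which root spaces contribute to $U^x_y$: the definition of $(M,G)(F)_{x,(r,s+)}$ and the intersection/product defining $U^x_y$ involve both the "$R(T,M)$ at depth $r$" part and the "$R(T,G/M)$ at depth $s$ vs $s+$" part, and one has to see that the $M$-part contributes trivially to the relevant subquotient (so that $U^x_y$ is built purely from $R(T,G/M)$-root spaces at the jump $s$) and then track how the $y$-condition cuts these down. Handling the symmetric ramified roots correctly — where $\ord_x(\alpha)$ is $\Gamma$-stable and $F_\alpha = F_{\pm\alpha}$ behaves differently — and making sure the passage $\sgn_{\bF_p}\circ\fNorm_{k_\alpha/k} = \sgn_{k_\alpha}$ (resp.\ the $k^1_\alpha$ analogue) is applied with the right field, is where the bulk of the careful work lies. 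I would expect to isolate this as a standalone sublemma computing $\delta^x_y$ (the paper's "Lemma \ref{lem:diff}" presumably leans on exactly such a computation, possibly folded into \S\ref{sub:root}).
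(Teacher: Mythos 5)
Your strategy is the paper's strategy: reduce to restrictions to $T(F)\subb$ via Lemma \ref{lem:p-vs-2} and Corollary \ref{cor:unique}, observe that among the pieces of $\epsilon^{G/M}_\bullet\res_{T(F)\subb}$ only $\epsilon_{\sharp,\bullet}$ depends on the base point, and compute $\delta$ as a product of sign characters; and your guess for the final shape of $\delta^x_y/\delta^y_x$ is correct. However, two substantive pieces are missing. The first is the reduction to $G$ adjoint. The character $\epsilon^{G/M}_x$ is pulled back from $\Mad(F)_{[x]}$, while $\delta^x_y$ is defined via the space $U^x_y$ built from $G(F)$, not $\Gad(F)$; moreover, Lemma \ref{lem:p-vs-2} concerns $H(F)_x\cap H(F)_y$ for $x,y\in\cB(H,F)$, and the group $M(F)_{[x]}\cap M(F)_{[y]}$ is not of that form with $H=M$ unless $G$ is adjoint (so that $[x]=x$). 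The paper disposes of this by proving that $G\to\Gad$ induces an isomorphism $U^x_y\cong\bar U^x_y$ onto the analogous space for $\Gad$ (via descent over a tame splitting field $E$, the fact that $G\to\Gad$ is an isomorphism on root subgroups, and the vanishing of the $T(E)$-contribution to $U^x_y$), which shows $\delta^x_y$ is inflated from $\Mad$. You mention the descent only parenthetically in part~\eqref{lem:diff:conj} and not at all in part~\eqref{lem:diff:move}, where it is essential.

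The second missing piece is the computation you explicitly flag as the ``main obstacle'': it is not carried out, and the structural observations that make it work are not anticipated. The paper sets $\lambda = y-x\in X_*(T)\otimes_\Z\R$, identifies
\[
U^x_y \cong \Bigl(\bigoplus_{\substack{\alpha\in R(T,G/M)\\ s\in\ord_x(\alpha),\ \pair\alpha\lambda>0}}\fg_\alpha\Bigr)(F)_{x,s:s+},
\]
and then uses that $U^x_y$ is actually a $k$-vector space via the Moy--Prasad isomorphism, so that $\delta^x_y = \sgn_k\circ\det\nolimits_k$ rather than the $\sgn_{\bF_p}\circ\fNorm$-type bookkeeping you anticipate. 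The crucial observation you do not make is that for any symmetric $\alpha$ one has $\pair\alpha\lambda = \pair{\sigma_\alpha\alpha}{\sigma_\alpha\lambda} = -\pair\alpha\lambda$, hence $\pair\alpha\lambda = 0$: symmetric roots contribute nothing to $\delta^x_y$ or $\delta^y_x$, and for them $\ord_x(\alpha)=\ord_y(\alpha)$. This is what collapses both $\epsilon_{\sharp,x}^{G/M}\delta^x_y$ and $\epsilon_{\sharp,y}^{G/M}\delta^y_x$ to a single common product over roots with $\pair\alpha\lambda=0$. Your worry about the symmetric ramified roots is a red herring here: they appear in neither $\epsilon_{\sharp,\bullet}$ nor $\delta$, and the formula you predict for $\delta^x_y/\delta^y_x$, while true, only emerges after these observations about $\lambda$ are in hand.
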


\begin{proof}
We claim that the quotient map $G \to \Gad$ maps $U^x_y$ isomorphically onto the analogous space defined in terms of $\Gad$. To see this, write $\bar U^x_y$ for the latter space. Since the quotient map restricts to a map $(M,G)(F)_{x,(r,s)} \to (\Mad,\Gad)(F)_{x,(r,s)}$ and analogously for $(r,s+)$, it induces a map $U^x_y \to \bar U^x_y$. According to \cite[Corollary 2.3]{Yu01}, $U^x_y = (\_EU^x_y)^{\Gal(E/F)}$ and \(\bar U^x_y = (\_E{\smash{\bar U}}^x_y)^{\Gal(E/F)}\), where $E/F$ is a finite tamely ramified Galois extension splitting a maximal torus $T$ of $M$ with $x \in \cB(T,F)$, and $\_EU^x_y$ and \(\_E{\smash{\bar U}}^x_y\) are constructed in the same way as \(U^x_y\) and \(\bar U^x_y\) but relative to $E$. Therefore it is enough to show that $\_EU^x_y \to \_E{\smash{\bar U}}^x_y$ is an isomorphism. But the quotient map $G \to \Gad$ is an isomorphism when restricted to each root group with respect to $T$ and this isomorphism respects the filtrations induced by $x$ and $y$, respectively. While the quotient is not an isomorphism when restricted to $T$, the contribution of $T(E)$ to $U_x^y$ is trivial. This proves that $_EU^x_y \to {_E\bar U^x_y}$ is an isomorphism, and the claim follows.

The claim implies that the characters $\epsilon_x^{G/M}$, $\epsilon_y^{G/M}$, $\delta_x^y$, and $\delta_y^x$, are inflated from $\Mad(F)_{[x]}$, $\Mad(F)_{[y]}$ and $\Mad(F)_{[x]} \cap \Mad(F)_{[y]}$, respectively. We may thus assume in this proof that $G$ is adjoint. Then $x=[x]$, so we drop the brackets from the notation.

By Corollary \ref{cor:unique}, for (\ref{lem:diff:conj}), it suffices to show that the desired equality holds for the restrictions to \(T(F)\subb\), where \(T \subset M\) is a tame maximal torus with \(x \in \cB(M, F)\).  For every such torus, we have that \(\alpha \mapsto \^m\alpha\) is a length-preserving, \(\Sigma\)-equivariant bijection \(R(T, G/M) \to R(\^mT, G/M)\).  Since, for all \(\alpha \in R(T, G/M)\), we have \(\ord_x(\alpha) = \ord_{m\dota x}(\^m\alpha)\) and \(f_{G, \^mT}(\^m\alpha) = f_{G, T}(\alpha)\), and \(\^m\alpha(m\gamma m^{-1}) = \alpha(\gamma)\) for all \(\gamma \in T(F)\subb\), it follows that the restriction of \(\^{m^{-1}}{\smash{\epsilon_{m\dota x}^{G/M}}}\) to \(T(F)\subb\) is \(\epsilon_{\sharp, x}^{G/M}\epsilon_\flat^{G/M}\epsilon_f^{G/M}\).

By Lemma \ref{lem:p-vs-2}, for (\ref{lem:diff:move}), it suffices to show the desired equality for the restrictions to \(T(F)\subb\), where now \(T \subset M\) is a tame maximal torus with \(x, y \in \cB(M, F)\); which, since the other characters in Definition \ref{dfn:lstk} do not change when we replace \(x\) by \(y\), reduces to showing that \(\epsilon_{\sharp, x}^{G/M}\delta^x_y = \epsilon_{\sharp, y}^{G/M}\delta^y_x\). 

Note that $U_y^x$ is equipped with a $k$-vector space structure via the Moy--Prasad isomorphism, and the $\bF_p$-structure of $U_y^x$ is obtained by forgetting the $k$\-structure. Therefore, $\delta_y^x = \sgn_k(\det\nolimits_k(\anondot\res_{U^x_y}))$.

We now compute the \(k\)-structure more precisely.
Put \(\lambda = y - x\), regarded as a \(\Gamma\)-invariant element of \(X_*(T) \otimes_\Z \R\).  Then \(U^x_y\) is identified with
\[
\Bigl(\bigoplus_{\substack{
    \alpha \in R(T, G/M) \\
    s \in \ord_x(\alpha) \\
    \pair\alpha\lambda > 0
}}
    \fg_\alpha
\Bigr)(F)_{x, s:s+},
\]
so that, for any \(\gamma \in T(F)\subb\),
\[
\delta^x_y(\gamma) = \prod_{\substack{
    \alpha \in R(T, G/M)/\Gamma \\
    s \in \ord_x(\alpha) \\
    \pair\alpha\lambda > 0
}}
    \sgn_{k_\alpha}(\alpha(\gamma)).
\]
If \(\alpha\) is symmetric, then \(\pair\alpha\lambda = \pair{\sigma_\alpha\alpha}{\sigma_\alpha\lambda} = \pair{-\alpha}\lambda\), so that \(\pair\alpha\lambda = 0\).  Therefore,
\[ 
\delta^x_y(\gamma) = \prod_{\substack{
    \alpha \in R(T, G/M)_\tx{asym}/\Sigma \\
    s \in \ord_x(\alpha) \\
    \pair\alpha\lambda \ne 0
}}
    \sgn_{k_\alpha}(\alpha(\gamma)).
\]
Combining this with the definition of $\epsilon_{\sharp,x}^{G/M}$ we obtain
\[
\eps_{\sharp, x}^{G/M}(\gamma)\delta^x_y(\gamma)
= \prod_{\substack{
    \alpha \in R(T, G/M)_\tx{asym}/\Sigma \\
    s \in \ord_x(\alpha) \\
    \pair\alpha\lambda = 0
}}
    \sgn_{k_\alpha}(\alpha(\gamma))
\dotm\prod_{\substack{
    \alpha \in R(T, G/M)_\tx{sym,unram}/\Sigma \\
    s \in \ord_x(\alpha) \\
    \pair\alpha\lambda = 0
}}
    \sgn_{k_\alpha^1}(\alpha(\gamma)).
\]
We have an analogous equality for \(\eps_{\sharp, y}^{G/M}(\gamma)\delta^y_x(\gamma)\), with \(\ord_x(\alpha)\) replaced by \(\ord_y(\alpha)\).  However, when \(\pair\alpha\lambda = 0\), the statements \(s \in \ord_x(\alpha)\) and \(s \in \ord_y(\alpha)\) are equivalent.  The result follows.
\end{proof}

We now come to Yu's construction and its twist. The construction of \cite[\S4]{Yu01} involves a tower of twisted Levi subgroups; but due to its inductive nature most of the time it only refers to two consecutive subgroups.  Accordingly, we may continue to work with a tame twisted Levi subgroup  \(M\) of \(G\), and a point $x \in \cB(M,F)$. We recall that we write \([x]\) for the image of \(x\) in \(\cB(\Mad, F) \subset \cB(\Gad, F)\). We assume there exists and fix a character $\phi$ of $M(F)$ that is $G$-generic relative to $x$ of depth $r$ for some non-negative real number $r$ in the sense of \cite[\S9, p.~599]{Yu01}, and put \(s = r/2\). To the character $\phi$ one can associate a generic element of the dual lie algebra of $M$. Since our convention differs slightly from Yu's, we first explain what we mean.

\begin{rem}
\label{rem:generic}
In \cite[\S8]{Yu01}, Yu uses elements of $\Lie^*(Z_{M}^\circ)(F)$ to represent characters of $M(F)$, while our good elements lie in $\Lie^*(\Mscab)(F)$. 

Yu states that the map $\Lie^*(M)^M \to \Lie^*(Z_M^\circ)$ obtained by restriction is an isomorphism, and regards elements of $\Lie^*(Z_M^\circ)$ as elements of $\Lie^*(M)^M$ via this identification. However, this map is not always an isomorphism. For example, assume that $F$ has characteristic $p$ and let $S$ be an anisotropic torus in $\SL_2$. Then $M=\PGL_p \times S$ is a twisted Levi subgroup of $G=\PGL_p \times \SL_2$, $\Lie^*(M)^M$ is $2$-dimensional, having a 1-dimensional contribution from $\Lie^*(\PGL_p)^{\PGL_p}$ given by the trace map, while $\Lie^*(Z_M^\circ)$ is 1-dimensional.

On the other hand, we show in Lemma \ref{lem:hiss} that the natural map $\Lie^*(\Mscab) \to \Lie^*(\Msc)^{\Msc}$ is an isomorphism (assuming $p\neq 2$). Therefore, if we interpret Yu's generic elements as lying in \(\Lie^*(M)^M(F)\), then such an element can be mapped to $\Lie^*(\Msc)^{\Msc}(F)$ by pulling back along the natural map $\Msc \to M$, and then transporting it to $\Lie^*(\Mscab)(F)$, thereby producing a good element in our sense.
\end{rem}

\begin{rem}
\label{rem:dual-Lie-subspace}
Whenever \(H\) is a subgroup of \(G\), we have that \(\fh^*\) is naturally a \emph{quotient} of \(\fg^*\), via the restriction map.

When \(H\) is a twisted Levi subgroup of $G$, this quotient has a natural section that allows us also to regard \(\fh^*\) as a \emph{subspace} of \(\fg^*\).  Namely, we have the $H$-invariant decomposition \(\fg=\fh \oplus \fh^\perp\), where $\fh^\perp$ is the sum of the weight spaces for $Z_H^\circ$ corresponding to non-trivial weights, and the annihilator of $\fh^\perp$ in $\fg^*$ provides the desired section. 
\end{rem}

\begin{lem} \label{lem:hiss}
The natural maps $\Lie^*(\Mscab) \to \Lie^*(\Msc)^{\Msc}$ and $\Lie^*(\Gsc)^{\Msc} \to \Lie^*(\Msc)^{\Msc}$ are isomorphisms (recall that we have assumed $p\neq 2$).
\end{lem}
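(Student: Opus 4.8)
The plan is to reduce both isomorphism claims to a single statement about the Lie algebra of $\Msc$: that the natural inclusion $\Lie(Z_{\Msc})\hookrightarrow\Lie(\Msc)^{\Msc}$ of the center into the invariants is, after passing to duals, an isomorphism, and likewise that $\Lie(\Mscab)^*\to\Lie(\Msc)^{\Msc}$ is an isomorphism. Concretely, $\Mscder$ is semisimple and simply connected, so its Lie algebra $\Lie(\Mscder)$ contains no nonzero $\Mscder$-invariants (when $p\ne 2$ the adjoint representation of a simply connected semisimple group has trivial invariants and trivial coinvariants — this is where the hypothesis $p\ne 2$, and more precisely the goodness/tameness restrictions implicit in the standing assumptions, is used to rule out the small characteristic pathologies). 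Dually, $\Lie(\Mscder)^*$ has no nonzero $\Mscder$-coinvariants. From the central isogeny $Z_{\Msc}\times\Mscder\to\Msc$ we get $\Lie(\Msc)=\Lie(Z_{\Msc})\oplus\Lie(\Mscder)$ as $\Msc$-modules (the center acts trivially on everything, and the sum is direct because $p$ does not divide the relevant torsion — again the tameness/$p\ne2$ hypothesis), hence dually $\Lie(\Msc)^*=\Lie(Z_{\Msc})^*\oplus\Lie(\Mscder)^*$, and taking $\Msc$-invariants picks out exactly $\Lie(Z_{\Msc})^*=\Lie(\Mscab)^*$, since $\Lie(\Mscder)^{*,\Msc}=(\Lie(\Mscder)^*)_{\Mscder}=0$. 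This yields the first isomorphism.

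For the second map, $\Lie^*(\Gsc)^{\Msc}\to\Lie^*(\Msc)^{\Msc}$, I would use the $\Msc$-stable decomposition from Remark \ref{rem:dual-Lie-subspace}: since $M$ is a twisted Levi, $\fg=\fm\oplus\fm^\perp$ as $M$-modules (hence $\gsc=\msc\oplus\msc^\perp$ as $\Msc$-modules), and dually $\gsc^*=\msc^*\oplus(\msc^\perp)^*$. The restriction map $\gsc^*\to\msc^*$ is simply the projection killing $(\msc^\perp)^*$. So it suffices to show $((\msc^\perp)^*)^{\Msc}=0$, equivalently $(\msc^\perp)_{\Msc}=0$. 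But $\msc^\perp$ is by construction a sum of weight spaces for the central torus $Z_{\Msc}^\circ$ (equivalently $Z_M^\circ$) associated to the \emph{nontrivial} characters $\alpha_0\in R(Z_M,G/M)$; a torus acting through nontrivial characters on each summand has no invariants and no coinvariants on the sum, so $(\msc^\perp)_{\Msc}=0$ and the restriction map is injective on invariants. Surjectivity is immediate since it is the composite of the section $\msc^*\hookrightarrow\gsc^*$ (annihilator of $\msc^\perp$) followed by restriction, which is the identity on $\msc^*$ and hence in particular on $(\msc^*)^{\Msc}$; this section lands in $(\gsc^*)^{\Msc}$ because $\msc^\perp$ is $\Msc$-stable.

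The only real subtlety — the main obstacle — is controlling the characteristic: one must be sure that $\Lie(\Msc)^{\Msc}$ really decomposes as $\Lie(Z_{\Msc})^{\Msc}\oplus\Lie(\Mscder)^{\Msc}$ and that $\Lie(\Mscder)$ has no invariants, since in small characteristic a simply connected simple group can have a nonzero invariant (or a nontrivial radical of the Killing form) in its adjoint representation. Here the hypothesis $p\ne 2$ together with the running tameness assumptions on $G$ excludes the bad cases, and I would invoke the standard classification of when $\mathfrak{g}^{G}\ne 0$ for $G$ simple simply connected (trivial exactly when $p\nmid$ the order of the center in a suitable sense, or by direct inspection type by type), reducing to a finite check that is harmless under $p\ne 2$. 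I would spell this out by working factor-by-factor on the simply connected almost-simple factors of $\Mscder$ and noting that the isogeny $Z_{\Msc}\times\Mscder\to\Msc$ has kernel of order prime to $p$ when $G$ (hence $\Msc$) is tame, so that the two Lie-algebra decompositions above hold over $\sepfield$ and are $\Gamma$- and $\Msc$-equivariant, and finally descending to $F$-points.
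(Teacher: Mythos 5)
Your treatment of the second isomorphism is fine and is essentially the paper's argument: the restriction map $\gsc^*\to\msc^*$ is projection along $(\msc^\perp)^*$, and $Z_M^\circ$ acts on $\msc^\perp$ through nonzero characters, so both the invariants and the coinvariants of $\msc^\perp$ (hence of $(\msc^\perp)^*$) vanish.

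The treatment of the first isomorphism, however, has several genuine gaps. (a) The decomposition $\Lie(\Msc)=\Lie(Z_{\Msc})\oplus\Lie(\Mscder)$ requires the kernel $Z(\Mscder)$ of the central isogeny $Z_{\Msc}\times\Mscder\to\Msc$ to be \'etale, i.e.\ $p\nmid\lvert Z(\Mscder)\rvert$; neither $p\neq 2$ nor tameness of $G$ ensures this. Remark~\ref{rem:generic} of the paper exhibits exactly this failure: for $G=\PGL_p\times\SL_2$ in characteristic $p$, the group $G$ is split (hence tame) while $\Mscder$ contains an $\SL_p$ factor whose center $\mu_p$ is infinitesimal, so the Lie algebra does not split. (b) The assertion that $\Lie(\Mscder)$ has no nonzero $\Mscder$-invariants under $p\neq 2$ is false: $\mathfrak{sl}_n$ in characteristic $p\mid n$ has a one-dimensional, $\Ad$-invariant center consisting of the scalar matrices, and the analogous phenomenon occurs in type $E_6$ when $p=3$; the paper cites \cite{Hurley82} for precisely this. (c) The statement actually needed is $\Lie^*(\Mscder)^{\Msc}=0$, which is equivalent to the vanishing of the \emph{coinvariants} $\Lie(\Mscder)_{\Msc}$, not of the invariants $\Lie(\Mscder)^{\Msc}$; your chain $\Lie(\Mscder)^{*,\Msc}=(\Lie(\Mscder)^*)_{\Mscder}$ conflates the two, and in positive characteristic they genuinely differ (for $\mathfrak{sl}_p$ in characteristic $p$ the invariants are nonzero while the coinvariants vanish). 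The paper avoids the splitting entirely, working instead with the always-exact sequence $0\to\Lie(\Mscder)\to\Lie(\Msc)\to\Lie(\Mscab)\to0$, which reduces the first claim to $\Lie^*(\Mscder)^{\Msc}=0$; this last vanishing is then extracted from the module-structure results of \cite{Hurley82} and \cite{Hiss84} (the adjoint module $\fh$ of a simple simply connected group is indecomposable, its center $Z_\fh$ is at most one-dimensional, and $\fh/Z_\fh$ is irreducible except for $G_2$ in characteristic $3$), showing that every $\Mscder$-invariant linear form on $\fh$ vanishes even when $\fh^{\Mscder}\neq 0$. That nontrivial content, which is the heart of the lemma, is neither supplied nor correctly described by your proposal; the appeal to ``a finite check harmless under $p\neq 2$'' is checking the wrong statement, and that wrong statement is false.
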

\begin{proof}
Let us assume that $G$ is simply connected and drop the subscripts $\tx{sc}$. For the second map we let $\fm^\perp$ be the direct sum of the weight spaces for the action of $Z_M^\circ$ on $\fg$ corresponding to non-zero weights. Then $\fg=\fm \oplus \fm^\perp$ is an $M$-invariant decomposition. Since $Z_M^\circ$ has no non-zero invariants in $\fm^\perp$, neither does $M$, and we conclude that the second map is an isomorphism, as claimed.

For the first map we consider the exact sequence $0 \to \Lie(M\dergp) \to \Lie(M) \to \Lie(M\abgp) \to 0$, where $M\dergp$ is the derives subgroup of $M$, automatically simply connected. To prove the claim it is enough to show that $\Lie^*(M\dergp)$ has no $M$-invariants.

Write $H=M\dergp$, a semi-simple simply connected group. We will show that $\fh^*$ has no non-zero $H$-invariants. This problem immediately reduces to the case that $H$ is (almost) simple. Consider an $H$-invariant element of $\fh^*$ and let $V \subset \fh$ be its kernel, an $H$-invariant subspace of codimension $0$ or $1$. We want to show $V=\fh$. The structure of the $H$-representation $\fh$ has been studied extensively. We will use the papers \cite{Hurley82} and \cite{Hiss84}. It is possible that the Lie algebra $\fh$ has a non-trivial center, and the different possibilities are described in \cite[Theorem 3.1]{Hurley82}. Since we are assuming that the characteristic of $F$ is not $2$, the center of $\fh$ is at most 1-dimensional, and occurs only when $H$ is of type $A_{n-1}$ with $n=0$ in $F$, or when $H$ is of type $E_6$ and $3=0$ in $F$.

According to \cite[Hauptsatz]{Hiss84}, $\fh$ is an indecomposable $H$-module. Therefore $V$ must contain the center of $\fh$, for otherwise it would be an $H$-invariant complement to it. Let $V'=V/Z_{\fh}$. Using again \cite[Hauptsatz]{Hiss84} we see that $\fh/Z_{\fh}$ is an irreducible $H$-module except in the case of the group $G_2$ when $F$ has characteristic $3$, in which case the only possibly submodule has codimension equal to $7$. Therefore, in all cases, $V'=\fh/Z_{\fh}$, hence $V=\fh$.
\end{proof}

With this discussion in mind, we let $X \in \Lie^*(\Mscab)(F)_{-r}$ be a good element associated to $\phi$. While the genericity of $\phi$ depends on the point $x$, the goodness of $X$ does not. Therefore, using $X$ we can apply Theorem \ref{thm:main} to any point $y \in \mc{B}(M,F)$ and obtain a character $\epsilon^{\Gad/\Mad}_{[y]}$ of  $\Mad(F)_{[y]}/\Mad(F)_{[y],0+}$.

Following \cite[\S3, p.~591]{Yu01}, we set
\begin{equation*}
	J=(M, G)(F)_{x,(r,s)},
\quad	J_+=(M, G)(F)_{x,(r,s+)},
\quad	K'=M(F) \cap G(F)_{[x]},
\end{equation*}
where \(G(F)_{[x]}\) denotes the stabilizer of $[x]$. We have $K' = M(F)_{[x]}$, since $[x]$ is a point of $\cB(\Mad,F)$. Hence we have a map from $K'$ to $\Mad(F)_{[x]}/\Mad(F)_{[x],0+}$ that is trivial on $K' \cap G(F)_{x,0+} = M(F)_{x, 0+}$.

We extend the (restriction of the) character $\phi$ to a character $\hat \phi$ of $J$ following \cite[\S4 and \S9]{Yu01}, and we denote by $N$ the kernel of $\hat \phi$. We also equip $J/J_+$ with the  pairing $\pair\anondot\anondot$ defined by $\pair a b=\hat \phi(aba^{-1}b^{-1})$ to obtain a non-degenerate symplectic vector space over $\bF_p$ (after choosing an isomorphism between $p$-th roots of unity in $\bC^\times$ and $\bF_p$) \cite[Lemma~11.1]{Yu01}. Then $J/N$ is a Heisenberg $p$-group with center $J_+/N$ and Yu provides a symplectic action under which we can pull back the Weil--Heisenberg representation of $\Sp(J/J_+) \ltimes (J / N)$ to $K' \ltimes J$  \cite[Proposition~11.4]{Yu01}. Here the map from $K'$ to $\Sp(J/J_+)$ results from the conjugation action of $K'$ on $J/J_+$, which preserves the symplectic form \cite[Lemma~11.3]{Yu01}. Following Yu we denote the resulting representation of $K' \ltimes J$  by $\wt \phi$, and we write $\phi'$ for the representation of $K'J$ whose inflation to $K' \ltimes J$ is $(\phi\res_{K'} \ltimes 1) \otimes \wt \phi$ \cite[\S14]{Yu01}.

Note that the action of $\^{g\cap}{K'}$ on  $J/J_+$ preserves the totally isotropic subspace $U_{0,x,g}\ldef(\^g{J_+} \cap J)J_+/J_+$ of \(J/J_+\),
and we define the character $\chi_{x,g}: \^{g\cap}{(K'J)} \to \{\pm 1\}$ as follows.
First note that $(\^{g\cap}{K'})(\^{g\cap}J)=\^{g\cap}{(K'J)}$ by \cite[Lemma~13.7]{Yu01}.  Then we define $\chi_{x, g}(k') = \sgn_{\bF_p}(\det\nolimits_{\bF_p} k'\res_{U_0,x,g})$ for all \(k' \in \^{g\cap}{K'}\), and let \(\chi_{x, g}\) be trivial on \(\^{g\cap}J\).  This definition is consistent, since \(K' \cap J\) is contained in \(M(F)_{x, 0+}\), and hence acts trivially on \(J/J_+\).

\begin{pro}[Twist of {\cite[Proposition~14.1]{Yu01}}] \label{pro:14.1}
	Let $\eps: K'J \to \{ \pm1 \}$ be a quadratic character such that
	$$ {(\eps \chi_{x,g})}\res_{\^{g\cap}{(K'J)}}= {((\^g\eps) \chi_{g\dota x,g^{-1}})}\res_{\^{g\cap}{(K'J)}} $$
	for all $g\in M(F)$.
	Then for all $g\in M(F)$, we have
	$$\dim \Hom_{\^{g\cap}{(K'J)}}(\^g{(\eps\phi')},\eps\phi')=1 . $$
\end{pro}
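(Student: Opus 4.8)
The plan is to follow the proof of \cite[Proposition~14.1]{Yu01} step by step, to isolate the single place where Yu invokes the misstated lemma from \cite{gerardin:weil}, and to replace that step by its correct form; the hypothesis on \(\eps\) is exactly what makes the corrected argument produce a one-dimensional intertwining space. Recall that \(\phi'\) is the representation of \(K'J\) whose inflation to \(K' \ltimes J\) is \((\phi\res_{K'} \ltimes 1) \otimes \wt\phi\), with \(\wt\phi\) the pullback of the Weil--Heisenberg representation of \(\Sp(J/J_+) \ltimes (J/N)\). Since \(\eps\) is a quadratic character of \(K'J\) and restricts trivially to the pro-\(p\) group \(J\), the twist \(\eps\phi'\) is again of Weil--Heisenberg shape, now with \(\phi\) replaced by \(\eps\phi\). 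The first step is then to rerun Yu's Mackey-theoretic computation of \(\dim\Hom_{\^{g\cap}{(K'J)}}(\^g{(\eps\phi')},\eps\phi')\) for a fixed \(g \in M(F)\), using the decomposition \(\^{g\cap}{(K'J)} = (\^{g\cap}{K'})(\^{g\cap}J)\) of \cite[Lemma~13.7]{Yu01}.

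At the level of \(J\) the relevant representations factor through the Heisenberg quotient \(J/N\), and the Heisenberg part of the computation goes through as in Yu --- the pertinent central characters agree on \(\^{g\cap}{J_+}\) because \(\phi\) is a generic character of \(M(F)\) --- contributing a factor of \(1\) to the intertwining number. What remains is a comparison, restricted to \(\^{g\cap}{K'}\), of the two Weil representations of the finite symplectic group \(\Sp(J/J_+)\) attached to the polarizations of the symplectic \(\bF_p\)-space \(J/J_+\) coming from \(x\) and from \(g \dota x\). This is the step where \cite{gerardin:weil} was used, and used incorrectly: Yu extracted from it that these two restrictions are isomorphic, which \cite{Fin19} shows is false. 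The correct change-of-polarization formula for Weil representations over a finite field of odd characteristic shows instead that the two restrictions differ by the quadratic characters \(\chi_{x,g}\) and \(\chi_{g \dota x, g^{-1}}\) built from the determinants of the conjugation actions on the totally isotropic subspaces \(U_{0,x,g}\) and \(U_{0,g\dota x,g^{-1}}\). Consequently the corrected form of Yu's computation gives
\[ \dim\Hom_{\^{g\cap}{(K'J)}}\bigl(\^g{(\eps\phi')},\,\eps\phi'\bigr) = \dim\Hom_{\^{g\cap}{(K'J)}}\bigl((\^g\eps)\chi_{g\dota x,g^{-1}},\,\eps\chi_{x,g}\bigr), \]
and the right-hand side is \(1\) if the two quadratic characters of \(\^{g\cap}{(K'J)}\) agree and \(0\) otherwise. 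The hypothesis on \(\eps\) says precisely that they agree, so the intertwining number is \(1\), as claimed.

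The hard part will be the middle step: pinning down the correct replacement for \cite{gerardin:weil}, i.e.\ computing exactly the quadratic character by which the two polarization-dependent Weil representations of \(\Sp(J/J_+)\) differ over \(\bF_p\), and matching it with the characters \(\chi_{x,g}\) and \(\chi_{g\dota x,g^{-1}}\) appearing in the hypothesis. A secondary point needing care is the Heisenberg bookkeeping upon restriction from \(J\) to the proper subgroup \(\^{g\cap}J\), together with the routine check that, once \(\eps\) is inserted, every part of Yu's argument not relying on \cite{gerardin:weil} goes through verbatim with \(\phi\) replaced by \(\eps\phi\).
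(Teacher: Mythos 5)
Your proposal follows essentially the same strategy as the paper: re-run Yu's argument, replace the misquoted change-of-polarization step from \cite{gerardin:weil} by the correct version (which introduces the quadratic characters $\chi^{U_0}$ of the parabolic, specialized to $\chi_{x,g}$ and $\chi_{g\dota x,g^{-1}}$), and observe that the hypothesis on $\eps$ is exactly what makes the two twisted Weil--Heisenberg submodules match. The one imprecision is the displayed equality $\dim\Hom_{\^{g\cap}{(K'J)}}(\^g{(\eps\phi')},\eps\phi') = \dim\Hom((\^g\eps)\chi_{g\dota x,g^{-1}}, \eps\chi_{x,g})$: the paper does not assert that identity, but rather shows the left side is $\le 1$ via Yu's Heisenberg argument (which does not rely on the misstated lemma), and $\ge 1$ by exhibiting a nonzero intertwiner between the isotypic submodules $\eps\pi_1$ and $(\^g\eps)\pi_1^\bullet$ using the corrected \cite[Lemma~14.6]{Yu01}; your equality would need a separate justification, but the conclusion under the stated hypothesis is the same.
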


\begin{rem}
	The statement of Proposition \ref{pro:14.1} without the twist by $\eps$ is the false statement \cite[Proposition~14.1]{Yu01}. Yu's proof relied on a misprinted version of \cite[Theorem~2.4(b)]{gerardin:weil}, and \cite[\S4]{Fin19} provides a counterexample to \cite[Proposition~14.1]{Yu01}. The misprint in \cite[Theorem~2.4(b)]{gerardin:weil} is that a required quadratic character is missing in the statement (but not the proof). We have therefore introduced the twist by $\eps$ that will allow us to apply Yu's original proof to the twisted representation to prove Proposition \ref{pro:14.1}. The existence of such an $\epsilon$ follows from Theorem \ref{thm:main} and is the content of Corollary \ref{cor:14.1}.
\end{rem}

In order to prove Proposition \ref{pro:14.1} we will follow Yu's approach \cite[\S14]{Yu01}. The mistake in Yu's proof occurs when studying the restriction of Weil representations \cite[Lemma~14.6]{Yu01}. In order to correct the statement, we introduce some of the notation that Yu uses. Let $(V, \pair\anondot\anondot)$ be a non-degenerate symplectic space over $\bF_p$ and $(\omega, W)$ the Weil--Heisenberg representation of $\Sp(V) \ltimes V^\sharp$ associated to a non-trivial character $\bF_p \to \bC^\times$, where $V^\sharp$ is the Heisenberg group of $V$ with underlying set $V \times \bF_p$.
For a subspace $U \subset V$, we write $U^\sharp$ for its preimage under the map $V^\sharp \to V$ and $U^0$ for the subspace $U \times \{0 \} \subset U^\sharp$. Let $U_0$ be a totally isotropic subspace of $V$ and let $U$ be the orthogonal complement $U_0^\perp$.  Let $P_0=\{g \in \Sp(V) \stbar g\dota U_0 \subset U_0\}$ and denote by $\chi^{U_0}$ the quadratic character $P_0 \to \{ \pm1 \}$ given by $p \mapsto \sgn(\det p\res_{U_0})$.

\begin{lem}[Correction of {\cite[Lemma~14.6]{Yu01}}] \label{lem:14.6}
	The group $U^\sharp/U_0^0$ is naturally isomorphic to $(U/U_0^0)^\sharp$ by the map $(u, c) +U_0^0 \mapsto (u+U_0, c)$. The subspace $W^{U_0^0}$ is stable under the action of $P_0 \ltimes U^\sharp$. Write $(\omega', W')$ for the composition of the surjection $P_0 \ltimes U^\sharp \to \Sp(U/U_0) \ltimes (U/U_0)^\sharp$ with the Weil--Heisenberg representation of $\Sp(U/U_0) \ltimes (U/U_0)^\sharp$ associated to the same character as $(\omega, W)$. Then the representation $W^{U_0^0}$ of $P_0 \ltimes U^\sharp$ is isomorphic to
	$\omega' \otimes (\chi^{U_0} \ltimes 1)$.
\end{lem}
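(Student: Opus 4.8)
The plan is to reduce the statement to explicit computations in a Schrödinger model of $(\omega, W)$ adapted to $U_0$; the arithmetic input is the appearance of the Legendre symbol $\sgn_{\bF_p}$, which is precisely the quadratic character missing from the misprinted form of \cite[Theorem~2.4(b)]{gerardin:weil}. For the group-theoretic assertion, note that $U_0$ totally isotropic forces $U_0 \subseteq U_0^\perp = U$, so for $u \in U$ and $u_0 \in U_0$ the Heisenberg commutator $\pair u{u_0}$ vanishes; thus $U_0^0$ is central in $U^\sharp$, and since the radical of the form $\pair\anondot\anondot$ restricted to $U$ is $U \cap U^\perp = U_0$, it descends to a non-degenerate symplectic form on $U/U_0$, exhibiting the indicated map as a well-defined surjection $U^\sharp \to (U/U_0)^\sharp$ with kernel $U_0^0$. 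For stability: $U^\sharp$ centralizes $U_0^0$, hence preserves $W^{U_0^0}$; any $g \in P_0$ satisfies $\omega(g)\rho(v)\omega(g)^{-1} = \rho(g v)$ and $g U_0 = U_0$, hence normalizes $\rho(U_0^0)$ and preserves $W^{U_0^0}$; and $g U = g(U_0^\perp) = U$, so $P_0$ normalizes $U^\sharp$ and $P_0 \ltimes U^\sharp$ acts on $W^{U_0^0}$.

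Next I would fix a Lagrangian $L \supseteq U_0$ and a complementary Lagrangian $M'$ of $V$, and work in the Schrödinger model $W = \bC[M']$. A direct computation shows that $(u_0, 0) \in U_0^0$ acts by multiplication by $m \mapsto \psi(\pair{u_0}m)$, so $W^{U_0^0} = \bC[M' \cap U_0^\perp] = \bC[M' \cap U]$. Since $L \subseteq U$, the subspace $L/U_0$ is a Lagrangian of $U/U_0$ and $M' \cap U$ maps isomorphically onto a Lagrangian of $U/U_0$ complementary to $L/U_0$; under the isomorphism $U^\sharp/U_0^0 \cong (U/U_0)^\sharp$ above, $\bC[M' \cap U]$ with its induced action is exactly the Schrödinger model of $\omega'$ attached to $L/U_0$. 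In particular $W^{U_0^0}$ is irreducible as a $(U/U_0)^\sharp$-module. The homomorphism $P_0 \to \Sp(U/U_0)$, $g \mapsto g\res_U$ (well-defined since $g$ preserves $U_0$ and $U$), is compatible with the other maps, and for $g \in P_0$ both $\omega(g)\res_{W^{U_0^0}}$ and $\omega'(\bar g)$ conjugate the action of $(v, c) \in (U/U_0)^\sharp$ into that of $(\bar g v, c)$. By Schur's lemma there is $\eta(g) \in \bC^\times$ with $\omega(g)\res_{W^{U_0^0}} = \eta(g)\,\omega'(\bar g)$, and $\eta$ is a character of $P_0$.

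It remains to identify $\eta$ with $\chi^{U_0}$, which is the heart of the argument. I would decompose $V = W_0 \perp W_1$ with $W_0 = U_0 \oplus U_0'$ hyperbolic and $W_1 = W_0^\perp \cong U/U_0$, and choose $L$ and $M'$ compatibly, so that $W^{U_0^0} = \bC[M' \cap W_1]$ sits inside $W \cong \bC[U_0'] \otimes \bC[M' \cap W_1]$ as $\delta_0 \otimes \bC[M' \cap W_1]$. By multiplicativity of the Weil representation over the orthogonal decomposition, an element $g$ of the subgroup $\Sp(W_1) \cong \Sp(U/U_0) \subset P_0$ acting trivially on $W_0$ acts by $\id \otimes \omega'(\bar g)$, so $\eta$ is trivial there; it is also trivial on $\SL(U_0)$ and on the unipotent radical $N$ of the parabolic $P_0$ (as $N \subseteq [P_0, P_0]$), and these together with $\GL(U_0)$ generate $P_0$, so $\eta(g) = \zeta(\det\nolimits_{\bF_p}(g\res_{U_0}))$ for some character $\zeta$ of $\bF_p^\times$. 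Finally, for $g$ in the Siegel Levi $\GL(U_0) \hookrightarrow \Sp(W_0)$ acting by $a$ on $U_0$, the standard formula for the Weil representation on the Siegel parabolic shows that $g$ acts on $\bC[U_0']$ by $f \mapsto \sgn_{\bF_p}(\det a)\,(f \circ \ell)$ for a linear automorphism $\ell$, so $\omega(g)\,\delta_0 = \sgn_{\bF_p}(\det a)\,\delta_0$; as $\bar g = \id$ here, this gives $\zeta = \sgn_{\bF_p}$, hence $\eta = \chi^{U_0}$ and $W^{U_0^0} \cong \omega' \otimes (\chi^{U_0} \ltimes 1)$ as $P_0 \ltimes U^\sharp$-modules.

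The main obstacle is this last computation: fixing the normalization of the Weil representation over $\bF_p$ and evaluating its restriction to the Siegel Levi $\GL(U_0)$ — exactly where the quadratic character $\sgn_{\bF_p}$ enters, and whose omission (through the misprint in \cite[Theorem~2.4(b)]{gerardin:weil}) is the origin of the error in \cite[Lemma~14.6]{Yu01}. The remaining points — the Schrödinger-model formulas, and the Levi-decomposition facts for $P_0$ over the finite field $\bF_p$ (where $p \neq 2$ is used) — are routine.
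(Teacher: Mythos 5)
Your proof takes a genuinely different route from the paper's. The paper simply cites Gérardin's \cite[Theorem~2.4(b)]{gerardin:weil} corrected to include the quadratic factor $\chi^{U_0}$ appearing in his equation (2.19), writes the restriction of $(\omega, W)$ to $P_0 \ltimes V^\sharp$ as $\Ind_{P_0 \ltimes U^\sharp}^{P_0 \ltimes V^\sharp}(\omega' \otimes (\chi^{U_0} \ltimes 1))$, and identifies $W^{U_0^0}$ with the functions in this induced model supported on $P_0 \ltimes U^\sharp$. You instead reprove the needed fact from scratch in a Schrödinger model $\bC[M']$ attached to a Lagrangian $L \supseteq U_0$: after identifying $W^{U_0^0} = \bC[M' \cap U]$ as a Schrödinger model for $(U/U_0)^\sharp$, you apply Schur's lemma to obtain a character $\eta$ of $P_0$ measuring the discrepancy between $\omega\res_{W^{U_0^0}}$ and $\omega'$, and then compute $\eta$ on a generating set, with $\sgn_{\bF_p}(\det)$ emerging from the Siegel-Levi action of $\GL(U_0)$ on $\bC[U_0']$. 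This is in effect an unpacking of the computation underlying Gérardin's (2.19); it is more self-contained and makes the origin of the quadratic character explicit, at the cost of invoking the orthogonal-decomposition multiplicativity of the finite-field Weil representation, whereas the paper's proof is shorter but requires checking the correction of the cited statement against Gérardin's proof.

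One step, however, is false as stated: you justify the triviality of $\eta$ on the unipotent radical $N$ of $P_0$ by asserting $N \subseteq [P_0, P_0]$, and this inclusion can fail. Already for $\Sp_2(\bF_3)$ with $U_0$ a Lagrangian line, $P_0$ is a Borel subgroup in which the torus acts on $N \cong \bF_3$ with weight $z \mapsto z^2 \equiv 1$, so $P_0 \cong \Z/6$ is abelian and $[P_0, P_0]$ is trivial. More generally, whenever $U_0$ is Lagrangian the group $N$ is abelian, isomorphic to the symmetric square of $U_0^*$, and the centre of $\GL(U_0)$ acts through squaring, which is not faithful for $p = 3$. (Your parallel appeal to triviality on $\SL(U_0)$ has the same weakness since $\SL(U_0)(\bF_p)$ need not be perfect, but that is anyway superseded by your direct computation of $\eta$ on all of $\GL(U_0)$.) The conclusion $\eta\res_N = 1$ is still true; one can check it directly by arranging the Schrödinger model so that $W^{U_0^0} = \delta_0 \otimes \bC[M' \cap W_1]$ and observing that the shears in $N$ act by a combination of translation in the $W_1$-direction and multiplication by quadratic characters whose underlying forms vanish at $U_0' = 0$, hence fix $W^{U_0^0}$ pointwise. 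With that repair your argument is complete and recovers the lemma.
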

\begin{proof}
	The first sentence is as in \cite[Lemma~14.6]{Yu01}, and is obvious.

	By \cite[Theorem~2.4(b)]{gerardin:weil}, corrected to include the factor \(\chi^{U_0}\) that appears in the proof \cite[p.~67, (2.19)]{gerardin:weil}, the restriction of the Weil--Heisenberg representation $(\omega, W)$ from $\Sp(V) \ltimes V^\sharp$ to $P_0 \ltimes V^\sharp$ is
	given by
	$$\Ind_{P_0 \ltimes U^\sharp}^{P_0 \ltimes V^\sharp} \bigl( \omega' \otimes (\chi^{U_0} \ltimes 1) \bigr) .$$
	Thus $W^{U_0^0}$ is the subspace of the above induction consisting of the functions that are supported by $P_0 \ltimes U^\sharp$. Hence $W^{U_0^0}$ is stable under  the action of $P_0 \ltimes U^\sharp$ and isomorphic to $ \omega' \otimes (\chi^{U_0} \ltimes 1)$ as a representation of $P_0 \ltimes U^\sharp$.
\end{proof}

Following \cite[\S14]{Yu01}, we apply Lemma \ref{lem:14.6} to the totally isotropic subspace $U_0 = U_{0,x,g}=(\^g{J_+} \cap J)J_+/J_+$ of $J/J_+$, with orthogonal complement $U = (\^{g\cap}J)J_+/J_+$. As mentioned above, the resulting action of $\^{g\cap}{K'}$ on  $J/J_+$ preserves $U_0=U_{0,x,g}$ \cite[Lemma~14.5]{Yu01}, and the composition of the action map $\^{g\cap}{K'} \to P_0$ with $\chi^{U_0}$ is the character $\chi_{x,g}$ defined before Proposition \ref{pro:14.1}.

This allows us to correct \cite[Proposition~14.7(iii)]{Yu01}  based on the correction Lemma \ref{lem:14.6} of   \cite[Lemma~14.6]{Yu01}. (\cite[Proposition~14.7(i, ii)]{Yu01} needs no correction.)

\begin{cor}[Correction of {\cite[Proposition~14.7(iii)]{Yu01}}] \label{cor:14.7}
 Denote by $\pi_1$ the ($\^g{\smash{\hat\phi}}\res_{\^g{J_+}\cap J}$)-isotypic subspace of the representation $\wt \phi$ of $K' \ltimes J$. Then $\pi_1$ is naturally a representation of $\^{g\cap}{K'} \ltimes \^{g\cap}J$, and this representation is the Weil--Heisenberg representation coming from Yu's symplectic action $(f_1, j_1)$ defined in  \cite[Proposition~14.7(i)-(ii)]{Yu01}, tensored with the character $\chi_{x,g} \ltimes 1$.
\end{cor}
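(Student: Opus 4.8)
The plan is to follow Yu's proof of \cite[Proposition~14.7(iii)]{Yu01} essentially line by line, replacing the erroneous \cite[Lemma~14.6]{Yu01} by the corrected Lemma~\ref{lem:14.6} and tracking the additional quadratic character that thereby enters.

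First I would unwind the definition of $\wt\phi$. By \cite[Proposition~11.4]{Yu01}, $\wt\phi$ is the pullback along $K'\ltimes J\to\Sp(J/J_+)\ltimes(J/N)$ of the Weil--Heisenberg representation $(\omega,W)$ attached to the fixed character $\bF_p\to\bC^\times$; here $V=J/J_+$ carries the symplectic form $\pair\anondot\anondot$ of \cite[Lemma~11.1]{Yu01} and $V^\sharp$ is identified with $J/N$, with center $J_+/N$. One then checks, exactly as Yu does, that the $\^g{\smash{\hat\phi}}\res_{\^g{J_+}\cap J}$-isotypic subspace of $\wt\phi$ is the fixed space $W^{U_0^0}$, where $U_0=U_{0,x,g}=(\^g{J_+}\cap J)J_+/J_+$ is the totally isotropic subspace of $V$ and $U_0^0$ is the image in $J/N$ of $(\^g{J_+}\cap J)N/N$; this is the step where the Heisenberg description turns the isotypic condition on $\^g{J_+}\cap J$ into a fixed-space condition.

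Next I would apply Lemma~\ref{lem:14.6} with $V$, $U_0$ as above and $U=U_0^\perp=(\^{g\cap}J)J_+/J_+$. It gives that $W^{U_0^0}$, as a representation of $P_0\ltimes U^\sharp$, is isomorphic to $\omega'\otimes(\chi^{U_0}\ltimes 1)$, where $\omega'$ is inflated from the Weil--Heisenberg representation of $\Sp(U/U_0)\ltimes(U/U_0)^\sharp$ along the surjection $P_0\ltimes U^\sharp\to\Sp(U/U_0)\ltimes(U/U_0)^\sharp$. I would then restrict along the action maps $\^{g\cap}{K'}\to P_0$ (well-defined by \cite[Lemma~14.5]{Yu01}) and $\^{g\cap}J\to U^\sharp$; that $\^{g\cap}{K'}\ltimes\^{g\cap}J$ acts on $\pi_1=W^{U_0^0}$ at all uses, as in the discussion before the corollary, that $K'\cap J\subset M(F)_{x,0+}$ acts trivially on $J/J_+$. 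On composing with the quotient onto $\Sp(U/U_0)\ltimes(U/U_0)^\sharp$, the factor $\omega'$ becomes precisely Yu's symplectic action $(f_1,j_1)$ of \cite[Proposition~14.7(i)--(ii)]{Yu01} --- this is exactly the content of parts (i)--(ii), which need no correction --- while by the paragraph preceding the corollary the composition of $\^{g\cap}{K'}\to P_0$ with $\chi^{U_0}$ is the character $\chi_{x,g}$, which is moreover trivial on $\^{g\cap}J$. Hence $\pi_1$ is the Weil--Heisenberg representation of $(f_1,j_1)$ tensored with $\chi_{x,g}\ltimes 1$, as claimed.

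The only place needing genuine care --- the main obstacle --- is the bookkeeping of the first two steps: pinning down exactly which fixed space $W^{U_0^0}$ the isotypic subspace equals, and then verifying that the pullbacks of $\omega'$ and of $\chi^{U_0}$ along the two action maps separate cleanly into Yu's $(f_1,j_1)$ and the single character $\chi_{x,g}$, with no extra sign leaking into the symplectic part. Since \cite[Proposition~14.7(i)--(ii)]{Yu01} is correct as stated, this reduces to confirming that the sole effect of passing from \cite[Lemma~14.6]{Yu01} to the corrected Lemma~\ref{lem:14.6} is the appearance of the tensor factor $\chi^{U_0}\ltimes 1$, which the structure of the argument makes transparent.
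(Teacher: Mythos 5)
Your proposal is correct and follows essentially the same route as the paper: the paper's proof is a terse two sentences invoking \cite[Lemma~14.3]{Yu01} to identify $\pi_1$ with the $j^{-1}(U_0^0)$-fixed subspace of $\wt\phi$ and then applying the corrected Lemma~\ref{lem:14.6}, which is precisely the skeleton you flesh out. Your more detailed unwinding of the Heisenberg bookkeeping and the composition with the action maps is a faithful expansion of the same argument, not a different one.
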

\begin{proof}
	The proof is the same as Yu's proof of Proposition~14.7(iii), except that we use Lemma \ref{lem:14.6} in place of \cite[Lemma~14.6]{Yu01}. More precisely, Yu shows \cite[Lemma~14.3]{Yu01} that the ($\^g{\smash{\hat\phi}}\res_{\^g{J_+}\cap J}$)-isotypic subspace $\pi_1$ of $\wt \phi$ is the same as the subspace fixed by $j^{-1}(U_0^0)$, where $j:J/N \to (J/J_+)^\sharp$ is the special isomorphism defined in \cite[Proposition~11.4]{Yu01}. Now the result follows from Lemma \ref{lem:14.6}.
\end{proof}

\begin{proof}[Proof of Proposition \ref{pro:14.1}]
Yu's proof of \cite[Proposition~14.2]{Yu01} reduces it to showing the inequality $\dim \Hom_{\^{g\cap}{K'} \ltimes \^{g\cap}J}(\^g{(\eps\smash{\wt \phi})},\eps \wt\phi) \geq 1$. This reduction does not depend on the mis-stated result \cite[Theorem 2.4(b)]{gerardin:weil}, and we refer the reader to  \cite[\S14]{Yu01} for the details.

Thus it remains to exhibit a nontrivial element of $\Hom_{\^{g\cap}{K'} \ltimes \^{g\cap}J}(\^g{(\eps\smash{\wt \phi})},\eps\wt\phi)$. Following Yu, we consider the submodule $\eps\pi_1$ of $\eps\wt \phi$ arising from Corollary \ref{cor:14.7} and the analogous submodule $(\^g \eps)\pi_1^\bullet$ in $(\^g{\eps})\^g{\smash{\wt \phi}}$ arising from the same construction by replacing $J$ by $\^gJ$,  $x$ by $g\dota x$, and $g$ by $g^{-1}$. As Yu shows, but with the correction Corollary \ref{cor:14.7} to \cite[Proposition~14.7(iii)]{Yu01} and our twist \(\eps\) taken into account, these submodules are isomorphic to Weil--Heisenberg representations of $\^{g\cap}{K'} \ltimes \^{g\cap}J$ tensored with $\eps(\chi_{x,g} \ltimes 1)$ or $\^g\eps(\chi_{g\dota x,g^{-1}} \ltimes 1)$, respectively. Since the characters $\eps(\chi_{x,g} \ltimes 1)$ and $\^g\eps(\chi_{g\dota x,g^{-1}} \ltimes 1)$ agree on $\^{g\cap}{K'} \ltimes \^{g\cap}J$  by our assumption on $\eps$, the submodules $\eps\pi_1$ and $(\^g\eps)\pi_1^\bullet$ are isomorphic as representations of $\^{g\cap}{K'} \ltimes \^{g\cap}J$, which completes the proof.
\end{proof}

\begin{dfn} \label{dfn:epsGM}
We denote by $\eps_x^{G/M}$ the character of \(K'J\) that is trivial on \(J\), and whose restriction to \(K'\) is the composition of the above map $K' \to \Mad(F)_{[x]}/\Mad(F)_{[x],0+}$ with the sign character $\eps_{[x]}^{\Gad/\Mad}$. \end{dfn}

\begin{cor}  [Explicit twist of {\cite[Proposition~14.1]{Yu01}}] \label{cor:14.1}
	For all $g\in M(F)$, we have $\dim \Hom_{\^{g\cap}{(K'J)}}(\^g{(\smash{\eps_x^{G/M}}\phi')},\eps_x^{G/M}\phi')=1$.
\end{cor}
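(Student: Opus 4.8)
The plan is to deduce Corollary~\ref{cor:14.1} directly from Proposition~\ref{pro:14.1}, the point being to verify that the character $\eps = \eps_x^{G/M}$ of Definition~\ref{dfn:epsGM} satisfies the intertwining hypothesis
\[ (\eps\chi_{x,g})\res_{\^{g\cap}{(K'J)}} = ((\^g\eps)\chi_{g\dota x,g^{-1}})\res_{\^{g\cap}{(K'J)}} \]
for every $g \in M(F)$. Because $\^{g\cap}{(K'J)} = (\^{g\cap}{K'})(\^{g\cap}J)$, because $\chi_{x,g}$ and $\chi_{g\dota x,g^{-1}}$ are trivial on $\^{g\cap}J$ by construction, and because $\eps_x^{G/M}$, hence also $\^g\eps_x^{G/M}$, is trivial on $\^{g\cap}J \subset J$, it suffices to check this identity after restriction to $\^{g\cap}{K'} = M(F)_{[x]} \cap M(F)_{[g\dota x]}$.

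First I would identify the Weil-representation signs $\chi_{x,g}$ of \S\ref{sub:Yu} with the determinant characters $\delta^x_y$ of Definition~\ref{dfn:uxy}. Conjugation by $g \in M(F)$ sends $(M,G)(F)_{x,(r,t)}$ to $(M,G)(F)_{g\dota x,(r,t)}$ for $t \in \{s, s+\}$, so $\^g{J_+} \cap J = (M,G)(F)_{g\dota x,(r,s+)} \cap (M,G)(F)_{x,(r,s)}$; matching this against Definition~\ref{dfn:uxy}(2) gives a canonical identification $U_{0,x,g} = U^x_{g\dota x}$ of $\bF_p$-vector spaces, compatible with the conjugation action of $\^{g\cap}{K'}$, whence $\chi_{x,g}\res_{\^{g\cap}{K'}} = \delta^x_{g\dota x}\res_{\^{g\cap}{K'}}$. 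Running the same computation with the pair $(x,g)$ replaced by $(g\dota x, g^{-1})$ gives $U_{0,g\dota x,g^{-1}} = U^{g\dota x}_x$ and $\chi_{g\dota x,g^{-1}}\res_{\^{g\cap}{K'}} = \delta^{g\dota x}_x\res_{\^{g\cap}{K'}}$.

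Next I would rewrite $\^g\eps_x^{G/M}$ as $\eps_{g\dota x}^{G/M}$. On $K'$ the character of Definition~\ref{dfn:epsGM} agrees with the character denoted $\eps_x^{G/M}$ in Lemma~\ref{lem:diff} (both being the pullback of $\eps_{[x]}^{\Gad/\Mad}$), so Lemma~\ref{lem:diff}(\ref{lem:diff:conj}), applied with $m = g^{-1}$ and $y = g\dota x$, yields $\^g\eps_x^{G/M} = \eps_{g\dota x}^{G/M}$ on $\^{g\cap}{K'}$. With the identifications of the previous paragraph, the hypothesis of Proposition~\ref{pro:14.1} for $\eps = \eps_x^{G/M}$ becomes precisely
\[ (\eps_x^{G/M}\res_{M(F)_{[x]} \cap M(F)_{[g\dota x]}})\delta^x_{g\dota x} = (\eps_{g\dota x}^{G/M}\res_{M(F)_{[x]} \cap M(F)_{[g\dota x]}})\delta^{g\dota x}_x, \]
which is Lemma~\ref{lem:diff}(\ref{lem:diff:move}) with $y = g\dota x$. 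Proposition~\ref{pro:14.1} then gives the desired one-dimensionality.

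I expect the only real work to be the bookkeeping behind the identifications $U_{0,x,g} = U^x_{g\dota x}$ and $U_{0,g\dota x,g^{-1}} = U^{g\dota x}_x$: one must line up the Moy--Prasad subgroups in the two definitions, keep track of the passage between the enlarged building of $M$ and the reduced building of $G$ underlying $K' = M(F)_{[x]}$, and check that the $\bF_p$-linear conjugation actions defining $\chi_{x,g}$ and $\delta^x_{g\dota x}$ coincide. Granting that, the corollary is a formal consequence of Lemma~\ref{lem:diff} and Proposition~\ref{pro:14.1}.
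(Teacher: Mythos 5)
Your proof is correct and follows essentially the same route as the paper: reduce via Proposition \ref{pro:14.1} to the intertwining hypothesis on $\eps$, identify $U_{0,x,g}$ with $U^x_{g\dota x}$ and $U_{0,g\dota x,g^{-1}}$ with $U^{g\dota x}_x$, and invoke Lemma \ref{lem:diff}. The paper's proof is terser, leaving implicit the use of Lemma \ref{lem:diff}(\ref{lem:diff:conj}) to convert $\^g\eps_x^{G/M}$ into $\eps_{g\dota x}^{G/M}$, but the content is the same.
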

\begin{proof}
By Proposition \ref{pro:14.1} it suffices to show that
 $${(\eps_x^{G/M}\dotm\chi_{x,g})}\res_{\^{g\cap}{(K'J)}}= {((\^g{\smash{\eps_x^{G/M}}}) \chi_{g\dota x,g^{-1}})}\res_{\^{g\cap}{(K'J)}} $$
 for all $g\in M(F)$.   This follows from Lemma \ref{lem:diff}, since, in the notation of that result, we have \(U_{0, x, g} = U^x_{g\dota x}\) and \(U_{0, g\dota x, g^{-1}} = U^{g\dota x}_x\).
\end{proof}

\begin{cor}  [Explicit twist of {\cite[Theorem~14.2]{Yu01}}] \label{cor:14.2}
	Let $C$ be a subgroup of $K'J$ that contains $J$. Then for all $g\in M(F)$, we have
	 $$\dim \Hom_{\^{g\cap}C}(\^g{(\smash{\eps_x^{G/M}}\phi')},\eps_x^{G/M}\phi')=1.$$
\end{cor}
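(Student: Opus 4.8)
The plan is to deduce Corollary \ref{cor:14.2} from Corollary \ref{cor:14.1}, which is the special case $C = K'J$, by running Yu's proof of \cite[Theorem~14.2]{Yu01} with $C$ in place of $K'J$, using the corrected Lemma \ref{lem:14.6} and Corollary \ref{cor:14.7} in place of \cite[Lemma~14.6 and Proposition~14.7(iii)]{Yu01}, and carrying the twist by $\eps_x^{G/M}$ along. The lower bound is immediate: since $C \subseteq K'J$ we have $\^{g\cap}C = C \cap \^gC \subseteq \^{g\cap}{(K'J)}$, so restriction of intertwiners embeds the one-dimensional space $\Hom_{\^{g\cap}{(K'J)}}(\^g{(\eps_x^{G/M}\phi')}, \eps_x^{G/M}\phi')$ into $\Hom_{\^{g\cap}C}(\^g{(\eps_x^{G/M}\phi')}, \eps_x^{G/M}\phi')$; the content lies in the upper bound.

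The twist is harmless because $\eps_x^{G/M}$ is trivial on $J$ (Definition \ref{dfn:epsGM}), and I would exploit this twice. First, $\eps_x^{G/M}\phi'\res_J = \phi'\res_J$ is the irreducible Heisenberg representation underlying $\wt\phi$, with $J_+/N$ acting by the central character $\hat\phi\res_{J_+}$; since a representation whose restriction to a normal subgroup is irreducible is itself irreducible, it follows that $\eps_x^{G/M}\phi'\res_C$ is irreducible for every $C$ with $J \subseteq C \subseteq K'J$, and likewise that $\^g{(\eps_x^{G/M}\phi')}\res_{\^gC}$ is irreducible. Second, since $\^g\phi = \phi$ (a character of a group is conjugation-invariant) and the auxiliary twisting character $\phi\res_{K'} \ltimes 1$ is also trivial on $J$, the $\phi$-contributions cancel when comparing $\^g{(\eps_x^{G/M}\phi')}$ with $\eps_x^{G/M}\phi'$, and the problem reduces to computing $\Hom_{\^{g\cap}C}(\^g{(\eps_x^{G/M}\wt\phi)}, \eps_x^{G/M}\wt\phi)$.

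For this I would follow Yu's argument. An intertwiner $T$ over $\^{g\cap}C$ is in particular equivariant for $\^g{J_+} \cap J$ and for $J_+ \cap \^gJ$, both contained in $\^{g\cap}J \subseteq \^{g\cap}C$; since $\wt\phi\res_{J_+}$ and $\^g{\wt\phi}\res_{\^g{J_+}}$ are given by central characters, the reasoning of \cite[Lemmas~14.3 and~14.5]{Yu01} shows that $T$ factors through the relevant Heisenberg-isotypic pieces, so that $\Hom_{\^{g\cap}C}(\^g{(\eps_x^{G/M}\wt\phi)}, \eps_x^{G/M}\wt\phi) \cong \Hom_{\^{g\cap}C}((\^g{\eps_x^{G/M}})\pi_1^\bullet, \eps_x^{G/M}\pi_1)$. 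By Corollary \ref{cor:14.7}, $\eps_x^{G/M}\pi_1$ and $(\^g{\eps_x^{G/M}})\pi_1^\bullet$ are Weil--Heisenberg representations of $\^{g\cap}{K'} \ltimes \^{g\cap}J$ whose restrictions to the normal subgroup $\^{g\cap}J$ are irreducible Heisenberg representations of the reduced symplectic space $U/U_0$; hence their restrictions to any subgroup sandwiched between $\^{g\cap}J$ and $\^{g\cap}{K'} \ltimes \^{g\cap}J$ --- in particular to the one governing the $\^{g\cap}C$-action --- remain irreducible. Finally, these two restrictions are isomorphic, because $\eps_x^{G/M}(\chi_{x,g} \ltimes 1)$ and $(\^g{\eps_x^{G/M}})(\chi_{g \dota x, g^{-1}} \ltimes 1)$ agree on $\^{g\cap}{(K'J)}$ --- which is precisely what the proof of Corollary \ref{cor:14.1} establishes via Lemma \ref{lem:diff}. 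Two isomorphic irreducible representations have a one-dimensional intertwining space, which gives the upper bound.

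The step I expect to be the main obstacle is the localization of the intertwining to the Heisenberg-isotypic pieces: one must check that Yu's Lemmas~14.3 and~14.5 (the identification of $\pi_1$ with a space of fixed vectors, and the stability of the totally isotropic subspace $U_0 = U_{0,x,g}$ under the action of $\^{g\cap}{K'}$) continue to do their job once $K'J$ is shrunk to $C$, i.e.\ that no intertwiner over the smaller group $\^{g\cap}C$ escapes the isotypic pieces $\pi_1$ and $\pi_1^\bullet$. Everything else goes through as in \S14 of \cite{Yu01}; the twist plays no role because $\eps_x^{G/M}\res_J$ is trivial, and the fact that irreducibility of a restriction to a normal subgroup is inherited by every intermediate subgroup is what keeps the $C$-side under control.
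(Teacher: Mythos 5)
Your lower bound ($\geq 1$ from Corollary \ref{cor:14.1}) is correct and matches the paper, but you missed the short upper-bound argument that the paper (and Yu's original proof of \cite[Theorem~14.2]{Yu01}) uses, which makes the whole corollary a two-line squeeze. Since $J \subseteq C$, we have $\^{g\cap}J \subseteq \^{g\cap}C$, so restriction of intertwiners embeds $\Hom_{\^{g\cap}C}(\^g{(\eps_x^{G/M}\phi')}, \eps_x^{G/M}\phi')$ into $\Hom_{\^{g\cap}J}(\^g{(\eps_x^{G/M}\phi')}, \eps_x^{G/M}\phi')$. Because $\eps_x^{G/M}$ is trivial on $J$ and $\phi'\res_J = \wt\phi\res_J$, the latter equals $\Hom_{\^{g\cap}J}(\^g{\smash{\wt\phi}}, \wt\phi)$, which is one-dimensional by \cite[Proposition~12.3]{Yu01} --- a result unaffected by the \cite{gerardin:weil} error and requiring none of the Lemma~14.6/Corollary~14.7 machinery. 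Thus $1 \leq \dim\Hom_{\^{g\cap}C} \leq 1$, and the corollary is proved.

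Your attempted upper bound instead re-runs the entire Weil--Heisenberg localization of \cite[\S14]{Yu01} with $C$ in place of $K'J$. This is not wrong in spirit --- it is essentially re-deriving a weakening of \cite[Proposition~12.3]{Yu01} by hand --- but it is far more work than needed, and you yourself flag the crucial localization step (that an intertwiner over the shrunken group $\^{g\cap}C$ still factors through the isotypic pieces $\pi_1$ and $\pi_1^\bullet$) as an unresolved obstacle. That step is exactly what Yu's Proposition~12.3 packages for you. In addition, your observation that $\eps_x^{G/M}\phi'\res_C$ is irreducible (because its restriction to the normal subgroup $J$ is) is correct but unused in the chain you build: irreducibility of $\pi_1\res_{\^{g\cap}J}$, not irreducibility of $\phi'\res_J$, is what your Schur-lemma finale actually requires, and that irreducibility needs its own justification. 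So the proposal is a genuinely different and more elaborate route, with a conceded gap at the step the simpler argument makes superfluous.
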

\begin{proof}
	The proof is the same as the proof of \cite[Theorem~14.2]{Yu01}. We quickly recall it for the reader. We have $\dim \Hom_{\^{g\cap}J}(\^g{\smash{\wt \phi}},\wt \phi)=1$ according to \cite[Proposition~12.3]{Yu01}. Since \(\eps_x^{G/M}\phi'\) and \(\wt\phi\) agree on \(J\), we infer
	$$\dim \Hom_{\^{g\cap}J}(\^g{(\eps_x^{G/M}\phi')},\eps_x^{G/M}\phi')=\dim \Hom_{\^{g\cap}J}(\^g{\phi'},\phi')=1 .  $$
	 Now the result follows immediately from Corollary \ref{cor:14.1}.
\end{proof}

Let $((G^0, \dotsc, G^d=G), x, (r_0, \dotsc, r_d), \rho, (\phi_0, \dotsc, \phi_d))$ be a generic datum as defined in \cite[\S15, p.~615]{Yu01},
and denote by $(K_\tx{Yu}, \rho_\tx{Yu})$ the compact-mod-center open subgroup and smooth representation that Yu attaches to this datum and calls $K_d$ and $\rho_d$ in \cite[\S3, p.~591]{Yu01} and \cite[\S4, p.~592]{Yu01}, respectively. In particular, $K_\tx{Yu}=G^0_{[x]}G^1(F)_{x,{r_0}/{2}}\dotsm G^d(F)_{x,r_{d-1}/2}$. For $1 \leq i \leq d$, we write $\eps_x^{G^i/G^{i-1}}$ for the quadratic character of $K_\tx{Yu}$ whose restriction to $G^0(F)_{[x]}$ is given by ${\eps_x^{G^i/G^{i-1}}}\res_{G^0(F)_{[x]}}$ defined in Definition \ref{dfn:epsGM} and that is trivial on $G^1(F)_{x,{r_0}/{2}}\dotsm G^d(F)_{x,r_{d-1}/2}$. Set $\eps\ldef\prod_{1 \leq i \leq d} \eps_x^{G^i/G^{i-1}}$.
Recall that Yu claims in \cite[Theorem~15.1]{Yu01} that the compactly induced representation $\pi\ldef\ind_{K_\tx{Yu}}^{G(F)}  \rho_\tx{Yu}$ is an irreducible supercuspidal representation of depth $r_d$.  This can be proven by other means \cite[Theorem 3.1]{Fin19}, but Yu's proof relies on the wrong statement \cite[Theorem~14.2]{Yu01} and therefore does not go through as written.  On the other hand, we can apply Yu's proof directly to the twisted construction  $\ind_{K_\tx{Yu}}^{G(F)}  (\eps\rho_\tx{Yu})$ to obtain the following result.

\begin{thm}[Explicit twist of {\cite[Theorem~15.1]{Yu01}}] \label{thm:twistedYu}
Let $((G^0, \dotsc, G^d=G), x, (r_0, \dotsc, r_d), \rho, (\phi_0, \dotsc, \phi_d))$ be a generic datum. Then the associated representation $\ind_{K_\tx{Yu}}^{G(F)} (\eps\rho_\tx{Yu})$, as defined above, is irreducible, supercuspidal, and of depth $r_d$.
\end{thm}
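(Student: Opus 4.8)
The plan is to follow Yu's original argument for \cite[Theorem~15.1]{Yu01} verbatim, substituting the twisted representation $\eps\rho_\tx{Yu}$ for $\rho_\tx{Yu}$ everywhere, and checking that the one place where Yu's proof breaks down --- the intertwining computation resting on the faulty \cite[Theorem~14.2]{Yu01} --- is repaired by Corollary \ref{cor:14.2}. Concretely, Yu's proof of irreducibility and supercuspidality proceeds by an inductive tower argument: one builds $\rho_\tx{Yu}$ as a successive twist-and-extend of $\rho$ by the representations $\phi_i'$ attached to each step $G^{i-1}\subset G^i$ of the twisted Levi tower, and then applies Mackey theory together with the intertwining result \cite[Theorem~14.2]{Yu01} to deduce that $\ind_{K_\tx{Yu}}^{G(F)}\rho_\tx{Yu}$ is irreducible, hence supercuspidal since $K_\tx{Yu}$ is compact mod center. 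First I would set up the tower notation exactly as in \cite[\S15]{Yu01}, noting that $K_\tx{Yu} = G^0_{[x]}G^1(F)_{x,r_0/2}\dotsm G^d(F)_{x,r_{d-1}/2}$ and that $\eps = \prod_i \eps_x^{G^i/G^{i-1}}$ is, by construction in Definition \ref{dfn:epsGM}, a quadratic character of $K_\tx{Yu}$ which on $G^0(F)_{[x]}$ restricts to the product of the characters $\eps_x^{G^i/G^{i-1}}\res_{G^0(F)_{[x]}}$ and is trivial on the higher filtration pieces.

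The key step is the intertwining estimate. At the $i$-th stage of the induction, Yu needs $\dim\Hom_{\^{g\cap}C}(\^g(\rho_i), \rho_i) \le 1$ for $g\in G^i(F)$ and $C$ an appropriate subgroup of $K'J$ containing $J$ (with $M=G^{i-1}$, $G=G^i$ in the notation of Proposition \ref{pro:14.1}), which is precisely the content of the false \cite[Theorem~14.2]{Yu01}. I would replace this at each stage with Corollary \ref{cor:14.2}, applied to $\eps_x^{G^i/G^{i-1}}\phi_i'$ in place of $\phi_i'$: Corollary \ref{cor:14.2} gives $\dim\Hom_{\^{g\cap}C}(\^g(\eps_x^{G^i/G^{i-1}}\phi_i'), \eps_x^{G^i/G^{i-1}}\phi_i') = 1$ for all $g\in G^i(F)$ and all $C$ with $J\subset C\subset K'J$. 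The point is that the twist $\eps_x^{G^i/G^{i-1}}$ was defined (via Lemma \ref{lem:diff} and Corollary \ref{cor:14.1}) exactly so that the intertwining hypothesis of Proposition \ref{pro:14.1} is satisfied; so after twisting, every intertwining computation in Yu's proof goes through unchanged. One must also check that the other inputs to Yu's proof --- the relations between the subgroups $K_i$, the fact that the Heisenberg--Weil construction and $\hat\phi_i$ are unaffected by a quadratic twist that is trivial on $J_+$, and the compatibility of the twists with the inductive extension process --- are purely formal and survive the twist; since $\eps_x^{G^i/G^{i-1}}$ is trivial on each $J$ by construction, the twist commutes with all of Yu's Heisenberg-theoretic manipulations, which take place inside the various $J$'s.

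The main obstacle I anticipate is purely bookkeeping: verifying that the \emph{cumulative} twist $\eps = \prod_i \eps_x^{G^i/G^{i-1}}$ interacts correctly with Yu's layered induction, i.e. that twisting the final representation $\rho_\tx{Yu}$ by $\eps$ is the same as performing, at each stage of the tower, the twisted construction with the single-stage character $\eps_x^{G^i/G^{i-1}}$ --- so that Corollary \ref{cor:14.2} is applicable stage by stage. This amounts to tracking how the characters $\eps_x^{G^i/G^{i-1}}$, which are defined on the groups $K'=G^{i-1}(F)_{[x]}$ and extended trivially across $J$, fit together into a character of $K_\tx{Yu}$ and commute with Yu's maps $\kappa_i$, $\phi_i'$, and the inflations along $K_\tx{Yu}\to K_\tx{Yu}/(\cdot)$. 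Once that compatibility is in place, supercuspidality follows because $\eps\rho_\tx{Yu}$ is still irreducible on the compact-mod-center group $K_\tx{Yu}$ and $\ind_{K_\tx{Yu}}^{G(F)}$ of such a representation is supercuspidal; and the depth is unchanged because $\eps$ is trivial on $G^1(F)_{x,r_0/2}\dotsm G^d(F)_{x,r_{d-1}/2}$, so it does not affect any depth-$r_d$ filtration data. I would conclude by remarking that the remaining assertions of \cite[Theorem~15.1]{Yu01} not needed here (e.g. the precise form of $\rho_\tx{Yu}$) are unaffected, as the twist changes $\rho_\tx{Yu}$ only by a character.
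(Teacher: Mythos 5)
Your proposal matches the paper's proof exactly: run Yu's argument for \cite[Theorem~15.1]{Yu01} verbatim with each $\phi_{i-1}'$ replaced by its $\eps_x^{G^i/G^{i-1}}$-twist and with \cite[Theorem~14.2]{Yu01} replaced by Corollary \ref{cor:14.2}. (One small indexing slip: at the step $G^{i-1}\subset G^i$ the twisted representation is $\eps_x^{G^i/G^{i-1}}\phi_{i-1}'$, not $\eps_x^{G^i/G^{i-1}}\phi_i'$.)
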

\begin{proof}
	This follows from Yu's proof of \cite[Theorem~15.1]{Yu01} by replacing $\phi'_{i-1}$  in his proof by $\eps_x^{G^i/G^{i-1}}\dotm\phi_{i-1}'$ for $1 \leq i \leq d$ and replacing \cite[Theorem~14.2]{Yu01} by Corollary \ref{cor:14.2}.
\end{proof}

\subsection{Change of $\chi$-data} \label{sub:root}

This section is concerned with the term $\Delta^\tx{abs}_{II}$ of the Harish-Chandra character formula for supercuspidal representations arising from Yu's construction, as stated in \cite[Corollary 4.10.1]{KalRSP}. This term can be written as a product of corresponding terms, one for each step in the twisted Levi subgroup tower $G^0 \subset \dotsb \subset G^d$ that is part of a Yu datum. We therefore consider a single step, thus a tame twisted Levi subgroup $M$ of a connected adjoint group $G$ and a $G$-generic element $X \in \Lie^*(\Mscab)(F)$ of depth $-r$. In fact, it will be enough for $X$ to be $G$-good such that $\tx{ord}(\<X,H_\alpha\>)=-r$ for all tame maximal tori $T \subset M$ and all $\alpha \in R(T,G/M)$.  Here $\Mscab$ is the abelianization of the Levi subgroup \(\Msc\) of $\Gsc$ corresponding to $M \subset G$ as in \S\ref{sec:main-stmt}.

We will need the concepts of $\chi$-data, $\zeta$-data, and $a$-data, for which we refer the reader to \cite[\S4.6]{KalRSP}.

The term $\Delta^\tx{abs}_{II}$ involves certain $\chi$-data, denoted by $\chi'$ and defined in \cite[(4.7.2)]{KalRSP}, where we recall that $a_\alpha$ in that formula is defined as $\pair X{H_\alpha}$ (the element $X$ here was denoted by $X_{d-1}^*$ in loc.\ cit.). These $\chi$-data depend not just on $M$, $G$, and $X$, but also on a tame maximal torus $T \subset M$, with respect to which the coroot $H_\alpha$ is formed. We will show how the character $\epsilon_\flat = \epsilon_{\flat,0} \dotm \epsilon_{\flat,1} \dotm \epsilon_{\flat,2}$ of Definition \ref{dfn:lstk}, which is part of the restriction to $T(F)\subb$ of the character $\epsilon_x^{G/M}$ of Theorem \ref{thm:main}, measures the passage from these $\chi$-data to different ones, defined here and denoted by $\chi''$.  These alternative \(\chi\)-data have the advantage of depending only on $M$, $G$, and $X$, not on a choice of maximal torus $T$ of $M$, because they are inflated from $\chi$-data $\chi''_0$ for $Z_M$. The $\chi$-data $\chi''_0$ specify an $L$-embedding $\^LM \to \^LG$ as explained in \cite[\S6]{KalDC} and is therefore linked to functorial transfer from $M$ to $G$. This allows for inductive arguments, which are not possible if one uses $\chi'$.

In \S\ref{sub:char} we apply the results of this section to see that the Harish-Chandra characters of supercuspidal representations obtained from the twisted Yu construction of \S\ref{sub:Yu} involve the more natural $\chi''$.

Since \(G\) and \(M\) are fixed throughout this subsection, we suppress superscripts \(G/M\), writing, for example, \(\epsilon_x\) in place of \(\epsilon_x^{G/M}\).

We assume here that $p$ does not divide the order of any bond in the Dynkin diagram for the (possibly reducible) root system $R(T,G)$. In particular, the adjusted square length $\ell_{p'}(\alpha^\vee)$ used in Definition \ref{dfn:lstk} and defined in Definition \ref{dfn:regular-length} is just the ordinary square length \(\ell(\alpha^\vee)\).
Note that this restriction is not needed for Theorem \ref{thm:main}, only for this application.

Let $\Lambda : F \to \C^\times$ be a character that is non-trivial on $O_F$ but trivial on $\mf{p}_F$ and let $\Lambda^0$ be the non-trivial character of $k_F$ whose inflation to $O_F$ equals $\Lambda\res_{O_F}$. 

\begin{lem} \label{lem:ers1}
For every $\alpha \in R(T,G)_\tx{sym,ram}$ choose $\epsilon_\alpha \in \{\pm 1\}$, subject to $\epsilon_\alpha=\epsilon_{\sigma\alpha}$ for all $\sigma \in \Gamma$. Define $\epsilon_\alpha=1$ for $\alpha \in R(T,G)^\tx{sym,ram}$. Denote also by $\epsilon_\alpha$ the unramified quadratic character of $F_\alpha$ that sends any uniformizer to $\epsilon_\alpha$. Then $(\epsilon_\alpha)$ is a set of $\zeta$-data in the sense of \cite[Definition 4.6.4]{KalRSP} and the corresponding character \(\zeta_T\) of \cite[Definition 4.6.5]{KalRSP} has the formula
\[ t \mapsto \prod_{\substack{\alpha \in R(T,G)_\tx{sym,ram}/\Gamma\\ \alpha(t) \in -1 + \mf p_\alpha}} \epsilon_\alpha \]
for $t \in T(F)$.
\end{lem}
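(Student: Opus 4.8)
The statement has two parts: first, that the family $(\epsilon_\alpha)_{\alpha \in R(T,G)}$ is a legitimate set of $\zeta$-data; and second, that the associated character $\zeta_T$ of $T(F)$ has the claimed product formula. For the first part, I would recall from \cite[Definition 4.6.4]{KalRSP} that a set of $\zeta$-data assigns to each symmetric $\alpha \in R(T,G)$ a character $\zeta_\alpha$ of $F_\alpha^\times$ such that $\zeta_\alpha$ restricted to $F_{\pm\alpha}^\times$ is trivial (or, depending on the convention, the quadratic character attached to $F_\alpha/F_{\pm\alpha}$ via class field theory), that $\zeta_{\sigma\alpha} = \zeta_\alpha \circ \sigma^{-1}$ for $\sigma \in \Gamma$, and that $\zeta_{-\alpha} = \zeta_\alpha^{-1}$; asymmetric roots get the trivial character. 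Here the proposed $\zeta_\alpha$ for $\alpha$ ramified symmetric is the unramified quadratic (or trivial) character of $F_\alpha$ determined by the sign $\epsilon_\alpha \in \{\pm1\}$, extended to all symmetric $\alpha$ by declaring it trivial on unramified symmetric roots. The Galois-invariance $\epsilon_\alpha = \epsilon_{\sigma\alpha}$ that we hypothesized gives the required equivariance, and since $\epsilon_\alpha^2 = 1$ and the character is unramified one checks $\zeta_{-\alpha} = \zeta_\alpha$; one also needs that restricting an unramified character of $F_\alpha^\times$ to $F_{\pm\alpha}^\times$ behaves correctly for a \emph{ramified} extension $F_\alpha/F_{\pm\alpha}$ — because the extension is (totally) ramified, the residue fields coincide and the restriction of our unramified character is again unramified, which is compatible with the ramified-symmetric requirement. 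This part is essentially a bookkeeping check against the definitions.

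For the second part, I would unwind \cite[Definition 4.6.5]{KalRSP}, which builds $\zeta_T : T(F) \to \C^\times$ as a product over $\Gamma$-orbits of symmetric roots of local terms of the shape $\zeta_\alpha(\text{something in } F_\alpha^\times \text{ built from } t)$. For ramified symmetric $\alpha$ the relevant local term evaluates $\zeta_\alpha$ on an element of $F_\alpha^\times$ that, up to units in $F_{\pm\alpha}^\times$ (which $\zeta_\alpha$ kills) and up to the norm-one subgroup, is controlled by whether $\alpha(t)$ lies in $1 + \mf p_\alpha$ or $-1 + \mf p_\alpha$; since $\zeta_\alpha$ is unramified, its value on a unit is determined by the image of that unit in $k_\alpha^\times/(k_\alpha^\times)^2 \cong \{\pm1\}$, equivalently by the coset of $\alpha(t)$ modulo $\pm(1 + \mf p_\alpha)$. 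Tracking through, the local factor is $1$ when $\alpha(t) \in 1 + \mf p_\alpha$ and is $\epsilon_\alpha$ when $\alpha(t) \in -1 + \mf p_\alpha$, which reproduces the displayed product (the unramified symmetric roots contribute trivially because there $\zeta_\alpha = 1$, and asymmetric roots contribute nothing). For $t \in T(F)\subb$ one has $\alpha(t) \in O_\alpha^\times$, so $\alpha(t)$ lands in one of these two cosets; for general $t \in T(F)$ one must check the local term still reduces to this, using that $\zeta_\alpha$ is unramified so it is insensitive to the valuation of $\alpha(t)$ — this is where the unramifiedness of the $\zeta$-data is doing real work.

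The main obstacle, I expect, is not any single computation but rather matching conventions precisely: \cite{KalRSP} has a specific normalization of the $\zeta_T$-construction (in terms of the map $\Lang_{F_\alpha/F_{\pm\alpha}}$ and the identification of $F_\alpha^1$ with $F_\alpha^\times/F_{\pm\alpha}^\times$, notation recalled in \S\ref{sec:notation}), and one has to verify that feeding in an unramified character at a ramified place produces exactly the indicator-of-$(-1 + \mf p_\alpha)$ behavior with no stray sign. Concretely, I would fix a $\Gamma$-orbit representative $\alpha$, let $t \in T(F)$, write $\alpha(t) = u \in F_\alpha^\times$, use that $\sigma_\alpha(u) = \alpha(t)^{\pm1}$ forces $u \cdot \sigma_\alpha(u) \in F_{\pm\alpha}^\times$ and hence $u$ differs from an element of $F_{\pm\alpha}^\times$ by an element of $F_\alpha^1$, apply $\Lang_{\pm\alpha}^{-1}$ or its inverse as the definition requires, and observe that $\zeta_\alpha$ of the resulting class depends only on whether $u/\sigma_\alpha(u)$ — i.e. essentially $u^2/N(u)$ — reduces to a square in $k_\alpha$, which flips precisely according to the sign of $u \bmod \mf p_\alpha$ when $u$ is a unit with $u \equiv \pm1$. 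Once the normalization is pinned down this is short; the care is entirely in the setup. I would also remark that $\epsilon_{\flat,1}$ in Definition \ref{dfn:lstk}, being a product of exactly such sign characters indexed by ramified symmetric roots, is an instance of this lemma (with a particular choice of the $\epsilon_\alpha$), which is presumably why the lemma is stated here.
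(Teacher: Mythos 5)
Your plan is structurally the same as the paper's — unwind \cite[Definition 4.6.5]{KalRSP}, observe that for ramified symmetric $\alpha$ the local term is $\zeta_\alpha$ evaluated on an element $\delta_\alpha$ with $\delta_\alpha/\sigma_\alpha(\delta_\alpha)=\alpha(t)$, use that $\zeta_\alpha$ is unramified, and conclude that the value is the indicator of $\alpha(t)\in-1+\mf p_\alpha$. However, the way you deploy unramifiedness is garbled in two places, and these are not just stylistic slips: the claim "since $\zeta_\alpha$ is unramified, its value on a unit is determined by the image of that unit in $k_\alpha^\times/(k_\alpha^\times)^2$" is false — an unramified character is \emph{trivial} on $O_\alpha^\times$, so its value on any unit is $1$. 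Likewise, the phrase "$\zeta_\alpha$ of the resulting class depends only on whether $u/\sigma_\alpha(u)$ reduces to a square in $k_\alpha$" is vacuous: since $\alpha(t)\in F_\alpha^1$ we have $\sigma_\alpha(u)=u^{-1}$, so $u/\sigma_\alpha(u)=u^2$, which always reduces to a square. What actually matters is the \emph{parity of the $F_\alpha$-valuation of} $\delta_\alpha$, not a square class.

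The correct way the unramifiedness enters, and what the paper does in one line, is: $\zeta_\alpha(\delta_\alpha)=\epsilon_\alpha^{e_\alpha\ord(\delta_\alpha)}$ (the exponent is the $F_\alpha$-normalized valuation), so the local factor is controlled by $\ord(\delta_\alpha)$ mod $2$. To connect this to the coset of $\alpha(t)$: write $\delta_\alpha=\varpi^n v$ with $\varpi$ a uniformizer of $F_\alpha$ and $v$ a unit; because $F_\alpha/F_{\pm\alpha}$ is ramified, $\sigma_\alpha$ acts trivially on $k_\alpha$ and $\sigma_\alpha(\varpi)\equiv-\varpi$, so $\alpha(t)=\delta_\alpha/\sigma_\alpha(\delta_\alpha)\equiv(-1)^n\pmod{\mf p_\alpha}$. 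Hence $n$ is even iff $\alpha(t)\in1+\mf p_\alpha$, yielding the stated product. Finally, your check that $(\epsilon_\alpha)$ is a set of $\zeta$-data shows only that $\zeta_\alpha\res_{F_{\pm\alpha}^\times}$ is unramified; for $\zeta$-data it must be \emph{trivial}, which needs the extra observation that a uniformizer of $F_{\pm\alpha}$ has even $F_\alpha$-valuation (the extension being ramified of degree $2$), hence is sent to $\epsilon_\alpha^2=1$.
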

\begin{proof}
That $(\epsilon_\alpha)$ is a set of $\zeta$-data is immediate. The corresponding character of \cite[Definition 4.6.5]{KalRSP} sends $t$ to
\[ \prod_{\alpha \in R(T,G)_\tx{sym,ram}/\Gamma} \epsilon_\alpha^{e_\alpha\ord(\delta_\alpha)}, \]
where $\delta_\alpha \in F_\alpha^\times$ is any element with $\delta_\alpha/\sigma_\alpha(\delta_\alpha)=\alpha(t)$. Now $\alpha(t) \in F_\alpha^1 \subset O_{F_\alpha}^\times$ and its image in $k_\alpha^\times$ lies in $\{\pm 1\}$. If it is $+1 \in k_\alpha^\times$ we can choose $\delta_\alpha \in 1+\mf p_\alpha$ and the contribution of this $\alpha$ is $1$. If it is $-1 \in k_\alpha^\times$ we can choose $\delta_\alpha$ to be a uniformizer in $F_\alpha$ and the contribution of the corresponding factor to the character is $\epsilon_\alpha$.
\end{proof}

Given sets of $a$-data and $\chi$-data for $R(T,G/M)$ we define, following \cite[Definition 4.6.2]{KalRSP},
\[ \Delta_{II}^{G/M}[a,\chi] : T(F) \to \C^\times,\qquad \gamma \mapsto \prod_{\substack{\alpha \in R(T,G/M)_\tx{sym}/\Gamma \\ \alpha(\gamma) \neq 1}} \chi_\alpha\left(\frac{\alpha(\gamma)-1}{a_\alpha}\right). \]
Of course we have $\Delta_{II}^{G/M}(\gamma)=\Delta_{II}^{G,\tx{abs}}(\gamma)/\Delta_{II}^{M,\tx{abs}}(\gamma)$, where we arbitrarily extend the given $a$-data and $\chi$-data from $R(T,G/M)$ to $R(T,G)$. Thus the term $\Delta_{II}^{G/M}[a,\chi]$ is introduced purely for notational convenience.

\begin{notn}
\label{notn:onestep-a}
Remember that \(\Tsc\) stands for the pre-image of \(T\) in \(\Gsc\).  Via the surjective homomorphism $\Tsc \to \Mscab$ we map $X$ to $\Lie^*(\Tsc)(F)$ and for $\alpha \in R(T,G/M)$ define
\begin{equation}
a_\alpha = \pair X{H_\alpha}.
\end{equation}
It is immediate that this is a set of $a$-data for $R(T,G/M)$.
\end{notn}

Let $\chi_\alpha' : F_\alpha^\times \to \C^\times$ be
	\begin{itemize}
	\item trivial if $\alpha$ is asymmetric,
	\item the unique unramified quadratic character if $\alpha$ is unramified symmetric,
and	\item the unique extension of the inflation to $O_{F_\alpha}^\times$ of the quadratic character of $k_\alpha^\times$ that satisfies
\begin{equation}
\chi_\alpha'(2a_\alpha) = \lambda_{F_\alpha/F_{\pm\alpha}}(\Lambda \circ \tr_{F_{\pm\alpha}/F}),
\end{equation}
where $\lambda_{F_\alpha/F_{\pm\alpha}}$ is the Langlands $\lambda$-constant introduced in \cite{LanArt}, if \(\alpha\) is ramified symmeetric.
	\end{itemize}
These definitions of $a$-data and $\chi$-data are as in \cite[\S4.7]{KalRSP}. Recall that the Langlands $\lambda$-constant is closely related to a quadratic Gauss sum. More precisely, according to \cite[Lemma 1.5]{BH05b} we have
\[ \lambda_{F_\alpha/F_{\pm\alpha}}(\Lambda \circ \tr_{F_{\pm\alpha}/F}) = \kappa_\alpha(e_{\pm\alpha})\mf{G}_{k_\alpha}(\Lambda^0 \circ \tr_{k_\alpha/k}), \]
where
the character $\kappa_\alpha$ of $F_{\pm\alpha}^\times$ is the one corresponding to the quadratic extension $F_\alpha/F_{\pm\alpha}$ and \(\mf G_{k_\alpha}(\Lambda^0 \circ \tr_{k_\alpha/k})\) is the Gauss sum
\[
q_\alpha^{-1/2}\sum_{x \in k_\alpha^\times}\sgn_{k_\alpha}(x)\Lambda^0(\tr_{k_\alpha/k}(x)) = q_\alpha^{-1/2}\sum_{x \in k_\alpha} \Lambda^0(\tr_{k_\alpha/k}(x^2)).
\]

Let $\alpha_0$ be the restriction of $\alpha$ to $Z_M$. We define $\chi''_{\alpha_0}$ analogously to \(\chi'_\alpha\); namely, it is
	\begin{itemize}
	\item trivial if $\alpha_0$ is asymmetric,
	\item the unique unramified quadratic character if $\alpha_0$ is unramified symmetric,
and	\item the
unique extension of the inflation to $O_{F_{\alpha_0}}^\times$ of the quadratic character of $k_{\alpha_0}^\times$ that satisfies
\begin{equation}
\chi''_{\alpha_0}(\ell(\alpha^\vee)a_\alpha) = (-1)^{f_{\alpha_0}+1}\mf{G}_{k_{\alpha_0}}(\Lambda^0 \circ \tr_{k_{\alpha_0}/k}),
\end{equation}
where \(\ell(\alpha^\vee) \in \{1,2,3\}\) denotes the integer\-normalized square length of \(\alpha^\vee\) (so that \(\ell(\anondot)\) is identically 1 on every simply laced component of \(R^\vee(T, G)\),
and \(\ell(\alpha^\vee) = 1\) for all short coroots \(\alpha^\vee\)
in every non-simply laced component of \(R(T, G)\), cf.\ Definition \ref{dfn:regular-length}), if $\alpha_0$ is ramified symmetric.
	\end{itemize}

We claim that $\chi''_{\alpha_0}$ depends only on $\alpha_0$, and not on $\alpha$. As we prove in Lemma \ref{lem:pairing}, for every $\alpha \in R(T,G/M)$ the element $\ell(\alpha)a_\alpha$ depends only on $\alpha_0$. If $\ell \in \{1,2,3\}$ is the multiplicity of the strongest bond in the Dynkin diagram of the irreducible component containing $\alpha$, then $\ell(\alpha) \dotm \ell(\alpha^\vee) = \ell$. Since each fiber of the restriction map $R(T,G/M) \to R(Z_M,G)$ lies in a single irreducible component of $R(T,G)$, the element $\alpha_0$ determines $\ell$. Therefore $\ell(\alpha^\vee)^{-1}a_\alpha$ depends only on $\alpha_0$.
But the restriction of $\chi''_{\alpha_0}$ to $O_{F_\alpha}^\times$ is quadratic, so $\chi''_{\alpha_0}(\ell(\alpha^\vee)^2)=1$.

\begin{notn}
\label{notn:onestep-chi}
In this way we obtain a set of $\chi$-data $\chi''_0$ for $R(Z_M,G)$.
As in \cite[Remark 6.4]{KalDC} we obtain from $\chi''_0$ a set of $\chi$-data for $R(T,G/M)$, which we shall denote by $\chi''$. Explicitly, $\chi''_\alpha = \chi''_{\alpha_0} \circ N_{F_\alpha/F_{\alpha_0}}$.
\end{notn}

\begin{lem} \label{lem:ru}
Let $x \in \cB(T,F)$. For every $\gamma \in T(F)$ we have
\[ \Delta_{II}^{G/M}[a,\chi''](\gamma) = \Delta_{II}^{G/M}[a,\chi'](\gamma) \epsilon^{G/M}_{f}(\gamma) \epsilon^{G/M}_{\sharp, x}(\gamma)\epsilon^{G/M}_x(\gamma). \]
\end{lem}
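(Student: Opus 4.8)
The plan is to reduce the stated identity to a purely root-theoretic comparison of the two sets of $\chi$-data, one $\Gamma$-orbit at a time. By Theorem~\ref{thm:main}, on $T(F)\subb$ we have $\epsilon_x^{G/M} = \epsilon_{\sharp, x}^{G/M}\dotm\epsilon_\flat^{G/M}\dotm\epsilon_f^{G/M}$, and since $\epsilon_{\sharp, x}^{G/M}$ and $\epsilon_f^{G/M}$ are $\{\pm1\}$-valued, the right-hand side of the asserted identity equals $\epsilon_\flat^{G/M}(\gamma) = \epsilon_{\flat,0}^{G/M}(\gamma)\,\epsilon_{\flat,1}^{G/M}(\gamma)\,\epsilon_{\flat,2}^{G/M}(\gamma)$; interpreting $\epsilon_x^{G/M}$ on all of $T(F)$ through this product (all three factors extend to characters of $T(F)$ via the root-value formulas of Definition~\ref{dfn:lstk}), it suffices to prove
\[
\prod_{\substack{\alpha \in R(T,G/M)_\tx{sym}/\Gamma\\ \alpha(\gamma)\neq 1}}
	\bigl(\chi''_\alpha/\chi'_\alpha\bigr)\!\left(\frac{\alpha(\gamma)-1}{a_\alpha}\right)
= \epsilon_{\flat,0}^{G/M}(\gamma)\,\epsilon_{\flat,1}^{G/M}(\gamma)\,\epsilon_{\flat,2}^{G/M}(\gamma),
\]
which I would establish by matching $\alpha$-factors $\Gamma$-orbit by $\Gamma$-orbit.

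Next I would compute, for each symmetric $\alpha$, the quadratic character $\mu_\alpha := \chi''_\alpha/\chi'_\alpha$ of $F_\alpha^\times$ (recall $\chi''_\alpha = \chi''_{\alpha_0}\circ\fNorm_{F_\alpha/F_{\alpha_0}}$), which is determined by its conductor together with its value on one fixed uniformizer $\pi_\alpha$, in the following cases. If $\alpha$ is ramified symmetric, then $\alpha_0$ is also ramified symmetric and $e(\alpha/\alpha_0)$ is odd (Lemma~\ref{lem:r1} and its consequences); since $\sgn_{k_{\alpha_0}}\circ\fNorm_{k_\alpha/k_{\alpha_0}} = \sgn_{k_\alpha}$, the characters $\chi'_\alpha$ and $\chi''_\alpha$ agree on $O_{F_\alpha}^\times$, so $\mu_\alpha$ is unramified quadratic and $\mu_\alpha(\pi_\alpha)$ is the ratio of the two Gauss-sum normalizations $\chi'_\alpha(2a_\alpha) = \kappa_\alpha(e_{\pm\alpha})\mf G_{k_\alpha}(\Lambda^0\circ\tr_{k_\alpha/k})$ and $\chi''_{\alpha_0}(\ell(\alpha^\vee)a_\alpha) = (-1)^{f_{\alpha_0}+1}\mf G_{k_{\alpha_0}}(\Lambda^0\circ\tr_{k_{\alpha_0}/k})$. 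If $\alpha$ is unramified symmetric, then either $\alpha_0$ is unramified symmetric, in which case $\mu_\alpha$ is unramified quadratic with $\mu_\alpha(\pi_\alpha) = (-1)^{[k_\alpha:k_{\alpha_0}]+1}$, hence trivial because $[k_\alpha:k_{\alpha_0}]$ is odd; or $\alpha_0$ is ramified symmetric, in which case $\chi''_\alpha$ is ramified exactly when $e(\alpha/\alpha_0)$ is odd, making $\mu_\alpha$ ramified quadratic. The tools for these computations are the defining conditions for $\chi'_\alpha$ and $\chi''_{\alpha_0}$, the identification of Langlands $\lambda$-constants with normalized quadratic Gauss sums via \cite[Lemma~1.5]{BH05b}, a Hasse--Davenport relation linking $\mf G_{k_\alpha}$ and $\mf G_{k_{\alpha_0}}$, Lemma~\ref{lem:pairing} (so that $\ell(\alpha^\vee)^{-1}a_\alpha\in F_{\alpha_0}^\times$ and $\chi''_{\alpha_0}$ genuinely depends only on $\alpha_0$), and the expression of the toral invariant $f_{(G,T)}(\alpha)$ of \cite[\S4.1]{KalEpi} as a sign built from the same Gauss sums.

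Then I would feed $(\alpha(\gamma)-1)/a_\alpha$ into $\mu_\alpha$. For symmetric $\alpha$ and $\gamma\in T(F)$ we have $\alpha(\gamma)\in F_\alpha^1\subset O_{F_\alpha}^\times$; since $a_\alpha$ is $\sigma_\alpha$-anti-invariant, for ramified $\alpha$ the $F_\alpha$-normalized valuation $v_\alpha(a_\alpha)$ is odd, and, writing $\alpha(\gamma) = \delta/\sigma_\alpha(\delta)$, the difference $\delta-\sigma_\alpha(\delta)$ is also anti-invariant, so $v_\alpha(\alpha(\gamma)-1)$ is odd precisely when $\alpha(\gamma)\in 1+\mf p_\alpha$. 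Hence, in the ramified-symmetric case, $\mu_\alpha((\alpha(\gamma)-1)/a_\alpha)$ equals $\mu_\alpha(\pi_\alpha)$ when $\alpha(\gamma)\in -1+\mf p_\alpha$ and $1$ otherwise, so by Lemma~\ref{lem:ers1} the product over ramified symmetric $\alpha$ is $\prod_{\alpha(\gamma)\in -1+\mf p_\alpha}\mu_\alpha(\pi_\alpha)$; identifying $\mu_\alpha(\pi_\alpha)$ via the Gauss-sum bookkeeping above with $f_{(G,T)}(\alpha)\dotm(-1)^{[k_\alpha:k]+1}\sgn_{k_\alpha}(e_\alpha\ell_{p'}(\alpha^\vee))\dotm\sgn_{k_\alpha}(-1)^{(e(\alpha/\alpha_0)-1)/2}$ yields exactly $\epsilon_f^{G/M}(\gamma)\,\epsilon_{\flat,1}^{G/M}(\gamma)\,\epsilon_{\flat,2}^{G/M}(\gamma)$. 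In the unramified-symmetric case with $\alpha_0$ ramified symmetric and $2\nmid e(\alpha/\alpha_0)$, applying the ramified $\mu_\alpha$ to $(\alpha(\gamma)-1)/a_\alpha$ and invoking the finite-field identity $\sgn_{k_\alpha}(\zeta-1) = (\text{fixed sign})\dotm\sgn_{k_\alpha^1}(\zeta)$ for $\zeta\in k_\alpha^1$ (with the fixed sign absorbed into the normalization) contributes $\sgn_{k_\alpha^1}(\alpha(\gamma))$, i.e.\ the unramified-symmetric part of $\epsilon_{\flat,0}^{G/M}(\gamma)$. All remaining $\alpha$ contribute trivially on both sides: asymmetric $\alpha$ do not occur in $\Delta_{II}^{G/M}$ and the matching ``$2\nmid e(\alpha/\alpha_0)$'' subproduct of $\epsilon_{\flat,0}^{G/M}$ vanishes identically on $T(F)$ (by a parity computation in the style of Lemma~\ref{lem:r1}), while unramified-symmetric $\alpha$ with $2\mid e(\alpha/\alpha_0)$ give $\mu_\alpha = 1$. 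Collecting the three displayed contributions gives $\epsilon_\flat^{G/M}(\gamma)$.

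The hard part will be the sign bookkeeping in the last two steps: transporting the various quadratic Gauss sums over $k_\alpha$ and $k_{\alpha_0}$ through the Hasse--Davenport relation while keeping the $\ell_{p'}(\alpha^\vee)$- and $e(\alpha/\alpha_0)$-dependent parities straight, and recognizing the resulting signs as exactly the factors $(-1)^{[k_\alpha:k]+1}$, $\sgn_{k_\alpha}(e_\alpha\ell_{p'}(\alpha^\vee))$, $\sgn_{k_\alpha}(-1)^{(e(\alpha/\alpha_0)-1)/2}$ and $f_{(G,T)}(\alpha)$ recorded in Definition~\ref{dfn:lstk} --- which is precisely where the oddness of $e(\alpha/\alpha_0)$ for ramified symmetric $\alpha$ and the choice of adjusted square length are used.
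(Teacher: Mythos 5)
Your reduction via Theorem~\ref{thm:main} to the identity
\[
\prod_{\substack{\alpha \in R(T,G/M)_\tx{sym}/\Gamma\\ \alpha(\gamma)\neq 1}}
	\bigl(\chi''_\alpha/\chi'_\alpha\bigr)\!\left(\frac{\alpha(\gamma)-1}{a_\alpha}\right)
= \epsilon_\flat^{G/M}(\gamma)
\]
is correct and matches the paper. The gap is in your claim that ``the matching $2\nmid e(\alpha/\alpha_0)$ subproduct of $\epsilon_{\flat,0}^{G/M}$ vanishes identically on $T(F)$ (by a parity computation in the style of Lemma~\ref{lem:r1}).'' No such parity argument exists: the asymmetric part
\[
\prod_{\substack{\alpha \in R(T,G/M)_\tx{asym}/\Sigma\\ \alpha_0 \in R(Z_M,G/M)_\tx{sym,ram}\\ 2\nmid e(\alpha/\alpha_0)}}
	\sgn_{k_\alpha}(\alpha(\gamma))
\]
is a genuinely non-trivial character of $\ms T(k)$ in general, and it cannot be made to cancel --- for $\alpha$ asymmetric with $\alpha_0$ ramified symmetric, the $\sigma_{\alpha_0}$-translate $\sigma_{\alpha_0}\alpha$ lies in the fibre over $-\alpha_0$ but need not equal $-\alpha$, so there is no $\Sigma$-orbit pairing that forces the product to collapse. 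Since the left side of your displayed identity is a product only over symmetric $\alpha$, your root-by-root matching has nowhere to put this asymmetric contribution, and a direct comparison $\chi''_\alpha$ vs. $\chi'_\alpha$ for symmetric $\alpha$ alone cannot produce it.

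The paper avoids this trap by routing the comparison through the intermediary minimally ramified $\chi$-data $\min\chi''$: one has $\chi''_\alpha = (\min\chi'')_\alpha$ at ramified symmetric $\alpha$, and $(\min\chi'')_\alpha = \chi'_\alpha$ at the remaining $\alpha$. The step $\Delta_{II}[a,\chi''] = \Delta_{II}[a,\min\chi''] \dotm \epsilon_{\flat,0}$ is exactly where the asymmetric contributions of $\epsilon_{\flat,0}$ enter, and it is not a term-by-term identity but a rebalancing between symmetric and asymmetric orbits; this is the content of \cite[Proposition 5.27]{KalDC} together with \cite[Lemma 4.6.6]{KalRSP}, which you would have to re-prove. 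Your treatment of the ramified-symmetric case (the Gauss-sum bookkeeping giving $\epsilon_f^{G/M}\epsilon_{\flat,1}^{G/M}\epsilon_{\flat,2}^{G/M}$) is essentially the paper's computation for $\min\chi''$ vs.\ $\chi'$ and is sound, but the $\chi''$ vs.\ $\min\chi''$ step requires a different argument than the one you sketched.
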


We have used the superscripts $G/M$ in the statement of the lemma for emphasis, but will otherwise continue to omit them in this subsection, including in the proof.

\begin{proof}
We apply Theorem \ref{thm:main} to the maximal torus $T$ and see that the identity we are proving becomes
\begin{equation} \label{eq:eps_sharp}
\Delta_{II}[a,\chi''](\gamma) = \Delta_{II}[a,\chi'](\gamma)\dotm \epsilon_\flat(\gamma).
\end{equation}

We now consider the difference between $\chi'_\alpha$ and $\chi''_\alpha$. Both of these are tamely ramified $\chi$-data. By construction $\chi'$ is minimally ramified, in the sense of \cite[Definition 4.6.1]{KalRSP}. Although $\chi''_0$ was a set of minimally ramified $\chi$-data for $R(Z_M,G)$, its inflation $\chi''$ need not be minimally ramified, as discussed in \cite[\S5.4]{KalDC}. In the notation of loc.\ cit., we have $\chi''=\inf \chi''_0$, and we have the associated minimally ramified $\chi$-data $\min \chi''$. Our comparison of $\chi'_\alpha$ and $\chi''_\alpha$ will proceed through the intermediary $(\min \chi'')_\alpha$. We have $\chi''_\alpha=(\min \chi'')_\alpha$ when $\alpha$ is ramified symmetric, and $(\min \chi'')_\alpha=\chi'_\alpha$ when $\alpha$ is not ramified symmetric.

The comparison between $\chi''$ and $\min \chi''$ is handled by \cite[Proposition 5.27]{KalDC} and \cite[Lemma 4.6.6]{KalRSP}, which imply
\begin{equation} \label{eq:d21}
\Delta_{II}[a,\chi''](\gamma)=\Delta_{II}[a,\min \chi''](\gamma) \dotm \epsilon_{\flat,0}(\gamma).
\end{equation}
Turning to the comparison between $\min \chi''$ and $\chi'$, which only differ at ramified symmetric roots, let $\alpha$ be such a root. Then both $\chi_\alpha'$ and $\chi''_\alpha$ are tamely ramified characters of $F_\alpha^\times$ that restrict to the character $\kappa_\alpha : F_{\pm\alpha}^\times/N_{F_\alpha/F_{\pm\alpha}}(F_\alpha^\times) \to \{\pm 1\}$ of $F_{\pm\alpha}^\times$. Recall that the restriction of $\kappa_\alpha$ to $O_{\pm\alpha}^\times$ factors through $k_{\pm\alpha}^\times=k_{\alpha}^\times$ and is the quadratic character of this cyclic group of even order.
Therefore, any tamely ramified character of $F_\alpha^\times$ that extends $\kappa_\alpha : F_{\pm\alpha}^\times \to \{\pm 1\}$ also extends the inflation to $O_\alpha^\times$ of the sign charcter of $k_\alpha^\times$.

We see that $\chi'_\alpha$ and $(\min \chi'')_\alpha=\chi''_\alpha$ are either equal, or differ by the unramified quadratic character of $F_\alpha^\times$. To compute their quotient it is enough to evaluate them at an element of $F_\alpha^\times$ of odd valuation. We have that $\ell(\alpha)a_\alpha$ is such an element,
because its trace in $F_{\pm\alpha}$ is zero. By definition we have $\chi'_\alpha(\ell(\alpha^\vee)a_\alpha)=\kappa_\alpha(2\ell(\alpha^\vee))\lambda_{F_\alpha/F_{\pm\alpha}}(\Lambda \circ \tr_{F_{\pm\alpha}/F})$. By \cite[Lemma 1.5]{BH05b} the constant $\lambda_{F_{\alpha}/F_{\pm\alpha}}(\Lambda \circ \tr_{F_{\pm\alpha}/F})$ is equal to $\kappa_\alpha(e_{\pm\alpha})\mf{G}_{k_\alpha}(\Lambda^0 \circ \tr_{k_\alpha/k})$ and we arrive at
\[ \chi'_\alpha(\ell(\alpha^\vee)a_\alpha)=\kappa_\alpha(e_\alpha \dotm \ell(\alpha^\vee))\mf{G}_{k_\alpha}(\Lambda^0 \circ \tr_{k_\alpha/k}). \]

On the other hand, using that $\ell(\alpha^\vee)a_\alpha$ depends only on $\alpha_0$ and thus lies in $F_{\alpha_0}$, we compute
\begin{align*}
\chi''_\alpha(\ell(\alpha^\vee)a_\alpha)&{}=\chi''_{\alpha_0}(N_{F_\alpha/F_{\alpha_0}}(\ell(\alpha^\vee)a_\alpha))\\
&{}=\chi''_{\alpha_0}(\ell(\alpha^\vee)a_\alpha)^{[F_\alpha:F_{\alpha_0}]}\\
&{}=\bigl((-1)^{f_{\alpha_0}+1}\mf{G}_{k_{\alpha_0}}(\Lambda^0 \circ \tr_{k_{\alpha_0}/k})\bigr)^{e(\alpha/\alpha_0)f(\alpha/\alpha_0)}.
\end{align*}
By Lemma \ref{lem:r1} the integer $e(\alpha/\alpha_0)$ is odd. Therefore the relation $\kappa_{\alpha_0}(-1)=\mf{G}_{k_{\alpha_0}}(\Lambda^0 \circ \tr_{k_{\alpha_0}/k})^2$ implies
\begin{align*}
\chi''_\alpha(\ell(\alpha^\vee)a_\alpha)&{}=\bigl((-1)^{f_{\alpha_0}+1}\mf{G}_{k_{\alpha_0}}(\Lambda^0 \circ \tr_{k_{\alpha_0}/k}) \dotm \kappa_{\alpha_0}(-1)^{(e(\alpha/\alpha_0)-1)/2}\bigr)^{f(\alpha/\alpha_0)}\\
&{}=(-1)^{f_{\alpha}}\dotm(-\mf{G}_{\kappa_{\alpha_0}}(\Lambda^0 \circ \tr_{k_{\alpha_0}/k}))^{f(\alpha/\alpha_0)} \dotm \kappa_{\alpha}(-1)^{(e(\alpha/\alpha_0)-1)/2}\\
&{}=(-1)^{f_{\alpha}+1}\dotm\mf{G}_{\kappa_\alpha}(\Lambda^0 \circ \tr_{k_\alpha/k}) \dotm \kappa_{\alpha}(-1)^{(e(\alpha/\alpha_0)-1)/2},
\end{align*}
where the last equality is due to the Hasse--Davenport relation. We conclude that the ratio $\chi''_\alpha(\ell(\alpha^\vee)a_\alpha)/\chi'_\alpha(\ell(\alpha^\vee)a_\alpha)$ equals
\[ (-1)^{f_\alpha+1}\kappa_\alpha(e_{\alpha}\ell(\alpha^\vee))\kappa_\alpha(-1)^\frac{e(\alpha/\alpha_0)-1}{2}. \]
With \cite[Lemma 4.6.6]{KalRSP} and Lemma \ref {lem:ers1} we conclude
\begin{equation} \label{eq:d22}
\Delta_{II}[a,\min \chi''](\gamma)=\Delta_{II}[a,\chi'](\gamma) \dotm \epsilon_{\flat,1}(\gamma) \dotm \epsilon_{\flat,2}(\gamma).
\end{equation}
Combining \eqref{eq:eps_sharp}, \eqref{eq:d21}, and \eqref{eq:d22}, we obtain the result.
\end{proof}

\subsection{Character formula for regular supercuspidal representations} \label{sub:char}

In this section we give a formula for the Harish-Chandra character of a supercuspidal representation $\pi_{(S,\theta)}$ obtained from a tame elliptic pair that is regular (cf. \cite[Definition 3.7.5]{KalRSP}), or more generally $k_F$-non-singular (cf. \cite[Definition 3.4.1]{KalSLP}). For shallow elements this formula is based on \cite[\S4.4]{KalRSP}, which in turn is based on \cite[Theorem 4.28]{DS18}. For general elements, this formula is based on the forthcoming work \cite{Spice21}, which in turn rests on \cite{Spice17}, especially \cite[Theorem 5.3.11]{Spice17}. Using the twisted Yu construction not only makes it possible to obtain a formula for the Harish-Chandra character, but also yields a cleaner formula than those that have previously been available, by removing the auxiliary sign characters that were present in the formula obtained earlier in special cases from the original Yu construction.

Consider a tame $k_F$-non-singular elliptic pair $(S,\theta)$ and let $\pi_{(S,\theta)}$ be the associated supercuspidal representation. We emphasize that we are now using the twisted Yu construction as introduced in Theorem \ref{thm:twistedYu}. In this setting, this amounts to applying the original Yu construction to the pair $(S,\theta\dotm\epsilon)$, where $\epsilon$ is the product of $\epsilon^{G^i/G^{i-1}}_x$ over $i=0,\dotsc,d$, where $G^0 \subset \dotsb \subset G^d$ is the twisted Levi tower associated to $(S,\theta)$ and $x$ being an arbitrary point in $\cB(S,F)$.

We will state two forms of the character formula for $\pi_{(S,\theta)}$ --- one (Proposition \ref{pro:char}) for regular elements of $S(F)$ that are topologically semi-simple modulo $Z_G$, and one (Theorem \ref{thm:char}) for arbitrary regular semi-simple elements of $G(F)$. The first form is a consequence of the second, but we state it separately because it has a simpler form, requires fewer assumptions, and is moreover independent of \cite{Spice21}.

\begin{rem}
\label{rem:mod-Z-or-Z0}
Recall from \cite[Definition 2.23]{Spice08} that a topological Jordan decomposition modulo a central subgroup \(Z\) (there called a topological \(F\)-Jordan decomposition) of an element \(\gamma\) is a commuting decomposition \(\gamma = \gamma_0\dotm\gamma_{0+}\), where \(\gamma_0\) is topologically semisimple modulo \(Z\) and \(\gamma_{0+}\) is topologically unipotent modulo \(Z\) (there called absolutely \(F\)-semisimple and topologically \(F\)-unipotent modulo \(Z\)), in the sense of \cite[Definition 2.15]{Spice08}.  To be explicit, \(\gamma_0\) is semisimple and, for every maximal torus \(T \subset C_G(\gamma_0)\) and every character \(\chi \in X^*(T/Z)\), we have that \(\chi(\gamma_0)\) has finite order prime to \(p\); and the image \(\bar\gamma_{0+}\) of \(\gamma_{0+}\) in \(G/Z\) satisfies \(\lim_{n \to \infty} \bar\gamma_{0+}^{p^n} = 1\) \cite[Lemma 2.21]{Spice08}.  By \cite[Proposition 2.24]{Spice08}, the ingredients in a topological Jordan decomposition modulo \(Z\) are uniquely determined modulo \(Z(F)\), because a topological Jordan decomposition modulo \(Z\) projects to a topological Jordan decomposition (modulo the trivial subgroup) in \((G/Z)(F)\).

We can consider topological Jordan decompositions modulo \(Z = Z_G\) and \(Z = Z_G^\circ\), and the answers may differ, but not by much.  Namely, a topological Jordan decomposition modulo \(Z_G^\circ\) is also one modulo \(Z_G\); conversely, if \(\gamma = \gamma_0\dotm\gamma_{0+}\) is a topological Jordan decomposition modulo \(Z_G\), then there is some element \(z\) in the maximal bounded subgroup \(Z_G(F)\subb\) of \(Z_G(F)\) such that \(\gamma = (\gamma_0\dotm z)(z^{-1}\dotm \gamma_{0+})\) is a topological Jordan decomposition modulo \(Z_G^\circ\).
\end{rem}

We use the notations $\Lambda$ and $\Delta_{II}^\tx{abs}[a,\chi'']$ of \S\ref{sub:root}. Let $T_G$ be the minimal Levi subgroup of the quasi-split inner form of $G$ (well-defined up to conjugation, in particular up to isomorphism). Let $e(G)$ be the Kottwitz sign of $G$, cf. \cite{Kot83}.

\begin{pro} \label{pro:char}
Let $\gamma \in S(F)$ be topologically semisimple modulo \(Z_G\).
The value of the normalized Harish-Chandra character of $\pi_{(S,\theta)}$ at $\gamma$ is given by
\[ e(G)\epsilon_L(X^*(T_G)_\C-X^*(S)_\C,\Lambda)\sum_{w \in N(S,G)(F)/S(F)}\Delta_{II}^\tx{abs}[a,\chi''](\^w\gamma)\theta(\^w\gamma). \]
\end{pro}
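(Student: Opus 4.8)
The strategy is to deduce this from the known character formula for the \emph{original} Yu construction, namely \cite[Corollary 4.10.1]{KalRSP} (as recalled in the introduction), by tracking how twisting the datum by $\epsilon$ changes each ingredient. Recall that passing to the twisted Yu construction amounts to applying the original construction to the pair $(S,\theta\dotm\epsilon)$, where $\epsilon = \prod_i \epsilon^{G^i/G^{i-1}}_x$ with $x$ an arbitrary point of $\cB(S,F)$. Since the original formula for $\pi_{(S,\theta\dotm\epsilon)}$ at a shallow regular $\gamma$ reads
\[
e(G)\epsilon_L(X^*(T_G)_\C - X^*(S)_\C,\Lambda)\!\!\!\sum_{w \in N(S,G)(F)/S(F)}\!\!\!\Delta_{II}^\tx{abs}[a,\chi'](\^w\gamma)\,\epsilon_{f,\tx{ram}}(\^w\gamma)\,\epsilon^\tx{ram}(\^w\gamma)\,(\theta\dotm\epsilon)(\^w\gamma),
\]
the task reduces to showing that the product of correction factors telescopes correctly. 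Precisely, I would verify that, summand by summand,
\[
\Delta_{II}^\tx{abs}[a,\chi'](\^w\gamma)\,\epsilon_{f,\tx{ram}}(\^w\gamma)\,\epsilon^\tx{ram}(\^w\gamma)\,\epsilon(\^w\gamma) = \Delta_{II}^\tx{abs}[a,\chi''](\^w\gamma),
\]
after which the two formulas coincide and we are done.

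The identity above is essentially Lemma \ref{lem:ru} applied step by step along the twisted Levi tower $G^0 \subset \dotsb \subset G^d$. First I would recall, via Remark \ref{rem:compare-notn}, the dictionary: $\epsilon^\tx{ram} = \epsilon_{\sharp,x}$ and $\epsilon_{f,\tx{ram}}$ is the analogue of $\epsilon_f$ — more precisely $\epsilon_f^{G/M}$ is the \emph{quotient} of the $\epsilon_{f,\tx{ram}}$-characters for $G$ and $M$, so that the product of $\epsilon_f^{G^i/G^{i-1}}$ over $i$ collapses by telescoping to the single character $\epsilon_{f,\tx{ram}}$ attached to $(S,G)$ (the innermost term being trivial). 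The same telescoping applies to $\epsilon^\tx{ram}=\epsilon_{\sharp,x}$ and to the $\Delta_{II}$-terms, where $\Delta_{II}^\tx{abs}[a,\chi]$ for the full torus $S$ in $G$ factors as the product of the relative terms $\Delta_{II}^{G^i/G^{i-1}}[a,\chi]$ by the multiplicativity noted just before Notation \ref{notn:onestep-a}. Applying Lemma \ref{lem:ru} to each step $M = G^{i-1} \subset G = G^i$ (using that $\epsilon = \prod_i \epsilon_x^{G^i/G^{i-1}}$ and that the $\chi$-data $\chi''$ for $G$ are built compatibly from the one-step $\chi$-data along the tower, as in \S\ref{sub:root}) gives exactly the claimed cancellation: each factor $\Delta_{II}^{G^i/G^{i-1}}[a,\chi'']\ =\ \Delta_{II}^{G^i/G^{i-1}}[a,\chi']\cdot\epsilon_f^{G^i/G^{i-1}}\cdot\epsilon_{\sharp,x}^{G^i/G^{i-1}}\cdot\epsilon_x^{G^i/G^{i-1}}$, and multiplying over $i$ yields the boxed identity.

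Two points need care. First, one must check that the whole argument is compatible with the passage from the fixed point $x \in \cB(S,F)$ used to define $\epsilon$ to a genuinely building-independent statement; here part \ref{lem:diff:conj} of Lemma \ref{lem:diff} (equivariance $\^{m^{-1}}\epsilon_{m\cdot y}^{G/M} = \epsilon_y^{G/M}$) together with Corollary \ref{cor:unique} shows that $\theta\dotm\epsilon$ is well-defined up to the ambiguities that do not affect the resulting representation, and that the character value at $\^w\gamma$ transforms correctly under the Weyl-group sum. Second — and this is the genuine content for the non-shallow case — the formula must be extended from shallow regular elements, topologically semisimple modulo $Z_G$, to all regular semisimple elements; for Proposition \ref{pro:char} as stated this is the restriction to topologically semisimple $\gamma$, so the base input is exactly \cite[\S4.4]{KalRSP} as reinterpreted in \cite[Corollary 4.10.1]{KalRSP} and no appeal to \cite{Spice21} is needed. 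The main obstacle is organizational rather than deep: one must set up the bookkeeping so that the telescoping of $\epsilon_f$, of $\epsilon_{\sharp,x}$, and of the $\Delta_{II}$ factors along the tower is simultaneous and the $\chi$-data $\chi''$ are the ones produced inductively (so that Lemma \ref{lem:ru} applies verbatim at each step), and confirm that the $\epsilon_L$-factor and $e(G)$ are untouched by the twist.
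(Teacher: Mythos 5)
Your proposal is correct and takes essentially the same approach as the paper's proof, which simply applies \cite[Corollary 4.10.1]{KalRSP} to the pair $(S,\theta\dotm\epsilon)$ and invokes Lemma \ref{lem:ru} (understood inductively along the tower) to convert the auxiliary characters and $\chi'$-data into $\chi''$-data. The one small point the paper addresses that you omit is that \cite[Corollary 4.10.1]{KalRSP} is stated for topological Jordan decompositions modulo $Z_G^\circ$ rather than $Z_G$, and one needs Remark \ref{rem:mod-Z-or-Z0} to see that this discrepancy does not affect the resulting character formula.
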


Upon comparing this formula to that of \cite[Corollary 4.10.1]{KalRSP}, we see that the auxiliary characters $\epsilon_{f,\tx{ram}}$ and $\epsilon^\tx{ram}$ of loc.\ cit.\ have disappeared. As discussed in \S\ref{sub:root}, the $\chi$-data used here and denoted by $\chi''$ are more natural than the $\chi$-data $\chi'$ used in loc.\ cit., and allow for inductive arguments along the chain of groups $G^0 \subset G^1 \subset \dotsb \subset G^d$.

\begin{proof}
This follows from Lemma \ref{lem:ru} and \cite[Corollary 4.10.1]{KalRSP}, which gives a formula for $\pi_{(S,\theta)}$ obtained via the original Yu construction, so in this setting must be applied to $(S,\theta\dotm\epsilon)$.  The latter result is stated in terms of topological Jordan decompositions modulo \(Z_G^\circ\), but, by Remark \ref{rem:mod-Z-or-Z0}, this does not affect the validity of the resulting character formula.
\end{proof}

The statement of Theorem \ref{thm:char} will require a number of assumptions. 
We begin with the assumption that \(p\) is not a bad prime for \(G\), does not divide the order of the fundamental group of \(G\dergp\), and does not divide the order of $Z(G\dergp)$. These three assumptions are needed for \cite[Proposition 3.6.7]{KalRSP} (the final condition is not formally stated in loc. cit., but it is needed, as was pointed out by Charlotte Chan and Masao Oi; for a careful discussion of this we refer to the list of errata on the webpage of T.K.). We note that all these assumptions are implied by the assumption that $p$ is prime to the order of the absolute Weyl group of $G$.

In addition to the assumptions on \(p\) needed to apply the results of \cite[\S3.6]{KalRSP},
we assume that the characteristic of $F$ is zero and that the exponential map for $G$ converges on \(\fg(F)_{0+}\). 
This implies, for every tame, twisted Levi subgroup \(M\) of \(G\), that the exponential map for \(M\) converges on \(M(F)_{0+}\);
and, by \cite[Lemma 3.3.2]{KalRSP}, that the exponential map for the abelianization \(M\abgp = M/M\dergp\) also converges on \(M\abgp(F)_{0+}\).
By \cite[Lemma B.0.3]{DR09}, we have the necessary convergence whenever \(p \geq (2+e)n\), where $e$ is the ramification degree of $F/\Q_p$ and $n$ is the dimension of the smallest faithful algebraic representation of $G$.

One of the consequences of the convergence of the exponential map is that there exists a (non-unique) element $X \in \Lie^*(S)(F)$ that satisfies $\theta(\exp(Y))=\Lambda(\<X,Y\>)$ for all $Y \in \Lie(S)(F)_{0+}$. We will use such an element in Lemma \ref{lem:char1} below.

Recall the sign character $\tilde\epsilon^{G/G^0}$ introduced in \cite[\S4.3]{DS18} (see also \cite[(4.3.4)]{KalRSP}, where it was not decorated with a superscript, and was relative to $G/G^{d-1}$).

\begin{lem}
\label{lem:char1}
Let $\gamma \in G(F)$ be a regular semisimple element and let $\gamma=\gamma_0 \cdot \gamma_{0+}$ be a topological Jordan decomposition modulo $Z_G$.

Suppose that \(X\) belongs to \(\Lie^*(S)(F)\) and satisfies
\(\theta(\exp(Y)) = \Lambda(\<X, Y\>)\) for all \(Y \in \Lie(S)(F)_{0+}\).

The normalized character of the non-singular supercuspidal representation \(\pi_{(S, \theta)}\) of \(G\) at \(\gamma \in G\) is given by
\[
(-1)^{\operatorname{rk}_F(G^0)}\!\!\!\!\!\!\!\!\!\!\sum_{\substack{
	g \in S(F)\lmod G(F)/J(F) \\
	\^g{\gamma_0} \in S(F)
}}
	\!\!\!\!\!\!\!\!\!\!(-1)^{\operatorname{rk}_F({J^0_g})}
	\tilde e^{G/G^0}(\theta, \^g{\gamma_0})
	\epsilon^{G/G^0, \sharp}(\^g{\gamma_0})\dotm
	\theta(\^g{\gamma_0})
	\widehat O_{X^g}^J(\log \gamma_{0+}),
\]
where
	\begin{itemize}
	\item $J=C_G(\gamma_0)^\circ$, $J^0_g=C_{G^0}({^g}\gamma_0)^\circ$,
	\item $X^{g}=\Ad^*(g)^{-1}X$, i.e., \(\pair{X^g}Y = \pair X{\^g Y}\) for all \(Y \in \fg\),
	which we view as an element of \(\Lie^*(J)(F)\) using Remark \ref{rem:dual-Lie-subspace},
and	\item $\epsilon^{G/G^0,\sharp}(\^g{\gamma_0}) = \epsilon^{G/G^0}_{\sharp, x}(\^g{\gamma_0}) \dotm \epsilon^{G/G^0}_x(\^g{\gamma_0})$.
	\end{itemize}
\end{lem}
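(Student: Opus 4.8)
The plan is to deduce this from the Harish--Chandra character formula for Yu's \emph{original} construction, namely \cite[Theorem~10.2.1]{Spice21} (which rests on \cite{Spice17}, in particular \cite[Theorem~5.3.11]{Spice17}), by transporting it across the identification recorded at the beginning of this subsection. Concretely: by Theorem \ref{thm:twistedYu}, the representation $\pi_{(S,\theta)}$ produced by the twisted Yu construction from the datum $(S,\theta)$ is the representation produced by the original Yu construction from the datum $(S,\theta\dotm\epsilon)$, where $\epsilon=\prod_{i=1}^{d}\epsilon_x^{G^i/G^{i-1}}$, where $G^0\subset\dotsb\subset G^d=G$ is the twisted Levi tower of $(S,\theta)$ and $x\in\cB(S,F)$ is arbitrary. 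Here I write $\epsilon_x^{G/G^0}$ for $\epsilon$, in keeping with the convention --- already in force for $\tilde e^{G/G^0}$ and $\epsilon_{\sharp,x}^{G/G^0}$ --- under which a superscript $G/G^0$ on a character attached to a Yu datum denotes the product over the steps of the tower of the corresponding characters of \S\ref{sec:main-stmt}.

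Before invoking \cite[Theorem~10.2.1]{Spice21} for $(S,\theta\dotm\epsilon)$, I would verify that this twisted datum is admissible with the same auxiliary choices as $(S,\theta)$. Each $\epsilon_x^{G^i/G^{i-1}}$ is inflated from the residue-field group $\ms S(k)$, so $\epsilon$ is trivial on $S(F)_{0+}$; hence $(S,\theta\dotm\epsilon)$ has the same twisted Levi tower and the same depth and genericity data as $(S,\theta)$, and the element $X\in\Lie^*(S)(F)$ of the hypothesis --- which realizes $\theta$ on $\Lie(S)(F)_{0+}$ through $\Lambda$ --- realizes $\theta\dotm\epsilon$ there as well. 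The hypotheses on $p$, on $\operatorname{char}(F)$, and on the convergence of the exponential map depend only on $G$, $F$, and $p$, and so are unaffected by the twist.

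Next I would apply \cite[Theorem~10.2.1]{Spice21} to $(S,\theta\dotm\epsilon)$ and to the topological Jordan decomposition $\gamma=\gamma_0\dotm\gamma_{0+}$ modulo $Z_G$; if loc.\ cit.\ is phrased using a decomposition modulo $Z_G^\circ$, then, exactly as in the proof of Proposition \ref{pro:char}, Remark \ref{rem:mod-Z-or-Z0} shows the resulting value is independent of this choice. That theorem produces a sum over $g\in S(F)\lmod G(F)/J(F)$ with $\^g{\gamma_0}\in S(F)$, whose ingredients are the Kottwitz-type signs $(-1)^{\operatorname{rk}_F(G^0)}$ and $(-1)^{\operatorname{rk}_F(J^0_g)}$, the sign character $\tilde e^{G/G^0}$ of \cite[\S4.3]{DS18}, the character $\epsilon_{\sharp,x}^{G/G^0}$, the value $\theta$, and the orbital-integral transform $\widehat O^J_{X^g}(\log\gamma_{0+})$ --- but with $\theta$ replaced by $\theta\dotm\epsilon$ wherever it occurs. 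It then remains to simplify: the explicit character factor becomes $(\theta\dotm\epsilon)(\^g{\gamma_0})=\theta(\^g{\gamma_0})\dotm\epsilon_x^{G/G^0}(\^g{\gamma_0})$, which supplies the $\epsilon_x^{G/G^0}$ half of $\epsilon^{G/G^0,\sharp}$; whereas $\tilde e^{G/G^0}$, $\epsilon_{\sharp,x}^{G/G^0}$, $X^g=\Ad^*(g)^{-1}X$, and $\widehat O^J_{X^g}$ depend on the input pair only through the tower and the good elements attached to the $\phi_i$ --- data left intact by the shallow twist $\epsilon$ --- so that in particular $\tilde e^{G/G^0}(\theta\dotm\epsilon,\^g{\gamma_0})=\tilde e^{G/G^0}(\theta,\^g{\gamma_0})$. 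Assembling these identifications yields the displayed formula.

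The step I expect to be the main obstacle is this last one: matching the precise normalization of the character and the precise roster of auxiliary sign characters in \cite[Theorem~10.2.1]{Spice21} with those in the statement, and, above all, confirming that the twist $\epsilon$ enters the formula for $(S,\theta\dotm\epsilon)$ \emph{solely} through the explicit $\theta$-factor --- i.e.\ that $\tilde e^{G/G^0}$, $\epsilon_{\sharp,x}^{G/G^0}$, and $\widehat O^J_{X^g}$ genuinely do not change when $\theta$ is replaced by $\theta\dotm\epsilon$. This ultimately rests on the fact that $\epsilon$ is inflated from the reductive quotient at $x$ and therefore leaves the depth-by-depth genericity data of the Yu datum undisturbed.
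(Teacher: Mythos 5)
Your high-level strategy coincides with the paper's: both pass through \cite[Theorem~10.2.1]{Spice21} and use the identification of the twisted Yu construction applied to $(S,\theta)$ with the original construction applied to $(S,\theta\dotm\epsilon)$. But your proposal stops short precisely at the step you yourself flag as the main obstacle, and that step is not merely a matter of "matching normalizations" --- it is where the actual work of the paper's proof lives, and your sketch misdescribes what \cite[Theorem~10.2.1]{Spice21} provides.

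Concretely, there are three gaps. First, you assert that \cite[Theorem~10.2.1]{Spice21} outputs a sum whose terms already carry the explicit factors $\tilde e^{G/G^0}(\anondot,\anondot)\dotm\epsilon_{\sharp,x}^{G/G^0}(\anondot)\dotm\theta(\anondot)$. This is not the form of that theorem: as the paper explains, \cite[Theorem~10.2.1]{Spice21} is phrased in terms of a quantity $\widetilde{\mathfrak G}_{G/J}(X,\anondot)$ that (per \cite[Notation~8.2.5]{Spice21}) is only defined \emph{relatively}, and it is a separate result, \cite[Proposition~8.2.4]{Spice21}, that pins down $\widetilde{\mathfrak G}_{G/J}(X,\^g\gamma_0)=\tilde e^{G/G^0}(\theta,\^g\gamma_0)\epsilon^{G/G^0,\sharp}(\^g\gamma_0)$ once one normalizes $\widetilde{\mathfrak G}_{G^0/J^0_g}=1$. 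Notably $\epsilon^{G/G^0,\sharp}$ \emph{already contains} $\epsilon_x^{G/G^0}$, so the twist does not enter the formula through the $\theta$-factor at all in the paper's proof; it is absorbed by $\widetilde{\mathfrak G}_{G/J}$. Your claim that $\tilde e^{G/G^0}(\theta\dotm\epsilon,\anondot)=\tilde e^{G/G^0}(\theta,\anondot)$ may well be true, but it is exactly the kind of statement that would need to be \emph{derived} from the structure of \cite{Spice21}, not asserted; and the route the paper actually takes sidesteps the question entirely.

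Second, you omit the depth-zero ingredient. \cite[Theorem~10.2.1]{Spice21} reduces the character of $\pi$ to a sum involving the character of the depth-zero piece $\pi_0$ at shifted arguments, and the paper then invokes \cite[Lemmas~9.3.1, 10.0.4, 12.4.3]{DR09} to expand that inner character as a sum of Fourier transforms of orbital integrals. Without this, one does not obtain the final sum over a single coset space with $\widehat O^J_{X^g}(\log\gamma_{0+})$; one only has a formula with the character of $\pi_0$ left as a black box. Third, you use $X^g$ in $\widehat O^J_{X^g}$ without connecting $X$ to the Howe factorization. The theorem of \cite{Spice21} is stated in terms of an element $\Gamma=X_0+\dotsb+X_d$ built from the factorization, and the paper's proof shows $X-\Gamma\in\Lie^*(S)(F)_0$ to justify replacing $\Gamma$ by $X$ in the orbital integral --- another essential step that your proposal silently assumes.
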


Note that, in this subsection, we use \(J\) for the connected centralizers \(C_G(\gamma_0)^\circ\), not the compact-modulo-center subgroup of \S\ref{sub:Yu}.

\begin{proof}[Proof of Lemma \ref{lem:char1}]
By \cite[Proposition 3.6.7]{KalRSP}, the pair \((S, \theta)\) has a Howe factorization \(((S, G^0, \dotsc, G^d), (\phi_{-1}, \phi_0, \dotsc, \phi_d))\), in the sense of \cite[Definition 3.6.2]{KalRSP}.
Note that condition (2) of that definition requires only that each factor \(\phi_i\) be trivial on \(G^i_\tx{sc}(F)\); but the proof of \cite[Proposition 3.6.7]{KalRSP} (specifically \cite[Lemma 3.6.9]{KalRSP}) actually produces characters trivial on \(G^i_\tx{der}(F)\).  We require that our factors satisfy this stronger condition, and will write again \(\phi_i\) for any character of \(G^i\abgp(F)\) from which \(\phi_i\) may be pulled back.

For each \(0 \le i \le d\), put \(D^i = G^i\abgp\).  The duality afforded by \(\Lambda\) identifies \(\phi_i \circ \exp\rvert_{\Lie(D^i)(F)_{0+}} \in \Hom_\tx{cts}(\Lie(D^i)(F)_{0+}, \mathbb C^\times)\) with an element of the quotient \(\Lie^*(D^i)(F)/\Lie^*(D^i)(F)_0\).  Let \(X_i \in \Lie^*(D^i)(F)\) be any representative of this element.
If \(i\) is less than \(d\) and we write \(r_i\) for the depth of \(\phi_i\), then \cite[Lemma 3.6.8]{KalRSP} shows that there is an element in the Moy--Prasad coset (i.e., ``dual blob'') \(X_i + \Lie^*(D^i)(F)_{-r_i+}\) whose pull back to $\Lie^*(G^i)(F)$ is \(G^{i+1}\)-generic of depth \(r_i\), in the sense of \cite[\S8]{Yu01}.  However, it is easy to see that, if the pull back of \emph{some} element of \(X_i + \Lie^*(D^i)(F)_{-r_i+}\) satisfies \cite[p.~596, \textbf{GE1}]{Yu01}, then the same holds true for \emph{all} elements; in particular, the pull back of \(X_i\) itself does.  Then \cite[Lemma 8.1]{Yu01} shows that the pull back of \(X_i\) is \(G\)-generic of depth \(r_i\).  

We now have that \(\Gamma \ldef X_0 + \dotsb + X_d\), viewed as an element of \(\Lie^*(S)(F)\), satisfies the same condition as $X$, due to \(\prod_{i = -1}^d \phi_i = \theta\). Therefore \(X - \Gamma\) lies in \(\Lie^*(S)(F)_0\).

We will now apply \cite[Theorem 10.2.1]{Spice21} and \cite[Lemmas 9.3.1, 10.0.4, 12.4.3]{DR09}. While in \cite{DR09} there is a blanket assumption that $G$ is a pure inner form of an unramified group, this assumption is not necessary for the cited results.

The depth \(r_{-1}\) is \(0\), so the number \(R_{-1} = \max \{r_{-1}, {0+}\}\) appearing in \cite[Theorem 10.2.1]{Spice21} equals \(0+\). The representation $\pi_0$ of loc. cit. is the representation associated to the pair $(S,\theta)$ but for the group $G^0$ rather than $G$. In other words, it is the product of the character $\prod_{i = 0}^d \phi_i$ of $G^0(F)$ with the depth-zero supercuspidal representation of $G^0(F)$ obtained by truncating the Yu-datum of $\pi$.

For every \(g \in G(F)\) such that \(\^g{\gamma_0}\) belongs to \(S(F)\), and every regular semisimple \(\delta_{0+} \in J^0_g(F)_{0+}\), the results of \cite{DR09} cited above imply that the normalized character of \(\pi_0\) at \(\^g{\gamma_0}\dotm\delta_{0+}\) is given by
\[
\sum_{\substack{
    g^0 \in S(F)\lmod G^0(F)/{^g}J^0(F) \\
    \^{g^0 g}{\gamma_0} \in S(F)
}}
    (-1)^{\operatorname{rk}_F(G^0)-\operatorname{rk}_F(J^0_g)}
    \theta(\^{g^0g}\gamma_0)
    \widehat O_{X^{g^0 g}}^{J^0_g}(\log \delta_{0+}).
\]
Since \(X^{g^0 g} - X_{-1}^{g^0 g}\) is fixed by the coadjoint action of \(J^0_g\), we note that this is the same as if we replaced $\theta$ and $X$ by $\theta_{-1}=\phi_{-1}$ and $X_{-1}$, but multiplied the result with $\prod_{i = 0}^d \phi_i(\^{g^0 g}{\gamma_0} \cdot \delta_{0+})$, and this is the form in which \cite{DR09} formulate the result.

We combine this formula with \cite[Theorem 10.2.1]{Spice21}, where the character formula for \(\pi_0\) is allowed to be a linear combination of various Fourier transforms of orbital integrals of elements `near' \(\Gamma^{g^0 g}\); we are specifying that only \(X^{g^0 g} \in \Gamma^{g^0 g} + \Lie^*(S)(F)_0\) is needed.

As indicated in \cite[Notation 8.2.5]{Spice21},
the quantity \(\widetilde{\mathfrak G}_G(X, \gamma_0)\) and its relatives are defined only `relatively', in the sense that we can specify \(\widetilde{\mathfrak G}_{G^0/J^0_g}(X, \^{g^0 g}{\gamma_0})\) arbitrarily, and it determines \(\widetilde{\mathfrak G}_{G/J}(X, \^g{\gamma_0})\).  By \cite[Proposition 8.2.4]{Spice21},
if we put \(\widetilde{\mathfrak G}_{G^0/J^0_g}(X, \^g{\gamma_0}) = 1\), then \(\widetilde{\mathfrak G}_{G/J}(X, \^g{\gamma_0})\) equals \(\tilde e^{G/G^0}(\theta, \^g{\gamma_0})\epsilon^{G/G^0, \sharp}(\^g{\gamma_0})\).

\end{proof}

We now combine Lemmas \ref{lem:char1} and \ref{lem:ru} to obtain the desired character formula. For $0 \leq i <d$, let $\tilde G^i$ be the preimage of $G^i$ in the simply connected cover of $G^{i+1}$, and denote the image of $X_i$ (as defined in the proof of Lemma \ref{lem:char1}) under the natural map $\Lie^*(G^i\abgp)(F) \to \Lie^*(\tilde G^i\abgp)(F)$ again by \(X_i\).

\begin{notn}
\label{notn:RSG-a+chi}
We put on $R(S,G^{i+1}/G^i)$ the $a$-data used in Notation \ref{notn:onestep-a}, namely $a_\alpha=\pair{X_i}{H_\alpha}$. They are the same as the $a$-data defined in \cite[(4.10.1)]{KalRSP}. Putting these together, and choosing arbitrarily $a$-data for $R(S,G^0)$ consisting of units, we obtain $a$-data for $R(S,G)$. We obtain $\chi$-data $\chi''$ for $R(S,G^{i+1}/G^i)$ as in Notation \ref{notn:onestep-chi}: $\chi''_\alpha = \chi''_{\alpha_i} \circ N_{F_\alpha/F_{\alpha_i}}$, where $\alpha_i$ is the restriction of $\alpha \in R(S,G^{i+1}/G^i)$ to $Z_{G^i}^\circ$.
Putting these together, and using the canonical unramified $\chi$-data for $R(S,G^0)$, we obtain $\chi$-data $\chi''$ for $R(S,G)$.
\end{notn}

\begin{thm} \label{thm:char}
Let $\gamma \in G(F)$ be a tame, regular semi-simple element with topological Jordan decomposition $\gamma=\gamma_0 \dotm \gamma_{0+}$ modulo \(Z_G\). Let \(X \in \Lie^*(S)(F)\)  satisfy \(\theta(\exp(Y)) = \Lambda(\<X, Y\>)\) for all \(Y \in \Lie(S)(F)_{0+}\).

The value of the normalized Harish-Chandra character of $\pi_{(S,\theta)}$ evaluated at $\gamma$ is given by
\begin{equation}
\label{eq:unwound-1}
\begin{aligned}
&e(G)e(J)\epsilon_L(X^*(T_{G})_\C-X^*(T_{J})_\C,\Lambda)\cdot\\
&\sum_{\substack
	{g \in S(F) \lmod G(F)/J(F) \\
	\^g{\gamma_0} \in S(F)}
}
\Delta_{II}^\tx{abs}[a,\chi''](\^g{\gamma_0}) \dotm\theta(\^g{\gamma_0})\dotm
\widehat O_{X^{g}}^{J}(\log \gamma_{0+}),
\end{aligned}
\end{equation}
where $J=C_G(\gamma_0)^\circ$ and $T_G$ and $T_J$ are the minimal Levi subgroups of the quasi-split inner forms of $G$ and $J$, respectively, $e(\anondot)$ is the Kottwitz sign \cite{Kot83}, and $\epsilon_L$ is the root number of the given virtual Galois representation.
\end{thm}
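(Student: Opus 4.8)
The plan is to combine Lemma~\ref{lem:char1} with Lemma~\ref{lem:ru}, the only substantive work being to rewrite the $\gamma_0$-dependent constant appearing in the summand of Lemma~\ref{lem:char1} in terms of the absolute transfer factor $\Delta_{II}^\tx{abs}[a,\chi'']$ of Notation~\ref{notn:RSG-a+chi}. The sums over $g \in S(F)\lmod G(F)/J(F)$ with $\^g{\gamma_0} \in S(F)$ already agree in the two formulas, as do the factors $\theta(\^g{\gamma_0})$ and $\widehat O_{X^g}^{J}(\log\gamma_{0+})$, so it suffices to prove, for each such $g$, the identity
\[
(-1)^{\operatorname{rk}_F(G^0)}(-1)^{\operatorname{rk}_F(J^0_g)}\,\tilde e^{G/G^0}(\theta,\^g{\gamma_0})\,\epsilon^{G/G^0,\sharp}(\^g{\gamma_0})
= e(G)e(J)\,\epsilon_L\bigl(X^*(T_G)_\C - X^*(T_J)_\C,\Lambda\bigr)\,\Delta_{II}^\tx{abs}[a,\chi''](\^g{\gamma_0}).
\]

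First I would apply the reinterpretation carried out in \cite[\S4.9 and Corollary~4.10.1]{KalRSP}. That computation, which concerns only quantities depending on $\gamma$ through its topologically semisimple part $\gamma_0$ --- and is therefore, unlike the \emph{validity} result of \cite[\S4.4]{KalRSP}, not subject to any shallowness restriction --- expresses $(-1)^{\operatorname{rk}_F(G^0)+\operatorname{rk}_F(J^0_g)}\tilde e^{G/G^0}(\theta,\^g{\gamma_0})\,\epsilon^{G/G^0}_{\sharp, x}(\^g{\gamma_0})$ as the product of the Kottwitz signs $e(G)e(J)$, the root number $\epsilon_L(X^*(T_G)_\C - X^*(T_J)_\C,\Lambda)$, the transfer factor $\Delta_{II}^\tx{abs}[a,\chi'](\^g{\gamma_0})$ for the $\chi$-data $\chi'$ of \cite[(4.7.2)]{KalRSP}, and the auxiliary character $\epsilon_{f,\tx{ram}}(\^g{\gamma_0}) = \prod_{i=1}^d\epsilon_f^{G^i/G^{i-1}}(\^g{\gamma_0})$, the step at $G^0$ contributing trivially because of the canonical unramified $\chi$-data on $R(S,G^0)$ and because $\^g{\gamma_0}$ is topologically semisimple modulo $Z_G$. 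Since $\epsilon^{G/G^0,\sharp} = \epsilon^{G/G^0}_{\sharp, x}\cdot\epsilon_x^{G/G^0}$ by Lemma~\ref{lem:char1}, and $\epsilon_x^{G/G^0}\res_{S(F)}$ equals $\prod_{i=1}^d\epsilon_x^{G^i/G^{i-1}}$ --- the value at $\^g{\gamma_0}$ of the twist character $\epsilon$ of \S\ref{sub:Yu} (this multiplicativity follows from Corollary~\ref{cor:unique}, the characters $\epsilon_{\sharp, x}$, $\epsilon_f$, $\epsilon_\flat$ all being defined as products over the disjointly partitioned root sets) --- the left-hand side of the displayed identity equals
\[
e(G)e(J)\,\epsilon_L\bigl(X^*(T_G)_\C - X^*(T_J)_\C,\Lambda\bigr)\,\Delta_{II}^\tx{abs}[a,\chi'](\^g{\gamma_0})\prod_{i=1}^d\Bigl(\epsilon_f^{G^i/G^{i-1}}\,\epsilon_x^{G^i/G^{i-1}}\Bigr)(\^g{\gamma_0}).
\]

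It then remains to show that $\Delta_{II}^\tx{abs}[a,\chi'](\^g{\gamma_0})\prod_{i=1}^d(\epsilon_f^{G^i/G^{i-1}}\epsilon_x^{G^i/G^{i-1}})(\^g{\gamma_0}) = \Delta_{II}^\tx{abs}[a,\chi''](\^g{\gamma_0})$. For this I would apply Lemma~\ref{lem:ru} to each step $G^{i-1}\subset G^i$ of the tower --- equipped with the element $X_{i-1}$ and the $a$-data and $\chi''$-data of Notation~\ref{notn:RSG-a+chi} --- obtaining $\Delta_{II}^{G^i/G^{i-1}}[a,\chi''] = \Delta_{II}^{G^i/G^{i-1}}[a,\chi']\cdot\epsilon_f^{G^i/G^{i-1}}\cdot\epsilon^{G^i/G^{i-1}}_{\sharp, x}\cdot\epsilon_x^{G^i/G^{i-1}}$, and multiplying these over $i = 1,\dotsc,d$. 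The relative transfer factors $\Delta_{II}^{G^i/G^{i-1}}$ telescope to $\Delta_{II}^\tx{abs}[a,\chi'']/\Delta_{II}^{G^0,\tx{abs}}[a,\chi'']$ (with the unramified $\chi$-data on $R(S,G^0)$), and likewise on the $\chi'$ side; the $\epsilon^{G^i/G^{i-1}}_{\sharp, x}$ factors produced telescope to $\epsilon^\tx{ram}$, which, like the $G^0$-level transfer factors, is trivial at the topologically semisimple element $\^g{\gamma_0}$; and the remaining factors are exactly the $\prod_i(\epsilon_f^{G^i/G^{i-1}}\epsilon_x^{G^i/G^{i-1}})$ already present. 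This gives the required identity, and summing over $g$ and inserting the constants yields \eqref{eq:unwound-1}.

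I expect the main obstacle to be the bookkeeping at the depth-zero level. Concretely: one must verify that the $G^0$-step genuinely drops out of every telescoping product --- i.e.\ that $\Delta_{II}^\tx{abs}$ for the canonical unramified $\chi$-data on $R(S,G^0)$, together with $\epsilon^\tx{ram}$ and $\epsilon_{f,\tx{ram}}$ at that level, evaluate trivially at an element topologically semisimple modulo $Z_G$ --- and that the depth-zero character contribution furnished via \cite{DR09} inside the proof of Lemma~\ref{lem:char1} is compatible with the conventions of Notation~\ref{notn:RSG-a+chi}. A secondary point requiring care is keeping the identifications $\epsilon^{G/G^0,\sharp} = \epsilon^{G/G^0}_{\sharp, x}\epsilon_x^{G/G^0}$ and $\epsilon_x^{G/G^0}\res_{S(F)} = \prod_i\epsilon_x^{G^i/G^{i-1}}$ consistent with the normalization of $\widetilde{\mathfrak G}$ adopted in the proof of Lemma~\ref{lem:char1} (namely $\widetilde{\mathfrak G}_{G^0/J^0_g}(X,\^g{\gamma_0}) = 1$), so that no hidden constant is lost; and, finally, matching the Kottwitz signs and root number produced by the reinterpretation against $e(G)e(J)$ and $\epsilon_L(X^*(T_G)_\C - X^*(T_J)_\C,\Lambda)$, using that $\gamma_0$ --- not $\gamma$ --- controls the centralizer $J$ and all of these data.
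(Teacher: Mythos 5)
Your overall strategy — combining Lemma~\ref{lem:char1} with Lemma~\ref{lem:ru} and reducing to a term-by-term identity for the $\gamma_0$-dependent constant — is the same as the paper's, and your bookkeeping of the tower, the telescoping of $\Delta_{II}^{G^i/G^{i-1}}$, and the identifications $\epsilon^{G/G^0,\sharp} = \epsilon^{G/G^0}_{\sharp,x}\cdot\epsilon_x^{G/G^0}$ and $\epsilon_x^{G/G^0} = \prod_i\epsilon_x^{G^i/G^{i-1}}$ are all in order. There is, however, a genuine error: you claim twice that $\epsilon^\tx{ram} = \epsilon^{G/G^0}_{\sharp,x} = \prod_i\epsilon_{\sharp,x}^{G^i/G^{i-1}}$ is trivial at the topologically semisimple element $\^g{\gamma_0}$, and this is false. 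The character $\epsilon_{\sharp,x}$ is a product of $\sgn_{k_\alpha}(\alpha(\gamma_0))$ over asymmetric and unramified symmetric roots $\alpha$ with $s \in \ord_x(\alpha)$; when $\gamma_0$ is topologically semisimple its root values reduce to arbitrary prime-to-$p$ roots of unity in $k_\alpha^\times$, and there is no cancellation. (This is precisely the nontrivial ``obstruction to stability'' of Remark~\ref{rem:compare-notn}, so it had better not vanish.) You appear to have conflated $\epsilon^\tx{ram}$ with the \emph{different} character $\epsilon_\tx{sym,ram}$ of \cite[(4.3.2)]{KalRSP} — a product over \emph{ramified} symmetric roots with a condition involving $\ord(\alpha(\gamma_0)-1)$ — which the paper \emph{does} show is trivial on topologically semisimple elements, precisely because $\alpha(\gamma_0)-1$ is then a unit or zero.

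Concretely, the two places where the false triviality enters are (i) your claimed step-1 identity, whose right-hand side is missing the factor $\epsilon^\tx{ram}$ that genuinely appears in \cite[Corollary~4.10.1]{KalRSP} (the correct unwound form, matching equation~\eqref{eq:s1} of the paper after using \cite[Corollary~4.7.6, Lemma~4.9.1]{KalRSP} and Kottwitz's formula, is $(-1)^{\rk_F(G^0)+\rk_F(J^0_g)}\tilde e^{G/G^0} = e(G)e(J)\epsilon_L(\cdots)\Delta_{II}^{G/G^0}[a,\chi']\epsilon_f^{G/G^0}$, with no $\epsilon^{G/G^0}_{\sharp,x}$ on the left); and (ii) your claimed step-3 identity, whose right-hand side from Lemma~\ref{lem:ru} should be $\Delta_{II}^\tx{abs}[a,\chi'']\prod_i\epsilon_{\sharp,x}^{G^i/G^{i-1}}$, not $\Delta_{II}^\tx{abs}[a,\chi'']$. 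These two errors are both off by the same factor $\epsilon^{G/G^0}_{\sharp,x}$, so they happen to cancel and your final displayed formula agrees with the theorem; but the intermediate identities you assert are false as stated. The correct route is the paper's: appeal to \cite[Corollary~4.7.6]{KalRSP} (which concerns the product $\tilde e\cdot\epsilon_\tx{sym,ram}$), establish $\epsilon_\tx{sym,ram}(\gamma_0)=1$ at topologically semisimple $\gamma_0$, keep $\epsilon^{G/G^0,\sharp}=\epsilon^{G/G^0}_{\sharp,x}\epsilon_x^{G/G^0}$ intact so that $\epsilon^{G/G^0}_{\sharp,x}$ is available to combine with $\epsilon_f^{G/G^0}$ and $\epsilon_x^{G/G^0}$ in Lemma~\ref{lem:ru}, and separately prove the triviality of $\Delta_{II}^{\tx{abs},G^0}[a,\chi'']$ at $\gamma_0$.
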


\begin{proof}
We apply Lemma \ref{lem:char1} and need to show that for an element $\gamma_0 \in S(F)$ that is topologically semisimple modulo \(Z_G\), the term
\begin{equation} \label{eq:s}
(-1)^{\rk_F(G^0)} (-1)^{\rk_F(J^0)} \tilde e^{G/G^0}(\theta,\gamma_0)\dotm \epsilon^{G/G^0,\sharp}(\gamma_{0})
\end{equation}
equals
\begin{equation} \label{eq:f}
e(G)e(J)\epsilon_L(X^*(T_{G})_\C-X^*(T_{J})_\C,\Lambda)\Delta_{II}^\tx{abs}[a,\chi''](\gamma_0),
\end{equation}
where $J^0 = G^0 \cap J=C_{G^0}(\gamma_0)^\circ$. Recall that \cite[Corollary 4.7.6]{KalRSP} gives an expression for the product $\tilde e(\theta,\gamma_0)\epsilon_\tx{sym,ram}(\gamma_0)$, see \cite[(4.3.2)]{KalRSP} for the definition of $\epsilon_\tx{sym,ram}$. Note that in our current notation, the terms $\epsilon_\tx{sym,ram}(\gamma_0)$ and $\tilde e(\theta,\gamma)$ occurring in loc. cit. should carry a superscript $G^d/G^{d-1}$. We claim that $\epsilon_\tx{sym,ram}(\gamma_0)=1$. Indeed, the definition of $\epsilon_\tx{sym,ram}(\gamma_0)$, cf. \cite[(4.3.2)]{KalRSP}, is as a product over ramified symmetric roots $\alpha$ with the property that $\alpha(\gamma_0) \neq 1$ and $r_{d-1}-\ord(\alpha(\gamma_0)-1) \in 2\ord_x(\alpha)$. By assumption, the root values of $\gamma_0$ have finite order prime to \(p\) (see Remark \ref{rem:mod-Z-or-Z0}), so this condition becomes $r_{d-1} \in 2\ord_x(\alpha)$. \cite[Proposition 4.5.1]{KalRSP} shows that $\ord_x(\alpha)=e_\alpha^{-1}\Z$. On the other hand, $\pair{X_{d-1}}{H_\alpha}$ is an element of $F_\alpha$ whose trace in $F_{\pm\alpha}$ vanishes, so $r_{d-1}=-\ord \pair{X_{d-1}}{H_\alpha} \in \ord(F_\alpha^\times) \sm \ord(F_{\pm\alpha}^\times)=e_\alpha^{-1}(2\Z+1)$. Therefore, the product defining $\epsilon_\tx{sym,ram}(\gamma_0)$ is empty.

Therefore \cite[Corollary 4.7.6]{KalRSP} gives an expression for $\tilde e^{G^d/G^{d-1}}(\theta,\gamma_0)$. Applying this Corollary inductively for the chain $G^0 \subset \dots \subset G^d$ we see that $\tilde e^{G/G^0}(\theta,\gamma_0)$ equals

\begin{equation} \label{eq:s1}
	\epsilon_f^{G/G^0}(\gamma_0)
\frac
	{e(G)e(J)}
	{e(G^0)e(J^0)}
\frac
	{\epsilon_L(X^*(T_{G})_\C-X^*(T_{J})_\C,\Lambda)}
	{\epsilon_L(X^*(T_{G^{0}})_\C-X^*(T_{J^{0}})_\C,\Lambda)}
	\Delta_{II}^{G/G^0}[a,\chi'](\gamma_0)
\end{equation}

Consider the denominators in \eqref{eq:s1}. The term $\epsilon_L(X^*(T_{G^{0}})_\C-X^*(T_{J^{0}})_\C,\Lambda)$ was computed in \cite[Lemma 4.9.1]{KalRSP} as $(-1)^{\rk_F(T_{G^0})-\rk_F(T_{J^0})}$ (note that $S$ is maximally unramified both in $G^0$ and $J^0$). By \cite{Kot83} we have
\[ e(G^0)e(J^0)=(-1)^{\rk_F(G^0)-\rk_F(T_{G^0})+\rk_F(J^0)-\rk_F(T_{J^0})}. \]
We see that the combined contribution of the denominators in \eqref{eq:s1} is equal to $(-1)^{\rk_F(G^0)-\rk_F(J^0)}$, hence \eqref{eq:s} becomes
\[ e(G)e(J)\epsilon_L(X^*(T_{G})_\C-X^*(T_{J})_\C,\Lambda)\Delta_{II}^{G/G^0}[a,\chi'](\gamma_0)
\dotm\epsilon_f^{G/G^0}(\gamma_0)\dotm\epsilon^{\sharp,G/G^0}(\gamma_0). \]

Lemma \ref{lem:ru} converts this to
\[ e(G)e(J)\epsilon_L(X^*(T_{G})_\C-X^*(T_{J})_\C,\Lambda)\Delta_{II}^{G/G^0}[a,\chi''](\gamma_0). \]

Finally we claim that $\Delta_{II}^{\tx{abs},G^0}[a,\chi''](\gamma_0)$ is trivial, which implies
\[ \Delta_{II}^{\tx{abs},G}[a,\chi''](\gamma_0)=\Delta_{II}^{G/G^0}[a,\chi''](\gamma_0). \]
To prove the claim, recall $S$ is maximally unramified in $G^0$. This implies that no element of $R(S,G^0)$ is ramified symmetric. By construction, the character $\chi''_\alpha$ is trivial when $\alpha$ is asymmetric, and unramified quadratic when $\alpha$ is symmetric. We have chosen $a_\alpha$ to be a unit in $O_\alpha$. The element $\alpha(\gamma_0) \in F_\alpha^\times$ lies in $O_\alpha^\times$ and is topologically semi-simple, therefore $\alpha(\gamma_0)-1$ is either zero or a unit in $O_\alpha$. This proves the claim.
\end{proof}

\subsection{Endoscopy for non-singular supercuspidal $L$-packets} \label{sub:endo}

We retain the assumptions on $p$ from the previous subsection. In this subsection we will use the notation \(G'\) for an inner form of \(G\), and not for a twisted Levi subgroup as in \cite{Yu01}.

Let $\varphi : W_F \to \^LG$ be a supercuspidal parameter. Due to the assumptions on $p$, in particular that it does not divide the order of the Weyl group,
$\varphi$ is torally wild in the sense of \cite[Definition 4.1.2]{KalSLP}. Let $S_\varphi=\Cent(\varphi,\wh G)$. We recall that there is a pair $(S,\theta)$ associated to $\varphi$, consisting of a torus $S$ and a character $\theta : S(F) \to \C^\times$, satisfying certain properties that we will review below. There is a canonical exact sequence
\[ 1 \to \wh S^\Gamma \to S_\varphi \to \Omega(S,G)(F)_\theta \to 1, \]
where $\Omega(S,G) \subset \Aut(S)$ is the Weyl group relative to $G$ (see below), and \(\Omega(S, G)_\theta\) is the stabilizer of \(\theta\).
When $\varphi$ is regular, so that \(\Omega(S, G)_\theta\) is trivial, this sequence becomes an isomorphism $\wh S^\Gamma \to S_\varphi$.

Let $\Pi_\varphi$ be the $L$-packet associated to $\varphi$ as in \cite[\S4]{KalSLP} or \cite[\S5]{KalRSP}. In this section we prove the stability of $\Pi_\varphi$, as well as the endoscopic character identities for all $s \in \wh S^\Gamma \subset S_\varphi$. 

We begin by briefly recalling the construction of the packet. To $\varphi$ one associates a torally wild supercuspidal $L$-packet datum $(S,\wh\jmath,\chi,\theta)$ in the sense of \cite[Definition 4.1.4]{KalSLP}. We recall that $S$ is a tame $F$-torus whose dimension is equal to the absolute rank of $G$, $\wh\jmath : \wh S \to \wh G$ is an embedding of complex reductive groups whose $\wh G$-conjugacy class is $\Gamma$-stable, $\chi$ is a set of tamely ramified $\chi$-data for $R(S,G)$ and $\theta : S(F) \to \C^\times$ is a character. The embedding $\wh\jmath : \wh S \to \wh G$ specifies a $\Gamma$-stable $G(\sepfield)$-conjugacy class of embeddings $S \to G$, cf. \cite[\S5.1]{KalRSP}. This conjugacy class allows us to define the root system $R(S,G)$, equipped with $\Gamma$-action, as well as the Weyl group $\Omega(S,G)$. It also gives a stable class of embeddings of $S$ into any inner form of $G$; we call the embeddings in this stable class ``admissible''. We have a subsystem $R_{0+} \subset R(S,G)$, which can be defined either as $\{\alpha \in R(S,G) \stbar \theta(N_{E/F}(\alpha^\vee(E_{0+}^\times)))=1\}$, where $E/F$ is the splitting field of $S$, or as the dual to the root system of the centralizer of $\varphi(P_F)$ in $\wh G$.  We write $S^0 \subset S$ for the connected component of the intersection of the kernels of the members of $R_{0+}$. Given an admissible embedding $j : S \to G'$ into an inner form \(G'\) of \(G\), we obtain the twisted Levi subgroup $G'^0$ with root system $R_{0+}$, and $S^0$ is identified with $Z(G'^0)^\circ$. Let $R(S^0,G) \subset X^*(S^0)$ be the restrictions to \(S^0\) of $R(S,G) \sm R_{0+}$. We require the $\chi$-data on $R(S,G) \sm R_{0+}$ to be inflated from $\chi$-data for $R(S^0,G)$, and the $\chi$-data for $R_{0+}$ to be unramified. The pair $(S,\theta)$ is $F$-non-singular in the sense of \cite[Definition 3.1.1]{KalSLP}.

Let $(a_\alpha)$ and $(\chi'')$ be the $a$-data and $\chi$-data for $R(S,G)$ computed in terms of $\theta$ as in Notation \ref{notn:RSG-a+chi}.  The $a$-data are the same as given in \cite[(4.10.1)]{KalRSP}, but the $\chi$-data are not the same as given in \cite[(4.7.2)]{KalRSP}. The \(a\)- and \(\chi\)-data both depend only on \(\theta\res_{S(F)_{0+}}\), hence are invariants of the isomorphism class of the torally wild supercuspidal $L$-packet datum $(S,\wh\jmath,\chi,\theta)$. After changing $(S,\wh\jmath,\chi,\theta)$ within its isomorphism class we may, and do, assume that $\chi=\chi''$.

Given a rigid inner twist $(G',\xi,z)$ of $G$ and an admissible embedding $j : S \to G'$ we let $\pi_j$ be the supercuspidal representation $\pi_{(jS,j\theta)}$, computed in terms of the twisted Yu construction of \S\ref{sub:Yu}. When $\varphi$ is regular, $\pi_j$ is an irreducible regular supercuspidal representation and coincides with the one constructed in \cite[\S5.3]{KalRSP}. There we used the untwisted Yu construction, the $\chi$-data $\chi'$ of \cite[(4.7.2)]{KalRSP}, and the auxiliary characters $\epsilon_{f,\tx{ram}} \dotm \epsilon_{\sharp, x}$. That the resulting $\pi_j$ is the same follows from Lemma \ref{lem:ru}. The $L$-packet then consists of the representations $\pi_j$ for all rational classes of admissible embeddings $j$. When $\varphi$ is no long regular, $\pi_j$ may be reducible, where we are using the twisted Yu construction with a possibly reducible (but semi-simple) depth-zero supercuspidal representation of the smallest twisted Levi subgroup $G'^0$ by additivity. This is how $\pi_j$ was constructed in \cite[\S4.2]{KalSLP}. We refer to the set of irreducible constituents $[\pi_j]$ as a ``Deligne--Lusztig'' packet. The $L$-packet $\Pi_\varphi$ is the union of all Deligne--Lusztig packets, as $j$ runs over the rational classes of admissible embeddings $j$.

In Proposition \ref{pro:cf1} we use the notation $\gamma^g$, where \(g \in G(F)\), for the conjugation $g^{-1}\gamma g$ and $\gamma^j$, where \(j : S \to G\) is an embedding as above, for $j^{-1}(\gamma)$. Analogously, we write $\^gY$ for \(\Ad^*(g)Y\)
for any $Y \in \fg^*(F)$ and $^jX$ for the image of $X$ in $\Lie^*(j(S))(F)$ under the isomorphism $\Lie^*(j)^{-1}$, and then as an element of $\fg^*$ via Remark \ref{rem:dual-Lie-subspace}.

\begin{pro} \label{pro:cf1}
Let $(S,\wh\jmath,\chi,\theta)$, $(G',\xi,z)$, and $j$ be as just discussed. Let \(X \in \Lie^*(S)(F)\) satisfy \(\theta(\exp(Y)) = \Lambda(\<X, Y\>)\) for all \(Y \in \Lie(S)(F)_{0+}\). The normalized character of the supercuspidal representation $\pi_j$ evaluated at the strongly regular semisimple element $\gamma' = \gamma'_0 \dotm \gamma'_{0+} \in G'(F)$ has the formula
\[
e(G')e(J')\epsilon_L(T_G-T_J)\sum_{\substack{g \in J'(F) \lmod G'(F) / jS(F)\\ \gamma_{0}'^g \in jS(F)}}\Delta_{II}^\tx{abs}[a,\chi](\gamma_{0}'^{gj}) \theta(\gamma_{0}'^{gj}) \hO^{J'}_{\^{gj}X}(\log(\gamma_{0+}')),
\]
where $J'=C_{G'}(\gamma_0')^\circ$, $T_G$ and $T_J$ are the minimal Levi subgroups in the quasi-split inner forms of $G'$ and $J'$, and $\epsilon_L(T_G-T_J)$ is the root number of the virtual Galois representation $X^*(T_G)_\C - X^*(T_J)_\C$ with respect to the fixed additive character $\Lambda$.
\end{pro}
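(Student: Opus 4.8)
The plan is to deduce this directly from Theorem \ref{thm:char}, applied to the inner form $G'$ in place of $G$, after transporting the datum along the admissible embedding $j$.

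First I would check that $G'$ satisfies the standing hypotheses of \S\ref{sub:char}. Being an inner form of $G$, the group $G'$ has the same absolute root datum, hence the same Weyl group and the same $*$\nobreakdash-action governing the quasi-split inner form; in particular $G'$ splits over $F^\tx{tr}$, the quasi-split inner form of $G'$ coincides with that of $G$, the conditions on $p$ hold for $G'$, the characteristic hypothesis is unchanged, and the convergence of the exponential map on $\fg'(F)_{0+}$ is inherited (the dimension bound quoted from \cite[Lemma B.0.3]{DR09} being an inner-form invariant). Next I would record that the admissible embedding $j : S \to G'$ transports $(S,\theta)$ to a tame $F$-non-singular elliptic pair $(jS, j\theta)$ in $G'$ --- non-singularity and ellipticity being among the axioms for a torally wild supercuspidal $L$-packet datum --- carrying the twisted Levi tower $j(G^0) \subset \dotsb \subset G'$ and a Howe factorization by transport along $j$. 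By the construction of $\pi_j$ recalled before the statement, $\pi_j$ is precisely the twisted-Yu representation $\pi_{(jS,j\theta)}$ of $G'$ attached to this datum; moreover the $a$-data and $\chi$-data of Notation \ref{notn:RSG-a+chi} for $(jS,G')$ are the $j$-transports of those for $(S,G)$, so, after our normalization $\chi = \chi''$, they are exactly the data appearing in the statement.

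With these preliminaries in hand the substance of the argument is to invoke Theorem \ref{thm:char} for the datum $(G', jS, j\theta)$ and the element $\gamma' = \gamma_0'\dotm\gamma_{0+}'$ (its topological Jordan decomposition modulo $Z_{G'}$), and then to reconcile the notation. Theorem \ref{thm:char} outputs the prefactor $e(G')e(J')\epsilon_L(X^*(T_G)_\C - X^*(T_J)_\C,\Lambda)$ with $J' = C_{G'}(\gamma_0')^\circ$ and $T_G$, $T_J$ the minimal Levi subgroups of the quasi-split inner forms of $G'$ and $J'$ (which are also those of $G$ and $J'$), together with a sum over $g \in jS(F)\lmod G'(F)/J'(F)$ with $\^g{\gamma_0'} \in jS(F)$ whose summand is $\Delta_{II}^\tx{abs}[a,\chi''](\^g{\gamma_0'})\dotm\theta(j^{-1}(\^g{\gamma_0'}))\dotm\hO^{J'}_{X^g}(\log\gamma_{0+}')$, where $X^g$ is characterized by $\pair{X^g}{Y} = \pair{\^jX}{\^g Y}$. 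Substituting $g$ by $g^{-1}$ rewrites the index set as $g \in J'(F)\lmod G'(F)/jS(F)$ with $\gamma_0'^g \in jS(F)$ and converts the summand into $\Delta_{II}^\tx{abs}[a,\chi](\gamma_0'^{gj})\dotm\theta(\gamma_0'^{gj})\dotm\hO^{J'}_{\^{gj}X}(\log\gamma_{0+}')$ in the notation fixed just before the proposition, which is the asserted formula.

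I expect the only genuinely non-routine points to be the transfer of the hypotheses of Theorem \ref{thm:char} to the inner form $G'$ --- in particular confirming that $\gamma'$ may be taken tame, which is where the strong-regularity and large-$p$ assumptions enter --- and the identification $\pi_j = \pi_{(jS,j\theta)}$; both are immediate from the discussion preceding the statement. Everything else is the bookkeeping of matching the left/right conventions for the double-coset sum and the convention for the coadjoint twist of the functional $X$, neither of which presents a real obstacle.
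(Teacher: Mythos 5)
Your proposal is correct and follows exactly the route the paper takes: the paper's entire proof of Proposition \ref{pro:cf1} is the one-line observation that it is ``immediate from Theorem \ref{thm:char} and the construction of $\pi_j$,'' and your write-up is precisely the careful unwinding of that sentence --- transporting the datum along $j$ to $(jS,j\theta)$ in $G'$, verifying the standing hypotheses of \S\ref{sub:char} for the inner form, invoking Theorem \ref{thm:char}, and then performing the $g \mapsto g^{-1}$ substitution to match the double-coset and coadjoint-twist conventions fixed before the statement.
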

\begin{proof}
This is immediate from Theorem \ref{thm:char} and the construction of $\pi_j$.
\end{proof}

We note that the formula in Proposition \ref{pro:cf1} is independent of which representative of the isomorphism class of $(S,\wh\jmath,\chi,\theta)$ is being used.

In the following we use notation from \cite{KalRI}. In particular, $S_\varphi^+$ is the preimage of $S_\varphi \subset \wh G$ in a suitable finite cover of $\wh G$, or alternatively in the universal cover of $\wh G$, and $[\wh{\bar S}]^+$ is the preimage of $\wh S^\Gamma$.

Let $\dot s \in S_\varphi^+$ be a semi-simple element. Let $\mf{w}$ be a Whittaker datum for the quasi-split group $G$. The $\dot s$-stable character of $\varphi$ on a rigid inner form $(G',\xi,z)$ is defined as
\[ \Theta_{\varphi,\mf{w},z}^{\dot s} = e(G')\sum_{\rho \in \Irr(\pi_0(S_\varphi^+),[z])}\tr\rho(\dot s)\dotm\Theta_{\pi_\rho}. \]
This definition uses the bijection $\Irr(\pi_0(S_\varphi^+)) \to \Pi_\varphi$ constructed in \cite[\S\S4.4, 4.5]{KalSLP}. We briefly recall that this construction involves two steps. First, there is a natural simply transitive action of the character group $\pi_0([\wh{\bar S}]^+)^*$ on the set of Deligne--Lusztig packets in $\Pi_\varphi$. A choice of Whittaker datum for $G$ selects a base point in this set, because there is a unique rational class of admissible embeddings $j : S \to G$ for which the supercuspidal representation $\pi_j$ is $\mf{w}$-generic. This follows from \cite[Lemma 6.2.2]{KalRSP}, whose proof is valid in our setting once \cite[Lemma 6.2.1]{KalRSP} is replaced by Proposition \ref{pro:cf1}. Therefore the choice of $\mf{w}$ gives a bijection between $\pi_0([\wh{\bar S}]^+)^*$ and the set of Deligne--Lusztig packets in $\Pi_\varphi$. The second step is to create a bijection, for every $\eta \in \pi_0([\wh{\bar S}]^+)^*$, between $\Irr(\pi_0(S_\varphi^+),\eta)$ and the corresponding Deligne--Lusztig packet. This is done in \cite{KalSLP} up to the ambiguity of twisting by a character of $\pi_0(S_\varphi^+)$ inflated from $\Omega(S,G)(F)_\theta$. Resolving this ambiguity is currently work in progress. However, in this paper we will be concerned only with elements $\dot s$ that lie in the subgroup $[\wh{\bar S}]^+ \subset S_\varphi^+$.
For such elements, this ambiguity is immaterial, and $\Theta_{\varphi,\mf{w},z}^{\dot s}$ is unambiguously defined.

\begin{thm} \label{thm:schar}
Let $(S,\wh\jmath,\chi,\theta)$, $(G',\xi,z)$, and $j$ be as just discussed. Let \(X \in \Lie^*(S)(F)\) satisfy \(\theta(\exp(Y)) = \Lambda(\<X, Y\>)\) for all \(Y \in \Lie(S)(F)_{0+}\). Let $j_\mf{w} : S \to G$ be the unique rational class of admissible embeddings for which $\pi_{j_\mf{w}}$ is $\mf{w}$-generic. For $\dot s \in [\wh{\bar S}]^+ \subset S_\varphi^+$, the value of $\Theta_{\varphi,\mf{w},z}^{\dot s}$  at a strongly regular semi-simple element $\gamma' \in G'(F)$ with topological Jordan decomposition $\gamma' = \gamma'_0 \cdot \gamma'_{0+}$ modulo $Z_G$ is given by
\[ e(J')\epsilon_L(T_G-T_J)\sum_{j : S \to J'}\Delta_{II}^\tx{abs}[a,\chi](\gamma_{0}'^{j}) \theta(\gamma_{0}'^{j}) \sum_{k : S \to J'} \pair{\operatorname{inv}(j_\mf{w},k)}{\dot s}\hO^{J'}_{\^kX}(\log(\gamma_{0+}')),  \]
where $J'=C_{G'}(\gamma_{0}')^\circ$, $T_G$ and $T_J$ are the minimal Levi subgroups in the quasi-split inner forms of $G'$ and $J'$, $\epsilon_L(T_G-T_J)$ is the root number of the virtual Galois representation $X^*(T_G)_\C - X^*(T_J)_\C$ with respect to the fixed additive character $\Lambda$, and $\inv(j_\mf{w},k) \in H^1(u \to W,Z(G) \to S)$ is the invariant defined in \cite[\S5.1]{KalRSP}. The sum over \(j\) runs over the set of $J$-stable classes of embeddings $j : S \to J'$ whose composition with the inclusion $J' \to G'$ is admissible, and the sum over $k$ runs over the set of $J'(F)$-conjugacy classes in the stable class of $j$.
\end{thm}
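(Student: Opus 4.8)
The plan is to expand the definition of $\Theta_{\varphi,\mf{w},z}^{\dot s}$, substitute the character formula of Proposition~\ref{pro:cf1} for the members of the packet, and then carry out a combinatorial reorganization of the resulting multiple sum. Recall that by definition $\Theta_{\varphi,\mf{w},z}^{\dot s} = e(G')\sum_{\rho\in\Irr(\pi_0(S_\varphi^+),[z])}\tr\rho(\dot s)\,\Theta_{\pi_\rho}$. First I would regroup the index set $\Irr(\pi_0(S_\varphi^+),[z])$ according to Deligne--Lusztig packet, using the two-layer bijection $\Irr(\pi_0(S_\varphi^+))\xrightarrow{\sim}\Pi_\varphi$ of \cite[\S\S4.4, 4.5]{KalSLP}: the coarser layer sorts the representations into Deligne--Lusztig packets $[\pi_{j_1}]$, which form a torsor under $\pi_0([\wh{\bar S}]^+)^*$ normalized so that the $\mf{w}$-generic member $\pi_{j_\mf{w}}$ sits over the base point and so that $[\pi_{j_1}]$ corresponds to the character $\pair{\inv(j_\mf{w},j_1)}{\anondot}$ of $\pi_0([\wh{\bar S}]^+)$; the finer layer parametrizes the irreducible constituents of a fixed $\pi_{j_1}$.

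The crucial step is then to show that, because $\dot s$ lies in $[\wh{\bar S}]^+$, the weighted contribution of one Deligne--Lusztig packet telescopes:
\[
\sum_{\rho\in[\pi_{j_1}]}\tr\rho(\dot s)\,\Theta_{\pi_\rho}(\gamma')
= \pair{\inv(j_\mf{w},j_1)}{\dot s}\,\Theta_{\pi_{j_1}}(\gamma'),
\]
where $\Theta_{\pi_{j_1}}=\sum_{\rho}m_\rho\Theta_{\pi_\rho}$ is the character of the (possibly reducible) twisted Yu representation computed by Proposition~\ref{pro:cf1}. When $\varphi$ is regular every Deligne--Lusztig packet is a singleton with trivial internal component group and this is immediate; in general it requires the explicit internal parametrization of \cite{KalSLP}. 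Concretely, the image of $\dot s$ in $\pi_0(S_\varphi^+)$ lies in the normal subgroup $\pi_0([\wh{\bar S}]^+)$, so $\tr\rho(\dot s)$ can be computed by the Clifford-theoretic induction formula; one then matches the multiplicities $m_\rho$ of the Deligne--Lusztig constituents with the dimensions of the internal representations occurring in loc.\ cit., and uses that characters of $\pi_0([\wh{\bar S}]^+)$ see only the embedding class $j_1$, not the internal representation. This is the analogue, in the possibly reducible setting, of the regular-case computation of \cite[\S6]{KalRSP}, with Proposition~\ref{pro:cf1} in place of \cite[Lemma~6.2.1]{KalRSP}.

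After this reduction the left-hand side becomes $e(G')\sum_{j_1}\pair{\inv(j_\mf{w},j_1)}{\dot s}\,\Theta_{\pi_{j_1}}(\gamma')$, the sum taken over the relevant classes of admissible embeddings $j_1:S\to G'$. Substituting Proposition~\ref{pro:cf1}, which writes $\Theta_{\pi_{j_1}}(\gamma')$ as $e(G')e(J')\epsilon_L(T_G-T_J)$ times a sum over $g\in J'(F)\lmod G'(F)/j_1 S(F)$ with $\gamma_0'^g\in j_1 S(F)$, I would re-index the double sum over pairs $(j_1,g)$ by the embedding $k:=gj_1:S\to G'$. The coset condition forces $\gamma_0'\in k(S)(F)$; since $\gamma_0'$ is the topologically semisimple part of the regular element $\gamma'$, it is central in $J'=C_{G'}(\gamma_0')^\circ$, so $k$ automatically factors through $J'$, the assignment $(j_1,g)\mapsto k$ is a bijection onto the set of $J'(F)$-conjugacy classes of embeddings $S\to J'$ with admissible composition into $G'$, and $\inv(j_\mf{w},j_1)=\inv(j_\mf{w},k)$ because $k$ is $G'(F)$-conjugate to $j_1$. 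Moreover $\gamma_0'\in Z(J')$ gives $\gamma_0'^k=k^{-1}(\gamma_0')$ independent of the $J'(F)$-class of $k$, so $\Delta_{II}^\tx{abs}[a,\chi](\gamma_0'^k)$ and $\theta(\gamma_0'^k)$ depend only on the $J'$-stable class of $k$; grouping the $k$'s by stable class $j$ and pulling these factors out of the inner sum yields the displayed formula, with $e(G')\dotm e(G')e(J')=e(J')$ and the root numbers assembling into the stated prefactor.

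The main obstacle is the Deligne--Lusztig packet collapse of the second paragraph: when $\Omega(S,G)(F)_\theta$ is nontrivial a Deligne--Lusztig packet has several constituents, $\dot s$ need not act by a scalar on an individual $\rho$, and one must invoke the internal structure of \cite{KalSLP} (Clifford theory together with the compatibility of the Deligne--Lusztig multiplicities) to see that the $\dot s$-weighted sum nevertheless reduces to the embedding-class pairing times the packet character. Everything else is bookkeeping, modulo the elementary fact that the topologically semisimple part of a regular element is central in the identity component of its centralizer.
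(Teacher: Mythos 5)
Your argument reconstructs, in explicit detail, the proof the paper references: \cite[Lemma 6.3.1]{KalRSP} with Proposition \ref{pro:cf1} substituted for \cite[Lemma 6.2.1]{KalRSP}, and the combinatorial re-indexing $(j_1,g)\mapsto k = gj_1$ together with the observation that $\gamma_0'\in Z(J')$ lets the $\Delta_{II}$ and $\theta$ factors depend only on the $J'$-stable class of $k$ is correct. Your identification of the Deligne--Lusztig-packet collapse as the single substantive step needed to pass from the regular case to the non-singular case is exactly the content of the paper's remark, just before the theorem, that the twisting ambiguity in the internal parametrization of \cite{KalSLP} is immaterial for $\dot s \in [\wh{\bar S}]^+$.
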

\begin{proof}
The proof is the same as for \cite[Lemma 6.3.1]{KalRSP}, but with \cite[Lemma 6.2.1]{KalRSP} replaced by Proposition \ref{pro:cf1}.
\end{proof}

As an immediate application of Theorem \ref{thm:schar} we see that Conjecture \cite[\S4.3]{KalDC} holds for the supercuspidal $L$-packets constructed in \cite{KalSLP}, subject to the conditions on $F$ stipulated in this section:

\begin{cor} \label{cor:stabchar}
Let $(S,\wh\jmath,\chi,\theta)$, $(G',\xi,z)$, and $j$ be as just discussed. Let \(X \in \Lie^*(S)(F)\) satisfy \(\theta(\exp(Y)) = \Lambda(\<X, Y\>)\) for all \(Y \in \Lie(S)(F)_{0+}\).
The value of $S\Theta_{\varphi,z}$  at a strongly regular semi-simple element $\gamma' \in G'(F)$ with topological Jordan decomposition $\gamma' = \gamma'_0 \cdot \gamma'_{0+}$ modulo $Z_G$ is given by
\[ e(J')\epsilon_L(T_G-T_J)\sum_{j : S \to J'}\Delta_{II}^\tx{abs}[a,\chi](\gamma_{0}'^{j}) \theta(\gamma_{0}'^{j}) \wh{SO}^{J'}_{^jX}(\log(\gamma_{0+}')),  \]
where $J'$, $T_G$, $T_J$, and $j$, are as in Theorem \ref{thm:schar}.
\end{cor}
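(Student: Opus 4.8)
The plan is to obtain Corollary \ref{cor:stabchar} as the specialization of Theorem \ref{thm:schar} to the identity element. Recall that, in the notation of \cite{KalRI}, the stable virtual character $S\Theta_{\varphi,z}$ on the rigid inner twist $(G',\xi,z)$ is by definition $\Theta^{\dot s}_{\varphi,\mf{w},z}$ evaluated at $\dot s = 1 \in [\wh{\bar S}]^+ \subset S_\varphi^+$. Since $1$ certainly lies in $[\wh{\bar S}]^+$, Theorem \ref{thm:schar} applies and computes $S\Theta_{\varphi,z}(\gamma')$ for every strongly regular semisimple $\gamma' = \gamma'_0\dotm\gamma'_{0+} \in G'(F)$; the resulting expression will be visibly independent of $\mf{w}$, in accordance with the fact that $S\Theta_{\varphi,z}$ does not depend on the choice of Whittaker datum.

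Substituting $\dot s = 1$ into the formula of Theorem \ref{thm:schar}, the only $\dot s$-dependent factor is the pairing $\pair{\inv(j_\mf{w},k)}{\dot s}$, which is identically $1$. Thus the inner sum $\sum_{k : S \to J'}\pair{\inv(j_\mf{w},k)}{\dot s}\,\hO^{J'}_{\^kX}(\log(\gamma'_{0+}))$ collapses to the unweighted sum $\sum_{k : S \to J'}\hO^{J'}_{\^kX}(\log(\gamma'_{0+}))$ over the $J'(F)$-conjugacy classes $k$ inside the stable class of the fixed embedding $j : S \to J'$. By the parametrization of \cite[\S5.1]{KalRSP} already used in Theorem \ref{thm:schar}, these classes correspond bijectively to the rational adjoint orbits in $\Lie^*(J')(F)$ stably conjugate to $^jX$, so this unweighted sum is exactly the stable orbital integral $\wh{SO}^{J'}_{^jX}(\log(\gamma'_{0+}))$. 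Making this replacement turns the formula of Theorem \ref{thm:schar} into the asserted formula; the outer constant $e(J')\epsilon_L(T_G - T_J)$ is unchanged (the factor $e(G')$ from the definition of $\Theta^{\dot s}_{\varphi,\mf{w},z}$ having already been absorbed, via $e(G')^2 = 1$, in the passage from Proposition \ref{pro:cf1} to Theorem \ref{thm:schar}).

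I do not expect any real obstacle here: all of the analytic content sits in Theorem \ref{thm:schar}, and Corollary \ref{cor:stabchar} is its $\dot s = 1$ instance. The only point that deserves a line of care is the identification of the collapsed $k$-sum with a genuine stable orbital integral on $J'$, i.e.\ verifying that the set of $J'(F)$-conjugacy classes of admissible embeddings $k$ in the stable class of $j$ indexes precisely the rational classes inside the stable orbit of $^jX$ --- but this is the same bookkeeping already in force in the proof of Theorem \ref{thm:schar}.
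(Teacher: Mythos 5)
Your proof is correct and takes essentially the same approach as the paper, which simply asserts that Corollary \ref{cor:stabchar} is ``an immediate application of Theorem \ref{thm:schar}''; your argument makes explicit exactly what that amounts to, namely the substitution $\dot s = 1$, the collapse of the inner sum $\sum_k \pair{\inv(j_\mf{w},k)}{1}\hO^{J'}_{\^kX}$ to the stable orbital integral $\wh{SO}^{J'}_{^jX}$, and the observation that the $e(G')$ from the definition of $\Theta^{\dot s}_{\varphi,\mf{w},z}$ was already cancelled against the $e(G')$ of Proposition \ref{pro:cf1}.
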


\begin{thm}\ \\[-20pt] \label{thm:endo}
\begin{enumerate}
	\item The virtual character $\STheta_{\varphi,\mf{w},*}=\Theta_{\varphi,\mf{w},*}^1$ is stable across inner forms. That is, for any two rigid inner twists $(G_1',\xi_1,z_1)$ and $(G_2',\xi_2,z_2)$ and stably conjugate strongly regular elements $\gamma_1' \in G_1'(F)$ and $\gamma_2' \in G_2'(F)$ we have
	\[ \STheta_{\varphi,z_1}(\gamma_1') = \STheta_{\varphi,z_2}(\gamma_2'). \]
	\item For any $\dot s \in [\wh{\bar S}]^+ \subset S_\varphi^+$ the endoscopic character identity holds: if $\varphi^{\dot s}$ is the parameter for the endoscopic group $H$ corresponding to $(\varphi,\dot s)$, then
	\[ \Theta_{\varphi,\mf{w},z}^{\dot s}(f') = \STheta_{\varphi^{\dot s},1}(f^{\dot s}), \]
	where $(G',\xi,z)$ is a rigid inner twist, $f' \in \mc{C}^\infty_c(G'(F))$ is any test function and $f^{\dot s} \in \mc{C}^\infty_c(H(F))$ is its transfer with respect to the transfer factor $\Delta[\dot s,\mf{w},z]$ of \cite[(5.10)]{KalRI}.
\end{enumerate}
\end{thm}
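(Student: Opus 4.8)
The plan is to deduce both parts from the explicit character formulas of this paper --- Theorem~\ref{thm:schar} for $\Theta^{\dot s}_{\varphi,\mf{w},z}$ and Corollary~\ref{cor:stabchar} for $\STheta_{\varphi,z}$ --- following the blueprint of \cite[\S6.3--6.4]{KalRSP}, which treated the regular case, but with the old character formula and the $\chi$-data $\chi'$ of \cite[(4.7.2)]{KalRSP} systematically replaced by our new formula and the $\chi$-data $\chi''$. The conceptual point is that, after the twist of \S\ref{sub:Yu}, the $\chi$-data appearing in the character formula are the $\chi''$ inflated from $\chi$-data $\chi''_0$ for $R(Z_{G^0}^\circ,G)$, which are exactly the data used to build the $L$-embeddings $\^LS \to \^LG$ and $\^LS \to \^LH \to \^LG$ that define $\varphi$ and $\varphi^{\dot s}$; this is what makes the $\Delta_{II}$-part of the Langlands--Shelstad transfer factor cancel cleanly against the $\Delta_{II}^\tx{abs}[a,\chi]$-factor of the character formula, a cancellation that was obstructed in the original Yu construction by the auxiliary characters $\epsilon^\tx{ram}$ and $\epsilon_{f,\tx{ram}}$ visible in \cite[Corollary~4.10.1]{KalRSP}. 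Throughout, the hypotheses that $\tx{char}(F) = 0$ and $p$ is large are inherited from the previous subsection and are used both for the validity of the character formulas and for the geometric transfer statements below.

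\textbf{Part (1).} Set $\dot s = 1$ in Theorem~\ref{thm:schar}. Then $\pair{\inv(j_\mf{w},k)}{\dot s} = 1$ for all $k$, and $\sum_{k : S \to J'}\hO^{J'}_{\^kX}(\log(\gamma'_{0+})) = \wh{SO}^{J'}_{^jX}(\log(\gamma'_{0+}))$, the sum over $J'(F)$-conjugacy classes in a stable class being precisely the stable orbital integral; so $\STheta_{\varphi,z}(\gamma') = \Theta^1_{\varphi,\mf{w},z}(\gamma')$ is given by the formula of Corollary~\ref{cor:stabchar}. Now let $(G_1',\xi_1,z_1)$, $(G_2',\xi_2,z_2)$ be rigid inner twists and $\gamma_1' \in G_1'(F)$, $\gamma_2' \in G_2'(F)$ stably conjugate strongly regular, with topological Jordan decompositions $\gamma_i' = \gamma_{i,0}'\gamma_{i,0+}'$ modulo $Z_G$ and centralizers $J_i' = C_{G_i'}(\gamma_{i,0}')^\circ$. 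Then $J_1'$ and $J_2'$ are inner forms of one another; the minimal Levi subgroups of the quasi-split inner forms of $G_1',G_2'$ (resp.\ of $J_1',J_2'$) coincide, so $\epsilon_L(T_G - T_J)$ agrees; the index sets of admissible embeddings $S \to J_i'$ and the quantities $\Delta_{II}^\tx{abs}[a,\chi](\gamma_{i,0}'^{\,j})$ and $\theta(\gamma_{i,0}'^{\,j})$ depend only on the common stable class; and the $\log(\gamma_{i,0+}')$ lie in stably conjugate Lie-algebra cosets. It therefore remains to invoke the standard compatibility of the normalized (Kottwitz-sign-twisted) Fourier transform of stable orbital integrals with inner twisting, namely $e(J_1')\wh{SO}^{J_1'}_{^jX}(\log(\gamma_{1,0+}')) = e(J_2')\wh{SO}^{J_2'}_{^jX}(\log(\gamma_{2,0+}'))$; with this the two formulas agree term by term.

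\textbf{Part (2).} Since tempered characters are determined by their values on strongly regular semisimple elements, it suffices, by the Weyl integration formula and the defining property of the transfer $f' \mapsto f^{\dot s}$ (which expresses $SO^H(f^{\dot s},\gamma_H)$ as a sum of the $O^{G'}(f',\gamma')$ over $\gamma' \leftrightarrow \gamma_H$ weighted by $\Delta[\dot s,\mf{w},z](\gamma_H,\gamma')$ of \cite[(5.10)]{KalRI}), to prove the pointwise identity comparing $\Theta^{\dot s}_{\varphi,\mf{w},z}$ on the stable class of a strongly regular $\gamma'$ with $\Delta[\dot s,\mf{w},z](\gamma_H,\gamma')\STheta_{\varphi^{\dot s},1}(\gamma_H)$ at a matching $\gamma_H \in H(F)$. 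Because $p$ is large and $\tx{char}(F) = 0$, the topologically semisimple parts $\gamma_0', \gamma_{H,0}$ are tame with connected reductive centralizers $J' = C_{G'}(\gamma_0')^\circ$ and $J_H = C_H(\gamma_{H,0})^\circ$, with $\log\gamma_{0+}' \in \fj'(F)_{0+}$ and likewise on $H$; moreover, for $\dot s \in [\wh{\bar S}]^+$ the parameter $\varphi^{\dot s}$ is again a torally wild supercuspidal parameter built from $S$, so that Corollary~\ref{cor:stabchar} applies to $H$ with a torally wild datum $(S,\wh\jmath_H,\chi_H,\theta_H)$. Plugging the formula of Theorem~\ref{thm:schar} for $\Theta^{\dot s}_{\varphi,\mf{w},z}(\gamma')$ and that of Corollary~\ref{cor:stabchar} for $\STheta_{\varphi^{\dot s},1}(\gamma_H)$ into the desired identity, one is reduced to: (i) matching $\Delta_{II}^\tx{abs}[a,\chi]$ and $\theta$ against $\Delta_{II}^{\tx{abs},H}[a,\chi_H]$, $\theta_H$, and the $\Delta_{I}$-, $\Delta_{II}$-parts of $\Delta[\dot s,\mf{w},z]$; (ii) matching the invariants $\pair{\inv(j_\mf{w},k)}{\dot s}$ against the $\Delta_{III}$-part (in the rigid-inner-form normalization of \cite{KalRI}); (iii) accounting for the Kottwitz signs and the root numbers $\epsilon_L$ via $\Delta_{IV}$ and \cite[Lemma~4.9.1]{KalRSP}; and (iv) transferring the factors $\hO^{J'}_{\^kX}(\log\gamma_{0+}')$ to $\wh{SO}^{J_H}_{^{j_H}X_H}(\log\gamma_{H,0+})$ by endoscopic transfer of Fourier transforms of orbital integrals on $\fj'$ and $\fj_H$ --- available, with $J_H$ an endoscopic group of $J'$ and transfer factor the restriction of $\Delta[\dot s,\mf{w},z]$, by Waldspurger's reduction together with the fundamental lemma of Ngô in characteristic zero. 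Steps (i)--(iii) are the cohomological bookkeeping carried out in \cite[\S6.4]{KalRSP} and go through unchanged once \cite[Lemma~6.2.1]{KalRSP} is replaced by Proposition~\ref{pro:cf1}; the restriction $\dot s \in [\wh{\bar S}]^+$ is exactly what renders $\Theta^{\dot s}_{\varphi,\mf{w},z}$ unambiguous (the $\Omega(S,G)(F)_\theta$-ambiguity in the parametrization of $\Pi_\varphi$ being immaterial there).

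The main obstacle is the transfer-factor matching of steps (i)--(iii): one must check that the product of the $\theta$-factor, the $\Delta_{II}^\tx{abs}[a,\chi]$-factor, the embedding-invariants, and the signs assembles precisely into $\Delta[\dot s,\mf{w},z]$. Here the passage from $\chi'$ to $\chi''$ is indispensable: with $\chi''$ inflated from $\chi$-data for $R(Z_{G^0}^\circ,G)$, the $\Delta_{II}$-factor of the transfer factor and the $\Delta_{II}^\tx{abs}[a,\chi]$-factor of the character formula are literally built from the same $\chi$-data, so their comparison is transparent; with the old $\chi'$ and the leftover auxiliary characters of \cite[Corollary~4.10.1]{KalRSP}, no such matching is possible, which is precisely why the refinement of Yu's construction in this paper is needed for Theorem~\ref{thm:endo}. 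A secondary point requiring care is that the geometric transfer invoked in step~(iv) and in Part~(1) is applied not to $G'$ and $H$ but to the twisted Levi centralizers $J'$ and $J_H$ of the topologically semisimple parts; since these remain connected reductive and the relevant endoscopic data restrict from those for $G'$ and $H$, the requisite Lie-algebra transfer and inner-form-invariance statements hold under the standing hypotheses on $F$.
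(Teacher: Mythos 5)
Your proposal is correct and follows the same approach as the paper: the paper's own proof is simply the statement ``The proof is the same as for [KalRSP, Theorems 6.3.2, 6.3.4], with [KalRSP, Lemma 6.3.1] replaced by Theorem~\ref{thm:schar},'' and what you have written is an accurate unpacking of the content of that cited argument (reduction to a pointwise identity, matching the $\Delta_{II}^\tx{abs}[a,\chi]$-factors with the transfer factor using the $\chi''$-data, descent of geometric transfer to the centralizers $J'$, $J_H$, and the inner-form invariance of normalized Fourier transforms of stable orbital integrals for Part~(1)). You also correctly identify the role of the twisted Yu construction and the new $\chi$-data in making the transfer-factor cancellation go through, which is exactly the point the paper emphasizes.
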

\begin{proof}
The proof is the same as for \cite[Theorems 6.3.2, 6.3.4]{KalRSP}, with \cite[Lemma 6.3.1]{KalRSP} replaced by Theorem \ref{thm:schar}.
\end{proof}

As discussed in \cite[\S4.4]{KalDC}, Corollary \ref{cor:stabchar} and Theorem \ref{thm:endo} uniquely characterize the local Langlands correspondence for regular supercuspidal parameters. The local Langlands correspondence for all supercuspidal parameters will be uniquely characterized in the same way once Theorem \ref{thm:endo} has been extended to all $\dot s \in S_\varphi^+$.

\section{Construction of $\epsilon^{G/M}_x$ and proof of Theorem \ref{thm:main}}

The goal of this section is to prove Theorem \ref{thm:main} by constructing $\eps_x^{G/M}$ and computing its restriction to every tame maximal torus whose building contains $x$. The construction of $\eps_x^{G/M}$ involves three different techniques that we introduce in \S\S\ref{sub:sgn1f}, \ref{sub:sp-norm} and \ref{sec:sgn3} in a general set-up before specializing to our set-up in \S\ref{sub:epsxGM}.

\subsection{Sign characters from hypercohomology} \label{sub:sgn1f}\label{sec:sgn1}

Let $k$ be a field of odd characteristic with absolute Galois group $\Gamma$ relative to a fixed separable extension $\bar k$. As before we set $\Sigma = \Gamma \times \{\pm 1\}$. Let $\ms{G}$ be an algebraic group. We do not assume that $\ms{G}$ is connected or reductive.
Consider the abelian group of algebraic characters $X^*(\ms{G})$, written additively. It has an action of $\Sigma$, where $\Gamma$ acts according to the $k$-structure of $\ms{G}$ and $\{\pm 1\}$ acts by multiplication. We view the multiplication-by-$2$ map $X^*(\ms{G}) \to X^*(\ms{G})$ as a complex of $\Gamma$-modules of length $2$, which we place in degrees
$0$ and $1$. We recall the explicit description of the first
Galois-hypercohomology group of this complex.

\begin{dfn}
\begin{enumerate}
	\item The abelian group
$Z^1(\Gamma,X^*(\ms{G}) \to X^*(\ms{G}))$
of \textit{degree-$1$ hypercocycles} consists of pairs
$(\rho,\delta)$, where $\rho \in Z^1(\Gamma,X^*(\ms{G}))$
and $\delta \in X^*(\ms{G})$ satisfy $(1-\sigma)\delta=2\rho_\sigma$ for
all $\sigma \in \Gamma$. Addition is inherited from $X^*(\ms{G})$.
	\item The subgroup $B^1(\Gamma,X^*(\ms{G}) \to X^*(\ms{G}))$ of
\textit{degree-$1$ hypercoboundaries} consists of the pairs
$((1-\sigma)\chi,2\chi)$ for $\chi \in X^*(\ms{G})$.
	\item The \textit{first hypercohomology} group
$H^1(\Gamma,X^*(\ms{G}) \to X^*(\ms{G}))$ is the quotient $Z^1/B^1$.
\end{enumerate}
\end{dfn}

\begin{dfn}
Given $(\rho,\delta) \in Z^1(\Gamma,X^*(\ms{G}) \to X^*(\ms{G}))$
we define for each $g \in \ms{G}(k)$ and $\sigma \in \Gamma$ an element
$\epsilon_{\rho,\delta}(g,\sigma) \in \bar k^\times$ by choosing
arbitrarily a square root \(\sqrt{\delta(g)} \in \bar k^\times\)
of $\delta(g) \in \bar k^\times$ and setting
\[ \epsilon_{\rho,\delta}(g,\sigma)
= \rho_\sigma(g) \dotm \frac{\sigma\sqrt{\delta(g)}}{\sqrt{\delta(g)}}. \]
\end{dfn}

\begin{lem}\hfill
\begin{enumerate}
	\item The element
$\epsilon_{\rho,\delta}(g,\sigma) \in \bar k^\times$
is independent of the choice of square root.
	\item For fixed $g \in \ms G(k)$, the function
$\sigma \mapsto \epsilon_{\rho,\delta}(g,\sigma)$
lies in $Z^1(\Gamma,\mu_2)$.
	\item The function $\ms G(k) \to Z^1(\Gamma,\mu_2)$ thus defined
is a group homomorphism.
\end{enumerate}
\end{lem}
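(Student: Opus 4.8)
The plan is to verify \((1)\)--\((3)\) in turn; the only inputs are the defining relation \((1-\sigma)\delta = 2\rho_\sigma\) of a hypercocycle, the cocycle identity \(\rho_{\sigma\tau} = \rho_\sigma + \sigma\rho_\tau\) for \(\rho\), the multiplicativity of the algebraic characters \(\rho_\sigma\) and \(\delta\), and the elementary fact that, since \(g \in \ms G(k)\), for every \(\chi \in X^*(\ms G)\) and \(\sigma \in \Gamma\) one has \((\sigma\chi)(g) = \sigma\bigl(\chi(g)\bigr)\) in \(\bar k^\times\) (because \(\sigma^{-1}\) fixes \(g\)). Throughout one must keep straight the translation between the additive notation on \(X^*(\ms G)\) and the multiplicative notation on \(\bar k^\times\).

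For \((1)\), if \(u\) and \(u'\) are two square roots of \(\delta(g)\) in \(\bar k^\times\), then \(u'/u\) is a square root of \(1\), hence lies in \(\{\pm 1\}\subset k\) and is fixed by \(\Gamma\); so \(\sigma(u')/u' = \sigma(u)/u\), and the quantity \(\epsilon_{\rho,\delta}(g,\sigma)\) is independent of the chosen square root.

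For \((2)\), I would first square the defining formula to get \(\epsilon_{\rho,\delta}(g,\sigma)^2 = \rho_\sigma(g)^2\cdot\sigma\bigl(\delta(g)\bigr)/\delta(g)\); evaluating the hypercocycle relation \(\delta - \sigma\delta = 2\rho_\sigma\) at \(g\) gives \(\delta(g)/\sigma\bigl(\delta(g)\bigr) = \rho_\sigma(g)^2\), and the two factors cancel, so \(\epsilon_{\rho,\delta}(g,\sigma)\in\mu_2\). Since \(\mu_2\subset k\) is pointwise \(\Gamma\)-fixed, the required cocycle identity \(\epsilon_{\rho,\delta}(g,\sigma\tau) = \epsilon_{\rho,\delta}(g,\sigma)\cdot\sigma\bigl(\epsilon_{\rho,\delta}(g,\tau)\bigr)\) reduces to multiplicativity in the \(\Gamma\)-variable, which I would obtain by multiplying the identity \(\rho_{\sigma\tau}(g) = \rho_\sigma(g)\cdot\sigma\bigl(\rho_\tau(g)\bigr)\) by the telescoping identity \(\dfrac{(\sigma\tau)u}{u} = \sigma\!\left(\dfrac{\tau u}{u}\right)\cdot\dfrac{\sigma u}{u}\) (with \(u\) a fixed square root of \(\delta(g)\)) and regrouping the four factors into \(\epsilon_{\rho,\delta}(g,\sigma)\cdot\sigma\bigl(\epsilon_{\rho,\delta}(g,\tau)\bigr)\). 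Continuity of \(\sigma\mapsto\epsilon_{\rho,\delta}(g,\sigma)\) is automatic, since \(\rho\) is continuous and \(\sigma\mapsto\sigma u/u\) factors through the finite group \(\Gal\bigl(k(u)/k\bigr)\).

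For \((3)\), I would use that \(\rho_\sigma\) and \(\delta\) are algebraic characters, hence group homomorphisms on \(\ms G(k)\), together with the freedom established in \((1)\) to choose \(\sqrt{\delta(gh)} = \sqrt{\delta(g)}\,\sqrt{\delta(h)}\); then \(\epsilon_{\rho,\delta}(gh,\sigma)\) factors visibly as \(\epsilon_{\rho,\delta}(g,\sigma)\,\epsilon_{\rho,\delta}(h,\sigma)\), and since this holds for every \(\sigma\in\Gamma\) it gives the homomorphism property \(\ms G(k)\to Z^1(\Gamma,\mu_2)\). I do not anticipate a genuine obstacle: the entire lemma is formal once one is careful about evaluating Galois-twisted characters at \(k\)-points, and the only computation with any content is the cancellation in \((2)\) that forces the values into \(\mu_2\), which is exactly where the hypercocycle relation is used.
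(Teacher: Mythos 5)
Your proof is correct and follows essentially the same route as the paper's: the independence of the choice of square root because $-1 \in k$ is Galois-fixed, the multiplicativity in $g$ via choosing $\sqrt{\delta(gh)} = \sqrt{\delta(g)}\sqrt{\delta(h)}$ (the paper derives the same from the independence), and the hypercocycle relation $(1-\sigma)\delta = 2\rho_\sigma$ to force the values into $\mu_2$. The only substantive difference is that the paper first records that $\sigma \mapsto \rho_\sigma(g)$ and $\sigma \mapsto \sigma(\sqrt{\delta(g)})/\sqrt{\delta(g)}$ are separately cocycles in $Z^1(\Gamma, \bar k^\times)$ and deduces the $\mu_2$-valuedness last, whereas you reverse the order; your remark about continuity (which the paper leaves implicit) is fine, though strictly $\sigma \mapsto \sigma u/u$ factors through a finite \emph{quotient} of $\Gamma$ rather than through $\Gal(k(u)/k)$ when $k(u)/k$ is not Galois.
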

\begin{proof}
Fix \(g \in \ms G(k)\) and \(\sqrt{\delta(g)}\).
Both $\sigma \mapsto \rho_\sigma(g)$ and
$\sigma \mapsto (\delta/2)(g,\sigma)
\ldef(\sqrt{\delta(g)})^{-1}\sigma(\sqrt{\delta(g)})$
are elements of
$Z^1(\Gamma,\bar k^\times)$. The first one, $\rho_\sigma(g)$, is by
definition multiplicative in $g$. The second, $(\delta/2)(g,\sigma)$, is
independent of the choice of square root, for a different choice of
square root will result in both numerator and denominator being
multiplied by $-1 \in k$, so the result is unchanged. This independence
then shows that
$(\delta/2)(g \dotm g',\sigma)
=(\delta/2)(g,\sigma) \dotm (\delta/2)(g',\sigma)$.
It follows that $\epsilon_{\rho,\delta}(g,\anondot)$
lies in $Z^1(\Gamma,\bar k^\times)$
and is multiplicative in $g$.
Now $\epsilon_{\rho,\delta}(g,\sigma)^2=1$ by the relation
$(1-\sigma)\delta=2\rho_\sigma$.
\end{proof}

Define $\epsilon_{\rho,\delta}(g) \in k^\times/k^{\times,2}$ to be the image of
$\epsilon_{\rho,\delta}(g,\anondot)$ under the isomorphism
$H^1(\Gamma,\mu_2) \cong k^\times/k^{\times,2}$. Thus
\begin{equation}
\label{eq:abs-hyper-char}
\epsilon_{\rho,\delta} : \ms{G}(k) \to k^\times/k^{\times,2}
\end{equation}
is a character.

\begin{rem}
\label{rem:hyperbolic-hyper}
\begin{enumerate}
    \item If \(\rho = 0\), so that \(\delta\) is \(\Gamma\)-fixed (i.e., defined over \(k\)), then \(\epsilon_{\rho, \delta}\) is the composition \(\ms G(k) \xrightarrow\delta k^\times \to k^\times/k^{\times, 2}\).
    \item In general, Hilbert's Theorem 90 implies the existence of $\zeta_g \in \bar k^\times$ such that $\rho_\sigma(g)=\zeta_g^{-1}\cdot \sigma\zeta_g$. Then $\epsilon_{\rho,\delta}(g) = \zeta_g^2 \cdot \delta(g) \in k^\times/k^{\times,2}$.
\end{enumerate}

\end{rem}

\begin{lem} \label{lem:sgn1func}
The assignment $(\rho,\delta) \mapsto \epsilon_{\rho,\delta}$ is
functorial. More precisely, if $f : \ms{H} \to \ms{G}$ is a morphism of
algebraic groups over $k$, then
\[ \epsilon_{\rho\circ f,\delta\circ f}
= \epsilon_{\rho,\delta}\circ f.\]
\end{lem}
\begin{proof}
Immediate.
\end{proof}

\begin{lem}
\label{lem:hyper-char-additive}
The assignment $(\rho,\delta) \mapsto \epsilon_{\rho,\delta}$ is a
group homomorphism
\[ Z^1(\Gamma,X^*(\ms{G}) \to X^*(\ms{G}))
\to \Hom(\ms{G}(k), k^\times/k^{\times,2}) \]
whose kernel contains $B^1(\Gamma,X^*(\ms{G}) \to X^*(\ms{G}))$ and hence descends to a
group homomorphism
\[ H^1(\Gamma,X^*(\ms{G}) \to X^*(\ms{G}))
\to \Hom(\ms{G}(k), k^\times/k^{\times,2}) \]
\end{lem}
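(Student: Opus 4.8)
The plan is to establish additivity of $(\rho,\delta)\mapsto\epsilon_{\rho,\delta}$ on hypercocycles, then check that hypercoboundaries are sent to the trivial character, and finally invoke the universal property of the quotient $H^1 = Z^1/B^1$.

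For additivity, I would fix $g\in\ms G(k)$ and two hypercocycles $(\rho,\delta),(\rho',\delta')$. The key bookkeeping point is that, although $X^*(\ms G)$ is written additively, its elements take values in $\bar k^\times$ multiplicatively, so $(\delta+\delta')(g)=\delta(g)\delta'(g)$ and hence $\sqrt{\delta(g)}\,\sqrt{\delta'(g)}$ is a legitimate square root of $(\delta+\delta')(g)$. Computing $\epsilon_{\rho+\rho',\delta+\delta'}(g,\sigma)$ with this choice of square root, and using $(\rho+\rho')_\sigma(g)=\rho_\sigma(g)\rho'_\sigma(g)$, yields $\epsilon_{\rho+\rho',\delta+\delta'}(g,\sigma)=\epsilon_{\rho,\delta}(g,\sigma)\epsilon_{\rho',\delta'}(g,\sigma)$ in $Z^1(\Gamma,\mu_2)$; this is valid because the preceding lemma tells us the left-hand side is independent of the square root used. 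Passing to classes in $H^1(\Gamma,\mu_2)$ and transporting along the group isomorphism $H^1(\Gamma,\mu_2)\xrightarrow{\sim}k^\times/k^{\times,2}$ gives $\epsilon_{\rho+\rho',\delta+\delta'}=\epsilon_{\rho,\delta}\cdot\epsilon_{\rho',\delta'}$ as characters of $\ms G(k)$, so $(\rho,\delta)\mapsto\epsilon_{\rho,\delta}$ is a homomorphism $Z^1(\Gamma,X^*(\ms G)\to X^*(\ms G))\to\Hom(\ms G(k),k^\times/k^{\times,2})$.

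For the kernel, I would take a hypercoboundary $((1-\sigma)\chi,2\chi)$ with $\chi\in X^*(\ms G)$ and $g\in\ms G(k)$. Then $(2\chi)(g)=\chi(g)^2$, so one may choose $\sqrt{(2\chi)(g)}=\chi(g)$; since $g$ is $k$-rational, $((1-\sigma)\chi)(g)=\chi(g)/\sigma(\chi(g))$, and the defining formula gives $\epsilon_{(1-\sigma)\chi,2\chi}(g,\sigma)=\bigl(\chi(g)/\sigma(\chi(g))\bigr)\cdot\bigl(\sigma(\chi(g))/\chi(g)\bigr)=1$. Hence $\epsilon_{(1-\sigma)\chi,2\chi}$ is trivial, i.e. $B^1(\Gamma,X^*(\ms G)\to X^*(\ms G))$ lies in the kernel of the homomorphism above, which therefore factors through $H^1(\Gamma,X^*(\ms G)\to X^*(\ms G))$, as claimed. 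I do not expect any real obstacle here: the only things requiring care are the passage between the additive convention on $X^*(\ms G)$ and the multiplicative structure of $\bar k^\times$, and the (permitted) freedom to pick a convenient square root at each stage, which is exactly what the preceding lemma licenses.
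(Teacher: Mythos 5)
Your proof is correct and follows the same approach as the paper's own (very terse) proof: multiplicativity in $(\rho,\delta)$ is checked on the level of the $\mu_2$-valued cocycles $\epsilon_{\rho,\delta}(g,\anondot)$ by exploiting the freedom in choosing square roots, and the vanishing on hypercoboundaries is read off directly from the defining formula. You have simply spelled out the details that the paper leaves implicit.
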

\begin{proof}
The homomorphism statement reduces to showing that for fixed $g$ and
$\sigma$ the element $\epsilon_{\rho,\delta}(g,\sigma) \in \mu_2$ is
multiplicative in the pair $(\rho,\delta)$, which follows from the
independence of choice of square root. That this homomorphism kills
coboundaries is immediate from the formula for $\epsilon(g,\sigma)$.
\end{proof}

\begin{rem}
The construction of the character $\epsilon_{\rho,\delta}$ can be
rephrased more abstractly as follows. Consider $\ms{G}$ as a complex of
length $1$ placed in degree $0$. Let $\mb{G}_m \to \mb{G}_m$ be the
squaring map, considered as a complex placed in degrees $0$ and $1$. Let
$\Hom(\ms{G},\mb{G}_m \to \mb{G}_m)$ be the $\Hom$-complex. It equals
$X^*(\ms{G}) \to X^*(\ms{G})$. Now we have the obvious map
\[ H^1(\Gamma,\Hom(\ms{G},\mb{G}_m \to \mb{G}_m))
\to \Hom_\tx{grp}(\ms{G}(k),H^1(\Gamma,\mb{G}_m
\to \mb{G}_m)). \]
But, the squaring map on $\mb{G}_m$ being surjective, the complex
$\mb{G}_m \to \mb{G}_m$ is quasi-isomorphic to its kernel $\mu_2$.
\end{rem}

The following example gives a particular family of hypercocycles to which we can apply the construction \eqref{eq:abs-hyper-char}.

\begin{exa}
\label{exa:conc-hyper}
Let \(\fS\) be any set equipped with an action of \(\Sigma\)
for which \(\{\pm1\}\) has no fixed points,
and let \(i \mapsto \chi_i\) be a \(\Sigma\)-equivariant map
\(\fS \to X^*(\ms G)\).
We allow \(\chi_{-i}\) to equal \(\chi_i\).
Let \(\fS^+\) be any subset of \(\fS\) such that \(\fS\) is the
disjoint union of \(\fS^+\) and \(\fS^- \ldef -\fS^+\), and
define
\[
\delta_{\fS^+} = \sum_{i \in \fS^+} \chi_i
\quad\text{and, for all \(\sigma \in \Gamma\),}\quad
\rho_{\fS^+, \sigma} = \sum_{i \in \fS^+ \cap \sigma\fS^-} \chi_i.
\]
\end{exa}

\begin{lem}
\label{lem:conc-hyper}
In the notation of Example \ref{exa:conc-hyper},
$\sigma \mapsto \rho_{\fS^+, \sigma}$ lies in $Z^1(\Gamma,X^*(\ms G))$, and
we have the relation $(1-\sigma)\delta_{\fS^+} = 2\rho_{\fS^+, \sigma}$ in
$X^*(\ms G)$.
Therefore $(\rho_{\fS^+},\delta_{\fS^+})$
lies in $Z^1(\Gamma,X^*(\ms G) \to X^*(\ms G))$.
The class of this hypercocycle does not depend on the choice of
$\fS^+$.
\end{lem}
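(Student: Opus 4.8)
The plan is to transport the two identities into a torsion-free ``universal'' model and then push forward along the homomorphism induced by $i \mapsto \chi_i$; the torsion-freeness of the model is what makes the argument clean, and is also the only point that needs care. Concretely, I would set $B = \Z[\fS] \otimes_{\Z[\{\pm1\}]} \Z_{-}$, where $\Z_{-}$ is the sign representation of $\{\pm1\}$ and $\{\pm1\}$ acts on $\Z[\fS]$ through its action on $\fS$. Since $\Sigma = \Gamma \times \{\pm1\}$, the $\Gamma$- and $\{\pm1\}$-actions on $\fS$ commute, so $\Gamma$ acts on $B$; and since $\{\pm1\}$ acts freely on $\fS$ (the hypothesis of Example~\ref{exa:conc-hyper}), $B$ is a \emph{free} abelian group, with $\Z$-basis the images of the elements of $\fS^+$. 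Write $\bar i$ for the image in $B$ of $i \in \fS$, so that $\overline{-i} = -\bar i$. The $\Sigma$-equivariance of $i \mapsto \chi_i$, i.e.\ $\chi_{\sigma i} = \sigma\chi_i$ for $\sigma \in \Gamma$ and $\chi_{-i} = -\chi_i$, says exactly that the $\Z$-linear extension $\Z[\fS] \to X^*(\ms{G})$ is $\Gamma$-equivariant and sends the $\{\pm1\}$-action to multiplication by $-1$, hence factors through a $\Gamma$-equivariant map $\bar\chi : B \to X^*(\ms{G})$ with $\bar\chi(\bar i) = \chi_i$. Setting $\delta^B = \sum_{i \in \fS^+} \bar i$ and $\rho^B_\sigma = \sum_{i \in \fS^+ \cap \sigma\fS^-} \bar i$, we have $\bar\chi(\delta^B) = \delta_{\fS^+}$ and $\bar\chi(\rho^B_\sigma) = \rho_{\fS^+,\sigma}$.

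The one computation to carry out is the identity $(1-\sigma)\delta^B = 2\rho^B_\sigma$ in $B$. Splitting $\fS^+$ and $\sigma\fS^+$ by means of the decompositions $\fS = \sigma\fS^+ \sqcup \sigma\fS^-$ and $\fS = \fS^+ \sqcup \fS^-$ gives $\delta^B - \sigma\delta^B = \sum_{i \in \fS^+ \cap \sigma\fS^-} \bar i - \sum_{i \in \sigma\fS^+ \cap \fS^-} \bar i$; since $\sigma\fS^+ \cap \fS^-$ is the image of $\fS^+ \cap \sigma\fS^-$ under $i \mapsto -i$ and $\overline{-i} = -\bar i$, the second sum equals $-\rho^B_\sigma$, which is the identity. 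Everything else is formal: $\sigma \mapsto (1-\sigma)\delta^B$ is the coboundary of $\delta^B$, hence lies in $Z^1(\Gamma, B)$, so $\sigma \mapsto 2\rho^B_\sigma$ is a cocycle, and since $B$ is torsion-free so is $\sigma \mapsto \rho^B_\sigma$. Applying the $\Gamma$-equivariant $\bar\chi$, which sends cocycles to cocycles and respects the relation ``$(1-\sigma)(\anondot) = 2(\anondot)$'', yields $\rho_{\fS^+,\anondot} \in Z^1(\Gamma, X^*(\ms{G}))$ together with $(1-\sigma)\delta_{\fS^+} = 2\rho_{\fS^+,\sigma}$; that is, $(\rho_{\fS^+}, \delta_{\fS^+}) \in Z^1(\Gamma, X^*(\ms{G}) \to X^*(\ms{G}))$.

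For independence of the choice of $\fS^+$, let $\fS^{++}$ be a second such subset and compare the two constructions in $B$. Orbit by orbit, the contribution to $\delta^B_{\fS^{++}} - \delta^B_{\fS^+}$ is $0$ where the two sections agree and $\pm(\overline{-i} - \bar i) = \mp 2\bar i$ where they disagree, so $\delta^B_{\fS^{++}} - \delta^B_{\fS^+} = 2\mu$ for an explicit $\mu \in B$. Subtracting the two instances of $(1-\sigma)\delta^B = 2\rho^B_\sigma$ gives $2(\rho^B_{\fS^{++},\sigma} - \rho^B_{\fS^+,\sigma}) = 2(1-\sigma)\mu$, hence $\rho^B_{\fS^{++},\sigma} - \rho^B_{\fS^+,\sigma} = (1-\sigma)\mu$ by torsion-freeness. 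Thus the two hypercocycles in $B$ differ by the hypercoboundary $((1-\sigma)\mu, 2\mu)$; applying $\bar\chi$, the hypercocycles $(\rho_{\fS^{++}}, \delta_{\fS^{++}})$ and $(\rho_{\fS^+}, \delta_{\fS^+})$ differ by $((1-\sigma)\bar\chi(\mu), 2\bar\chi(\mu)) \in B^1(\Gamma, X^*(\ms{G}) \to X^*(\ms{G}))$, so they represent the same class.

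I expect the only real obstacle to be that $X^*(\ms{G})$ may have $2$-torsion (for instance $X^*(\mu_2) = \Z/2\Z$), so that the implication ``$2\rho$ a cocycle $\Rightarrow \rho$ a cocycle'', and the corresponding step in the independence argument, cannot be run inside $X^*(\ms{G})$ itself; routing through the torsion-free lift $B$ is precisely what removes this, and it incidentally makes the independence statement cost almost nothing extra. Apart from that, everything reduces to routine bookkeeping with the decomposition $\fS = \fS^+ \sqcup \fS^-$ and its $\Gamma$-translates --- one could instead verify the cocycle property of $\rho_{\fS^+}$ by hand, partitioning $\fS^+ \cap \sigma\tau\fS^-$ against $\sigma\fS^\pm$, but the lift keeps that mechanical step out of sight.
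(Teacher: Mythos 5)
Your proof is correct, and it takes a genuinely different route from the paper's. The paper verifies all three assertions directly in $X^*(\ms G)$: it checks the cocycle identity $\rho_{\fS^+,\sigma} + \sigma\rho_{\fS^+,\tau} = \rho_{\fS^+,\sigma\tau}$ by partitioning $\fS$ against $\sigma\fS^\pm$ and $\sigma\tau\fS^\pm$ and cancelling a pair of terms via $\chi_{-i} = -\chi_i$; it checks $(1-\sigma)\delta_{\fS^+} = 2\rho_{\fS^+,\sigma}$ by a similar partitioning; and for the independence statement it reduces to the case of swapping a single $i$ for $-i$ and tracks the resulting changes to $\rho$ and $\delta$ explicitly. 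You instead lift everything to the free $\Z$-module $B = \Z[\fS] \otimes_{\Z[\{\pm1\}]} \Z_-$, verify only the relation $(1-\sigma)\delta^B = 2\rho^B_\sigma$ there, and recover both the cocycle property of $\rho^B$ and the independence of $\fS^+$ as formal consequences of the torsion-freeness of $B$ (dividing coboundary identities by $2$), then push everything forward along the $\Gamma$-equivariant map $\bar\chi : B \to X^*(\ms G)$. What your approach buys is economy: only one combinatorial identity has to be checked by hand, and the independence argument comes for free without the single-swap reduction. It also isolates a clean structural fact --- that $B$ is the universal target for $\Sigma$-equivariant, sign-reversing assignments $i \mapsto \chi_i$ --- which the paper's argument uses implicitly but never names. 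Your concern about $2$-torsion in $X^*(\ms G)$ is also well placed: the paper dodges it by proving the cocycle identity for $\rho$ directly rather than deducing it from the relation with $\delta$, whereas your deduction ``$2\rho$ is a cocycle, hence $\rho$ is'' would indeed fail inside $X^*(\ms G)$, so routing through $B$ is not just aesthetic but is needed to make your particular chain of reasoning valid.
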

\begin{proof}
We abbreviate \(\rho_{\fS^+}\) and \(\delta_{\fS^+}\)
to \(\rho\) and \(\delta\), respectively.
The character $\rho_\sigma$ is the sum of $\chi_i$ for $i$ running
over the set $\fS^+ \cap \sigma \fS^-$.
In the following computation, for convenience, we abbreviate \(\sum_{i \in \fS'}\chi_i\) to just \([\fS']\) when \(\fS'\) is a subset of \(\fS\). Then $[\sigma \fS']=\sigma[\fS']$ for all $\sigma \in \Sigma$. For $\sigma,\tau \in \Gamma$ we have
\begin{align*}
\rho_\sigma+\sigma\rho_\tau
={} &[\fS^+\cap\sigma \fS^-]+[\sigma \fS^+\cap\sigma\tau \fS^-]\\
={}&[\cancel{\fS^+\cap\sigma \fS^-\cap\sigma\tau \fS^+}]
	+[\fS^+\cap\sigma \fS^-\cap\sigma\tau \fS^-]\\
	&{}+[\fS^+\cap\sigma \fS^+\cap\sigma\tau \fS^-]+[\cancel{\fS^-\cap\sigma \fS^+\cap\sigma\tau \fS^-}]\\
={}&\fS^+\cap\sigma\tau \fS^-\\
={}&\rho_{\sigma\tau},
\intertext{justifying the first claim, and}
(1-\sigma)\delta
={}&\fS^+-\sigma \fS^+\\
={}&[\fS^+\cap\sigma \fS^-]+[\cancel{\fS^+ \cap \sigma \fS^+}] \\
	&{}-\sigma[\cancel{\fS^+ \cap \sigma^{-1}\fS^+}]
	-\sigma[\fS^+ \cap \sigma^{-1}\fS^-]\\
={}&[\fS^+ \cap \sigma \fS^-]-[\fS^- \cap \sigma \fS^+]\\
={}&2[\fS^+ \cap \sigma \fS^-],
\end{align*}
justifying the second claim.

For the independence statement it is enough to consider the effect of
replacing \(\fS^+\) by \(\{-i\} \cup \fS^+ \sm \{i\}\)
for some \(i \in \fS^+\).
This change replaces $\delta$ by $\delta - 2\chi_i$,
while $\rho_\sigma$ is replaced by $\rho_\sigma + (\sigma-1)\chi_i$.
\end{proof}

Lemma \ref{lem:conc-hyper} allows us to make the following definition.

\begin{dfn} \label{dfn:epsX}
	Let $\fS$ and $i \mapsto \chi_i$ be as in Example \ref{exa:conc-hyper}. Then we write
	 $$\epsilon_\fS : \ms{G}(k) \to k^\times/k^{\times,2}$$
	 for the character of \eqref{eq:abs-hyper-char} associated to the class of the hypercocycle $(\rho_{\fS^+},\delta_{\fS^+})$ for any choice of $\fS^+ \subset \fS$  as in Example \ref{exa:conc-hyper}.
\end{dfn}

\begin{rem}
\label{rem:refine-hyper-char}
With the notation of Example \ref{exa:conc-hyper}, consider another set \(\fS'\) with \(\Sigma\)-action for which \(\{\pm1\}\) has no fixed points and a \(\Sigma\)-equivariant surjective map \(\pi : \fS \to \fS'\). Choose a subset \(\fS^{\prime, +}\) of \(\fS'\) such that $\fS' = \fS^{\prime,+} \sqcup -\fS^{\prime,+}$ and define \(\fS^+\) to be the pre-image of $\fS^{\prime,+}$. Put
\[
\chi_{i'} = \sum_{\substack{
    i \in \fS \\
    \pi(i) = i'
}}
    \chi_i
\]
for all \(i' \in \fS'\). Then \(\delta_{\fS^{\prime, +}}\) equals \(\delta_{\fS^+}\), \(\rho_{\fS^{\prime, +}}\) equals \(\rho_{\fS^+}\), and hence $\epsilon_{\fS'}=\epsilon_{\fS}$.
\end{rem}

Recall that $\Lang_i$ denotes the isomorphism $k_i^1 \to k_i^\times/k_{\pm i}^\times$ that is inverse to $a \mapsto a/\sigma_i(a)$. Then we can describe $\epsilon_\fS$ as follows.

\begin{pro}
\label{pro:conc-hyper}
For all \(\gamma \in \ms G(k)\) we have
\[\epsilon_\fS(\gamma)= \prod_{i \in \fS_\tx{asym}/\Sigma}
	\fNorm_{k_i/k}(\chi_i(\gamma))
\dotm \prod_{i \in \fS_\tx{sym}/\Gamma}
	\fNorm_{k_i/k}(\Lang_i(\chi_i(\gamma))) \mod k^{\times,2}. \]
\end{pro}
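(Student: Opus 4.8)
The plan is to compute $\epsilon_\fS(\gamma)$ directly from the definition, working $\Sigma$-orbit by $\Sigma$-orbit, and exploiting the additivity in Lemma \ref{lem:hyper-char-additive} together with the fact (Lemma \ref{lem:conc-hyper}) that $\epsilon_\fS$ is independent of the chosen set $\fS^+$. Since both sides are characters of $\ms G(k)$, it suffices to check the formula on each $\Sigma$-orbit in $\fS$ separately: writing $\fS = \bigsqcup_j \fS^{(j)}$ as a disjoint union of $\Sigma$-orbits, the hypercocycle $(\rho_{\fS^+},\delta_{\fS^+})$ decomposes as a sum over the orbits $\fS^{(j)}$ of the corresponding hypercocycles built from $\fS^{(j),+} = \fS^+\cap\fS^{(j)}$, so $\epsilon_\fS = \prod_j \epsilon_{\fS^{(j)}}$. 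Thus I reduce to a single $\Sigma$-orbit, and there are three cases to treat: asymmetric, symmetric unramified, and symmetric ramified. (Here ``ramified'' versus ``unramified'' for a symmetric $i$ refers to whether $-i\in I\dota i$; but note the right-hand side of the proposition does not distinguish these two, so the symmetric-ramified and symmetric-unramified computations must yield the same shape of answer, namely $\fNorm_{k_i/k}(\Lang_i(\chi_i(\gamma)))$.)

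For a \emph{single asymmetric orbit} with representative $i$, I would choose $\fS^+$ inside this orbit to be $\Sigma\dota i \cap \{+1\text{-part}\}$ in a way making $\delta = \sum_{\sigma\in\Gamma_i\backslash\Gamma}\sigma\chi_i$; this is already $\Gamma$-fixed (it is $\fNorm_{k_i/k}$ applied to $\chi_i$, viewed additively as an induced character), so $\rho = 0$, and by Remark \ref{rem:hyperbolic-hyper}(1) $\epsilon_{\fS}(\gamma) = \delta(\gamma) = \fNorm_{k_i/k}(\chi_i(\gamma))$ modulo squares. For a \emph{symmetric orbit}, $\Gamma_{\pm i}$ has index two in the stabilizer direction and I would pick $\fS^+$ to consist of the $\Gamma$-translates of $i$ (not $-i$); then $\delta = \sum_{\sigma\in\Gamma_{\pm i}\backslash\Gamma} \sigma\chi_i$ is a sum over cosets of $\Gamma_{\pm i}$, and $\rho_\sigma = \sum_{\sigma\tau\in\Gamma_{\pm i}\backslash\Gamma,\ \sigma\tau\dota i = -\tau\dota i}\tau\chi_i$ picks out exactly those cosets where $\sigma$ flips the sign. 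The key computation is that, on the orbit, this hypercocycle is the image under $\fNorm_{k_i/k}$ (i.e. induction from $\Gamma_{\pm i}$ to $\Gamma$) of the basic rank-one hypercocycle on $F_{\pm i}$ attached to the quadratic extension $k_i/k_{\pm i}$ and the character $\chi_i$ of $\Gamma_i$; by Lemma \ref{lem:sgn1func} (functoriality) and the compatibility of the hypercohomology construction with corestriction/induction, $\epsilon_\fS$ on this orbit is $\fNorm_{k_i/k}$ applied to the value of that rank-one character. So the heart of the matter is the rank-one identity: for the quadratic extension $k_i/k_{\pm i}$, the hypercocycle $(\rho,\delta)$ with $\delta = \chi_i + \sigma_i\chi_i$ and $\rho_{\sigma_i} = \chi_i$ (restricted appropriately) yields the character $\gamma\mapsto \fNorm_{k_i/k_{\pm i}}(\Lang_i(\chi_i(\gamma)))$ of $\ms G(k_{\pm i})$, equivalently $\gamma\mapsto \Lang_i(\chi_i(\gamma))$ composed with the inclusion $k_{\pm i}^\times/k_{\pm i}^{\times 2}$.

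I expect the main obstacle to be precisely this rank-one identity: unwinding Remark \ref{rem:hyperbolic-hyper}(2), one has $\rho_{\sigma_i}(\gamma) = \zeta^{-1}\sigma_i(\zeta)$ for some $\zeta\in\bar k^\times$, and then $\epsilon_{\rho,\delta}(\gamma) = \zeta^2\delta(\gamma) \bmod k_{\pm i}^{\times 2}$; I must identify this class with $\fNorm_{k_i/k_{\pm i}}(\Lang_i(\chi_i(\gamma)))$. The point is that $\chi_i(\gamma)\in k_i^\times$, that $\chi_i(\gamma)\cdot\sigma_i(\chi_i(\gamma)) = \fNorm_{k_i/k_{\pm i}}(\chi_i(\gamma)) = \delta(\gamma)$, and that $\rho_{\sigma_i}(\gamma) = \chi_i(\gamma)$ means one may take $\zeta$ related to $\chi_i(\gamma)$ up to an element of $k_{\pm i}^\times$; tracking what ``up to $k_{\pm i}^\times$'' does to the class $\zeta^2\delta(\gamma)$, and comparing with the definition of $\Lang_i$ as the inverse of $a\mapsto a/\sigma_i(a)$ (so $\Lang_i(\chi_i(\gamma))$ is the class in $k_i^\times/k_{\pm i}^\times$ of any $a$ with $a/\sigma_i(a) = \chi_i(\gamma)$, and its norm to $k_{\pm i}^\times/k_{\pm i}^{\times 2}$ is well-defined), gives the identification. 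The asymmetric case and the reduction to orbits are routine; the symmetric case is where I would be careful, and I would organize it by first establishing the rank-one statement over $k_{\pm i}$ and then applying functoriality under the norm (restriction of scalars) map on the level of algebraic groups, noting that $\ms G(k_i)$, $\ms G(k_{\pm i})$ are the $k$-points of $\mathrm{Res}_{k_i/k}\ms G$, $\mathrm{Res}_{k_{\pm i}/k}\ms G$ and that the induced character $\sum_{\Gamma_{\pm i}\backslash\Gamma}\sigma\chi_i$ corresponds to $\fNorm_{k_{\pm i}/k}$ of the rank-one datum.
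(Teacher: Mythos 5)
Your reduction to $\Sigma$-orbits and your treatment of the asymmetric case match the paper's proof exactly (there the paper also takes $\fS^+ = \Gamma\dota i$, observes $\rho = 0$, and invokes Remark~\ref{rem:hyperbolic-hyper}(1)). In the symmetric case, however, you take a genuinely different route: you want to first establish a ``rank-one'' identity over $k_{\pm i}$ and then lift it along $\fNorm_{k_{\pm i}/k}$, whereas the paper works directly over $k$ by explicitly producing a Hilbert~90 element $\zeta = \prod_{j \in \fS^+} \delta_j$ (where $\delta_i$ lifts $\Lang_i(\chi_i(\gamma))$ to $k_i^\times$ and $\delta_{\sigma i} = \sigma\delta_i$), verifying $\rho_\sigma(\gamma) = \zeta^{-1}\sigma\zeta$, and then applying Remark~\ref{rem:hyperbolic-hyper}(2) and simplifying.

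The principal gap in your route is the ``compatibility of the hypercohomology construction with corestriction/induction'' that you invoke to descend from $k_{\pm i}$ to $k$. Lemma~\ref{lem:sgn1func} does not supply this: it treats functoriality under a morphism $f\colon\ms H \to \ms G$ of algebraic groups \emph{over the fixed base field} $k$ (a change of group, not a change of Galois group), while what you need is a Shapiro-type compatibility for the explicit cochain-level construction of $\epsilon_{\rho,\delta}$ under induction of $\Gamma_{k_{\pm i}}$-modules to $\Gamma_k$-modules. Such a statement is plausible but would need to be formulated and proved; as written it is an unsupported appeal. There is also a small but telling slip in your rank-one set-up: with $\fS^+ = \{i\}$ the correct $\delta$ is $\chi_i$, not $\chi_i + \sigma_i\chi_i$; indeed $\Sigma$-equivariance forces $\sigma_i\chi_i = \chi_{-i} = -\chi_i$, so $\chi_i + \sigma_i\chi_i = 0$, which would make your hypercocycle trivial. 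Fixing this, the relation $(1-\sigma_i)\chi_i = 2\chi_i = 2\rho_{\sigma_i}$ is what makes $(\rho,\delta)$ a hypercocycle, and the heart of the rank-one identity is exactly the paper's Hilbert~90 manipulation; if you carry that out you will find it works uniformly over $k$ without any need to pass through $k_{\pm i}$ first, at which point the corestriction step becomes unnecessary.
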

\begin{proof}
If \(\fS\) admits a \(\Sigma\)-invariant, disjoint
decomposition \(\fS = \fS_1 \cup \fS_2\),
then \(\delta_{\fS^+}\) equals \(\delta_{\fS_1^+} + \delta_{\fS_2^+}\)
and \(\rho_{\fS^+}\) equals \(\rho_{\fS_1^+} + \rho_{\fS_2^+}\);
so, by Lemma \ref{lem:hyper-char-additive} and the fact that the claimed
formula also respects such a decomposition,
we may, and do, assume that \(\Sigma\) acts transitively on \(\fS\).
The result is obvious if \(\fS\) is empty, so we may, and do,
further suppose that there is some element \(i \in \fS\).

If \(i\) is asymmetric and we choose \(\fS^+ = \Gamma\dota i\),
then \(\rho\) is equal to \(0\), \(\delta\) is equal to \(\sum_{\sigma \in \Gamma/\Gamma_{\chi_i}} \sigma\chi_i\), and the result follows from Remark \ref{rem:hyperbolic-hyper}(1).

If \(i\) is symmetric, then
\(\fNorm_{k_i/k_{\pm i}}(\chi_i(\gamma))\) equals \(1\). Choose a lift \(\delta_i\)
of \(\Lang_i(\chi_i(\gamma))
	\in k_i^\times/k_{\pm i}^\times\)
to \(k_i^\times\),
and put \(\delta_{\sigma i} = \sigma\delta_i\)
for all \(\sigma \in \Gamma\). Then
\(\delta_j/\delta_{-j} = \chi_j(\gamma)\)
for all \(j \in \fS\). For all \(\sigma \in \Gamma\), we have that
\begin{multline*}
\pleft(\prod_{j \in \fS^+} \delta_j^{-1}\pright)
	\sigma\pleft(\prod_{j \in \fS^+} \delta_j\pright)
= \prod_{j \in \fS^+} \delta_j^{-1}
\dotm\prod_{j \in \sigma\fS^+} \delta_j
= \prod_{\substack{
	j \in \fS^+ \\
	j \notin \sigma\fS^+
}} \delta_j^{-1}
\dotm\prod_{\substack{
	j \notin \fS^+ \\
	j \in \sigma\fS^+
}}
	\delta_j \\
= \prod_{\substack{
	j \in \fS^+ \\
	j \notin \sigma\fS^+
}}
	\delta_j^{-1}\delta_{-j}
= \prod_{\substack{
	j \in \fS^+ \\
	j \notin \sigma\fS^+
}}
	\chi_j(\gamma)^{-1}
= \rho_\sigma(\gamma)^{-1}.
\end{multline*}
According to Remark \ref{rem:hyperbolic-hyper}(2), \(\epsilon_{\rho, \delta}(\gamma)\) is the class modulo \(k^{\times, 2}\) of
\begin{multline*}
\delta(\gamma) \cdot \pleft(
	\prod_{j \in \fS^+} \delta_j^{-1}
\pright)^2 
= \delta(\gamma)
\dotm\prod_{j \in \fS^+} \bigl(
	(\delta_j/\delta_{-j})
	(\delta_j\delta_{-j})
\bigr)^{-1} \\
= \delta(\gamma)
\dotm\prod_{j \in \fS^+} \chi_j(\gamma)^{-1}
\dotm\prod_{j \in \fS} \delta_j^{-1}
= \fNorm_{k_i/k}\delta_i^{-1}.\qedhere
\end{multline*}
\end{proof}

\subsection{Sign characters via the spinor norm}
\label{sub:sp-norm}

We continue with an arbitrary field $k$ of odd characteristic, with absolute Galois group $\Gamma$ relative to a fixed separable extension $\bar k$, as well as an algebraic group $\ms{G}$ defined over $k$, which we again do not assume to be reductive or connected.  Recall that we put \(\Sigma = \Gamma \times \{\pm1\}\).

Let $(V,\varphi)$ be a quadratic space over $k$.
We write \(\beta_\varphi\) for the associated symmetric bilinear form on
\(V\), given by
\(\beta_\varphi(v, w) = \varphi(v + w) - \varphi(v) - \varphi(w)\)
for all \(v, w \in V\).
(In \cite[Chapter 1, Definition 1.6]{scharlau:quadratic},
the associated symmetric bilinear form
is taken to be \(\frac1 2\beta_\varphi\) instead.
However, this will not trouble us.)
If \(\varphi\) is understood, then we may abbreviate \(\beta_\varphi\)
to \(\beta\).
We denote
the orthogonal group of \((V, \varphi)\) by \(\Or(V, \varphi)\)
and its special orthogonal group by \(\SO(V, \varphi)\)
(or by \(\Or(\varphi)\) and \(\SO(\varphi)\), respectively,
if \(V\) is understood).

The spinor norm
$\Or(V, \varphi)(k) \to k^\times/k^{\times,2}$
\cite[Chapter 9, Definition 3.4]{scharlau:quadratic}
is a homomorphism
determined by the following property: For an anisotropic vector
$v \in V$ the reflection $\tau_v \in \Or(V, \varphi)$ is defined by
$\tau_v(w)=w-2\frac{\beta(v,w)}{\beta(v,v)}v$ and its spinor norm is
$\varphi(v)$.

\begin{rem} \label{rem:snind}
The spinor norm also has a cohomological interpretation.
As an algebraic group defined over $k$, \(\Or(V, \varphi)\) has a central
extension $\Pin(V, \varphi)$ with kernel $\mu_2$.
The connecting homomorphism
$\Or(V, \varphi)(k) \to H^1(\Gamma,\mu_2)=k^\times/k^{\times,2}$
is the spinor norm.
The pre-image \(\Spin(V, \varphi)\) of \(\SO(V, \varphi)\) in \(\Pin(V, \varphi)\)
is the identity component of the latter, and the simply connected cover
of \(\SO(V, \varphi)\).
In particular, if \(\ms G\) is a connected, simply connected \(k\)-group,
then every orthogonal representation \(\ms G \to \Or(V, \varphi)\)
factors through \(\ms G \to \Spin(V, \varphi)\),
so that the spinor norm of the action of any element of \(\ms G(k)\) is
trivial.

It is possible that two distinct quadratic forms on $V$ that give the
same orthogonal group $\Or(\varphi)$ lead to different spinor norm homomorphisms.
For example, if we replace $\varphi$ by $a\varphi$ with $a \in k^\times$, the
spinor norm of each reflection will be multiplied by
$a \in k^\times/k^{\times,2}$.
However, the restriction of the spinor norm to
$\SO(\varphi)$ depends only on $\SO(\varphi)$ and not on $\varphi$,
because $\Spin(\varphi)$ is determined by $\SO(\varphi)$
(as its simply connected cover).
For example, since elements of $\SO(\varphi)$ are
products of an even number of reflections, replacing $\varphi$ by
$a\varphi$ multiplies the spinor norm of such elements by a power of $a^2$,
hence leaves it unchanged (as a square class).
\end{rem}

Every orthogonal representation $\ms{G} \to \Or(V,\varphi)$ of $\ms{G}$ gives rise to the character
\begin{equation}
\label{eq:spinor-char}
\ms{G}(k) \to \Or(V, \varphi)(k) \to k^\times/k^{\times,2},
\end{equation}
where the rightmost map is the spinor norm.
We would like to have a more explicit formula for the values of this character. In order to formulate the result, Lemma \ref{lem:spinor-ss}, we need the structure described in Definition \ref{dfn:grading}.

\begin{dfn}
\label{dfn:grading}
Let $(V,\varphi)$ be a non-degenerate quadratic space over \(k\), and let $\fS$ be a set equipped with an action of $\Sigma$. An \textit{$\fS$-grading} of $(V,\varphi)$ is a $\Gamma$-equivariant map from $\fS$ to the set of subspaces of $V_{\bar k}$, denoted by $i \mapsto (V_i)_{\bar k}$, such that $V_{\bar k}=\bigoplus_{i \in \fS} (V_i)_{\bar k}$ and $(V_i)_{\bar k}$ is orthogonal to $(V_j)_{\bar k}$ unless $j=-i$. Given a representation $\ms{G} \to \Or(V,\varphi)$, we say that the $\fS$-grading is \textit{$\ms{G}$-stable} if each $(V_i)_{\bar k}$ is $\ms{G}_{\bar k}$-stable.
\end{dfn}

Note that we are allowing the action of $-1 \in \Sigma$ to fix elements of $\fS$. A non-zero subspace $(V_i)_{\bar k}$ is anisotropic if and only if $i \in \fS$ is fixed by $-1$. Note further that it is implicit in the definition that $(V_i)_{\bar k} \neq (V_j)_{\bar k}$ whenever $i \neq j$ and $(V_i)_{\bar k} \neq \{0\}$.

For $i \in \fS$, let $k_i$ be the fixed field in $\bar k$ of the stabilizer $\Gamma_i$ of \(i\). 
The $\bar k$-vector space $(V_i)_{\bar k}$ has a natural $k_i$-structure given by $V_i = (V_i)_{\bar k}^{\Gamma_i}$.

Lemma \ref{lem:spinor-ss} gives an explicit formula for \eqref{eq:spinor-char} in a particular situation.

\begin{lem}
\label{lem:spinor-ss}
Let $\fS$ be a set equipped with a $\Sigma$-action, and $\ms{G} \to \Or(V,\varphi)$ be an orthogonal representation endowed with a $\ms{G}$-stable $\fS$-grading. Assume that for every $g \in \ms{G}(k)$ and $i \in \fS$ there exists $n$ such that the semisimple part of $g^{p^n}$ acts on $(V_i)_{\bar k}$ by a scalar, where $p$ is the characteristic exponent of $k$.

For all \(g \in \ms G(k)\), the spinor norm of \(g\) is
\begin{multline*}
\prod_{\substack{
	i = -i \\
	\text{\(-g\res_{V_i}\) unipotent}
}}
	\bigl(
		\disc(k_i/k)^{\dim_{k_i}(V_i)}\dotm
		\fNorm_{k_i/k}(\det\nolimits_{k_i}(\varphi\res_{V_i}))
	\bigr)
\dotm\prod_{\Gamma\dota i \ne -\Gamma\dota i}
	\fNorm_{k_i/k}(\det\nolimits_{k_i}(g\res_{V_i})) \\
\dotm\prod_{\substack{
	i \ne -i \\
	\Gamma\dota i = -\Gamma\dota i
}}
	\fNorm_{k_i/k}(\Lang_i(\det\nolimits_{k_i}(g\res_{V_i}))),
\end{multline*}
where the products are taken over the \(\Sigma\)-orbits of elements
\(i \in \fS\) satisfying the stated conditions.
\end{lem}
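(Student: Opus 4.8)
The plan is to reduce the computation of the spinor norm of $g$ to a product over $\Sigma$-orbits, then handle each orbit type separately. First I would observe that since the grading is $\ms G$-stable, the image of $g$ in $\Or(V,\varphi)$ preserves each $(V_i)_{\bar k}$, and hence preserves each of the $k$-rational orthogonal summands $V_{[i]} := \bigl(\bigoplus_{j \in \Sigma\dota i}(V_j)_{\bar k}\bigr)^\Gamma$ obtained by bundling a $\Sigma$-orbit together. Since the spinor norm is a homomorphism $\Or(V,\varphi)(k) \to k^\times/k^{\times,2}$ and $V = \bigoplus_{[i]} V_{[i]}$ is an orthogonal decomposition over $k$, the spinor norm of $g$ is the product over $\Sigma$-orbits of the spinor norm of $g\res_{V_{[i]}}$. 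So it suffices to treat each orbit type: (a) $i \ne -i$ asymmetric (i.e. $\Gamma\dota i \ne -\Gamma\dota i$); (b) $i \ne -i$ symmetric ($\Gamma\dota i = -\Gamma\dota i$ but $-i \notin \Gamma\dota i$ — wait, $\Gamma\dota i = -\Gamma\dota i$ means $-i \in \Gamma\dota i$, so "symmetric" here just means $i$ symmetric with $i \ne -i$); (c) $i = -i$ (so $(V_i)_{\bar k}$ is anisotropic and defined over $k_i$).

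For case (a), $V_{[i]} = \Res_{k_i/k}(V_i)$ is a hyperbolic-type space built from the pairing of $(V_i)_{\bar k}$ with $(V_{-i})_{\bar k}$, and $g$ acts on $V_i$ as a $k_i$-linear automorphism while acting on $V_{-i}$ by the inverse-transpose. A standard fact about spinor norms of such "parabolic" elements (an element of $\GL$ embedded in an orthogonal group via $A \mapsto A \oplus (A^t)^{-1}$ on a hyperbolic space) gives spinor norm $\det_{k_i}(g\res_{V_i})$ over $k_i$, hence $\fNorm_{k_i/k}(\det_{k_i}(g\res_{V_i}))$ over $k$; this is where I would cite or reprove the elementary lemma that such an element's spinor norm is the determinant. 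For case (b), using the decomposition of $V_{[i]}$ as $\Res_{k_{\pm i}/k}$ of a space over $k_{\pm i}$ that becomes hyperbolic over $k_i$, the element $g\res_{V_i}$ has $\det_{k_i}$ of norm $1$ down to $k_{\pm i}$ (this uses $i \ne -i$ together with $\Gamma\dota i = -\Gamma\dota i$, forcing the $\sigma_i$-twisted structure), and the spinor norm becomes the image under $\Lang_i : k_i^1 \to k_i^\times/k_{\pm i}^\times$ followed by $\fNorm_{k_i/k}$; this is the Galois-descent bookkeeping analogous to what appears in Proposition \ref{pro:conc-hyper}. For case (c), $(V_i, \varphi\res_{V_i})$ is an honest anisotropic (or at least non-degenerate) quadratic space over $k_i$ and $g\res_{V_i} \in \Or(V_i,\varphi\res_{V_i})(k_i)$; here the hypothesis that some $p$-power of the semisimple part of $g$ acts by a scalar is used to reduce to the case where $-g\res_{V_i}$ is unipotent (any semisimple eigenvalue $\zeta \ne \pm 1$ pairs with $\zeta^{-1}$ to contribute a square, and $\zeta = 1$ contributes trivially), after which the spinor norm of a negative-unipotent orthogonal transformation is computed by Zassenhaus's formula / the Wall invariant to be $\disc(V_i/k_i)\cdot$ (a power of $\det$), which over $k$ reads off as $\disc(k_i/k)^{\dim_{k_i}V_i}\cdot\fNorm_{k_i/k}(\det_{k_i}(\varphi\res_{V_i}))$ — I would invoke the known closed form for the spinor norm of $-u$ with $u$ unipotent in terms of the discriminant of the quadratic form.

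The main obstacle I expect is case (c): correctly identifying the spinor norm of an orthogonal element whose negative is unipotent. The reduction from a general $g\res_{V_i}$ to this case (using the scalar-after-$p$-power hypothesis and the pairing-off of reciprocal eigenvalues) is the conceptual heart, and then one needs the precise formula — essentially that the spinor norm of $-u$ equals the discriminant of the form on the $(-1)$-eigenspace-type piece, which for a negative-unipotent element is the whole space. Getting the $\disc(k_i/k)$ factor (coming from the trace form of $k_i/k$ when one restricts scalars, i.e. from $\det_{k_i/k}$ of the transfer of the quadratic form) to appear with exactly exponent $\dim_{k_i}(V_i)$, and reconciling the ambiguity noted in Remark \ref{rem:snind} about dependence of the spinor norm on the scaling of $\varphi$ (which is why only $\SO$, or here the even-dimensional / determinant-balanced pieces, give a well-defined answer), will require care. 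The asymmetric and $i\ne -i$ symmetric cases are, by contrast, routine once the hyperbolic-element spinor-norm lemma is in hand.
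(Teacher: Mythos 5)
Your overall structure — orthogonal direct-sum decomposition over $\Sigma$-orbits (citing the multiplicativity of the spinor norm on orthogonal summands, as in \cite[55:4]{omeara:quadratic}), then hyperbolic-space and Hermitian-space spinor-norm formulas for the $i \ne -i$ cases, and a discriminant formula for the $i = -i$ case — matches the paper's proof. Your treatment of cases (a) and (b) is essentially the paper's argument in compressed form.

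The gap is in case (c), where your account of how the $p^n$-power hypothesis enters is muddled, and the omission is a real one. The paper's crucial preliminary move, which you never state, is: since $p$ is odd and the spinor norm takes values in the $2$-torsion group $k^\times/k^{\times,2}$, replacing $g$ by $g^{p^n}$ does not change its spinor norm; and since $\det(g^{p^n}\res_{V_i}) = \det(g\res_{V_i})^{p^n}$ has the same square class as $\det(g\res_{V_i})$, it also does not change any term of the claimed formula. Once that is in hand, the hypothesis lets you assume $g$ has a rational Jordan decomposition whose semisimple part acts by a scalar on each $(V_i)_{\bar k}$; stripping off the unipotent factor (which has trivial spinor norm, being a $p$-power-order element of $\SO$) reduces to $g$ literally a scalar. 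In case (c) the scalar is forced to be $\pm 1$, so the whole question collapses to computing $\operatorname{sn}(-1) = \det(\varphi)$ (Scharlau, Ch.\ 9, Example 3.6) together with the transfer formula $\det(\varphi\res_{\Res_{k_i/k}V_i}) = \disc(k_i/k)^{\dim_{k_i} V_i}\fNorm_{k_i/k}(\det_{k_i}\varphi\res_{V_i})$ (Scharlau, Ch.\ 2, Theorem 5.12). Your proposed reduction — pairing off eigenvalues $\zeta$ with $\zeta^{-1}$ — is both unnecessary after this reduction (the semisimple part is already a scalar, so there is nothing to pair) and, as stated, not evidently sound over non-algebraically-closed $k_i$, since the eigenvalues need not be rational; and reaching for the general Zassenhaus/Wall formula is heavier machinery than is required once you know the semisimple part is $-1$. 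Make the ``odd powers preserve the spinor norm'' observation explicit and state the rational Jordan decomposition reduction; the rest of case (c) then becomes as routine as you found cases (a) and (b) to be.
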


\begin{proof}
By \cite[55:4]{omeara:quadratic}, we may assume that \(\Sigma\) acts
transitively on \(\mathfrak S\).
The result is obvious if \(\mathfrak S\) is empty, so we may
further assume that there is some element \(i \in \mathfrak S\).
Upon replacing \(g\) by a suitable power \(g^{p^n}\),
which does not affect the spinor norm (since \(p\) is odd),
we may assume that \(g\) has a rational Jordan decomposition
and that its semisimple part acts as a scalar (necessarily in the separable closure \(\bar k\)).

Since unipotent elements of \(\Or(\varphi)\) lie in \(\SO(\varphi)\)
(because they have \(p\)-power order in the component group \(\mu_2\), and \(p \ne 2\)),
but the spinor norm is trivial on unipotent elements of \(\SO(\varphi)\)
(by, for example, its cohomological description
in Remark \ref{rem:snind}),
we may further assume that \(g\) is semisimple.
We make all of these assumptions. Then, by the assumption of the lemma, $g$ acts by a scalar on $V$.

Assume first that \(i = -i\). Then \(V_i\) is non-degenerate and $V=\Res_{k_i/k} V_i$, where the notation $\tx{Res}_{k_i/k} V_i$ means that we view $V_i$ as a $k$-vector space via the inclusion $k \to k_i$. The only way for \(g\) to be an orthogonal operator is
if the scalar \(\lambda\) by which it operates satisfies
\(\lambda^2 = 1\).
If \(\lambda\) is \(1\) (so that \(-g\res_{V}\) is not unipotent),
then the spinor norm of \(g\) is also \(1\).
If \(\lambda\) is \(-1\) (so that \(-g\res_{V}\) is unipotent),
then, by \cite
	[Chapter 9, Example 3.6
	and Chapter 2, Theorem 5.12]
{scharlau:quadratic},
the spinor norm of \(g\) is
\[
\det\bigl(\varphi\res_{\Res_{k_i/k} V_i}\bigr)
= \disc(k_i/k)^{\dim_{k_i}(V_i)}\fNorm_{k_i/k}(\det(\varphi\res_{V_i})).
\]

Assume now that \(i \ne -i\). Then $V_i \oplus V_{-i}$ is a hyperbolic plane defined over $k_i$ that descends to $k_{\pm i}$, and \(V = \Res_{k_{\pm i}/k} (V_i \oplus V_{-i})^{\Gamma_{\pm i}}\).  Since \(\det(g\res_{V_{-i}}) = \det(g\res_{V_i})^{-1}\), the image of \(g\) lies in \(\SO(\varphi)(k)\). By \cite[Chapter 9, Example 3.7]{scharlau:quadratic}, it suffices to assume that \(k_{\pm i}\) equals \(k\).

If \(i\) is asymmetric,
then \(V\) is the hyperbolic space
\(\mb H(V_i)\), in the notation of
\cite[Chapter 1, Definition 4.3]{scharlau:quadratic}.
By \cite[Chapter 9, Example 3.5]{scharlau:quadratic},
the spinor norm of \(g\)
is \(\det(g\res_{V_i})\).

If \(i\) is symmetric,
then we have an isometric isomorphism between \((V, \varphi)\)
and \((\tx{Res}_{k_i/k_{\pm i}}V_i, \varphi_i)\) where \(\varphi_i(v)= \varphi(v+\sigma_i v)=\beta(v,\sigma_i(v))\) for all \(v \in \tx{Res}_{k_i/k_{\pm i}}V_i\). Now $h_i(v,w)=\beta(v,\sigma_i(w))$ is a Hermitian form on the $k_i$-vector space $V_i$ with respect to the quadratic extension $k_i/k_{\pm i}$, and $\varphi_i(v)=h_i(v,v)$. The element $g$ preserves $h_i$. By \cite[Chapter 10, Theorem 1.5]{scharlau:quadratic}, the spinor norm of \(g\) is \(\fNorm_{k_i/k_{\pm i}}\Lang_i\det(g\res_{V_i})\).
\end{proof}

\begin{rem}
The situation of Lemma \ref{lem:spinor-ss} might seem rather
specialized. But, for any \(g \in \Or(V, \varphi)(k)\),
we can take \(\ms G\) to be the closed subgroup generated by \(g\),
\(\mathfrak S\) to be \(\bar k^\times\),
and
\((V_i)_{\bar k}\) to be the \(i\)-eigenspace
of the semisimple part of a suitable power \(g^{p^n}\) of $g$ that has rational semisimple part.
\end{rem}

\subsection{Sign characters from the Moy--Prasad filtration} \label{sec:sgn3}

In this subsection we work with a connected adjoint group $G$ over a $p$-adic field $F$, a tame twisted Levi subgroup $M \subset G$, and a point \(x \in \cB(M, F) \subset \cB(G, F)\).

\begin{dfn}
\label{dfn:g/m-by-ZM-and-R}
The adjoint action of $Z_M$ on $\fg$ decomposes $\fg$ as
\[ \fg = \fm \oplus \fm^\perp,\qquad \fm^\perp= \bigoplus_{\alpha_0 \in R(Z_M,G)} \fg_{\alpha_0}, \]
where \(\fg_{\alpha_0}\) is the \(\alpha_0\)-weight space of \(Z_M\) on \(\fg\).

\begin{enumerate}
	\item For $O \in R(Z_M,G)/I$, let $\fg_O=\bigoplus_{\alpha_0 \in O} \fg_{\alpha_0}$.
	\item For \((O, t) \in R(Z_M, G)/I \times \R\), let
\(V_{(x,O,t)}=\fg_O(F^\tx u)_{x, t:t+}\).
\end{enumerate}
Then \(V_{(x,O, t)}\) carries an action of \(M(F^\tx u)_x\),
which descends to an algebraic action of \((\ms{M}_x)_{\bar k}\).
\end{dfn}

Note that $e_{\alpha_0}$ depends only on the $\Sigma$-orbit of $\alpha_0$, so we may, and do, write $e_O$ for the common value of $e_{\alpha_0}$ for all $\alpha_0 \in O$, where $O$ is contained in an orbit of $\Sigma$ on $R(Z_M,G)$.

Proposition \ref{pro:sym-ram-extends} is a general fact about
the determinant of the action of \(\ms M_x\)
on certain weight- and Moy--Prasad-subspaces.

\begin{pro}
\label{pro:sym-ram-extends}
Let $R_M' \subset R(Z_M,G)$ be a $\Sigma$-invariant subset. Choose \(d_\omega \in e_\omega^{-1}\Z\) for every \(\omega \in R'_M/\Gamma\),
write \(d_a = d_O = d_\omega\) whenever \(a \in O \subset \omega\),
put
\[
V_{\bar k} \ldef \bigoplus_{O \in R'_M/I}  \quad\bigoplus_{t \in (0, d_O/2)}
	V_{(x,O, t)}
\]
and let $V=V_{\bar k}^\Gamma$. Given a tame maximal torus $T \subset M$ with $x \in \cB(T,F)$, write \(R'_T\) for the pre-image in \(R(T, G/M)\) of \(R'_M\).
For every \(\gamma \in T(F)\subb\), we have
\[
\sgn_k(\det(\gamma\res_V))=\!\!\!\!\prod_{\substack{
	\alpha \in R'_{T,\tx{asym}}/\Sigma \\
	0, d_{\alpha_0}/2 \notin \ord_x(\alpha) \\
	2 \nmid (d_{\alpha_0}\dotm e_\alpha)
}}
	\sgn_{k_\alpha}(\alpha(\gamma))
\dotm\prod_{\substack{
	\alpha \in R'_{T, \tx{sym,ram}}/\Gamma \\
	\alpha(\gamma) \in -1 + \fp_\alpha
}}
	\sgn_{k_\alpha}(-1)^{\lfloor
		(e_\alpha d_{\alpha_0} - 1)/2
	\rfloor}.
\]
\end{pro}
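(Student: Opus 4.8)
Here is my plan.

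\medskip

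The plan is to compute $\det(\gamma\res_V)$ by Galois descent together with a root‑space bookkeeping, and then read off its sign. First I would reduce to the case in which $\Sigma$ acts transitively on $R'_M$: for each $\Sigma$‑orbit $\Omega \subset R'_M$, the span of those $V_{(x, O, t)}$ with $O \subset \Omega$ (and $0 < t < d_O/2$) is a subspace $V_\Omega \subset V$ stable under $\ms M_x$, one has $V = \bigoplus_{\Omega \in R'_M/\Sigma} V_\Omega$, and both $\sgn_k\det(\gamma\res_\bullet)$ and the right‑hand side of the asserted identity are multiplicative over this decomposition. So assume $R'_M$ is a single $\Sigma$‑orbit and write $d$ for $d_{\alpha_0}$ (for a symmetric orbit this is unambiguous, since then $-\alpha_0$ lies in the $\Gamma$‑orbit of $\alpha_0$; for an asymmetric orbit the argument below needs $d_{\alpha_0} = d_{-\alpha_0}$, which holds in the situations to which the proposition is applied). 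Recall from Lemma \ref{lem:r1} that a root $\alpha \in R'_T$ is ramified symmetric only if $\alpha_0$ is, and that then $e(\alpha/\alpha_0)$ is odd.

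Next I would carry out the descent. Fix a tame maximal torus $T \subset M$ with $x \in \cB(T, F)$ and let $\gamma \in T(F)\subb$. The $\bar k$‑space $V_{\bar k}$ is the direct sum of the one‑dimensional Moy--Prasad graded pieces of the root spaces $\fg_\alpha$ at $x$, one line for each pair $(\alpha, t)$ with $\alpha \in R'_T$, $0 < t < d/2$ and $t \in \ord_x(\alpha)$; the Galois group permutes these lines through its action on the pairs (fixing each $t$), $\gamma$ stabilizes each line and acts on it by the image of $\alpha(\gamma)$ in $k_\alpha^\times$, and $V = V_{\bar k}^\Gamma$. Since the determinant of a $k$‑linear endomorphism is unchanged by base change to $\bar k$, collecting the lines into $\Gamma$‑orbits (on each of which $\gamma$ has eigenvalues the $[k_\alpha:k]$ conjugates of $\alpha(\gamma)$, with common multiplicity $m_\alpha$ because $\ord_x(\sigma\alpha) = \ord_x(\alpha)$) gives
\[ \det(\gamma\res_V) = \prod_{\alpha \in R'_T/\Gamma} \fNorm_{k_\alpha/k}(\alpha(\gamma))^{m_\alpha}, \qquad m_\alpha \ldef \#\bigl(\ord_x(\alpha) \cap (0, d/2)\bigr). \]
Applying $\sgn_k$ and the finite‑field identity $\sgn_k \circ \fNorm_{k_\alpha/k} = \sgn_{k_\alpha}$ reduces the proposition to identifying $\prod_{\alpha \in R'_T/\Gamma} \sgn_{k_\alpha}(\alpha(\gamma))^{m_\alpha}$ with the claimed formula.

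It then remains to evaluate the factor $\sgn_{k_\alpha}(\alpha(\gamma))^{m_\alpha}$ according to the type of $\alpha$. If $\alpha$ is symmetric, then $\sigma_\alpha\alpha = -\alpha$, so $\alpha(\gamma)\sigma_\alpha(\alpha(\gamma)) = 1$; reducing shows that the image of $\alpha(\gamma)$ in $k_\alpha^\times$ lies in $k_\alpha^1$ if $\alpha$ is unramified symmetric and squares to $1$ if $\alpha$ is ramified symmetric. In the unramified symmetric case $[k_\alpha : k_{\pm\alpha}] = 2$, so $k_\alpha^1 = (k_\alpha^\times)^{q_{\pm\alpha} - 1} \subset (k_\alpha^\times)^2$ and $\sgn_{k_\alpha}(\alpha(\gamma)) = 1$; these roots contribute nothing, consistent with their absence from the formula. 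If $\alpha$ is asymmetric, then $-\alpha$ lies in a different $\Gamma$‑orbit, and using $k_{-\alpha} = k_\alpha$, $(-\alpha)(\gamma) = \alpha(\gamma)^{-1}$ and $\ord_x(-\alpha) = -\ord_x(\alpha)$, the combined contribution of the $\Sigma$‑orbit of $\alpha$ is $\sgn_{k_\alpha}(\alpha(\gamma))^{m_\alpha + m_{-\alpha}}$ with $m_\alpha + m_{-\alpha} = \#\bigl(\ord_x(\alpha) \cap ((-d/2, d/2) \sm \{0\})\bigr)$; a count of points of the $e_\alpha^{-1}\Z$‑torsor $\ord_x(\alpha)$ in a symmetric interval of length $d$ shows that this is odd exactly when $0, d/2 \notin \ord_x(\alpha)$ and $e_\alpha d$ is odd --- the first product. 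If $\alpha$ is ramified symmetric, then $k_\alpha = k_{\pm\alpha}$ and the image of $\alpha(\gamma)$ is $\pm 1$, equal to $-1$ precisely when $\alpha(\gamma) \in -1 + \fp_\alpha$; the factor is $\sgn_{k_\alpha}(-1)^{m_\alpha}$ in that case and $1$ otherwise, and --- using that $\ord_x(\alpha) = -\ord_x(\alpha)$ forces $\ord_x(\alpha)$ to be $e_\alpha^{-1}\Z$ or its translate by $\tfrac1{2e_\alpha}$, together with the oddness of $e(\alpha/\alpha_0)$ from Lemma \ref{lem:r1} --- a direct count gives $m_\alpha \equiv \lfloor(e_\alpha d - 1)/2\rfloor \pmod 2$ --- the second product. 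Multiplying the contributions of the three types of roots yields the claimed formula.

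I expect the main obstacle to be the last batch of lattice‑point parity computations, and in particular the ramified symmetric case: one has to keep track of which of the two $\tfrac1{2e_\alpha}$‑translate classes contains $\ord_x(\alpha)$ and reconcile the resulting count with the floor appearing in the exponent, and it is precisely the oddness of $e(\alpha/\alpha_0)$, i.e.\ of $e_\alpha$ relative to $e_{\alpha_0}$, that makes these agree. Organizing the Galois‑descent decomposition of $V_{\bar k}$ into $\gamma$‑eigenlines so that the residue‑field norms and norm‑one subgroups above emerge cleanly --- rather than some more opaque expression indexed by $\Sigma$‑orbits --- is the other step that repays some care.
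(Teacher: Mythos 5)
Your plan follows essentially the same route as the paper's proof: express $\sgn_k(\det(\gamma\res_V))$ as a product over $\Gamma$-orbits of $\sgn_{k_\alpha}(\alpha(\gamma))^{m_\alpha}$ with $m_\alpha = \card{\ord_x(\alpha) \cap (0, d_{\alpha_0}/2)}$, then analyze the exponent parity orbit by orbit according to whether $\alpha$ is asymmetric, unramified symmetric, or ramified symmetric. The asymmetric and unramified symmetric cases are handled correctly and match the paper.

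The gap is in the ramified symmetric case. You say that $\ord_x(\alpha) = -\ord_x(\alpha)$ only pins $\ord_x(\alpha)$ down to one of $e_\alpha^{-1}\Z$ or $e_\alpha^{-1}\Z + (2e_\alpha)^{-1}$, and then assert that ``a direct count gives $m_\alpha \equiv \lfloor(e_\alpha d-1)/2\rfloor \pmod 2$'' in either case, with the oddness of $e(\alpha/\alpha_0)$ from Lemma \ref{lem:r1} supposedly making the two cases agree. This is not correct in the generality of the proposition. Write $N = e_\alpha d_{\alpha_0}$. If $\ord_x(\alpha) = e_\alpha^{-1}\Z$ then $m_\alpha = \lfloor(N-1)/2\rfloor$ exactly. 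But if $\ord_x(\alpha) = e_\alpha^{-1}\Z + (2e_\alpha)^{-1}$ then $m_\alpha = \lceil(N-1)/2\rceil$, which differs from $\lfloor(N-1)/2\rfloor$ by $1$ (and hence in parity) whenever $N$ is even. The oddness of $e(\alpha/\alpha_0)$ does not determine the parity of $N = e(\alpha/\alpha_0) \cdot (e_{\alpha_0}d_{\alpha_0})$, because $d_{\alpha_0}$ is an arbitrary element of $e_{\alpha_0}^{-1}\Z$; nor does it determine which of the two translate classes $\ord_x(\alpha)$ actually is (that is a property of $x$ and $\alpha$, independent of $d$). What is actually needed, and what the paper's proof invokes, is \cite[Proposition 4.5.1]{KalRSP}, which gives $0 \in \ord_x(\alpha)$ for ramified symmetric $\alpha$, so that $\ord_x(\alpha) = e_\alpha^{-1}\Z$ and the count is $\lfloor(N-1)/2\rfloor$ on the nose. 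In the one application made of this proposition (Definition \ref{dfn:esr}), $d_{\alpha_0} = e_{\alpha_0}^{-1}$ so $N = e(\alpha/\alpha_0)$ is odd and both translate classes accidentally give the same parity, so your argument would survive for that instance; but the proposition as stated permits $e_{\alpha_0}d_{\alpha_0}$ even, and there your hedged count fails.
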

\begin{proof}
For every \(\gamma \in T(F)\subb\),
the value of \(\sgn_k(\det(\gamma\res_V))\) is
\[
\prod_{\alpha \in R'_T/\Gamma}
	\sgn_k(
		\fNorm_{k_\alpha/k}(\alpha(\gamma))
	)^{\card{\ord_x(\alpha) \cap (0,d_{\alpha_0}/2)}}.
\]
Choose \(\alpha \in R'_T\), write \(\alpha_0 = \alpha\res_{Z_M}\), and put \(d = d_{\alpha_0}\).
Recall
$\sgn_k(\fNorm_{k_\alpha/k}(\alpha(\gamma)))
=\sgn_{k_\alpha}(\alpha(\gamma))$.

Consider first an asymmetric root \(\alpha\).
Since $\sgn_{k_\alpha}(\alpha(\gamma))$ is
insensitive to replacing $\alpha$ by $-\alpha$,
the orbits of \(\alpha\) and \(-\alpha\) contribute
\[ \sgn_{k_\alpha}(\alpha(\gamma))
	^{\card{\ord_x(\alpha)
		\cap ((-d/2,0) \cup (0,d/2))}}. \]
Remember that \(\ord_x(\alpha)\) is an
\(e_\alpha^{-1}\Z\)-torsor and, in particular, is stable under
translation by \(d\).
If \(\ord_x(\alpha)\) contains \(0\) or \(d/2\),
then \(-\ord_x(\alpha)\) contains \(0\) or \(-d/2 + d = d/2\), respectively;
so, since \(\ord_x(\alpha)\) and \(-\ord_x(\alpha)\) are both
\(e_\alpha^{-1}\Z\)-torsors, they are equal.
Thus, in this case,
\(\ord_x(\alpha) \cap ((-d/2, 0) \cup (0, d/2))\)
is a set on which multiplication by \(-1\) is an involution
without fixed points, so it has even cardinality.
Otherwise, \(\ord_x(\alpha) \cap ((-d/2, 0) \cup (0, d/2))\)
equals \(\ord_x(\alpha) \cap (-d/2, d/2]\),
which, since
\(d\) lies in \(e_{\alpha_0}^{-1}\Z \subset e_\alpha^{-1}\Z\),
has \(d e_\alpha\) elements. Thus, the contribution of the asymmetric roots \(\alpha \in R(T, G/M)\) is
\[
\prod_{\substack{
    \alpha \in R'_{T,\tx{asym}}/\Sigma \\
    0, d_{\alpha_0}/2 \notin \ord_x(\alpha) \\
    2\nmid (d_{\alpha_0} \dotm e_\alpha)
}}
    \sgn_{k_\alpha}(\alpha(\gamma)).
\]

Next consider an unramified symmetric $\alpha$. Since
$\alpha(\gamma) \in k_\alpha^\times$ has trivial norm to
$k_{\pm\alpha}^\times$, it also has trivial norm to $k^\times$,
so its contribution to \(\sgn_k(\det(\gamma\res_V))\) equals $1$. 

Finally consider a ramified symmetric $\alpha$.
Note that in this case $\alpha(\gamma) \in \pm 1 + \mathfrak p_\alpha$.
By
\cite[Proposition 4.5.1]{KalRSP} we have $0 \in \ord_x(\alpha)$,
so \(\ord_x(\alpha)\) equals \(e_\alpha^{-1}\Z\).
Thus the intersection $\ord_x(\alpha) \cap (0, d/2)$
has cardinality $\lfloor(e_\alpha d-1)/2\rfloor$.
\end{proof}

\subsection{Good elements and quadratic forms}

We work with a connected adjoint group $G$ over $F$ and a twisted Levi subgroup $M \subset G$,
 and we assume that there exists, and fix, a good element \(X \in \Lie^*(\Mscab)(F)\). The main results of this section hold without assuming that $G$ is adjoint, cf. Remark \ref{rem:nonadjointform}, but we impose this condition for convenience.

The spinor-norm construction \eqref{eq:spinor-char} discussed in \S\ref{sub:sp-norm} takes as an input an orthogonal representation of an algebraic group. In this section we discuss the source of such representations for our purposes.

Since $G$ is adjoint, the center $Z_M$ of $M$ is a torus. We set
\[ \fm^\perp = \bigoplus_{\alpha_0 \in R(Z_M,G)} \fg_{\alpha_0}. \]
Our goal is to define a natural non-degenerate quadratic form on $\fm^\perp(F)$ that is invariant under $M(F)$. Such a form can be obtained by Galois descent from a non-degenerate quadratic form on $\fm^\perp(\sepfield)$ that is invariant under $M(\sepfield)$ and $\Gamma$, where $\sepfield$ is the separable closure of $F$.

We recall that $\Gsc$ denotes the simply connected cover of $G\dergp$ and $\Msc$ the preimage of $M$ in $\Gsc$. The derived subgroup $\Mscder$ of $\Msc$ is simply connected.
We have the $M$-stable decomposition $\gsc = \msc \oplus \fm^\perp$, which we use to identify \(\msc^*\), respectively \(\fm^{\perp, *}\), with the annihilator of \(\fm^\perp\), respectively of \(\msc\), and so obtain the \(M\)-stable decomposition \(\gsc^* = \msc^* \oplus \fm^{\perp, *}\).
We also have the $M$-invariant inclusion $\Lie^*(\Mscab) \to \msc^*$, where \(\Mscab\) is the abelianization of \(\Msc\).

\begin{dfn}
\label{dfn:regular-length}
Let $R$ be a reduced root system. 
\begin{enumerate}
    \item Assume $R$ is irreducible. For $\alpha \in R$ we write \(\ell(\alpha)\) for the integer\-normalized square length of \(\alpha\), so that \(\ell(\alpha)=1\) if $R$ is simply laced, and \(\ell(\alpha) = 1\) for all short roots \(\alpha\) if $R$ is not simply laced. Thus $\ell(\alpha) \in \{1,2,3\}$. Define $\ell_{p'}(\alpha) = \gcd(\ell(\alpha), p)^{-1}\ell(\alpha)$. 
    \item If $R$ is not necessarily irreducible, define $\ell(\alpha)$ and $\ell_{p'}(\alpha)$ for each irreducible factor separately. 
\end{enumerate}
\end{dfn}

\begin{rem}
This definition can also be applied to the dual root system $R^\vee$. Then the function $\ell(\alpha) \cdot \ell(\alpha^\vee)$ is constant on each irreducible factor of $R$, where it takes one of the values $\{1,2,3\}$. If this value equals $p$, then $\ell_{p'}(-)$ is identically $1$ on this factor of $R$ and the corresponding factor of $R^\vee$.
\end{rem}

\begin{dfn}\label{dfn:q-pmalpha}
	Let $T \subset M$ be a maximal torus.
As in \cite[\S1.3.3]{KotWeil}, for every \(\alpha \in R(T, G)\), we define a quadratic pairing \(Q_{\pm\alpha}\) on \(\fg_\alpha(\sepfield) \oplus \fg_{-\alpha}(\sepfield)\) as follows.  For \(Y_{\pm\alpha} \in \fg_{\pm\alpha}(\sepfield)\), we lift \(Y_{\pm\alpha}\) canonically to elements of \(\Lie(\Gsc)_{\pm\alpha}(\sepfield)\), and compute the commutator \([Y_\alpha, Y_{-\alpha}] \in \Lie(\Tsc)(\sepfield)\) of their lifts.  Since \(H_\alpha \ne 0\) in \(\Lie(\Tsc)(\sepfield)\), there is a unique scalar \(c\) such that \([Y_\alpha, Y_{-\alpha}] = cH_\alpha\) and we define \(Q_{\pm\alpha}(Y_\alpha + Y_{-\alpha}) = c\). For convenience, we denote the scalar $c$ by  $\frac{[Y_\alpha,Y_{-\alpha}]}{H_\alpha}$.
\end{dfn}

\begin{lem}
\label{lem:q-depth}
With the notation of Definition \ref{dfn:q-pmalpha}, let \(E/F\) be a tame extension of finite ramification degree over which \(T\) splits, and define \(d_x : \fg(E) \to \R\) by \(d_x(Y) = \sup \{t \in \R \stbar Y \in \fg(E)_{x, t}\}\).
Given $Y_{\pm\alpha} \in \fg_{\pm\alpha}(\sepfield)$, we have $\ord(Q_{\pm\alpha}(Y_\alpha + Y_{-\alpha}))=d_x(Y_\alpha)+d_x(Y_{-\alpha})$.
\end{lem}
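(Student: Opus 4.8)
\textbf{Proof strategy for Lemma \ref{lem:q-depth}.}

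The plan is to reduce the statement to a computation inside a single rank-one (or rank-two, in the symmetric ramified case) subgroup, where the bracket $[Y_\alpha, Y_{-\alpha}]$ can be related to the Moy--Prasad filtration directly via the $\SL_2$- (or $\SL_1$ of a quaternion algebra-) structure attached to the root $\alpha$. First I would pass to the tame splitting field $E/F$, so that $T_E$ is split and the root groups $U_{\pm\alpha}$ are honest $\mathbb G_a$'s over $E$; since $\ord$ is normalized with respect to the valuation sending a uniformizer of $F$ to $1$, and the Moy--Prasad filtrations $\fg(E)_{x,t}$ are normalized the same way, no normalization constant enters. The function $d_x$ is then additive along the grading by $T$-weights: $\fg(E)_{x,t} = \bigoplus_\beta \fg_\beta(E)_{x,t}$, and on each line $\fg_{\pm\alpha}(E)$ the filtration is a translate of $\fg_{\pm\alpha}(E)_{x,0}$ by the $\ord(E^\times)$-torsor recording the ``depth of $\alpha$ at $x$''.

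The key step is the identity relating the bracket to depths. Working in $\Lie(\Gsc)$, the canonical lifts $Y_{\pm\alpha}$ generate, together with $H_\alpha$, a copy of $\mathfrak{sl}_2$ (this uses $p \neq 2$ only mildly, via the tameness assumption that lets us choose a good $\mathfrak{sl}_2$-triple; cf.\ the setup of \cite[\S1.3.3]{KotWeil}). For the \emph{pinned} basis vectors $E_{\pm\alpha}$ with $[E_\alpha, E_{-\alpha}] = H_\alpha$, one has $d_x(E_\alpha) + d_x(E_{-\alpha}) = 0$: indeed $\fg_\alpha(E)_{x,0}$ and $\fg_{-\alpha}(E)_{x,0}$ are $O_E$-lattices in the respective root lines, dual to each other under the pairing $(Y_\alpha, Y_{-\alpha}) \mapsto [Y_\alpha,Y_{-\alpha}]/H_\alpha$ because the affine roots $\alpha + d_x(E_\alpha)$ and $-\alpha + d_x(E_{-\alpha})$ are opposite walls through $x$ — this is exactly the statement that $\ord_x(-\alpha) = -\ord_x(\alpha)$, which the excerpt notes holds in the tame case (the content the lemma is meant to supply). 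Writing a general $Y_{\pm\alpha} = c_{\pm\alpha} E_{\pm\alpha}$ with $c_{\pm\alpha} \in \sepfield^\times$, bilinearity of the bracket gives $Q_{\pm\alpha}(Y_\alpha + Y_{-\alpha}) = c_\alpha c_{-\alpha}$, while $d_x(Y_{\pm\alpha}) = \ord(c_{\pm\alpha}) + d_x(E_{\pm\alpha})$; adding and using $d_x(E_\alpha) + d_x(E_{-\alpha}) = 0$ yields $\ord(Q_{\pm\alpha}(Y_\alpha+Y_{-\alpha})) = \ord(c_\alpha) + \ord(c_{-\alpha}) = d_x(Y_\alpha) + d_x(Y_{-\alpha})$, as desired.

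The main obstacle is making precise the claim that $\fg_\alpha(E)_{x,0}$ and $\fg_{-\alpha}(E)_{x,0}$ pair into $O_E \cdot H_\alpha$ and that this pairing is perfect — i.e.\ that there is no gap. This requires knowing that the smallest $t$ with $\fg_\alpha(E)_{x,t} \neq 0$ and the smallest $t'$ with $\fg_{-\alpha}(E)_{x,t'} \neq 0$ satisfy $t + t' = 0$, which is the ``$\ord_x(-\alpha) = -\ord_x(\alpha)$'' phenomenon. In the tame setting this follows from Bruhat--Tits theory: the valued root datum attached to $(G_E, T_E, x)$ has the property that the concave function values at $\alpha$ and $-\alpha$ at $x$ are negatives of each other, because $x$ lies in the apartment of $T_E$ and the filtration on $U_\alpha(E)$ and $U_{-\alpha}(E)$ is governed by affine functionals $\alpha + r$ and $-\alpha + r'$ whose vanishing hyperplanes through $x$ must coincide when we ask for the ``$0$-th'' pieces, forcing $r + r' = 0$. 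I would cite \cite[Corollary 3.11]{DS18} (whose hypothesis $p \neq 2$ is in force here anyway, though as the excerpt remarks the conclusion persists for tame $G$ even at $p = 2$) for this duality, then assemble the displayed chain of equalities. Everything else is bookkeeping with the $T$-weight grading of the Moy--Prasad filtration, which is standard.
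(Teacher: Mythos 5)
Your reduction to pinned basis vectors $E_{\pm\alpha}$ via the scaling $Y_{\pm\alpha} = c_{\pm\alpha}E_{\pm\alpha}$ is the right opening move, and you correctly isolate the crux: one must show $d_x(E_\alpha) + d_x(E_{-\alpha}) = 0$ for an arbitrary $x \in \cB(T, E)$. The gap is that you do not actually establish this. Citing \cite[Corollary 3.11]{DS18} is problematic on two counts: its hypothesis $p \ne 2$ is precisely what the paper advertises this lemma as removing (see the remark in \S\ref{sec:main-stmt} just after the definition of $\ord_x(\alpha)$, which presents the lemma as reproving $\ord_x(-\alpha) = -\ord_x(\alpha)$ for tame $G$ even at $p = 2$); and, even granting $p \ne 2$, that corollary is logically equivalent to the identity you want for pinned vectors, so citing it supplies no independent content. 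Your alternative Bruhat--Tits gesture --- that the levels of $\alpha$ and $-\alpha$ at $x$ sum to zero --- is the right intuition, but making it precise requires fixing a base point of the apartment at which the affine root system is normalized, i.e.\ a pinning, and you stop short of doing so.

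The paper fills this gap by a genuinely different device. Besides the scaling in $Y$, it uses the translation formula $d_y(Y_\alpha) = d_x(Y_\alpha) + \pair{\alpha}{x - y}$ to reduce the whole identity to a \emph{single}, conveniently chosen $x$, namely the hyperspecial point determined by a Chevalley pinning of $\fg_E$; both sides of the claimed equality are unchanged under moving $x$, since $\pair{\alpha}{x-y} + \pair{-\alpha}{x-y} = 0$ and $Q_{\pm\alpha}$ does not involve $x$. At the Chevalley point the parahoric has an explicit $O_E$-model in which the Chevalley basis vectors span, so $d_x(E_\alpha) = d_x(E_{-\alpha}) = 0$ is read off directly rather than deduced from the duality $\ord_x(-\alpha) = -\ord_x(\alpha)$; the lemma is then \emph{used} to derive that duality in general. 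Incorporating this translation step and replacing the appeal to \cite{DS18} by the explicit computation at the pinned point would make your argument complete, self-contained, and valid for all $p$.
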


\begin{proof}
For $a \in E^\times$ we have $d_x(aY_\alpha)=d_x(Y_\alpha)+\ord(a)$;
and for $y \in \cB(T,E)$ and \(\alpha \in O\) we have $d_y(Y_\alpha)=d_x(Y_\alpha)+\pair\alpha{x-y}$.

It therefore suffices to prove this statement for \emph{some} \(x \in \cB(T, E)\); we choose a Chevalley basis for $\fg_E$ with respect to $\mf{t}_E$, and let $x$ be the corresponding point of $\mc{A}(T,E)$. It also suffices to prove it for \emph{some} non-zero elements \(Y_{\pm\alpha} \in \fg_{\pm\alpha}(E)\), so we take both \(Y_\alpha\) and \(Y_{-\alpha}\) in the chosen Chevalley basis.
By the definition of a Chevalley basis, $[Y_\alpha,Y_{-\alpha}]=H_\alpha$, so $Q_{\pm\alpha}(Y_\alpha + Y_{-\alpha})=1$ and hence $\ord(Q_{\pm\alpha}(Y_\alpha + Y_{-\alpha}))=0$.

Therefore it suffices to show that \(d_x(Y_\alpha) = d_x(Y_{-\alpha}) = 0\).
For this, note that, since \(G_E\) is split, we
can easily write down the integral model \(\mathscr G_{E, x}^\circ\) for our parahoric \(G(E)_{x, 0}\), and observe that \(\{Y_\alpha\}\) is an \(O_E\)-basis for \(\fg_\alpha(E) \cap \Lie(\mathscr G_{E, x}^\circ)(O_E)\).  By \cite[\S3.2]{MP94} and \cite[\S3.3]{MP96}, we have \(\fg_\alpha(E)_{x, 0} = \fg_\alpha(E) \cap \Lie(\mathscr G_{E, x}^\circ)(O_E)\) and \(\fg_\alpha(E)_{x, 0+} = \fg_\alpha(E) \cap \fp_E\Lie(\mathscr G_{E, x}^\circ)(O_E)\), so that \(d_x(Y_\alpha) = 0\).  It follows similarly that \(d_x(Y_{-\alpha}) = 0\).
\end{proof}

We recall that a quadratic form $\varphi$ is said to be non-degenerate if the associated bilinear form given by \(\beta_\varphi(v, w) = \varphi(v + w) - \varphi(v) - \varphi(w)\) is non-degenerate.

\begin{dfn} \label{dfn:q-on-g/m} Let $T \subset M$ be a maximal torus, so that $\fm^\perp = \bigoplus_{\alpha \in R(T,G/M)}\fg_\alpha$.
Let \(\varphi\) be the orthogonal sum of the quadratic forms \(\ell_{p'}(\alpha^\vee)Q_{\pm\alpha}\), with \(\ell_{p'}(\alpha^\vee)\) as in Definition \ref{dfn:regular-length} and \(Q_{\pm\alpha}\) as in Definition \ref{dfn:q-pmalpha}, over the set \(R(T, G/M)/\{\pm1\}\).
\end{dfn}

\begin{pro} \label{pro:minv}
The form $\varphi$ is an $M(\sepfield)$-invariant, $\Gamma$-invariant, non-degenerate quadratic form $\fm^\perp(\sepfield) \to \sepfield$. It is independent of the choice of $T$.
\end{pro}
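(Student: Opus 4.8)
The plan is to establish the three assertions --- invariance under $M(\sepfield)$, invariance under $\Gamma$, and independence of $T$ --- essentially independently, with the $M(\sepfield)$-invariance being the technical core. First I would reduce the $M(\sepfield)$-invariance to two pieces: invariance under $T(\sepfield)$ for the fixed maximal torus $T$, and invariance under the root group $U_\alpha(\sepfield)$ for each $\alpha \in R(T,M)$ (these generate $M(\sepfield)$, since $Z_M$ is contained in $T$). The $T(\sepfield)$-invariance is immediate: $t \in T(\sepfield)$ scales $Y_{\pm\alpha} \in \fg_{\pm\alpha}$ by $(\pm\alpha)(t)$, and $\Ad(t)$ fixes $\Lie(\Tsc)$ pointwise, so $[\Ad(t)Y_\alpha, \Ad(t)Y_{-\alpha}] = \alpha(t)\alpha(t)^{-1}[Y_\alpha,Y_{-\alpha}] = [Y_\alpha,Y_{-\alpha}]$, whence each $Q_{\pm\alpha}$, and hence $\varphi$, is preserved. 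For the root groups, the key point is that $\varphi$ can be recognized intrinsically: using Lemma \ref{lem:q-depth}, $\ord(Q_{\pm\alpha}(Y_\alpha + Y_{-\alpha})) = d_x(Y_\alpha) + d_x(Y_{-\alpha})$, so up to the length factors $\ell_{p'}(\alpha^\vee)$ the form $\varphi$ has a filtration-theoretic interpretation; but the cleanest argument is to exhibit $\varphi$ directly as (a rescaling of) the Killing-type pairing $Y \mapsto \tfrac12 \pair{[X, Y]}{Y}$ restricted to $\fm^\perp$, or more precisely to use the pairing $\fg_\alpha \times \fg_{-\alpha} \to \sepfield$, $(Y_\alpha, Y_{-\alpha}) \mapsto [Y_\alpha,Y_{-\alpha}]/H_\alpha$ coming from the simply connected cover. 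Since this pairing on $\gsc$ is $\Gsc(\sepfield)$-invariant up to the choice of normalization, and since the decomposition $\gsc = \msc \oplus \fm^\perp$ is $M$-stable, the restriction to $\fm^\perp$ is $\Msc(\sepfield)$-invariant, hence $M(\sepfield)$-invariant because $\Msc \to M$ is surjective on $\sepfield$-points with central kernel acting trivially on $\fg$.

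For $\Gamma$-invariance, I would argue that the entire construction --- the canonical lifts $\fg_{\pm\alpha} \to \Lie(\Gsc)_{\pm\alpha}$, the element $H_\alpha \in \Lie(\Tsc)(\sepfield)$, the coroot length $\ell_{p'}(\alpha^\vee)$ --- is carried out functorially in a way that commutes with the $\Gamma$-action, because $G$, $M$, $\Gsc$, $\Msc$, $X$, and the chosen $T$ are all defined over $F$ (and $T$ is tame, so splits over a Galois extension). Concretely, $\Gamma$ permutes the $\Sigma$-orbits in $R(T,G/M)$, sends $\fg_\alpha$ to $\fg_{\sigma\alpha}$, sends $H_\alpha$ to $H_{\sigma\alpha}$, and preserves $\ell_{p'}(\alpha^\vee) = \ell_{p'}((\sigma\alpha)^\vee)$; hence $\sigma^*\varphi = \varphi$ since $\varphi$ is a sum over all of $R(T,G/M)/\{\pm1\}$ and $\Gamma$ merely permutes the summands. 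Here it matters that we use $R(T,G/M)/\{\pm1\}$ rather than a choice of positive system, so there is genuinely nothing to check beyond "the index set and the summands are $\Gamma$-stable."

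For independence of $T$: given another maximal torus $T' \subset M$, write $T' = \^m T$ for some $m \in M(\sepfield)$ (all maximal tori of $M$ are $M(\sepfield)$-conjugate). Then $\Ad(m)$ identifies $R(T, G/M)$ with $R(T', G/M)$ preserving the $\Sigma$-action and the coroot lengths, and it intertwines the two decompositions $\fm^\perp = \bigoplus_{\alpha}\fg_\alpha$ (relative to $T$) and $\fm^\perp = \bigoplus_{\alpha'}\fg_{\alpha'}$ (relative to $T'$). By the naturality of the construction of $Q_{\pm\alpha}$ (the lift to $\Gsc$, the commutator, the ratio with $H_\alpha$, all transported by $\Ad(m)$), the quadratic form built from $T'$ equals $\Ad(m)^*$ of the one built from $T$; but the latter equals the former by the $M(\sepfield)$-invariance already proved. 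So the two forms on $\fm^\perp(\sepfield)$ coincide. The main obstacle I anticipate is pinning down the $M(\sepfield)$-invariance precisely --- in particular making rigorous the claim that the pairing $(Y_\alpha, Y_{-\alpha}) \mapsto [Y_\alpha,Y_{-\alpha}]/H_\alpha$ on $\fm^\perp$ is the restriction of a genuinely $\Gsc(\sepfield)$-invariant bilinear form on $\gsc$; one has to be careful because this pairing is defined root-space by root-space, so invariance under unipotent elements, which mix root spaces, is not visible term-by-term and requires either the Chevalley-basis computation behind Lemma \ref{lem:q-depth} or an identification with a fixed invariant bilinear form (e.g. a multiple of the Killing form in good characteristic, using the hypothesis that $p$ avoids the relevant bad primes/bond multiplicities). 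Non-degeneracy then follows since each $Q_{\pm\alpha}$ is a non-degenerate pairing of the dual lines $\fg_\alpha, \fg_{-\alpha}$ and the sum is orthogonal.
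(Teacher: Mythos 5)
Your outline is correct where $p$ is coprime to the bond multiplicity $\ell$ of the root system, and in that case it agrees with the paper's easy case: when no root satisfies $\ell(\alpha)=p$ we have $\ell_{p'}(\alpha^\vee)=\ell(\alpha^\vee)$, and the orthogonal sum of $\ell(\alpha^\vee)Q_{\pm\alpha}$ really is the restriction to $\fm^\perp$ of a single $\Gsc(\sepfield)$-invariant quadratic form on $\gsc$ (Kottwitz, \cite[\S5.1, \S I.3]{KotWeil}), which gives the $M(\sepfield)$-invariance immediately. Your reductions to torus- and root-group-invariance, the $\Gamma$-equivariance, and the deduction of $T$-independence from $M(\sepfield)$-conjugacy are all fine.

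The gap is the case $p = \ell$, i.e.\ some root has $\ell(\alpha) = p$, so that $\ell_{p'}(\alpha^\vee)\neq\ell(\alpha^\vee)$. You flag this as the main obstacle and propose to resolve it by ``using the hypothesis that $p$ avoids the relevant bad primes/bond multiplicities,'' but that hypothesis is \emph{not} available here: Proposition~\ref{pro:minv} is used to prove Theorem~\ref{thm:main}, which assumes only $p\neq 2$ (the bond-multiplicity restriction enters only later, in \S\ref{sub:root}, and the paper explicitly notes it is not needed for Theorem~\ref{thm:main}). When $p=\ell$, the modification $\ell\mapsto\ell_{p'}$ changes the form precisely because the naive $G$-invariant form of Kottwitz, built with the weights $\ell(\alpha^\vee)\in\{1,\ell\}$, degenerates in characteristic $p=\ell$; so $\varphi$ is genuinely \emph{not} the restriction of a $\Gsc$-invariant form on $\gsc$, and the root-group invariance cannot be obtained by your intrinsic/Killing-form argument. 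The paper handles this by observing (via Lemma~\ref{lem:pairing}(2), which crucially uses the existence of a $G$-good element $X$) that the long and short weight spaces $\fm^\perp_\tx{long}$ and $\fm^\perp_\tx{short}$ are each separately $M$-invariant, and then invoking the more delicate constructions of \cite[\S\S4.3, 5.4]{KotWeil} to produce two \emph{different} $G(\sepfield)$-modules $\tilde\fg'$ and $\tilde\fg''$ carrying invariant forms whose restrictions to the long and short pieces separately match $\varphi$. Without both the hypothesis of a good element and this long/short splitting, the argument does not go through in the remaining case.
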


\begin{rem} \label{rem:nonadjointform}
If we do not assume that $G$ is adjoint, we can apply the discussion in this section to $G\adform$ instead. Writing $\Mad$ for the image of $M$ in $G\adform$, the natural map $\mf{m}^\perp \to \mf{m}\adform^\perp$ is an $M(\bar F)$-equivariant isomorphism. Pulling back $\varphi$ along this isomorphism we obtain a form on $\mf{m}^\perp(\bar F)$ that satisfies Proposition \ref{pro:minv} without the assumption that $G$ is adjoint.
\end{rem}	

Before we prove Proposition \ref{pro:minv}, we record the way in which we will use it. Note that for any Galois extension $E/F$ we obtain from $\varphi$ a non-degenerate quadratic form $\fm^\perp(E) \to E$ that is $\Gamma$-invariant and $M(E)$-invariant. Let \(x \in \cB(M, F) \subset \cB(G, F)\). Recall from Definition \ref{dfn:g/m-by-ZM-and-R} the $\bar k$-vector space $V_{(x,O,t)}$ associated to any $O \in R(Z_M,G)/I$ and $t \in \bR$.

\begin{dfnlem}  \label{dfn:q-on-Vot}
For every finite subset $\fS$ of $R(Z_M,G)/I \times \bR$, the bilinear form $\beta_\varphi : \fm^\perp(F^\tx{un}) \times \fm^\perp(F^\tx{un}) \to F^\tx{un}$ descends to a non-degenerate, \(\ms M_x(\bar k)\)-invariant pairing
\[ \bigoplus_{(O, t) \in \fS} V_{(x,O,t)} \times \bigoplus_{(O, t) \in \fS} V_{(x,-O, -t)} \to \bar k. \]

If \(\fS\) is stable under \((O, t) \mapsto (-O, -t)\), then this yields a non-degenerate, \(\ms M_x(\bar k)\)-invariant symmetric bilinear form, which we again denote by $\beta_\varphi$. We denote by $\varphi(v)=\frac{1}{2}\beta_\varphi(v,v)$ the associated quadratic form on the $\bar k$-vector space \(V_{\fS, \bar k} \ldef \bigoplus_{(O, t) \in \fS} V_{(x,O,t)}\).
\end{dfnlem}
\begin{proof}
It is enough to consider a single $O \in R(Z_M,G)/I$ and \(t \in \R\). The restriction of $\beta_\varphi$ to $\fg_O(F^\tx{un}) \times \fg_{-O}(F^\tx{un})$ is a non-degenerate pairing taking values in $F^\tx{un}$ and invariant under $M(F^\tx{un})$ and $\Gamma$. Therefore, it is enough to show that the further restriction to $\fg_O(F^\tx{un})_{x,t} \times \fg_{-O}(F^\tx{un})_{x,-t}$ takes values in $O_{F^\tx{un}}$ and induces a non-degenerate pairing $V_{(x,O,t)} \times V_{(x,-O,-t)} \to \bar k$.

Fix a tame maximal torus $T \subset M$ with $x \in \mc{B}(T,F)$ and let $E/F^\tx{un}$ be a finite tame Galois extension splitting $T$.

Note that we have the decomposition of $E$-vector spaces
\[ \fg_O=\bigoplus_{\substack{\alpha \in R(T,G)\\ \alpha\res_{Z_M} \in O}} \fg_\alpha. \]

According to Lemma \ref{lem:q-depth}, \(\beta_\varphi\) restricts to an \(O_E\)-bilinear pairing 
\[ \fg_\alpha(E)_{x, t} \times \fg_{-\alpha}(E)_{x, -t} \to O_E \] 
whose values on \(\fg_\alpha(E)_{x, t} \times \fg_{-\alpha}(E)_{x, (-t)+}\) and \(\fg_\alpha(E)_{x, t+} \times \fg_{-\alpha}(E)_{x, -t}\) lie in \(\fp_E\).  It thus induces a \(\bar k\)-bilinear pairing 
\[ \fg_\alpha(E)_{x, t:t+} \times \fg_{-\alpha}(E)_{x, (-t):(-t)+} \to k_E = \bar k, \] 
which we will call \(\bar\beta_{\varphi, \alpha}\). Given $O \in R(Z_M,G)/I$, the orthogonal sum of \(\bar\beta_{\varphi, \alpha}\) for \(\alpha \in O\) gives a pairing on \(V_{(x,O, t)} \times V_{(x,-O, -t)}\) by restriction, which we will call \(\bar\beta_{\varphi, O}\).
Since \(M(F^\tx{ur})_x \to \ms M_x(\bar k)\) is surjective, the \(\ms M_x(\bar k)\)-invariance of \(\bar\beta_{\varphi, O}\) follows from the \(M(F^\tx{ur})_x\)-invariance of \(\beta_\varphi\).

It remains only to show that \(\bar\beta_{\varphi, O}\) is non-degenerate.
For this, fix \(\alpha \in O\).  Notice that, by tameness, the ramification degree \(e_\alpha = \card{I/I_\alpha}\) is non-zero in \(\bar k\); and, since \(I\) acts trivially on \(\bar k\), that \(\bar\beta_{\varphi, \sigma\alpha}(\sigma Y_\alpha, \sigma Y_{-\alpha}) = \bar\beta_{\varphi, \alpha}(Y_\alpha, Y_{-\alpha})\) for all \(Y_{\pm\alpha} \in \fg_{\pm\alpha}(E)_{x, (\pm t):(\pm t)+}\) and all \(\sigma \in I\).  Let \(\gamma\) be a square root of \(e_\alpha\) in \(\bar k\).  Then we have the isomorphisms \(\pi_{x, \pm t} : \fg_{\pm\alpha}(E)_{x, (\pm t):(\pm t)+} \to \fg_{\pm O}(F^\tx{ur})_{x, (\pm t):(\pm t)+}\) given by \(Y_{\pm\alpha} \mapsto \gamma^{-1}\sum_{\sigma \in I/I_\alpha} \sigma Y_{\pm\alpha}\).  For \(Y_{\pm\alpha}\) in \(\fg_{\pm\alpha}(E)_{x, (\pm t):(\pm t)+}\), we have that
\begin{multline*}
\bar\beta_{\varphi, O}(\pi_{x, t}(Y_\alpha), \pi_{x, -t}(Y_{-\alpha}))
= \gamma^{-2}\sum_{\sigma \in I/I_\alpha}
	\bar\beta_{\varphi, \sigma\alpha}(\sigma Y_\alpha, \sigma Y_{-\alpha}) \\
= e_\alpha^{-1}\sum_{\sigma \in I/I_\alpha}
	\bar\beta_{\varphi, \alpha}(Y_\alpha, Y_{-\alpha}) =\bar\beta_{\varphi, \alpha}(Y_\alpha, Y_{-\alpha}).
\end{multline*}
That is, the bilinear forms \(\bar\beta_{\varphi, \alpha}\) and \(\bar\beta_{\varphi, O}\) are isomorphic in the obvious sense, so that we need only show the non-degeneracy of \(\bar\beta_{\varphi, \alpha}\).

For this, fix \(Y_\alpha \in \fg_\alpha(E)_{x, t}\), and choose \(Y_{-\alpha} \in \fg_{-\alpha}(E)\) such that \([Y_\alpha, Y_{-\alpha}] = H_\alpha\).  Since \(\beta_\varphi(Y_\alpha, Y_{-\alpha}) = 1\), we have that \(\ord(\beta_\varphi(Y_\alpha, Y_{-\alpha})) = 0\) and hence by Lemma \ref{lem:q-depth} that \(d_x(Y_{-\alpha}) = -d_x(Y_\alpha)\).  It follows that either \(d_x(Y_{-\alpha}) \ge -t\), so that the images \(\bar Y_{\pm\alpha}\) of \(Y_{\pm\alpha}\) in \(\fg_{\pm\alpha}(E)_{x, (\pm t):(\pm t)+}\) satisfy \(\bar\beta_{\varphi, \alpha}(\bar Y_\alpha, \bar Y_{-\alpha}) = 1\), or \(d_x(Y_{-\alpha}) < -t\), in which case \(d_x(Y_\alpha) > t\) and so the image \(\bar Y_\alpha\) equals \(0\).
\end{proof}

The remainder of this section is devoted to the proof of Proposition \ref{pro:minv}. That $\varphi$ is non-degenerate is clear from its construction as orthogonal sum of hyperbolic planes. It is also clear that it is $\Gamma$-invariant. We need to prove that it is $M(\sepfield)$-invariant. Once this is done, the independence of $T$ will follow at once, since all maximal tori in $M$ are $M(\sepfield)$-conjugate.

The $M(\sepfield)$-invariance follows from some basic observations made in \cite{KotWeil}. In fact, the discussion of \cite[\S5.1, \S I.3]{KotWeil} shows that, when no root in \(R(T, G/M)\) satisfies $\ell(\alpha)=p$, so that $\ell_{p'}(\anondot^\vee)=\ell(\anondot^\vee)$, the form $\varphi$ is the restriction to $\fm^\perp(\sepfield)$ of a $G(\sepfield)$-invariant non-degenerate quadratic form on $\gsc$, so the proof of Proposition \ref{pro:minv} is complete in that case.

The rest of this section is concerned with the remaining case where some root in \(R(T, G/M)\) does satisfy $\ell(\alpha)=p$, although Lemma \ref{lem:pairing} is valid without this assumption.

\begin{lem}
\label{lem:pairing}
Let $Y \in \Lie^*(\Mscab)(F)$, \(T \subset M\) a maximal torus, \(\alpha \in R(T, G/M)\).
\begin{enumerate}
	\item\label{lem:pairing:with-p} The element $\ell(\alpha)\pair Y{H_\alpha}\in \sepfield$ depends only on \(\alpha\res_{Z_M}\).
	\item\label{lem:pairing:without-p} If in addition \(Y\) is good, i.e., there exists $r \in \R$ such that  $\ord(\pair Y{H_\beta})=-r$ for all $\beta \in R(T,G/M)$, then the element $\ell_{p'}(\alpha)\pair Y{H_\alpha}\in \sepfield$ depends only on \(\alpha\res_{Z_M}\).
\end{enumerate}
\end{lem}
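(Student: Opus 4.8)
The plan is to reduce everything to a statement about how the coroot $H_\alpha \in \tsc(\sepfield)$ depends on $\alpha$ when we restrict $\alpha$ to $Z_M$. The key point is that when two roots $\alpha, \beta \in R(T, G/M)$ have the same restriction $\alpha_0 = \beta_0$ to $Z_M$, they lie in a common irreducible component of $R(T, G)$ and are in the same $W(\fm, \ft)$-orbit (indeed, roots in $R(T,G/M)$ restricting to a fixed weight of $Z_M$ form a single orbit under the Weyl group of $M$, by standard structure theory of twisted Levi subgroups); in particular $\alpha$ and $\beta$ have the same length, so $\ell(\alpha) = \ell(\beta)$ and $\ell_{p'}(\alpha) = \ell_{p'}(\beta)$, and the factor out front is well-defined. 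So both parts reduce to showing that $\ell(\alpha)\pair Y{H_\alpha}$, respectively $\ell_{p'}(\alpha)\pair Y{H_\alpha}$, is unchanged when $\alpha$ is replaced by another root $\beta$ with $\beta_0 = \alpha_0$.

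First I would treat part \eqref{lem:pairing:with-p}. Fix such $\alpha, \beta$. Since $Y$ comes from $\Lie^*(\Mscab)(F)$, it is $M$-invariant (pulled back from the abelianization of $\Msc$), so $\pair Y{\cdot}$ kills $[\msc, \msc]$ and is invariant under the coadjoint action of $M(\sepfield)$; in particular $\pair Y{w H_\alpha} = \pair Y{H_\alpha}$ for any $w$ in the Weyl group $W(\Msc, \Tsc)$ realized by $\Msc(\sepfield)$. The relation between $H_\alpha$ and $H_\beta$ when $\alpha, \beta$ restrict to the same weight on $Z_M$ is the crux: writing $H_\alpha = d\alpha\scform^\vee(1)$, I want $\ell(\alpha) H_\alpha$ to depend only on $\alpha_0$ modulo the part of $\tsc$ on which $\pair Y{\cdot}$ vanishes, i.e. modulo $[\msc,\msc] \cap \tsc$ together with $\ker(\text{projection to } Z_{\Msc})$. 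Concretely: $\alpha_0 = \beta_0$ forces $\alpha - \beta$ to be a linear combination of roots of $M$, hence $\alpha^\vee - \beta^\vee$ (suitably normalized) lies in the coroot lattice of $M$; and scaling by $\ell(\alpha)$ is exactly what is needed so that $\ell(\alpha)\alpha^\vee$ is a root-system-theoretic quantity (the ``long coroot'' direction) that only sees the ambient component. I would make this precise using the standard fact that in an irreducible root system the product $\ell(\alpha)\ell(\alpha^\vee)$ is constant and $\ell(\alpha)\alpha^\vee$ is proportional to a short coroot, combined with the computation in \cite[\S5.1, \S I.3]{KotWeil} that already underlies the definition of $\varphi$; then $\ell(\alpha)(H_\alpha - H_\beta)$ lands in the span of coroots $H_\gamma$ for $\gamma \in R(T, M)$, on which $\pair Y{\cdot}$ vanishes.

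For part \eqref{lem:pairing:without-p}, the issue is only when $\ell(\alpha) = p$ for $\alpha$ in some component, since otherwise $\ell_{p'}(\alpha) = \ell(\alpha)$ and part \eqref{lem:pairing:with-p} applies verbatim. In the case $\ell(\alpha) = p$ we have $\ell_{p'}(\alpha) = 1$, so the claim becomes: $\pair Y{H_\alpha}$ itself depends only on $\alpha_0$. Here I would use goodness: by hypothesis $\ord(\pair Y{H_\gamma}) = -r$ for all $\gamma \in R(T, G/M)$, so all these pairings have the same valuation; meanwhile part \eqref{lem:pairing:with-p} already gives $p\,\pair Y{H_\alpha} = p\,\pair Y{H_\beta}$, i.e. $\pair Y{H_\alpha} - \pair Y{H_\beta} \in p \cdot (\text{something})$ — more carefully, $\pair Y{H_\alpha} - \pair Y{H_\beta}$ lies in $\frac{1}{p}\cdot 0 = 0$ once we divide, provided we know $\pair Y{H_\alpha} = \pair Y{H_\beta}$ and not merely $p$ times each agreeing. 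The point is that in a $p$-torsion-free field ($F$ has characteristic $0$ here, or at least $p \neq 2$ and we only divide by $p$ when $p$ appears as a length multiplicity, which needs $\operatorname{char} F \neq p$) the equation $p u = p v$ forces $u = v$. So $\pair Y{H_\alpha} = \pair Y{H_\beta}$ directly, without needing the valuation hypothesis at all — but goodness ensures we are in a setting where $\operatorname{char} F = 0$ (or more precisely that $F$ has residue characteristic $p$, so $p$ is invertible in $F$), which is what legitimizes the cancellation. I expect the main obstacle to be pinning down precisely, at the level of the lattice $X_*(\Tsc)$ and the coroot $H_\alpha$, the identity $\ell(\alpha) H_\alpha \equiv \ell(\beta) H_\beta$ modulo the $M$-coroot span when $\alpha_0 = \beta_0$ — this is a root-combinatorial lemma that has to be checked component-by-component (it is automatic in simply-laced components, and in $B/C/F_4/G_2$ components one must track which of $\alpha, \beta$ is long or short), and getting the normalization right so that it interacts correctly with the $\ell$ versus $\ell_{p'}$ distinction is the delicate bookkeeping.
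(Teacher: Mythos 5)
Your opening reduction rests on a false structural claim: roots in $R(T, G/M)$ with the same restriction to $Z_M$ need not form a single $W(M, T)$-orbit, and need not have the same length. The paper itself gives a counterexample in the remark following Proposition \ref{pro:minv}: for $G = \operatorname{PGSp}_4$ and $M$ a Siegel Levi, the long simple root $\alpha_\tx{long}$ and the short root $\alpha_\tx{short} + \alpha_\tx{long}$ both restrict to the same weight of $Z_M$, yet have different lengths and lie in distinct $W(M, T)$-orbits. So $\ell(\alpha) = \ell(\beta)$ can fail, and the expression $\ell(\alpha)(H_\alpha - H_\beta)$ in your sketch is not the right quantity; one must work with $\ell(\alpha)H_\alpha - \ell(\beta)H_\beta$, where the two coefficients may differ. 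The paper's argument handles this by noting that $\alpha - \beta \in \Z R(T, M)$ and that the map $\gamma \mapsto \ell(\gamma)\gamma^\vee$ on roots is the restriction of a $\Z$-linear map $\Z R(T,G) \to \Z R^\vee(T,G)$ (Bourbaki, Cor.~2 of Prop.~VI.1.6.19), so that $\ell(\alpha)\alpha^\vee - \ell(\beta)\beta^\vee$ lands in $\Z R^\vee(T, M)$, which every $\chi \in X^*(\Mscab)$ annihilates; the conclusion is about the products $\ell(\alpha)\pair Y{H_\alpha}$, never the individual factors.

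For part (2), your plan to cancel $p$ from $p\,\pair Y{H_\alpha} = p\,\pair Y{H_\beta}$ inside $F$ requires $p \neq 0$ in $F$, but nothing in the hypotheses, and certainly not goodness, forces $\operatorname{char} F \ne p$: the paper allows local function fields of characteristic $p$, and $\ell = p$ is precisely the case in which $p$ may be zero in $F$. Goodness is actually needed for something else: since $\ord(\pair Y{H_\alpha}) = \ord(\pair Y{H_\beta})$ is finite, the identity $\ell(\alpha)\pair Y{H_\alpha} = \ell(\beta)\pair Y{H_\beta}$ from part (1) forces $\ell(\alpha) = \ell(\beta)$ (because $\ord(p) \ne \ord(1)$, or $p = 0 \ne 1$ in $F$) --- exactly the equality your part (1) assumed without justification. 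Once this is known, the paper's cancellation takes place in $\Z$, not in $F$: for $\chi \in X^*(\Mscab) \subset X^*(\Tsc)$, the lattice pairing $\pair\chi{\alpha^\vee}$ is an integer, the identity $\ell(\alpha)\pair\chi{\alpha^\vee} = \ell(\beta)\pair\chi{\beta^\vee}$ holds in $\Z$, one divides by the nonzero integer $\ell(\alpha) = \ell(\beta)$ there, and only then applies the ring homomorphism $\Z \to F$ --- a step that works in any characteristic.
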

\begin{proof}
Recall that $M_\tx{sc}$ and $T_\tx{sc}$ are the preimages of $M$ and $T$ in $G_\tx{sc}$, respectively. Note that the character lattice \(X^*(\Mscab)\) is the annihilator \(R^\vee(T, M)^\perp\) in \(X^*(\Tsc)\) of the coroot system \(R^\vee(T, M)\) of \(M\) relative to \(T\).
The element $Y$ lies in $\Lie^*(\Mscab)(\sepfield)=X^*(\Mscab)\otimes_\Z \sepfield = R^\vee(T, M)^\perp \otimes_\Z \sepfield$.  Therefore it is enough to check, for every $\alpha \in R(T,G/M)$ and $\chi \in R^\vee(T, M)^\perp$, that $\ell(\alpha)\pair\chi{\alpha^\vee} \in \Z$ for (\ref{lem:pairing:with-p}), and $\ell_{p'}(\alpha)\pair\chi{\alpha^\vee} \in \Z$ for (\ref{lem:pairing:without-p}), depend only on $\alpha\res_{Z_M}$.

(\ref{lem:pairing:with-p}) Suppose that \(\alpha, \beta \in R(T, G/M)\) satisfy \(\alpha\res_{Z_M} = \beta\res_{Z_M}\).  Then \(\alpha - \beta\) lies in \(\Z R(T, M)\).  Since the map \(R(T, M) \to \Z\R^\vee(T, M)\) given by \(r \mapsto \ell(r)r^\vee\) is additive in the obvious sense, it extends to a homomorphism \(\Z R(T, M) \to \Z R^\vee(T, M)\) by 
\cite[Corollary 2 of Proposition VI.1.6.19]{BourLie4-6}. 
It follows that \(\ell(\alpha)\alpha^\vee - \ell(\beta)\beta^\vee\) lies in \(\Z R^\vee(T, M)\), and hence that \(\ell(\alpha)\pair\chi{\alpha^\vee} = \ell(\beta)\pair\chi{\beta^\vee}\).

(\ref{lem:pairing:without-p}) The only case not covered by (\ref{lem:pairing:with-p}) is $\ell(\alpha)=p$, which we now assume. Given $\alpha,\beta \in R(T,G/M)$ with equal restriction to $Z_M$, they belong to the same irreducible component of $R(T,G)$ and hence $\ell(\alpha),\ell(\beta) \in \{1,p\}$. Since $\ord(\pair Y{H_\alpha})=\ord(\pair Y{H_\beta}) \in \R$, the equality \(\ell(\alpha)\pair Y{H_\alpha} = \ell(\beta)\pair Y{H_\beta}\) from (1) implies $\ell(\alpha)=\ell(\beta)$. 
Both sides of the equality \(\ell(\alpha)\pair\chi{\alpha^\vee} = \ell(\beta)\pair\chi{\beta^\vee}\) lie in \(\Z\), not in \(F\), so we may divide by \(\ell(\alpha) = \ell(\beta)\).
Noting that $\ell_{p'}(\alpha)=\ell_{p'}(\beta)=1$ we find that \(\ell_{p'}(\alpha)\pair\chi{\alpha^\vee} = \ell_{p'}(\beta)\pair\chi{\beta^\vee}\).
\end{proof}

Recall that  \(X \in \Lie^*(\Mscab)(F)\) denotes a good element.
For convenience, we assume from now on that \(G\) is absolutely simple, in addition to being adjoint.
This is mostly a notational convenience, but there is also no loss of generality in this assumption;
to reduce from the general case to this case,
we base change so that \(G\) is a product of absolutely simple factors, and then check each simple factor whose intersection with $M$ is a proper twisted Levi subgroup (with the restriction of $X$ to this factor) separately.

By absolute simplicity, there is a unique integer \(\ell\) such that \(\ell(\alpha) \cdot \ell(\alpha^\vee) = \ell\) for all \(\alpha \in R(T, G)\).  We finally make the key assumption that $p=\ell$ (so that, in particular, \(R(T, G)\) is not simply laced).
By combining both parts of Lemma \ref{lem:pairing} and the fact that \(X\) is good,
we may unambiguously call a root \(\alpha_0 \in R(Z_M, G)\) ``long'' or ``short'' according as it is the restriction to \(Z_M\) of a long or short element of \(R(T, G/M)\).
Put
\[ \fm^\perp_\tx{long} = \bigoplus_{\alpha_0 \in R(Z_M,G)_\tx{long}} \fg_{\alpha_0},\qquad \fm^\perp_\tx{short} = \bigoplus_{\alpha_0 \in R(Z_M,G)_\tx{short}} \fg_{\alpha_0}. \]
These are subspaces of the spaces
\[ V_\tx{long} = \bigoplus_{\alpha \in R(T,G)_\tx{long}} \fg_{\alpha},\qquad V_\tx{short} = \bigoplus_{\alpha \in R(T,G)_\tx{short}} \fg_{\alpha}, \]
where the sum is taken over the long or short roots, respectively.

We equip $V_\tx{long}(\sepfield)$ and $V_\tx{short}(\sepfield)$ with the quadratic forms $Q_{V_\tx{long}}$ and $Q_{V_\tx{short}}$ that are the orthogonal sums of the quadratic forms $Q_{\pm\alpha}: Y_\alpha+Y_{-\alpha} \mapsto \frac{[Y_\alpha,Y_{-\alpha}]}{H_\alpha}$ over appropriate $\alpha \in R(T,G)$.

Consider the group \(\tilde G = (\Gsc \times \Tsc)/Z_{\Gsc}\) and its maximal torus \(\tilde T = (\Tsc \times \Tsc)/Z_{\Gsc}\).  In \cite[\S3, \S4, \S5.1]{KotWeil} Kottwitz constructs a quadratic form $Q_{\tilde T}$ on $\tilde\ft(\sepfield)$ and shows that the quadratic form $Q_{\tilde G}$ on $\tilde\fg(\sepfield) = \tilde\ft(\sepfield) \oplus V_\tx{long}(\sepfield) \oplus V_\tx{short}(\sepfield)$ given by $Q_{\tilde T} \oplus Q_{V_\tx{long}} \oplus \ell Q_{V_\tx{short}}$ is $G(\sepfield)$-equivariant.
(Note, when comparing our notation with Kottwitz's, that he writes \(\mathbb G\) where we write \(G\); \(V'\) where we write \(V_\tx{long}\); and \(V''\) where we write \(V_\tx{short}\).)

Consider first the case that $\ell$ is non-zero in $F$.
Since the restrictions of $\varphi$ to $\fm^\perp_\tx{long}(\sepfield)$ and $\fm^\perp_\tx{short}(\sepfield)$ coincide with the restrictions of $Q_{\tilde G}$ and $\frac{1}{\ell} \dotm Q_{\tilde G}$, respectively, we see that $\varphi$ is $M(\sepfield)$-invariant.

Consider now the case that $\ell$ is zero in $F$.
In \cite[\S\S4.3, 5.4]{KotWeil}, Kottwitz constructs \(N_G(T)(\sepfield)\)-modules \(\tilde\ft'(\sepfield)\) and \(\tilde\ft''(\sepfield)\) such that we have a \(N_G(T)(\sepfield)\)-equivariant exact sequence \(0 \to \tilde\ft''(\sepfield) \to \tilde\ft(\sepfield) \to \tilde\ft'(\sepfield) \to 0\); and then puts \(G(\sepfield)\)-module structures on \(\tilde\fg''(\sepfield) = \tilde\ft''(\sepfield) \oplus V_\tx{short}(\sepfield)\) and \(\tilde\fg'(\sepfield) = \tilde\ft'(\sepfield) \oplus V_\tx{long}(\sepfield)\) such that \(Q_{\tilde\fg'} \ldef Q_{\tilde\ft'} \oplus Q_{V_\tx{long}}\) and \(Q_{\tilde\fg''} \ldef Q_{\tilde\ft''} \oplus Q_{V_\tx{short}}\) are \(G(\sepfield)\)-invariant, where $Q_{\tilde\ft'}$ and $Q_{\tilde\ft''}$ are quadratic forms on $\tilde\ft'(F)$ and $\tilde\ft''(F)$, respectively, and such that
\[ 0 \to \tilde\fg''(\sepfield) \to \tilde\fg(\sepfield) \to \tilde\fg'(\sepfield) \to 0 \]
is a \(G(\sepfield)\)-equivariant (automatically exact) sequence.
In particular, the \(M(\sepfield)\)-actions on \(\fm^\perp_\tx{long}(\sepfield)\) are the same whether we regard it as a subset of \(\tilde\fg''(\sepfield)\) or of \(\tilde\fg(\sepfield)\), and similarly for the \(M(\sepfield)\)-actions on \(\fm^\perp_\tx{short}(\sepfield)\) regarded as a subset of \(\tilde\fg'(\sepfield)\) or of \(\tilde\fg(\sepfield)\).

Now the restrictions of \(Q_{\tilde\fg'}\) and \(\varphi\) to $\fm^\perp_\tx{long}(\sepfield) \subset V_\tx{long}(\sepfield) \subset \tilde\fg'(\sepfield)$ agree, so that \(\varphi\res_{\fm^\perp_\tx{long}(\sepfield)}\) is \(M(\sepfield)\)-invariant; and similarly for the restrictions of \(Q_{\tilde\fg''}\) and \(\varphi\) to $\fm^\perp_\tx{short}(\sepfield) \subset \tilde\fg''(\sepfield)$, so that \(\varphi\res_{\fm^\perp_\tx{short}(\sepfield)}\) is \(M(\sepfield)\)-invariant.  Since \(\fm^\perp_\tx{long}(\sepfield)\) and \(\fm^\perp_\tx{short}(\sepfield)\) are \(\varphi\)-orthogonal, we have shown the desired invariance.  The proof of Proposition \ref{pro:minv} is now complete.

\begin{rem}
While we can always form the subspaces
\[
\fm^\perp_\tx{short} = \bigoplus\limits_{\alpha \in R(T,G/M)_\tx{short}} \fg_\alpha
\quad\text{and}\quad
\fm^\perp_\tx{long} = \bigoplus\limits_{\alpha \in R(T,G/M)_\tx{long}} \fg_\alpha,
\]
whether or not \(p = \ell\),
they will rarely be $M$-invariant when \(p \ne \ell\), even when a good element exists. For example, consider $G=\operatorname{PGSp}_4$ and a `Siegel' Levi subgroup \(M \cong \operatorname{GL}_2/\mu_2\).  When \(p \ne 2\), there is a good element of \(\Lie(\Mscab)\), but this invariance fails, essentially because there are a long root and a short root in \(R(T, G/M)\) that have the same restriction to \(Z_M\).
(In fact, this situation even arises in the setting of \cite{Yu01}; as shown in \cite[\S1.4.2]{kim-yu:sp4}, we may arrange by twisting that \(M\) becomes an anisotropic unitary group in \(2\) variables.)

Specifically, choose a maximal torus \(T \subset M\) and a basis of simple roots \(\Delta_M = \{\alpha_\tx{short}\}\) for \(R(T, M)\), and let \(\{\alpha_\tx{short}, \alpha_\tx{long}\}\) be a basis of simple roots for \(R(T, G)\) containing \(\Delta_M\).  Then \(X = d\alpha_\tx{short} + d\alpha_\tx{long}\) is a good element of \(\Lie^*(\Mscab)(F)\).
(Twisting to make \(M\) anisotropic might destroy the rationality of \(X\); but, since \(\Lie^*(\Mscab)\) is \(1\)-dimensional, there is some \(F\)-rational element of \(\sepfield X \sm \{0\}\), which is still good.)
If \(m\) is a non-trivial element of the \(\alpha_\tx{short}\)-root subgroup of \(M\) and \(Y\) is a non-trivial element of the root space corresponding to the short root \(\alpha_\tx{short} + \alpha_\tx{long} \in R(T, G/M)\), then \(\Ad(m)Y - Y\) is a non-trivial element of the root space corresponding to the long root \(2\alpha_\tx{short} + \alpha_\tx{long}\).  Both of these statements use \(p \ne 2\).

\end{rem}

\subsection{Construction of $\epsilon^{G/M}_x$}
\label{sub:epsxGM}

From now on and for the rest of the paper we return to the notation of
\S\ref{sec:main-stmt}. Thus we have
a connected adjoint group $G$ over the non-archimedean local field $F$,
a tame twisted Levi subgroup $M \subset G$, a point \(x \in \cB(M, F) \subset \cB(G, F)\),
and an $G$-good element $X \in \Lie^*(\Mscab)(F)$. We set $r=-\ord(\pair X{H_\alpha})$ for some (hence by goodness for any) $\alpha \in R(T,G/M)$, and put \(s = r/2\).

We now construct the character $\epsilon^{G/M}_x$ as the product of three pieces using the general constructions outlined so far.

For the first piece, we use the notation from Definition \ref{dfn:g/m-by-ZM-and-R}.

\begin{dfn} \label{dfn:esr}
Let $\_M\epsilon_\tx{sym,ram} : \ms M_x(k) \to \{\pm 1\}$ be the character $\sgn_k(\det(\anondot\res_V))$, where
\[ V_{\bar k} \ldef \bigoplus_{O \in R(Z_M,G)_\tx{sym,ram}/I} \quad\bigoplus_{t \in (0, e_O^{-1}/2)}
	V_{(x,O, t)}, \]
$V=(V_{\bar k})^\Gamma$, and $e_O$ is the common value of $e_{\alpha_0}$ for every $\alpha_0 \in O$.
\end{dfn}

To construct the second piece, we also adopt the notation of Definition \ref{dfn:g/m-by-ZM-and-R}.

\begin{lem} \label{lem:r-shift}
For $O\in R(Z_M, G)/I$ and $t \in \R$ the $\ms M_x(\bar k)$-modules $V_{(x,O, t)}$ and $V_{(x,O, t-r)}$ are isomorphic.
\end{lem}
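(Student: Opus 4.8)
The statement to prove is that for $O \in R(Z_M,G)/I$ and $t \in \R$, the $\ms M_x(\bar k)$-modules $V_{(x,O,t)}$ and $V_{(x,O,t-r)}$ are isomorphic. The natural mechanism for such an isomorphism is the coadjoint action of a suitable element constructed from the good element $X$: since $X \in \Lie^*(\Mscab)(F)$ has the property that $\ord(\pair X{H_\alpha}) = -r$ for all $\alpha \in R(T,G/M)$, multiplication by $X$ (or by a related element of $\fm^\perp$ or a torus cocharacter evaluation) should shift Moy--Prasad depths on the weight spaces $\fg_{\alpha_0}$ by exactly $r$. The first thing I would do is pin down precisely which operator to use.

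The cleanest approach I expect is the following. Fix a tame maximal torus $T \subset M$ with $x \in \cB(T,F)$, splitting over a tame extension $E/F$. For each $\alpha \in R(T,G)$ with $\alpha\res_{Z_M} \in O$, we have $H_\alpha \in \tsc(\bar F)$ and the pairing $\pair X{H_\alpha} \in F_\alpha$ (after identifying $X$ with its image in $\Lie^*(\Tsc)$) has valuation $-r$; the key input is that $\pair X{H_\alpha}$ depends only on $\alpha\res_{Z_M} = \alpha_0$ up to the length-normalization subtleties handled in Lemma \ref{lem:pairing}, but here we only need its valuation, which is $-r$ uniformly. I would produce an element $Z \in \fm^\perp(\bar F)$ or rather use the structure of $\fg_{\alpha}$ under $\Lie(\Tsc)$: the adjoint action of $H_\alpha$ on $\fg_\alpha$ is by the scalar $\pair\alpha{\alpha^\vee} = 2 \ne 0$, but I want an operator that acts on $\fg_{\alpha}$ by a scalar of valuation $-r$ and commutes with the $\ms M_x$-action after descent. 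The candidate is $\ad^*$-type multiplication: regard $X$ as giving, via the pairing on $\fg_{\alpha} \times \fg_{-\alpha}$, a nonzero element $X_\alpha \in \fg_{-\alpha}^*$ of the right depth. Concretely, since $\fg_\alpha(E)_{x,t}$ is one-dimensional over a depth-$\Z$-graded ring, multiplication by a scalar $c_\alpha \in F_\alpha$ with $\ord(c_\alpha) = -r$ carries $\fg_\alpha(E)_{x,t}$ isomorphically onto $\fg_\alpha(E)_{x,t-r}$, hence induces an isomorphism $V_{(x,O,t)} \to V_{(x,O,t-r)}$ of $k_E$-vector spaces; the content is showing this descends and is $\ms M_x$-equivariant.

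The main obstacle, and the reason this lemma is nontrivial, is the $\ms M_x(\bar k)$-equivariance. Naive scalar multiplication is only $T$-equivariant, not $M$-equivariant, because the scalars $c_\alpha$ vary with $\alpha$ within a single $Z_M$-weight $\alpha_0$. The resolution should be that $X$, being $M$-invariant in $\Lie^*(\Mscab)$ and good, defines a \emph{single} $M$-equivariant operator: the map $\fg_{\alpha_0} \to \fg_{\alpha_0}$ given by $Y \mapsto$ (the component of $[\,\cdot\,, \cdot\,]$ paired against $X$) is governed by the length-normalized quantity $\ell(\alpha)\pair X{H_\alpha}$ of Lemma \ref{lem:pairing}\eqref{lem:pairing:with-p}, which depends only on $\alpha_0$, so the operator is well-defined on the whole weight space $\fg_{\alpha_0}$ and $M$-equivariantly so. Since $\ord$ of this quantity is still $-r$ (goodness plus the fact that $\ell(\alpha) \in \{1,p,\dots\}$ has valuation $0$ when $p \ne \ell$, and when $p = \ell$ one uses $\ell_{p'}$ and the argument in \S\ref{sub:epsxGM} instead), the operator shifts Moy--Prasad depth by $r$ and descends to the required isomorphism $V_{(x,O,t)} \xrightarrow{\sim} V_{(x,O,t-r)}$ of $\ms M_x(\bar k)$-modules. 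I would write this out by first treating the split case over $E$ root-by-root, checking the depth shift via Lemma \ref{lem:q-depth}, then assembling over $O$ and descending via the $I$-averaging isomorphism $\pi_{x,t}$ already used in the proof of Definition/Lemma \ref{dfn:q-on-Vot}, and finally verifying $\ms M_x$-equivariance from the $M(F^{\tx{ur}})_x$-invariance of the underlying pairing together with the $M$-invariance of $X$.
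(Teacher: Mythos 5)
Your proposal is correct in spirit, but it takes a genuinely different route from the paper's own proof of Lemma \ref{lem:r-shift}. The paper's proof is a short double-duality argument: the pairing $(Y_1,Y_2) \mapsto \pair X{[Y_1,Y_2]}$ exhibits $V_{(x,O,t)}$ as the dual $\ms M_x(\bar k)$-module of $V_{(x,-O,r-t)}$, while Definition/Lemma \ref{dfn:q-on-Vot} exhibits $V_{(x,-O,r-t)}$ as the dual of $V_{(x,O,t-r)}$, so the two modules in the statement are each other's double duals, hence isomorphic. Your explicit construction of the isomorphism by multiplication on $\fg_{\alpha_0}(E^\tx{ur})_{x,t:t+}$ by a scalar depending only on $\alpha_0$ is exactly what the paper records in the \emph{Remark} immediately following the lemma, where the authors note that it explicates the double duality; you have independently recovered that remark. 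The one wrinkle is your choice of scalar: you propose $\ell(\alpha)\pair X{H_\alpha}$, citing Lemma \ref{lem:pairing}(\ref{lem:pairing:with-p}), and assert that its valuation "is still $-r$." That is false precisely when $\ell(\alpha)=p$, since then $\ord(\ell(\alpha))>0$ and the product has valuation $-r+\ord(p)$, so it does not shift Moy--Prasad depth by exactly $r$. The correct scalar is $\ell_{p'}(\alpha^\vee)^{-1}\pair X{H_\alpha}$ (a $p$-adic-unit multiple of $\pair X{H_\alpha}$); one then writes $\ell_{p'}(\alpha^\vee)^{-1}=\ell_{p'}(\alpha)/\ell_{p'}$ and invokes Lemma \ref{lem:pairing}(\ref{lem:pairing:without-p}) (the $\ell_{p'}$-version, which requires goodness of $X$) to see it depends only on $\alpha_0$. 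You gesture at this fix in your parenthetical, but the sentence as written contains the incorrect valuation claim; tightening that point would make the argument fully rigorous. On the trade-off: the paper's duality argument is cleaner and avoids case analysis on $\ell(\alpha)$ vs.\ $p$ entirely; your explicit construction is more informative (it names the isomorphism) but has to confront the length-normalization subtleties head-on.
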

\begin{proof}
Note that \((Y_1,Y_2) \mapsto \pair X{[Y_1, Y_2]}\) is a non-degenerate, equivariant pairing between the \(\ms M_x(\bar k)\)-modules \(V_{(x,O, t)}\) and \(V_{(x,O, r - t)}\), so that they are dual modules. At the same time, \(V_{(x,O, r - t)}\) and \(V_{(x,O, t - r)}\) are dual modules according to Definition/Lemma \ref{dfn:q-on-Vot}. Therefore  \(V_{(x,O, t)}\) and \(V_{(x,O, t - r)}\) are isomorphic as \(\ms M_x(\bar k)\)-modules.
\end{proof}

\begin{rem}
The isomorphism of Lemma \ref{lem:r-shift} can be made explicit using the definition of the quadratic form $\varphi$ of Definition \ref{dfn:q-on-g/m}. Choosing a maximal torus $T \subset M$ that splits over a tame Galois extension $E/F$ and such that $x \in \cB(T,F)$, \cite[Corollary 2.3]{Yu01} implies 
\[ V_{(x,O,t)}=\left(\bigoplus_{\alpha_0 \in O} \mf{g}_{\alpha_0}(E^\tx{ur})_{x,t:t+}\right)^{\tx{Gal}(E^\tx{ur}/F^\tx{ur})}. \]
It is therefore enough to produce an $M(E^\tx{ur})_x$-equivariant isomorphism 
\[ \bigoplus_{\alpha_0 \in O} \mf{g}_{\alpha_0}(E^\tx{ur})_{x,t:t+} \to \bigoplus_{\alpha_0 \in O} \mf{g}_{\alpha_0}(E^\tx{ur})_{x,(t-r):(t-r)+}.\]
On the component $\mf{g}_{\alpha_0}(E^\tx{ur})_{x,t:t+}$ this isomorphism is given by multiplication by the scalar $\ell_{p'}(\alpha^\vee)^{-1}\<X,H_\alpha\>$, where $\alpha \in R(T,G/M)$ is any element restricting to $\alpha$. Since $\ell_{p'}(\alpha^\vee)^{-1}=\ell_{p'}(\alpha)/\ell_{p'}$, where $\ell_{p'}$ is the largest value of $\ell_{p'}(\beta)$ for all $\beta$ belonging to the same irreducible component as $\alpha$, and since this irreducible component is uniquely determined by $\alpha_0$, we see from Lemma \ref{lem:pairing} that  the scalar $\ell_{p'}(\alpha^\vee)^{-1}\<X,H_\alpha\>$ depends only on $\alpha_0$. It is moreover assigned to $\alpha$ (and hence to $\alpha_0$) in a Galois-equivariant way. Therefore the isomorphisms on the various components indexed by $\alpha_0$ splice together to a Galois-equivariant isomorphism between the direct sums. That this isomorphism is the explication of the double duality of the proof of Lemma \ref{lem:r-shift} follows from Definition \ref{dfn:q-on-g/m}.
\end{rem}

We extend the action of $\Sigma$ on $R(Z_M, G)/I$ to an action on $R(Z_M, G)/I \times \R/r\Z$,  by letting $-1 \in \Sigma$ act on $\R/r\Z$ by multiplication by $-1$ and $\Gamma \subset \Sigma$ act trivially on $\R/r\Z$.

\begin{lem} \label{lem:dual}
	The map \(R(Z_M, G)/I \times \R \to X^*(\ms M_x)\)
sending \((O, t)\) to \(\chi_{(O,t)}\ldef\det(\anondot\res_{V_{(x,O, t)}})\) is \(\Sigma\)-equivariant and descends to $R(Z_M,G)/I \times \R/r\R$.
\end{lem}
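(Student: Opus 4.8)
The plan is to verify $\Sigma$-equivariance directly from the definition of the action on $R(Z_M, G)/I \times \R$ introduced just above, and then to use Lemma \ref{lem:r-shift} for the descent to the quotient by $r\Z$. First I would handle $\Gamma$-equivariance: for $\sigma \in \Gamma$ and $(O, t) \in R(Z_M, G)/I \times \R$, the action of $\sigma$ on $\ms M_x$ intertwines, up to the fixed isomorphisms coming from the $k$-structure, the space $V_{(x, O, t)}$ with $V_{(x, \sigma O, t)}$ (here $\sigma$ acts trivially on the $\R$-coordinate). Concretely, conjugation by a representative of $\sigma$ carries $\fg_O$ to $\fg_{\sigma O}$ and respects the Moy--Prasad filtration at $x$ (as $x$ is $\Gamma$-fixed in the reduced building), so $\det(\anondot\res_{V_{(x, O, t)}})$ is carried to $\det(\anondot\res_{V_{(x, \sigma O, t)}})$; this says exactly that $(O, t) \mapsto \chi_{(O, t)}$ is $\Gamma$-equivariant, where $\Gamma$ acts on the target $X^*(\ms M_x)$ via the $k$-structure.

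Next I would treat the action of $-1 \in \Sigma$. By definition $-1$ sends $(O, t)$ to $(-O, -t)$, and on the target of $\det$ it acts by inversion of characters. Definition/Lemma \ref{dfn:q-on-Vot} furnishes a non-degenerate, $\ms M_x(\bar k)$-invariant pairing $V_{(x, O, t)} \times V_{(x, -O, -t)} \to \bar k$, exhibiting $V_{(x, -O, -t)}$ as the dual $\ms M_x$-module of $V_{(x, O, t)}$; hence $\det(\anondot\res_{V_{(x, -O, -t)}}) = \det(\anondot\res_{V_{(x, O, t)}})^{-1}$ as characters of $\ms M_x$. Since $\Gamma$ and $-1$ generate $\Sigma$ and their actions are compatible in the obvious sense, this establishes full $\Sigma$-equivariance of $(O, t) \mapsto \chi_{(O, t)}$.

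Finally, for the descent, I must show $\chi_{(O, t - r)} = \chi_{(O, t)}$ for all $(O, t)$. This is exactly Lemma \ref{lem:r-shift}, which gives an isomorphism of $\ms M_x(\bar k)$-modules $V_{(x, O, t)} \cong V_{(x, O, t - r)}$; taking determinants of the action yields the equality of characters, so $(O, t) \mapsto \chi_{(O, t)}$ factors through $R(Z_M, G)/I \times \R/r\Z$. One should also check that the $\Sigma$-action itself descends to $R(Z_M, G)/I \times \R/r\Z$, but this is built into the definition given above: $\Gamma$ acts trivially on $\R/r\Z$ and $-1$ acts by multiplication by $-1$, which is well-defined on $\R/r\Z$.

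\textbf{Expected main obstacle.} There is essentially no deep obstacle here: the statement is a packaging of Definition/Lemma \ref{dfn:q-on-Vot} (for the $-1$-equivariance) and Lemma \ref{lem:r-shift} (for the periodicity), together with the routine observation that Galois conjugation permutes the weight-and-filtration pieces compatibly with the $k$-structure on $\ms M_x$. The only point requiring mild care is bookkeeping: making sure that "$\Sigma$-equivariant" is interpreted with the correct action on the target $X^*(\ms M_x)$ — namely $\Gamma$ acting through the $k$-structure and $-1$ acting by inversion — and that the fixed isomorphisms used to identify $V_{(x, O, t)}$ with its Galois-conjugate are the ones coming from the descent to $k$, so that the determinant characters genuinely match rather than merely being abstractly isomorphic.
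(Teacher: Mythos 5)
Your proof is correct and follows essentially the same route as the paper's: $\Gamma$-equivariance from $\sigma V_{(x,O,t)} = V_{(x,\sigma O,t)}$, the action of $-1 \in \Sigma$ from the duality pairing of Definition/Lemma \ref{dfn:q-on-Vot}, and periodicity modulo $r$ from Lemma \ref{lem:r-shift}. The only difference is that you spell out the Galois-conjugation bookkeeping in more detail, which the paper leaves implicit.
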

\begin{proof}
Since \(\sigma V_{(x,O, t)}\) equals \(V_{(x,\sigma O, t)}\)
for all \(\sigma \in \Gamma\),
it suffices to verify
that \(\chi_{(-O, -t)}\) equals \(\chi_{(O, t)}^{-1}\).
This follows from the fact that the \(\ms M_x(\bar k)\)-modules \(V_{(x,O, t)}\) and \(V_{(x,-O, -t)}\)
are dual to one another
(Definition/Lemma \ref{dfn:q-on-Vot}). The second statement follows from Lemma \ref{lem:r-shift}.
\end{proof}

Since $-s$ and $s$ are congruent modulo $r$, if we put \(\chi_O = \det(\anondot\res V_{(x,O, s)})\)
for all \(O \in R(Z_M, G)/I\),
then \(O \mapsto \chi_O\) is \(\Sigma\)-equivariant. Given any \(\Sigma\)-stable subset $R' \subset R(Z_M, G)^\tx{sym,ram}$ we may set \(\fS = R'/I\) and use Definition \ref{dfn:epsX} to associate to it a character \(\epsilon_{\fS} : \ms M_x(k) \to k^\times/k^{\times, 2}\), which can be computed by Proposition \ref{pro:conc-hyper}. Of immediate use for us is the special case $R' = R(Z_M, G)^\tx{sym,ram}$.

\begin{dfn} \label{dfn:Mepss}
Let \(\_M\epsilon_s^\tx{sym,ram}:\ms M_x(k) \to \{\pm1\} \) be the composition of the isomorphism $\sgn_k : k^\times/k^{\times,2} \xrightarrow{\simeq} \{\pm 1\}$ with the character $\epsilon_\fS: \ms M_x(k) \to k^\times/k^{\times,2}$ obtained by applying Definition \ref{dfn:epsX} to the set $\fS=R(Z_M, G)^\tx{sym,ram}/I$ and the assignment $\fS \ni O \mapsto \chi_O= \det(\anondot\res V_{(x,O, s)}) \in X^*(\ms M_x)$.
\end{dfn}

We now come to the third piece. Let \(R'_M\) be a \(\Sigma\)-stable subset of \(R(Z_M, G)\).
In the notation of Definition \ref{dfn:g/m-by-ZM-and-R},
we have an action of \(\ms M_x\) on
\[
V_{\bar k} \ldef \bigoplus_{\substack{
	O \in R'_M/I \\
	t \in (-s, s)
}} V_{(x,O, t)}
\]
that preserves the form \(\varphi\) of Definition/Lemma \ref{dfn:q-on-Vot},
and that stabilizes each of the subspaces
\[
V_{0, \bar k} \ldef \bigoplus_{O \in R'_M/I} V_{(x,O, 0)}
\quad\text{and}\quad
V^\pm_{\bar k} \ldef \bigoplus_{\substack{
	O \in R'_M/I \\
	t \in \pm(0, s)
}} V_{(x,O, t)}.
\]
Then \(\varphi\) yields non-degenerate quadratic forms on \(V\), \(V_0\), and \(V^+ \oplus V^-\), where, as usual,
\(V = V_{\bar k}^\Gamma\), \(V_0 = V_{0, \bar k}^\Gamma\), and \(V^\pm = (V^\pm_{\bar k})^\Gamma\).
Since \(W\), for \(W\) = \(V\), \(V_0\), or \(V^+ \oplus V^-\), is \(\Gamma\)-stable and \(\varphi\) is \(\Gamma\)-fixed, we obtain a rational orthogonal representation
\[
\ms M_x \to \Or(W, \varphi),
\]
hence a homomorphism
\begin{equation} \label{eq:char_sn}
\ms M_x(k)
	\to \Or(W, \varphi)(k)
	\xrightarrow{\operatorname{sn}} k^\times/k^{\times, 2}
	\xrightarrow{\sgn_k} \{\pm1\},
\end{equation}
where $\operatorname{sn}$ denotes the spinor norm on the appropriate orthogonal group, as in \S\ref{sub:sp-norm}.
We temporarily write \(\epsilon_{\operatorname{sn}}\),
\(\epsilon_{\operatorname{sn}, 0}\),
and \(\epsilon_{\operatorname{sn}}^0\)
for the resulting characters.
By \cite[55:4]{omeara:quadratic}, we have that
\(\epsilon_{\operatorname{sn}}\)
equals \(\epsilon_{\operatorname{sn}, 0} \dotm \epsilon_{\operatorname{sn}}^0\).

\begin{dfn} \label{dfn:ep0}
Let $\_M\epsilon_0$ be the character $\epsilon_{\operatorname{sn}, 0}$ of $\ms M_x(k)$ given by \eqref{eq:char_sn} relative to $R_M'=R(Z_M,G)_\tx{sym,ram}$.
\end{dfn}

\begin{dfn}
Let $\epsilon^{G/M}_x$ be the product of the characters $\_M\epsilon_\tx{sym,ram}$ (Definition \ref{dfn:esr}), \(\_M\epsilon_s^\tx{sym,ram}\) (Definition \ref{dfn:Mepss}), and $\_M\epsilon_0$ (Definition \ref{dfn:ep0}).
\end{dfn}

The character $\epsilon^{G/M}_x$ has now been constructed. In \S\ref{subsec:main-proof} we prove that its values are as required by Theorem \ref{thm:main}. As preparation, we compute the values of the character $\epsilon_{\operatorname{sn}, 0}$ constructed with respect to an arbitrary subset $R'_M$.

\begin{pro} \label{pro:all-at-0}
Fix a tame maximal torus \(T \subset M\) with \(x \in \cB(T, F)\),
and let \(R'_T\) be the pre-image in \(R(T, G/M)\) of \(R'_M\). For every \(\gamma \in \ms T(k)\),
the value \(\epsilon_{\operatorname{sn}, 0}(\gamma)\)
equals
\begin{multline*}
\prod_{\substack{
	\alpha \in R'_{T, \tx{asym}}/\Sigma \\
	0 \in \ord_x(\alpha)
}}
	\sgn_{k_\alpha}(\alpha(\gamma))
\dotm\prod_{\substack{
	\alpha \in R'_{T, \tx{sym,unram}}/\Gamma \\
	0 \in \ord_x(\alpha)
}}
	\sgn_{k_\alpha^1}(\alpha(\gamma)) \\
\dotm\prod_{\alpha \in R'_{T, \tx{sym,ram}}/\Gamma}
	\bigl(
		(-1)^{[k_\alpha : k] + 1}
		\sgn_{k_\alpha}(e_\alpha\ell_{p'}(\alpha^\vee))
		f_{(G, T)}(\alpha)
	\bigr)
\end{multline*}
\end{pro}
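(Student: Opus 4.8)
## Plan for proving Proposition \ref{pro:all-at-0}

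The plan is to compute the spinor-norm character $\epsilon_{\operatorname{sn}, 0}$ by applying the general formula of Lemma \ref{lem:spinor-ss} to the orthogonal representation $\ms M_x \to \Or(V_0, \varphi)$, where $V_0 = \bigoplus_{O \in R'_M/I} V_{(x,O,0)}$, and then matching the resulting terms against the claimed expression orbit by orbit. First I would fix the tame maximal torus $T \subset M$ with $x \in \cB(T, F)$ and use the $\fS$-grading of $(V_0, \varphi)$ coming from the decomposition $V_{(x, O, 0)} = \bigl(\bigoplus_{\alpha_0 \in O} \fg_{\alpha_0}(F^\tx{ur})_{x, 0:0+}\bigr)^{\Gamma}$, refined all the way down to the individual roots $\alpha \in R'_T$: the relevant index set is $\fS = \{\alpha \in R'_T \mid 0 \in \ord_x(\alpha)\} \cup R'_{T, \tx{sym, ram}}$, with $\Sigma$ acting through its action on roots, and the grading subspace attached to $\alpha$ is $\fg_\alpha(E^\tx{ur})_{x, 0:0+}$ (nonzero precisely when $0 \in \ord_x(\alpha)$ for asymmetric and unramified symmetric $\alpha$, and always nonzero for ramified symmetric $\alpha$ since $0 \in \ord_x(\alpha)$ by \cite[Proposition 4.5.1]{KalRSP}). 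Since $\gamma \in \ms T(k)$ is semisimple and acts on $\fg_\alpha(E^\tx{ur})_{x, 0:0+}$ (which is one-dimensional over $k_\alpha$) by the scalar $\alpha(\gamma)$, the hypothesis of Lemma \ref{lem:spinor-ss} is automatic.

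Next I would read off the three products in Lemma \ref{lem:spinor-ss}. The asymmetric contribution is $\prod \fNorm_{k_\alpha/k}(\det_{k_\alpha}(\gamma\res)) = \prod \fNorm_{k_\alpha/k}(\alpha(\gamma)) \bmod k^{\times 2}$ over $\alpha \in R'_{T, \tx{asym}}/\Sigma$ with $0 \in \ord_x(\alpha)$, which after applying $\sgn_k$ becomes $\prod \sgn_{k_\alpha}(\alpha(\gamma))$ — the first factor of the claim. The unramified-symmetric contribution is $\prod \fNorm_{k_\alpha/k}(\Lang_\alpha(\det_{k_\alpha}(\gamma\res)))$ over $\alpha \in R'_{T, \tx{sym,unram}}/\Gamma$ with $0 \in \ord_x(\alpha)$; since $\Lang_\alpha(\alpha(\gamma)) \in k_\alpha^\times/k_{\pm\alpha}^\times$, applying $\fNorm_{k_\alpha/k}$ and then $\sgn_k$ produces the sign character of $k_\alpha^1$ evaluated at $\alpha(\gamma)$, giving $\prod \sgn_{k_\alpha^1}(\alpha(\gamma))$ — the second factor. (Here I would spell out that the restriction of $\sgn_{k_\alpha} \circ \fNorm_{k_\alpha/k_{\pm\alpha}}$, equivalently the transfer of the quadratic character through $\Lang_\alpha$, is exactly $\sgn_{k_\alpha^1}$ on the norm-one subgroup.) The ramified-symmetric contribution is the $i = -i$ term $\prod \bigl(\disc(k_\alpha/k)^{\dim_{k_\alpha}(V_\alpha)} \fNorm_{k_\alpha/k}(\det_{k_\alpha}(\varphi\res_{V_\alpha}))\bigr)$, taken over those $\alpha \in R'_{T, \tx{sym, ram}}/\Gamma$ for which $-\gamma\res_{V_\alpha}$ is unipotent, i.e. $\alpha(\gamma) \in -1 + \fp_\alpha$ — but wait: in Proposition \ref{pro:all-at-0} the product over ramified symmetric $\alpha$ runs over \emph{all} of them, not just those with $\alpha(\gamma) \in -1 + \fp_\alpha$, so I need to argue that for ramified symmetric $\alpha$ the condition $-\gamma\res \text{ unipotent}$ combined with the hypothesis $\gamma \in \ms T(k)$... actually $\gamma \in \ms T(k)$ means the eigenvalue $\alpha(\gamma)$ lies in $k_\alpha^\times$; since $\alpha$ is ramified symmetric, $\alpha(\gamma)$ has norm one to $k_{\pm\alpha}^\times = k_\alpha^\times$, hence $\alpha(\gamma) \in \{\pm 1\}$, so either $\gamma\res$ or $-\gamma\res$ is the identity — and when $\gamma\res$ itself is unipotent (scalar $+1$) the $i = -i$ term contributes trivially. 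So the product really is over all ramified symmetric $\alpha$, matching the claim, provided the $\gamma$-independent factor equals $(-1)^{[k_\alpha:k]+1} \sgn_{k_\alpha}(e_\alpha \ell_{p'}(\alpha^\vee)) f_{(G,T)}(\alpha)$.

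The main obstacle, and the part requiring genuine computation, is identifying $\sgn_k\bigl(\disc(k_\alpha/k)^{\dim_{k_\alpha}(V_\alpha)} \fNorm_{k_\alpha/k}(\det_{k_\alpha}(\varphi\res_{V_\alpha}))\bigr)$ with the stated product for ramified symmetric $\alpha$. Here $V_\alpha$ is one-dimensional over $k_\alpha$ (spanned by the image of a root vector in $\fg_\alpha(E^\tx{ur})_{x, 0:0+}$), so $\dim_{k_\alpha} V_\alpha = 1$ and I need to evaluate the restriction of $\varphi$ — which on $\fg_\alpha \oplus \fg_{-\alpha}$ is $\ell_{p'}(\alpha^\vee) Q_{\pm\alpha}$ with $Q_{\pm\alpha}(Y_\alpha + Y_{-\alpha}) = [Y_\alpha, Y_{-\alpha}]/H_\alpha$ — on the symmetric/Hermitian descent $\Res_{k_\alpha/k_{\pm\alpha}} V_\alpha$. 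Unwinding Definition/Lemma \ref{dfn:q-on-Vot}, $\varphi\res_{V_\alpha}$ is the Hermitian form $h_\alpha(v, v) = \bar\beta_{\varphi, \alpha}(v, \sigma_\alpha v)$ scaled by $\ell_{p'}(\alpha^\vee)$ and by the $e_\alpha$-normalization from the isomorphism $\pi_{x, 0}$; its discriminant/determinant over $k_\alpha$ is computed by picking the Chevalley-type root vector normalized so that $[Y_\alpha, Y_{-\alpha}] = H_\alpha$, giving the toral-invariant factor $f_{(G,T)}(\alpha)$ (which by \cite[\S4.1]{KalEpi} is precisely $\sgn_{k_{\pm\alpha}}$ applied to $Q_{\pm\alpha}$ evaluated on such a basis vector, up to the standard normalizations), while the scalars $\ell_{p'}(\alpha^\vee)$ and $e_\alpha$ and the factor $\disc(k_\alpha/k)$ together with the Hasse relation $\sgn_k(\disc(k_\alpha/k)) = (-1)^{[k_\alpha:k]+1}$ produce the remaining $(-1)^{[k_\alpha:k]+1}\sgn_{k_\alpha}(e_\alpha\ell_{p'}(\alpha^\vee))$. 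I would carry this out by reducing $\Res_{k_\alpha/k_{\pm\alpha}} V_\alpha$ to the case $k_{\pm\alpha} = k$ via \cite[Chapter 9, Example 3.7]{scharlau:quadratic} (as in the proof of Lemma \ref{lem:spinor-ss}), then track each scalar's square class carefully; the bookkeeping of how the $e_\alpha$-factor from the descent isomorphism $\pi_{x,0}$ interacts with the $k_\alpha$-versus-$k_{\pm\alpha}$ determinant is where errors are most likely, so I would cross-check the final formula against the known toral-invariant identities in \cite{KalEpi}.
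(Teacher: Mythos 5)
Your overall strategy is correct and matches the paper's: apply Lemma \ref{lem:spinor-ss} to the spinor-norm construction on $V_0$ and match the three cases (asymmetric, unramified symmetric, ramified symmetric) against the stated product. The asymmetric and unramified-symmetric parts of your argument are fine, and you correctly identify \cite[Proposition 4.5.1]{KalRSP} as the reason ramified symmetric roots always contribute. However, there are two substantive issues with your setup.

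First, the proposed $\fS$-grading by individual roots $\alpha \in R'_T$, with subspaces $\fg_\alpha(E^\tx{ur})_{x,0:0+}$, is not a valid grading of $(V_0, \varphi)$ in the sense of Definition \ref{dfn:grading}. There $\Gamma$ is $\Gal(\bar k/k)$, which does not act on the set of roots (the inertia group $I_F$ acts non-trivially on $R(T,G/M)$); only the set of $I$-orbits of roots carries a $\Gal(\bar k/k)$-action. Moreover, $\fg_\alpha(E^\tx{ur})_{x,0:0+}$ is a subquotient of $\fg(E^\tx{ur})$, not of $V_{0,\bar k} = V_0 \otimes_k \bar k$; only the $\Gal(E^\tx{ur}/F^\tx{ur})$-invariant piece descends. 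The grading you should use, and the one the paper uses, is $\fS = \{\alpha \in R'_T \stbar 0 \in \ord_x(\alpha)\}/I$ with $(V_O)_{\bar k} = \fg_O(F^\tx{ur})_{x,0:0+}$. This matters for the ramified symmetric roots: in your grading $\alpha \ne -\alpha$, so you would land in the ``$i \ne -i$, $\Gamma\dota i = -\Gamma\dota i$'' case of Lemma \ref{lem:spinor-ss}, which has no $\disc$ or $\det(\varphi)$ factors; whereas a ramified symmetric $\alpha$ satisfies $-\alpha \in I\dota\alpha$, so its $I$-orbit $O$ has $O = -O$ and the ``$i = -i$'' case applies. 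The two cases give structurally different formulas, and only the $I$-orbit grading produces the stated answer.

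Second, on the ramified symmetric product, your resolution of the condition question is wrong. The factor $(-1)^{[k_\alpha:k]+1}\sgn_{k_\alpha}(e_\alpha\ell_{p'}(\alpha^\vee))f_{(G,T)}(\alpha)$ is independent of $\gamma$; if you include it also when $\alpha(\gamma) \in 1 + \fp_\alpha$ (where the spinor norm contributes trivially), you introduce a spurious constant. The product must be restricted to those $\alpha$ with $\alpha(\gamma) \in -1 + \fp_\alpha$, as in the paper's own proof and as required to match $\epsilon_f \dotm \epsilon_{\flat,1}$ in the final proof of Theorem \ref{thm:main}; the displayed statement is missing this restriction, which you were right to be suspicious about, but the correct conclusion is that the condition belongs there, not that it can be dropped.

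Once the grading is corrected, the computation you sketch does go through, but the cleanest route is the paper's: evaluate $\varphi$ directly on the unnormalized element $Y_O = \sum_{\sigma \in I/I_\alpha}\sigma Y_\alpha$ rather than threading through $\pi_{x,0}$, whose square-root normalization $\gamma^{-1} = e_\alpha^{-1/2}$ is exactly the factor you flagged as error-prone. One finds $\varphi(Y_O) = e_\alpha\ell_{p'}(\alpha^\vee)\dotm[Y_\alpha,\sigma_\alpha Y_\alpha]/H_\alpha$ in $k_\alpha$, and then $e_\alpha$, $\ell_{p'}(\alpha^\vee)$, the toral invariant, and $\sgn_k(\disc(k_\alpha/k)) = (-1)^{[k_\alpha:k]+1}$ (from \cite{JMV90}) assemble into the stated product.
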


\begin{proof}
Fix \(\gamma \in \ms T(k)\).
We use Lemma \ref{lem:spinor-ss},
applied to \(\ms G = \ms T\)
and \(\fS = \{\alpha \in R'_T \stbar 0 \in \ord_x(\alpha)\}/I\),
with \((V_O)_{\bar k} = \fg_O(F^\tx{ur})_{x, 0:0+}\)
for all \(O \in \fS\),
to write the desired spinor norm as a product over the
\(\Sigma_k\)-orbits in \(\fS\),
and then apply \(\sgn_k\) to it.

For all \(\alpha \in R'_T\), we have that
\(k_{I\dota\alpha}\) is the residue field \(k_\alpha\) of \(F_\alpha\);
that
\(\sgn_k \circ \fNorm_{k_\alpha/k}\) equals \(\sgn_{k_\alpha}\);
and that \(\sgn_{k_\alpha} \circ \Lang_\alpha\)
equals \(\sgn_{k_\alpha^1}\)
if \(\alpha\) is unramified symmetric.
Thus, the contribution of the roots that are not ramified symmetric
is as stated.

Now fix \(\alpha \in R'_{T, \tx{sym,ram}}\),
and let \(O\) be the \(I\)-orbit of \(\alpha\).
By \cite[Proposition 4.5.1]{KalRSP},
we have that
\(\fg_\alpha(F_\alpha)_{x, 0} \ne \fg_\alpha(F_\alpha)_{x, 0+}\),
and hence that \(O\) lies in \(\fS\).
Since \(\alpha(\gamma)\) lies in \(F_\alpha^1\),
it belongs to \(\pm 1 + \fp_\alpha\).
If \(\alpha(\gamma) \in 1 + \fp_\alpha\), then \(O\) contributes only trivially to \(\epsilon_{\operatorname{sn}, 0}\) and to the product in the statement.
Thus, we may, and do, restrict to the case that \(\alpha(\gamma) \in -1 + \fp_\alpha\).
The contribution of the \(\Gamma\)-orbit of \(\alpha\)
to the spinor norm is
\[ \disc(k_\alpha/k)\fNorm_{k_\alpha/k}(
	\det(\varphi\res_{V_{(x,O, 0)}^{\Gal(\bar k/k_\alpha)}})
), \]
where we note that the $\bar k$-vector space $V_{(x,O,0)}$ is $1$-dimensional. 
By \cite[p.~24, Theorem 1, and p.~25, Lemma 1]{JMV90},
we have that \(\disc(k_\alpha/k)\) is the trivial square class
if and only if \([k_\alpha : k]\) is odd. To compute the determinant it is enough to evaluate the quadratic form $\varphi$ on any non-zero element of the 1-dimensional vector space $V_{(x,O,0)}$. Let \(Y_\alpha\) be an element of
\(\fg_\alpha(F_\alpha)_{x, 0} \sm \fg_\alpha(F_\alpha)_{x, 0+}\).
Then \(Y_O \ldef \sum_{\sigma \in I/I_\alpha} \sigma(Y_\alpha)\)
is a non-zero element of \(V_{(x,O, 0)}\),
and \(\varphi(Y_O)\) equals
$$
 \ell_{p'}(\alpha^\vee)\tr_{F_\alpha/F_O}\left(\frac{[Y_\alpha, \sigma_\alpha Y_\alpha]}{H_\alpha}\right) = e_\alpha\ell_{p'}(\alpha^\vee)\dotm \frac{[Y_\alpha, \sigma_\alpha Y_\alpha]}{H_\alpha}
$$
in \(k_O = k_\alpha\).
Since \(\sgn_k\left(\frac{[Y_\alpha, \sigma_\alpha Y_\alpha]}{H_\alpha}\right)\) equals \(f_{(G, T)}(\alpha)\), by definition,
the value of \(\sgn_k\) at the entire product equals
\[
(-1)^{[k_\alpha : k] + 1}f_{(G, T)}(\alpha)
\sgn_k(e_\alpha\ell_{p'}(\alpha^\vee)).\qedhere
\]
\end{proof}

\begin{rem} \label{rem:spinor-unifies}
One can interpret all three pieces of $\epsilon^{G/M}_x$ in terms of the spinor norm as follows.  Note that \(\_M\epsilon_0\) is already defined in those terms.

For the piece \(\_M\epsilon^\tx{sym,ram}_s\) constructed via hypercohomology, we consider more generally a subset \(\fS\) of \(R(Z_M, G)/I\) that contains no orbits of ramified symmetric weights (with \(\_M\epsilon^\tx{sym,ram}_s\) corresponding to \(\fS = R(Z_M, G) \sm R(Z_M, G)_\tx{sym,ram}\)).
A choice of subset \(\fS^+ \subset \fS\) as in Example \ref{exa:conc-hyper}
gives rise to subspaces
\[
V^\pm_s = \bigoplus_{O \in \pm \fS^+} V_{(x,O, s)},
\]
and so allows us to define an \(\ms M_x(\bar k)\)-invariant quadratic form
 \(\varphi_s\) on \(V^+_s \oplus V^-_s\) by \(\varphi_s(Y^+ + Y^-) = \pair X{[Y^+, Y^-]}\)
for all \(Y^\pm \in V^\pm_s\).

Then Lemma \ref{lem:spinor-ss} allows us explicitly to compute
\[ \ms M_x(k) \to \Or(V^+_s \oplus V^-_s, \varphi_s)(k) \xrightarrow{\operatorname{sn}} k^\times/k^{\times, 2}, \]
and Proposition \ref{pro:conc-hyper} shows that it agrees with the hypercohomology character \(\epsilon_{\fS}\),
so that the spinor-norm construction \eqref{eq:spinor-char} generalizes the construction of
\S\ref{sec:sgn3}.

For the piece \(\_M\epsilon_\tx{sym,ram}\) constructed via Moy--Prasad filtrations,
we can again use Lemma \ref{lem:spinor-ss} to show more generally that, in the notation of Proposition \ref{pro:sym-ram-extends}, the composition
\[
\ms M_x(k) \to
	\Or\bigl(
		\bigoplus_{O \in R'_M/I}
		\bigoplus_{\substack{
			t \in (-d_O^{-1}/2, d_O^{-1}/2) \\
			t \ne 0
		}}
			V_{(x,O, t)},
		\varphi
	\bigr)(k)
\xrightarrow{\operatorname{sn}}	k^\times/k^{\times, 2}
\]
agrees with \(\det(\anondot\res_V)\).
\end{rem}

\subsection{Proof of Theorem \ref{thm:main}}
\label{subsec:main-proof}

We now come to the proof of Theorem \ref{thm:main}. Fix a tame maximal torus $T \subset M$ such that $x \in \cB(T,F)$. For a finite $\Sigma$-stable
subset $\Phi \subset X^*(T)$
we can define
a character $\epsilon_\Phi : \ms{T}(k) \to \{\pm 1\}$ by
\begin{equation} \label{eq:charphi}
\epsilon_\Phi(\gamma)
= \prod_{\alpha \in \Phi_\tx{asym}/\Sigma}
	\sgn_{k_\alpha}(\alpha(\gamma))
\dotm\prod_{\alpha \in \Phi_\tx{sym,unram}/\Gamma}
	\sgn_{k_\alpha^1}(\alpha(\gamma)).
\end{equation}

\begin{exa}
\label{exa:sharp-and-flat0}
The characters $\epsilon_{\sharp, x}$
and $\epsilon_{\flat,0}$
of Definition \ref{dfn:lstk}
come via the construction \eqref{eq:charphi}
from the subsets
\[ \Phi_{\sharp}
= \{\alpha \in R(T,G/M)\stbar s \in \ord_x(\alpha) \}\]
and
\[ \Phi_{\flat,0}
= \{\alpha \in R(T,G/M)
	\stbar\alpha_0 \in R(Z_M,G/M)_\tx{sym,ram},2 \nmid e(\alpha/\alpha_0)\}. \]
\end{exa}

\begin{lem} \label{lem:symdif}
If $\Phi_1,\Phi_2 \subset X^*(T)$ are finite $\Sigma$-stable subsets, then so is their
symmetric difference
\[ \Phi_1 \sdiff \Phi_2
= (\Phi_1 \cup \Phi_2) \sm (\Phi_1 \cap \Phi_2), \]
and moreover
\[ \epsilon_{\Phi_1} \dotm \epsilon_{\Phi_2}
= \epsilon_{\Phi_1\sdiff\Phi_2}. \]
\end{lem}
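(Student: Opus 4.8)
The plan is to reduce the claim to a single multiplicativity property of the assignment $\Phi \mapsto \epsilon_\Phi$ under disjoint unions, and then carry out the routine symmetric-difference bookkeeping.

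First I would dispose of the stability assertion. Since $X^*(T)$ is itself $\Sigma$-stable and the $\Sigma$-action commutes with the Boolean operations, the union, the intersection, and the complement inside $X^*(T)$ of finite $\Sigma$-stable subsets are again finite and $\Sigma$-stable; hence so is $\Phi_1 \sdiff \Phi_2 = (\Phi_1 \cup \Phi_2) \sm (\Phi_1 \cap \Phi_2)$, and likewise the three auxiliary pieces $\Phi_1 \cap \Phi_2$, $\Phi_1 \sm \Phi_2$, and $\Phi_2 \sm \Phi_1$ that I will use below.

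The key step is the observation that if $\Phi = \Phi' \sqcup \Phi''$ is a disjoint union of finite $\Sigma$-stable subsets of $X^*(T)$, then $\epsilon_\Phi = \epsilon_{\Phi'}\dotm\epsilon_{\Phi''}$. This is immediate from \eqref{eq:charphi}: whether a root is asymmetric, symmetric unramified, or symmetric ramified depends only on its $\Sigma$-orbit, so the decomposition $\Phi = \Phi' \sqcup \Phi''$ induces the disjoint decompositions $\Phi_\tx{asym}/\Sigma = (\Phi'_\tx{asym}/\Sigma) \sqcup (\Phi''_\tx{asym}/\Sigma)$ and $\Phi_\tx{sym,unram}/\Gamma = (\Phi'_\tx{sym,unram}/\Gamma) \sqcup (\Phi''_\tx{sym,unram}/\Gamma)$; each of the two products in \eqref{eq:charphi} therefore factors as the corresponding product for $\Phi'$ times that for $\Phi''$.

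Finally I would apply this to the three decompositions $\Phi_1 = (\Phi_1 \cap \Phi_2) \sqcup (\Phi_1 \sm \Phi_2)$, $\Phi_2 = (\Phi_1 \cap \Phi_2) \sqcup (\Phi_2 \sm \Phi_1)$, and $\Phi_1 \sdiff \Phi_2 = (\Phi_1 \sm \Phi_2) \sqcup (\Phi_2 \sm \Phi_1)$, which together give
\[ \epsilon_{\Phi_1}\dotm\epsilon_{\Phi_2} = \epsilon_{\Phi_1 \cap \Phi_2}^2 \dotm \epsilon_{\Phi_1 \sm \Phi_2} \dotm \epsilon_{\Phi_2 \sm \Phi_1} = \epsilon_{\Phi_1 \sm \Phi_2}\dotm\epsilon_{\Phi_2 \sm \Phi_1} = \epsilon_{\Phi_1 \sdiff \Phi_2}, \]
the middle equality using that $\epsilon_{\Phi_1 \cap \Phi_2}$ is $\{\pm1\}$-valued. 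There is no genuine obstacle; the only point deserving a moment's attention is the well-definedness underlying both \eqref{eq:charphi} and the multiplicativity above — namely that the factor attached to a root depends only on its $\Sigma$-orbit, which for an asymmetric root $\alpha$ amounts to $\sgn_{k_\alpha}(\alpha(\gamma)) = \sgn_{k_\alpha}((-\alpha)(\gamma))$, valid because $\sgn_{k_\alpha}$ is $\{\pm1\}$-valued — and this is already implicit in the statement of the definition.
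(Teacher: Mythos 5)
Your proof is correct and is simply a spelled-out version of what the paper declares to be immediate: you reduce to multiplicativity of $\Phi\mapsto\epsilon_\Phi$ over disjoint unions and then cancel the $\{\pm1\}$-valued contribution of $\Phi_1\cap\Phi_2$, which is exactly the underlying reason the lemma holds. No issues.
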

\begin{proof}
Immediate.
\end{proof}

\begin{dfn}
\label{dfn:some-chars}
Let
$\epsilon_{0, \tx{sym,ram}}$, $\epsilon_s^\tx{sym,ram}$,
and $\epsilon^{0,s}_\tx{sym,ram}$
be the characters of \(\ms T(k)\) defined via the construction
\eqref{eq:charphi} using the following subsets of $R(T,G/M)$:
\begin{align*}
\Phi_{0, \tx{sym,ram}} ={} & \{\alpha_0 \in R(Z_M, G)_\tx{sym,ram},\
	0 \in \ord_x(\alpha)\}, \\
\Phi_s^\tx{sym,ram}
={} & \{\alpha_0 \notin R(Z_M,G)_\tx{sym,ram},\
	s \in \ord_x(\alpha)\},
\intertext{and}
\Phi^{0,s}_\tx{sym,ram}
={} &\{\alpha_0 \in R(Z_M,G)_\tx{sym,ram},\
	2\nmid e(\alpha/\alpha_0),\
	0 \notin \ord_x(\alpha)\not\ni s\}.
\end{align*}
\end{dfn}

\begin{lem}\label{lem:r1}
If $\alpha \in R(T,G/M)$ is such that $\alpha_0 = \alpha\res_{Z_M}$ is ramified symmetric, then the
rational number $e_{\alpha_0}r$ is an odd integer.
If $\alpha$ itself is
ramified symmetric, then the rational number $e_\alpha r$ is an odd
integer, and in particular the natural number $e(\alpha/\alpha_0)$ is
odd.
\end{lem}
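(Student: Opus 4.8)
The plan is to work with the element $a_\alpha := \pair X{H_\alpha} \in \sepfield$, where $X$ is the fixed $G$-good element and $r = -\ord(a_\alpha)$ is finite, so $a_\alpha \neq 0$. The key is to exhibit $a_\alpha$ as a nonzero trace-zero element of a ramified quadratic extension and then use that such elements have odd normalized valuation. First I would extract from Lemma~\ref{lem:pairing}(\ref{lem:pairing:with-p}) (applied with $Y = X$) two facts. Put $\alpha_0 = \alpha\res_{Z_M}$ and $b_{\alpha_0} = \ell(\alpha)a_\alpha$; the lemma says $b_{\alpha_0}$ depends only on $\alpha_0$. Using that $X$ is defined over $F$ (so $\pair X{\anondot}$ is $\Gamma$-equivariant and $\sigma H_\beta = H_{\sigma\beta}$) and that $\ell(\anondot)$ is $\Gamma$-invariant and negation-invariant, one checks $\sigma b_{\alpha_0} = b_{\sigma\alpha_0}$ for all $\sigma \in \Gamma$; hence $b_{\alpha_0}$ is $\Gamma_{\alpha_0}$-fixed, i.e.\ lies in $F_{\alpha_0}$, and therefore so does $a_\alpha = b_{\alpha_0}/\ell(\alpha)$. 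Moreover, if $\alpha_0$ is symmetric and $\sigma \in \Gamma$ satisfies $\sigma\alpha_0 = -\alpha_0$, then $\sigma b_{\alpha_0} = b_{-\alpha_0} = \ell(-\alpha)\pair X{H_{-\alpha}} = -b_{\alpha_0}$, so $\sigma a_\alpha = -a_\alpha$. Choosing $\sigma$ to lift $\sigma_{\alpha_0}$ (or, when $\alpha$ itself is symmetric, choosing $\sigma$ to lift $\sigma_\alpha$, for which one may take $\sigma\alpha = -\alpha$ directly without Lemma~\ref{lem:pairing}), I conclude that $a_\alpha \in F_{\alpha_0}$ has trace $0$ over $F_{\pm\alpha_0}$, and, when $\alpha$ is symmetric, trace $0$ over $F_{\pm\alpha}$ as well.

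Next I would prove the auxiliary valuation fact: in a ramified quadratic extension $E/E_\pm$ of non-archimedean local fields of residue characteristic $\neq 2$, any $a \in E^\times$ with $\sigma_{E/E_\pm}(a) = -a$ has $\ord(a) \in \ord(E^\times) \smallsetminus \ord(E_\pm^\times)$. Indeed, if $\ord(a)$ lay in $\ord(E_\pm^\times)$, we could rescale $a$ by a power of a uniformizer of $E_\pm$ to make it a unit of $E$; reducing modulo $\mf p_E$ and using that $\sigma_{E/E_\pm}$ acts trivially on the (common) residue field because the extension is ramified, we would get $\bar a = -\bar a \neq 0$, contradicting $2 \neq 0$ in the residue field. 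Combined with $\ord(E^\times) \smallsetminus \ord(E_\pm^\times) = e^{-1}(2\Z+1)$, where $e$ is the ramification degree of $E$ over the base field (since $e(E/E_\pm) = 2$), this says: $e_{\alpha_0}\ord(a_\alpha)$ is an odd integer whenever $\alpha_0$ is ramified symmetric (take $E = F_{\alpha_0}$), and $e_\alpha\ord(a_\alpha)$ is an odd integer whenever $\alpha$ is ramified symmetric (take $E = F_\alpha$).

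Finally I would assemble. For the first claim: $\alpha_0$ ramified symmetric makes $F_{\alpha_0}/F_{\pm\alpha_0}$ a ramified quadratic extension of residue characteristic $p \neq 2$, so the two steps above give that $e_{\alpha_0}\ord(a_\alpha) = -e_{\alpha_0}r$ is an odd integer, whence $e_{\alpha_0}r$ is an odd integer. For the second claim: if $\alpha$ is ramified symmetric then so is $\alpha_0$ (restriction to $Z_M$ is $\Gamma$- and $I$-equivariant), so $e_{\alpha_0}r$ is an odd integer by the first claim, while the same reasoning applied to $F_\alpha/F_{\pm\alpha}$ shows $e_\alpha r$ is an odd integer; and then $e(\alpha/\alpha_0) = e_\alpha/e_{\alpha_0}$, being a positive integer equal to the ratio of the two odd integers $e_\alpha r$ and $e_{\alpha_0}r$, must be odd. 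The only subtle point, where I expect to have to be careful, is the first step: checking that $a_\alpha$ genuinely descends to $F_{\alpha_0}$ and is negated by the relevant Galois involution even when $\alpha$ is asymmetric — this is exactly where Lemma~\ref{lem:pairing}(\ref{lem:pairing:with-p}) is needed. Everything after that is routine.
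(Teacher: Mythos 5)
Your approach and the paper's are essentially the same: identify a trace\-zero element of the ramified quadratic extension whose valuation equals $-r$, and appeal to the fact that trace\-zero elements of a ramified quadratic extension (residue characteristic $\neq 2$) have odd normalized valuation. Your spelled-out verifications of the $\Gamma$-equivariance of $\alpha_0 \mapsto b_{\alpha_0}$ and of the valuation lemma are both correct and faithfully fill in details the paper asserts without comment. The final assembly — taking the ratio of the two odd integers $e_\alpha r$ and $e_{\alpha_0} r$ to conclude $e(\alpha/\alpha_0)$ is odd — is also the paper's argument.

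However, there is a genuine gap. You invoke Lemma~\ref{lem:pairing}(\ref{lem:pairing:with-p}) and work with $b_{\alpha_0} = \ell(\alpha)\,a_\alpha$, then divide by $\ell(\alpha)$ to conclude $a_\alpha \in F_{\alpha_0}$. This step fails when $F$ has positive characteristic $p$ and $\ell(\alpha) = p$ (since $p \neq 2$ by standing hypothesis, this is the case $p = 3$ with a $G_2$ factor): then $\ell(\alpha) = 0$ in $F$, so $b_{\alpha_0} = 0$ identically, and the division is the undefined $0/0$. In that case the $\Gamma_{\alpha_0}$-invariance of $b_{\alpha_0} = 0$ carries no information about $a_\alpha$. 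The paper avoids this by using Lemma~\ref{lem:pairing}(\ref{lem:pairing:without-p}) instead, which is exactly tailored for this: it produces $c = \ell_{p'}(\alpha)\,a_\alpha$, where $\ell_{p'}(\alpha)$ is coprime to $p$ by construction (Definition~\ref{dfn:regular-length}), hence a unit in $F$. One then need not divide at all: $c$ lies in $F_{\alpha_0}$, has trace zero over $F_{\pm\alpha_0}$, and satisfies $\ord(c) = \ord(a_\alpha) = -r$ because $\ell_{p'}(\alpha)$ is a unit, so the valuation argument applies directly to $c$. Replacing your use of part~(\ref{lem:pairing:with-p}) with part~(\ref{lem:pairing:without-p}) (noting that $X$ is good, which is what part (2) requires) and running your argument on $\ell_{p'}(\alpha)\,a_\alpha$ rather than $a_\alpha$ itself closes the gap without otherwise changing anything.
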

\begin{proof}
By Lemma \ref{lem:pairing}(2), \(c:=\ell_{p'}(\alpha)\pair X{H_\alpha}\) depends only on \(\alpha_0\) and is therefore fixed by \(\Gamma_{\alpha_0}\) and negated by \(\sigma_{\alpha_0} \in \Gamma_{\pm\alpha_0} \sm \Gamma_{\alpha_0}\). Thus \(c \in F_{\alpha_0}\) satisfies \(\tr_{F_{\alpha_0}/F_{\pm\alpha_0}}(c) = 0\). Since the extension $F_{\alpha_0}/F_{\pm\alpha_0}$ is ramified,
this implies that
$-r = \ord(c)
\in \ord(F_{\alpha_0}^\times) \sm \ord(F_{\pm\alpha_0}^\times)
=e_{\alpha_0}^{-1}\Z \sm 2e_{\alpha_0}^{-1}\Z$,
and therefore $e_{\alpha_0}r$ is odd.

If \(\alpha\) is ramified symmetric, then an analogous argument gives that also \(e_\alpha r\),
and hence \(\frac{e_\alpha r}{e_{\alpha_0}r} = e(\alpha/\alpha_0)\),
is odd.
\end{proof}

\begin{cor} \label{cor:esr_asym}
The contribution of $R(T,G)_\tx{sym,unram}$ to the character $\epsilon^{0,s}_\tx{sym,ram}$ is trivial.
\end{cor}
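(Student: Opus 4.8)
The plan is to show that the index set $\Phi^{0,s}_\tx{sym,ram}$ of Definition \ref{dfn:some-chars} contains \emph{no} symmetric unramified root, so that the second product in the defining formula \eqref{eq:charphi} for $\epsilon^{0,s}_\tx{sym,ram}$ --- which is exactly the ``contribution of $R(T,G)_\tx{sym,unram}$'' --- is an empty product and hence trivial. So I would argue by contradiction: suppose $\alpha \in \Phi^{0,s}_\tx{sym,ram}$ is symmetric unramified. By the very definition of $\Phi^{0,s}_\tx{sym,ram}$, the restriction $\alpha_0 = \alpha\res_{Z_M}$ is then ramified symmetric and $e(\alpha/\alpha_0)$ is odd.

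Next I would run the parity computation. Applying the first assertion of Lemma \ref{lem:r1} to $\alpha$ gives that $e_{\alpha_0}r$ is an odd integer; multiplying by the odd integer $e(\alpha/\alpha_0)$ shows $e_\alpha r = e(\alpha/\alpha_0)\dotm(e_{\alpha_0}r)$ is an odd integer, i.e.\ (since $s = r/2$) that $2e_\alpha s$ is odd, so $s \in (2e_\alpha)^{-1}(2\Z+1)$ and in particular $s \notin e_\alpha^{-1}\Z$. On the other hand, since $\alpha$ is symmetric we have $\ord_x(\alpha) = -\ord_x(\alpha)$, and $\ord_x(\alpha)$ is an $e_\alpha^{-1}\Z$-torsor in $\R$ (both recalled in \S\ref{sec:main-stmt}); so for any $t \in \ord_x(\alpha)$ the element $2t = t-(-t)$ lies in $e_\alpha^{-1}\Z$, whence $\ord_x(\alpha) \subset (2e_\alpha)^{-1}\Z$. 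As $e_\alpha^{-1}\Z$ has index $2$ in $(2e_\alpha)^{-1}\Z$, the torsor $\ord_x(\alpha)$ is one of its two cosets: either $e_\alpha^{-1}\Z$, which contains $0$, or $(2e_\alpha)^{-1}(2\Z+1)$, which contains $s$. In both cases the condition ``$0 \notin \ord_x(\alpha) \not\ni s$'' from the definition of $\Phi^{0,s}_\tx{sym,ram}$ fails, contradicting $\alpha \in \Phi^{0,s}_\tx{sym,ram}$; this proves the claim.

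I do not expect a real obstacle here: the argument is a short parity/coset bookkeeping built on Lemma \ref{lem:r1} together with the torsor and symmetry properties of $\ord_x$ recalled in \S\ref{sec:main-stmt}. The only point to present carefully is the dichotomy for $\ord_x(\alpha)$ --- that a symmetric $e_\alpha^{-1}\Z$-torsor inside $(2e_\alpha)^{-1}\Z$ is one of exactly two cosets, distinguished by whether it contains $0$ or contains $(2e_\alpha)^{-1}$ --- and the matching observation that ``$e_\alpha r$ odd'' forces $s$ into the second coset, so that the two alternatives jointly rule out simultaneous avoidance of $0$ and $s$.
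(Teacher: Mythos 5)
Your proof is correct and takes essentially the same approach as the paper: both rely on Lemma \ref{lem:r1} to see that $e_\alpha r$ is odd and then observe that a self-negating $e_\alpha^{-1}\Z$-torsor must contain either $0$ or the ``half-integer'' point $s$, contradicting the defining condition $0 \notin \ord_x(\alpha) \not\ni s$. The only cosmetic difference is that the paper concludes directly that every $\alpha \in \Phi^{0,s}_\tx{sym,ram}$ is asymmetric, whereas you argue by contradiction from the assumption that $\alpha$ is symmetric unramified; the underlying computation is identical.
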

\begin{proof}
The definition of \(\epsilon^{0,s}_\tx{sym,ram}\) involves a product over roots $\alpha$ that satisfy the conditions that $\alpha_0$ is ramified symmetric, $e(\alpha/\alpha_0)$ is odd, and $0 \notin \ord_x(\alpha) \not\ni s$. Let \(\alpha\) be such a root.  Lemma \ref{lem:r1} implies that $e_{\alpha_0}r$ is odd. Then $e_\alpha r$ is also odd, hence $e_\alpha s \in \Z+\frac{1}{2}$. Now $e_\alpha \dotm \ord_x(\alpha)$ is a $\Z$-torsor that, since it contains neither \(e_\alpha\dotm0 = 0\) nor the half-integer \(e_\alpha\dotm s\), equals neither $\Z$ nor $\Z+\frac{1}{2}$.  Thus $\ord_x(\alpha) \neq -\ord_x(\alpha)=\ord_x(-\alpha)$, so $\alpha \notin -\Gamma \dota \alpha$.
\end{proof}
   
\begin{lem} \label{lem:repack}
With notation as in Example \ref{exa:sharp-and-flat0} and Definitions \ref{dfn:lstk} and \ref{dfn:some-chars},
we have the equality
\[ \epsilon_{\sharp, x} \dotm \epsilon_{\flat,0}
= \epsilon_{0, \tx{sym,ram}}
\dotm \epsilon_s^\tx{sym,ram} \dotm \epsilon^{0,s}_\tx{sym,ram}. \]
\end{lem}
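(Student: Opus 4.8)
The plan is to recognize that all six characters in the identity arise via the construction \eqref{eq:charphi} from $\Sigma$-stable subsets of $X^*(T)$, and then to reduce the asserted identity of characters to an identity of subsets using Lemma \ref{lem:symdif}. By Example \ref{exa:sharp-and-flat0} we have $\epsilon_{\sharp, x} = \epsilon_{\Phi_\sharp}$ and $\epsilon_{\flat,0} = \epsilon_{\Phi_{\flat,0}}$, while by Definition \ref{dfn:some-chars} the three characters on the right-hand side are $\epsilon_{\Phi_{0,\tx{sym,ram}}}$, $\epsilon_{\Phi_s^\tx{sym,ram}}$, and $\epsilon_{\Phi^{0,s}_\tx{sym,ram}}$. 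Since symmetric differences of $\Sigma$-stable subsets are again $\Sigma$-stable, and since $\sdiff$ is associative, Lemma \ref{lem:symdif} reduces the lemma to the set-theoretic equality
\[ \Phi_\sharp \sdiff \Phi_{\flat,0} = \Phi_{0,\tx{sym,ram}} \sdiff \Phi_s^\tx{sym,ram} \sdiff \Phi^{0,s}_\tx{sym,ram}; \]
in fact, because $\epsilon_\Phi$ depends only on the asymmetric and unramified symmetric roots contained in $\Phi$, it would even suffice to check this equality after discarding the ramified symmetric roots.

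First I would partition $R(T, G/M)$ into the $\Sigma$-stable sets $A = \{\alpha \stbar \alpha_0 \in R(Z_M, G)_\tx{sym,ram}\}$ and $B = R(T,G/M) \sm A$, where $\alpha_0 \ldef \alpha\res_{Z_M}$; here $R(Z_M, G/M) = R(Z_M, G)$ since $Z_M$ acts trivially on $\fm$. All of $\Phi_{\flat,0}$, $\Phi_{0,\tx{sym,ram}}$, $\Phi^{0,s}_\tx{sym,ram}$ are contained in $A$, so on $B$ the displayed identity collapses to $\Phi_\sharp \cap B = \Phi_s^\tx{sym,ram} \cap B$, and both of these equal $\{\alpha \in B \stbar s \in \ord_x(\alpha)\}$ directly from the definitions. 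On $A$ we have $\Phi_s^\tx{sym,ram} \cap A = \emptyset$, so it remains to prove $(\Phi_\sharp \sdiff \Phi_{\flat,0}) \cap A = (\Phi_{0,\tx{sym,ram}} \sdiff \Phi^{0,s}_\tx{sym,ram}) \cap A$.

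The key input for the $A$-part is Lemma \ref{lem:r1}: for $\alpha \in A$ the integer $e_{\alpha_0} r$ is odd, hence, writing $e_\alpha = e_{\alpha_0}\dotm e(\alpha/\alpha_0)$, the number $e_\alpha s = \tfrac12 e_\alpha r$ is an integer if $2 \mid e(\alpha/\alpha_0)$ and lies in $\Z + \tfrac12$ otherwise. Since $\ord_x(\alpha)$ is an $e_\alpha^{-1}\Z$-torsor, this means that in the first case the conditions ``$0 \in \ord_x(\alpha)$'' and ``$s \in \ord_x(\alpha)$'' are equivalent, while in the second case they are mutually exclusive (and may both fail). Now, for $\alpha \in A$, membership in $\Phi_\sharp \sdiff \Phi_{\flat,0}$ is the exclusive-or of ``$s \in \ord_x(\alpha)$'' and ``$2 \nmid e(\alpha/\alpha_0)$'', while membership in $\Phi_{0,\tx{sym,ram}} \sdiff \Phi^{0,s}_\tx{sym,ram}$ is the exclusive-or of ``$0 \in \ord_x(\alpha)$'' and ``$2 \nmid e(\alpha/\alpha_0)$ and $0 \notin \ord_x(\alpha)$ and $s \notin \ord_x(\alpha)$''. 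A direct case check on the parity of $e(\alpha/\alpha_0)$, using the two equivalences/exclusions just recorded, shows that these two Boolean conditions coincide. This case check is the only genuine computation in the proof; it is the place where one must be careful, but it is completely elementary. Combining the $B$-part and the $A$-part yields the displayed set identity, and Lemma \ref{lem:symdif} then gives the lemma.
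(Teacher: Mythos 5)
Your proposal is correct and follows essentially the same route as the paper's proof: reduce to a set identity via Lemma \ref{lem:symdif}, then use Lemma \ref{lem:r1} to relate the conditions $0 \in \ord_x(\alpha)$ and $s \in \ord_x(\alpha)$ according to the parity of $e(\alpha/\alpha_0)$. The only difference is bookkeeping --- you partition $R(T,G/M)$ by whether $\alpha_0$ is ramified symmetric and then perform a Boolean case check, whereas the paper writes $\Phi_{\flat,0}$ and $\Phi_\sharp$ as explicit disjoint unions and simplifies the symmetric difference term by term; both amount to the same calculation.
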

\begin{proof}
By Lemma \ref{lem:symdif} it is enough to show
\[ \Phi_{\sharp}\sdiff\Phi_{\flat,0}
= \Phi_{0, \tx{sym,ram}}
\sdiff \Phi_s^\tx{sym,ram} \sdiff \Phi^{0,s}_\tx{sym,ram}. \]
In fact the three sets on the right are pairwise disjoint, so
their symmetric difference is just their union.
The set $\Phi_{\flat,0}$ can be expressed as the disjoint union of the
following subsets of $R(T,G/M)$, where ``r.s.'' stands for
``ramified symmetric'', and ``n.r.s.'' for ``not ramified symmetric''
(i.e., asymmetric or unramified symmetric):
\begin{align*}
\{\tx{\(\alpha_0\) r.s.},
2 \nmid e(\alpha/\alpha_0),
0 \in \ord_x(\alpha) \ni s\}
{}\cup{}
\{\tx{\(\alpha_0\) r.s.},
2 \nmid e(\alpha/\alpha_0),
0 \notin \ord_x(\alpha) \ni s\}\\
{}\cup{}
\{\tx{\(\alpha_0\) r.s.},
2 \nmid e(\alpha/\alpha_0),
0 \in \ord_x(\alpha) \not\ni s\}
{}\cup{}
\{\tx{\(\alpha_0\) r.s.},
2 \nmid e(\alpha/\alpha_0),
0 \notin \ord_x(\alpha) \not\ni s\}
\intertext{while $\Phi_{\sharp}$ has the analogous subdivision}
\{\tx{\(\alpha_0\) r.s.},
2 \nmid e(\alpha/\alpha_0),
0 \in \ord_x(\alpha) \ni s \}
{}\cup{}
\{\tx{\(\alpha_0\) r.s.},
2 \nmid e(\alpha/\alpha_0),
0 \notin \ord_x(\alpha) \ni s \}\\
{}\cup{}
\{\tx{\(\alpha_0\) r.s.},
2 \mid e(\alpha/\alpha_0),
0 \in \ord_x(\alpha) \ni s \}
{}\cup{}
\{\tx{\(\alpha_0\) r.s.},
2 \mid e(\alpha/\alpha_0),
0 \notin \ord_x(\alpha) \ni s \}\\
{}\cup{}
\{\tx{\(\alpha_0\) n.r.s.},
\ord_x(\alpha) \ni s\}.\hskip 8.1cm
\intertext{The symmetric difference $\Phi_{\sharp}\sdiff\Phi_{\flat,0}$ is
therefore the disjoint union}
\{\tx{\(\alpha_0\) r.s.},
2 \nmid e(\alpha/\alpha_0),
0 \in \ord_x(\alpha) \not\ni s\}
{}\cup{}
\{\tx{\(\alpha_0\) r.s.},
2 \nmid e(\alpha/\alpha_0),
0 \notin \ord_x(\alpha) \not\ni s\}\\
{}\cup{}
\{\tx{\(\alpha_0\) r.s.},
2 \mid e(\alpha/\alpha_0),
0 \in \ord_x(\alpha) \ni s \}
{}\cup{}
\{\tx{\(\alpha_0\) r.s.},
2 \mid e(\alpha/\alpha_0),
0 \notin \ord_x(\alpha) \ni s \}\\
{}\cup{}
\{\tx{\(\alpha_0\) n.r.s.},
\ord_x(\alpha) \ni s \}.\hskip 8.1cm
\end{align*}
By Lemma \ref{lem:r1}, $e_{\alpha_0}r$ is odd when $\alpha_0$ is ramified symmetric, so the parities of $e_\alpha r$ and $e(\alpha/\alpha_0)$ are
the same. If $e(\alpha/\alpha_0)$ is even, then $e_\alpha s = \tfrac1 2 e_\alpha r$ is an
integer, so, since \(\ord_x(\alpha)\) is an \(e_\alpha^{-1}\Z\)-torsor,
the conditions $s \in \ord_x(\alpha)$ and
$0 \in \ord_x(\alpha)$ are equivalent.
If $e(\alpha/\alpha_0)$ is odd, then
$e_\alpha s$ is a half-integer, but not an integer, and so the
conditions $0 \in \ord_x(\alpha)$ and $s \in \ord_x(\alpha)$ are
mutually exclusive. Therefore, the above set of roots becomes the
disjoint union of
\begin{align*}
\Phi_{0, \tx{sym,ram}}
={} & \{\tx{\(\alpha_0\) r.s.},
0 \in \ord_x(\alpha)\}, \\
\Phi^{0,s}_\tx{sym,ram}
={} & \{\tx{\(\alpha_0\) r.s.},
2 \nmid e(\alpha/\alpha_0),
0 \notin \ord_x(\alpha) \not\ni s\},
\intertext{and}
\Phi_s^\tx{sym,ram}
={} & \{\tx{\(\alpha_0\) n.r.s.},
\ord_x(\alpha) \ni s\}.\qedhere
\end{align*}
\end{proof}

\begin{proof}[Proof of Theorem \ref{thm:main}]
We apply Proposition \ref{pro:sym-ram-extends} to compute the restriction of $\_M\epsilon_\tx{sym,ram}$ to $T(F)\subb$.  The contribution of the roots \(\alpha \in R(T, G)_\tx{sym,ram}\) equals \(\epsilon_{\flat,2}(\gamma)\).  In view of Corollary \ref{cor:esr_asym}, all the remaining contribution comes from \(\alpha \in R(T, G)_\tx{asym} \cap \Phi_\tx{sym,ram}^{0, s}\), for which \(e(\alpha/\alpha_0)\) is odd.  By Lemma \ref{lem:r1}, for such roots, $e_{\alpha_0}r$, hence also \(e_\alpha r\), is an odd integer, so \(e_\alpha(e_{\alpha_0}^{-1}/2 - s) = \tfrac1 2(e(\alpha/\alpha_0) - e_\alpha r)\) lies in \(\Z\).  Since \(\ord_x(\alpha)\) is an \(e_\alpha^{-1}\Z\)-torsor, we therefore have that the statements \(e_{\alpha_0}^{-1}/2 \notin \ord_x(\alpha)\) and \(s \notin \ord_x(\alpha)\) are equivalent, so that the remaining contribution is precisely \(\epsilon^{0,s}_\tx{sym,ram}\).

On the other hand, Lemma \ref{lem:sgn1func}, Remark \ref{rem:refine-hyper-char} (applied to the restriction map \(R(T, G/M)/I \to R(Z_M, G)/I\)), and Proposition \ref{pro:conc-hyper} imply that the restriction of $\_M\epsilon_s^\tx{sym,ram}$ to $T(F)\subb$ equals \(\epsilon_s^\tx{sym,ram}\). Finally, Proposition \ref{pro:all-at-0} shows that the restriction of $\_M\epsilon_0$ to $T(F)\subb$ equals $\epsilon_{0,\tx{sym},\tx{ram}} \dotm \epsilon_f \dotm \epsilon_{\flat,1}$. The result now follows from Lemma \ref{lem:repack}.
\end{proof}

\bibliographystyle{amsalpha}
\bibliography{bibliography}

\providecommand{\bysame}{\leavevmode\hbox to3em{\hrulefill}\thinspace}
\providecommand{\MR}{\relax\ifhmode\unskip\space\fi MR }
% \MRhref is called by the amsart/book/proc definition of \MR.
\providecommand{\MRhref}[2]{%
  \href{http://www.ams.org/mathscinet-getitem?mr=#1}{#2}
}
\providecommand{\href}[2]{#2}
\begin{thebibliography}{JMV90}

\bibitem[AS09]{AS09}
Jeffrey~D. Adler and Loren Spice, \emph{Supercuspidal characters of reductive
  {$p$}-adic groups}, Amer. J. Math. \textbf{131} (2009), no.~4, 1137--1210.
  \MR{2543925 (2011a:22018)}

\bibitem[BH05]{BH05b}
Colin~J. Bushnell and Guy Henniart, \emph{The essentially tame local
  {L}anglands correspondence. {II}. {T}otally ramified representations},
  Compos. Math. \textbf{141} (2005), no.~4, 979--1011. \MR{2148193
  (2006c:22017)}

\bibitem[Bou02]{BourLie4-6}
Nicolas Bourbaki, \emph{Lie groups and {L}ie algebras. {C}hapters 4--6},
  Elements of Mathematics (Berlin), Springer-Verlag, Berlin, 2002, Translated
  from the 1968 French original by Andrew Pressley. \MR{1890629}

\bibitem[CI21]{ChanIvanov21}
Charlotte Chan and Alexander Ivanov, \emph{Cohomological representations of
  parahoric subgroups}, Represent. Theory \textbf{25} (2021), 1--26.
  \MR{4197070}

\bibitem[CO]{ChanOi}
Charlotte Chan and Masao Oi, \emph{Geometric l-packets of howe-unramified toral
  supercuspidal representations}, arXiv:2105.06341.

\bibitem[DR09]{DR09}
Stephen DeBacker and Mark Reeder, \emph{Depth-zero supercuspidal {$L$}-packets
  and their stability}, Ann. of Math. (2) \textbf{169} (2009), no.~3, 795--901.
  \MR{2480618 (2010d:22023)}

\bibitem[DS18]{DS18}
Stephen DeBacker and Loren Spice, \emph{Stability of character sums for
  positive-depth, supercuspidal representations}, J. Reine Angew. Math.
  \textbf{742} (2018), 47--78. \MR{3849622}

\bibitem[Fin]{Fin19}
Jessica Fintzen, \emph{On the construction of tame supercuspidal
  representations}, arXiv:1908.09819.

\bibitem[G{\'e}r77]{gerardin:weil}
Paul G{\'e}rardin, \emph{Weil representations associated to finite fields}, J.
  Algebra \textbf{46} (1977), no.~1, 54--101. \MR{460477}

\bibitem[His84]{Hiss84}
Gerhard Hiss, \emph{Die adjungierten {D}arstellungen der
  {C}hevalley-{G}ruppen}, Arch. Math. (Basel) \textbf{42} (1984), no.~5,
  408--416. \MR{756692}

\bibitem[Hur82]{Hurley82}
James~F. Hurley, \emph{Centers of {C}hevalley algebras}, J. Math. Soc. Japan
  \textbf{34} (1982), no.~2, 219--222. \MR{651266}

\bibitem[JMV90]{JMV90}
Dieter Jungnickel, Alfred~J. Menezes, and Scott~A. Vanstone, \emph{On the
  number of self-dual bases of {${\rm GF}(q^m)$} over {${\rm GF}(q)$}}, Proc.
  Amer. Math. Soc. \textbf{109} (1990), no.~1, 23--29. \MR{1007501}

\bibitem[Kala]{KalDC}
Tasho Kaletha, \emph{On {$L$}-embeddings and double covers of tori over local
  fields}, arXiv:1907.05173.

\bibitem[Kalb]{KalSLP}
\bysame, \emph{Supercuspidal {$L$}-packets}, arXiv:1912.03274.

\bibitem[Kal15]{KalEpi}
\bysame, \emph{Epipelagic {$L$}-packets and rectifying characters}, Invent.
  Math. \textbf{202} (2015), no.~1, 1--89. \MR{3402796}

\bibitem[Kal16]{KalRI}
\bysame, \emph{Rigid inner forms of real and {$p$}-adic groups}, Ann. of Math.
  (2) \textbf{184} (2016), no.~2, 559--632. \MR{3548533}

\bibitem[Kal19]{KalRSP}
\bysame, \emph{Regular supercuspidal representations}, J. Amer. Math. Soc.
  \textbf{32} (2019), no.~4, 1071--1170. \MR{4013740}

\bibitem[Kot]{KotWeil}
Robert~E. Kottwitz, \emph{Reductive groups, epsilon factors, and {W}eil
  indices}, arXiv:1612.05550.

\bibitem[Kot83]{Kot83}
\bysame, \emph{Sign changes in harmonic analysis on reductive groups}, Trans.
  Amer. Math. Soc. \textbf{278} (1983), no.~1, 289--297. \MR{697075
  (84i:22012)}

\bibitem[KY11]{kim-yu:sp4}
Ju-Lee Kim and Jiu-Kang Yu, \emph{Twisted {L}evi sequences and explicit types
  on {${\rm Sp}_4$}}, Harmonic analysis on reductive, {$p$}-adic groups,
  Contemp. Math., vol. 543, Amer. Math. Soc., Providence, RI, 2011,
  pp.~135--154. \MR{2798426}

\bibitem[Lan]{LanArt}
Robert~P. Langlands, \emph{On the functional equation of the {A}rtin
  {$L$}-functions}, \url{http://publications.ias.edu/rpl/paper/61}.

\bibitem[MP94]{MP94}
Allen Moy and Gopal Prasad, \emph{Unrefined minimal {$K$}-types for {$p$}-adic
  groups}, Invent. Math. \textbf{116} (1994), no.~1-3, 393--408. \MR{1253198
  (95f:22023)}

\bibitem[MP96]{MP96}
\bysame, \emph{Jacquet functors and unrefined minimal {$K$}-types}, Comment.
  Math. Helv. \textbf{71} (1996), no.~1, 98--121. \MR{1371680 (97c:22021)}

\bibitem[O'M00]{omeara:quadratic}
O.~Timothy O'Meara, \emph{Introduction to quadratic forms}, Classics in
  Mathematics, Springer-Verlag, Berlin, 2000, Reprint of the 1973 edition.
  \MR{1754311}

\bibitem[Ree08]{Ree08}
Mark Reeder, \emph{Supercuspidal {$L$}-packets of positive depth and twisted
  {C}oxeter elements}, J. Reine Angew. Math. \textbf{620} (2008), 1--33.
  \MR{2427973 (2009e:22019)}

\bibitem[Sch85]{scharlau:quadratic}
Winfried Scharlau, \emph{Quadratic and {H}ermitian forms}, Grundlehren der
  Mathematischen Wissenschaften [Fundamental Principles of Mathematical
  Sciences], vol. 270, Springer-Verlag, Berlin, 1985. \MR{770063}

\bibitem[Spi]{Spice21}
Loren Spice, \emph{Explicit asymptotic expansions in $p$-adic harmonic analysis
  {II}}, preprint, arXiv:2108.12935.

\bibitem[Spi08]{Spice08}
\bysame, \emph{Topological {J}ordan decompositions}, J. Algebra \textbf{319}
  (2008), no.~8, 3141--3163. \MR{2408311}

\bibitem[Spi18]{Spice17}
\bysame, \emph{Explicit asymptotic expansions for tame supercuspidal
  characters}, Compos. Math. \textbf{154} (2018), no.~11, 2305--2378.
  \MR{3867302}

\bibitem[Yu01]{Yu01}
Jiu-Kang Yu, \emph{Construction of tame supercuspidal representations}, J.
  Amer. Math. Soc. \textbf{14} (2001), no.~3, 579--622 (electronic).
  \MR{1824988 (2002f:22033)}

\end{thebibliography}

\end{document}